\documentclass[11pt,a4paper]{article}

\usepackage[T1]{fontenc}
\usepackage[utf8]{inputenc}
\usepackage{lmodern}
\usepackage{microtype}
\usepackage{amsmath,amssymb,amsthm,mathtools}
\allowdisplaybreaks
\usepackage{aliascnt}
\usepackage{enumitem}
\usepackage{geometry}
\usepackage{hyperref}
\usepackage[nameinlink,capitalize]{cleveref}
\usepackage{xfrac}

\hypersetup{
  hidelinks,
  pdftitle={Global Strong Solutions for Maxwell--Stefan Diffusion with Additive Friction Coefficients},
  pdfauthor={Dieter Bothe},
  pdfsubject={Global strong solvability and structural analysis of normalized Maxwell--Stefan diffusion with additive friction coefficients},
  pdfkeywords={Maxwell--Stefan diffusion, additive friction, global strong solutions, root coordinates, multi-EPD truncations, De Giorgi regularity, entropy symmetrization}
}
\setlist[itemize]{leftmargin=2em}
\setlist[enumerate]{leftmargin=2em}

\newtheorem{theorem}{Theorem}[section]

\newaliascnt{proposition}{theorem}
\newtheorem{proposition}[proposition]{Proposition}
\aliascntresetthe{proposition}

\newaliascnt{lemma}{theorem}
\newtheorem{lemma}[lemma]{Lemma}
\aliascntresetthe{lemma}

\newaliascnt{corollary}{theorem}
\newtheorem{corollary}[corollary]{Corollary}
\aliascntresetthe{corollary}

\newaliascnt{definition}{theorem}
\newtheorem{definition}[definition]{Definition}
\aliascntresetthe{definition}

\newaliascnt{assumption}{theorem}

\aliascntresetthe{assumption}

\theoremstyle{remark}
\newaliascnt{remark}{theorem}
\newtheorem{remark}[remark]{Remark}
\aliascntresetthe{remark}

\newaliascnt{example}{theorem}

\aliascntresetthe{example}

\crefname{lemma}{Lemma}{Lemmas}
\Crefname{lemma}{Lemma}{Lemmas}
\crefname{proposition}{Proposition}{Propositions}
\Crefname{proposition}{Proposition}{Propositions}
\crefname{theorem}{Theorem}{Theorems}
\Crefname{theorem}{Theorem}{Theorems}
\crefname{corollary}{Corollary}{Corollaries}
\Crefname{corollary}{Corollary}{Corollaries}
\crefname{definition}{Definition}{Definitions}
\Crefname{definition}{Definition}{Definitions}
\crefname{assumption}{Assumption}{Assumptions}
\Crefname{assumption}{Assumption}{Assumptions}
\crefname{remark}{Remark}{Remarks}
\Crefname{remark}{Remark}{Remarks}
\crefname{example}{Example}{Examples}
\Crefname{example}{Example}{Examples}

\newcommand{\R}{\mathbb R}

\newcommand{\dd}{\,\mathrm d}
\newcommand{\Div}{\operatorname{div}}
\newcommand{\diag}{\operatorname{diag}}
\newcommand{\co}{\operatorname{co}}
\newcommand{\one}{\mathbf 1}

\newcommand{\cD}{\mathcal D}

\newcommand{\cT}{\mathcal T}
\newcommand{\cS}{\mathcal S}
\newcommand{\cZ}{\mathcal Z}

\newcommand{\cK}{\mathcal K}

\newcommand{\sym}{\operatorname{sym}}

\newcommand{\hz}{{h_z}}

\title{Global Strong Solutions for Maxwell--Stefan Diffusion with Additive Friction Coefficients}
\author{Dieter Bothe\thanks{Mathematical Modeling and Analysis, Department of Mathematics, Technische Universit\"at Darmstadt, Peter-Gr\"unberg-Stra\ss e 10, 64287 Darmstadt, Germany. E-mail: \texttt{bothe@mma.tu-darmstadt.de}.}}

\begin{document}
\maketitle

\begin{abstract}
We study Maxwell--Stefan diffusion with additive friction coefficients $f_{ij}=g_i+g_j$,
$0<g_1<\cdots<g_N$. In mass fractions, the system isolates the constrained pair-friction block;
in mole fractions, it is the classical ideal
isothermal/isobaric Maxwell--Stefan system at constant total molar concentration. 
Additivity makes the constrained pair-friction dissipation
species-diagonal; conversely, species-diagonality on one interior barycentric
constraint space forces a pair-sum representation. At operator level, the
positive constrained relaxation operator is a scalar shift of a compression of
$G=\diag(g_1,\ldots,g_N)$. Its scalar resolvent yields both an explicit
constrained inverse and interlacing spectral roots, which form global
real-analytic coordinates on the open simplex and whose differentials are left
eigen-covectors. In root coordinates the principal part is
diagonal, no self-square gradient term occurs, and scalar comparison yields invariant rectangles
and separation from the simplex boundary. For regularity we introduce entropy-stabilized
one-sided multi-EPD truncations: Euler--Poisson--Darboux entropies cancel mixed quadratic
production, while for $N\ge4$ a truncation-weighted mixing-entropy correction supplies
transverse coercivity. Caccioppoli and logarithmic estimates, shrinking, and critical mass yield
H\"older continuity up to the Neumann boundary. {The mixing entropy also symmetrizes the moment system;}
frozen conormal estimates give spatial Lipschitz bounds. Together with time H\"older control and
short-interval maximal regularity, this yields global strong solvability on bounded
$C^{2+\alpha}$ domains {with $0<\alpha<1$}, for each $N\ge2$, $d\ge2$, and $p>d+2$, for all uniformly positive
initial concentrations in the natural trace class satisfying Neumann compatibility. Solutions
become classical for positive times and converge exponentially to equilibrium in relative
entropy, $L^2$, and $C^1(\overline\Omega)$.
\end{abstract}

\medskip
\noindent\textbf{Keywords.}
Maxwell--Stefan diffusion; additive friction; global strong solutions;
root coordinates; multi-EPD truncations; De Giorgi regularity; entropy
symmetrization; invariant regions.

\smallskip
\noindent\textbf{MSC 2020.}
35K59, 35B45, 35B65, 76R50.

\section{Introduction and main results}

\subsection{Maxwell--Stefan diffusion and the additive setting}

Multicomponent diffusion is a classical setting in which scalar Fickian diffusion
\cite{Fick1855} is inadequate.  {Relative motion of distinct constituents
produces interspecies friction, and the resulting cross-diffusion may produce
uphill diffusion, osmotic diffusion, and diffusion barriers.}  The Maxwell--Stefan
approach, originating in Maxwell's kinetic theory of gases and Stefan's continuum
model for gas mixtures \cite{Maxwell1867,Stefan1871}, balances thermodynamic
driving forces against pairwise interspecies friction.  It is standard in
physical chemistry and chemical engineering; see
\cite{ErnGiovangigli1994,TaylorKrishna1993,Giovangigli1999,KrishnaWesselingh1997}.

Conservation of total mass makes the force--flux matrix singular, so the
Maxwell--Stefan relation can be inverted only after restriction to a
codimension-one constraint space.  After elimination of one component, the
resulting diffusion system is quasilinear and generally nonsymmetric.  Its
principal part is normally elliptic, but the component equations for the
fractions do not satisfy scalar maximum principles, and the standard entropy
structure by itself does not provide the pointwise regularity needed for a
global strong-solution continuation argument.  {The analysis below identifies an
additive coefficient class for which additional algebraic structure yields a
global strong-solution theory without a small-data assumption.}

{
Consider a smooth scalar state function $z=z(y)$.  From the balance law
\[
        \partial_t y+\Div J=0{,}
\]
the chain and product rules give
\begin{equation}\label{eq:intro-transformed-balance}
 \partial_t z+\Div\bigl(J^{\sf T}\nabla_y z\bigr)
 =\sum_{i=1}^N\nabla_x(\partial_{y_i}z)\cdot J_i.
\end{equation}
After insertion of the flux law, the right-hand side is quadratic in first
spatial derivatives.  Hence the principal second-order term is determined by
the transformed flux $J^{\sf T}\nabla_y z$.  A sufficient condition for a scalar
divergence-form principal part is that this flux be proportional to
$\nabla_x z$.  Since $\nabla_x y=B(y)J$, where $B(y)$ denotes the
Maxwell--Stefan matrix defined below in \eqref{eq:B-general},
\[
 \nabla_x z=(\nabla_x y)^{\sf T}\nabla_y z
              =J^{\sf T}B(y)^{\sf T}\nabla_yz.
\]
It is therefore sufficient that
\begin{equation}\label{eq:intro-integrable-left-eigenvector}
        B(y)^{\sf T}\nabla_y z=-\lambda(y)\nabla_y z,
        \qquad \lambda(y)>0,
\end{equation}
because then $J^{\sf T}\nabla_y z=-\lambda^{-1}\nabla_xz$.  {Thus the principal
second-order term in this single $z$-equation is a scalar divergence operator involving only
$\nabla_x z$.  This does not decouple the nonlinear system: lower-order gradient couplings and
coefficient dependence on the other state variables remain.  A coordinate system with diagonal
principal second-order part requires $N-1$ scalar functions $z_j(y)$ satisfying the
corresponding identities.}  In particular, the selected left eigen-covector
fields of $B(y)$ must be exact, i.e. differentials of scalar state functions.
Pointwise diagonalizability of $B(y)$ does not imply this exactness property.
The additive class considered below admits such integrable spectral
covectors; their construction is given in \Cref{sec:root-coordinates}.
}

We consider the additive class
\begin{equation}\label{eq:intro-additive-class-new}
        f_{ij}=g_i+g_j,\qquad g_i>0,\qquad i\ne j,
\end{equation}
and, for the global root-coordinate theory, assume after relabeling that
$0<g_1<\cdots<g_N$.  The additive law reduces the $N(N-1)/2$ pair parameters
to $N$ species parameters when $N\ge4$.  It is a constitutive subclass, not a
universal consequence of molecular kinetic theory.  {In the molar formulation,
the parameters $g_i$ admit a species-specific momentum-exchange interpretation:
the pair coefficient $f_{ij}$ is the sum of the two corresponding rates.}  A
complementary first-order approximation interpretation is given below.

The normalized system studied here isolates the constrained Maxwell--Stefan
diffusion block.  We use normalized mass fractions $y_i$ and, in
\eqref{eq:MS-law}, the simplified driving force $\nabla y$.  This keeps both
the simplex constraint and the barycentric mass-flux constraint free of
molar-mass weights, but $\nabla y$ should not be confused with the general
thermodynamic driving force.  For a classical isothermal ideal mixture with
molar masses $M_i>0$, let $\rho$ and $c$ denote the total mass density and
total molar concentration, respectively; thus $\rho_i=\rho y_i$.  Converting the mass fractions $y_i$ to mole
fractions gives
\[
 x_i(y)=\frac{\rho}{c}\frac{y_i}{M_i},\qquad
 \frac{\rho}{c}=\left(\sum_{k=1}^N\frac{y_k}{M_k}\right)^{-1}.
\]
With the mass-specific chemical potentials
$\mu_i=\mu_i^\circ(T,p)+(RT/M_i)\log x_i$, the standard projected force
$\rho_i(RT)^{-1}(\nabla\mu_i-\sum_k y_k\nabla\mu_k)$ reduces, by the
Gibbs--Duhem relation and after division by the total mass density $\rho$,
to
\[
 \begin{gathered}
 \Gamma_M(y)\nabla y,\qquad
 \Gamma_M(y):=D_M-x(y)\otimes {m_M}^{\sf T},\\
 D_M:=\diag(M_1^{-1},\ldots,M_N^{-1}),\qquad
 {m_M}:=(M_i^{-1})_{i=1}^N,
 \end{gathered}
\]
since $(c/\rho)\nabla x=\Gamma_M(y)\nabla y$.  Writing
$E:=\{v\in\R^N:\sum_i v_i=0\}$, one has
$\Gamma_M(y)\R^N\subset E$ and
$\ker\Gamma_M(y)=\operatorname{span}\{y\}$, so
$\Gamma_M(y)|_E$ is invertible; for equal molar masses it is simply
$M^{-1}I_E$.  Thus the same singular pair-friction structure and constrained
inversion remain, while the thermodynamic factor generally changes the
effective diffusion matrix
\cite{BotheDreyer2015,HerbergMeyriesPruessWilke2017,BotheDruet2023}.

There is also an exact realization of the normalized system in the classical
molar formulation.  For an ideal isothermal and isobaric mixture with constant
total molar concentration, let $\mathcal J_i$ denote the molar diffusion fluxes
relative to the molar-average velocity and {assume that the molar-average velocity vanishes}.  With mole fractions $x_i$ and normalized fluxes
$j_i:=\mathcal J_i/c$, {the species balances and Maxwell--Stefan}
law become
\[
 \partial_t x_i+\Div j_i=0,\qquad
 \sum_i j_i=0,\qquad
 \nabla x_i=\sum_{k\ne i}f_{ik}(x_i j_k-x_kj_i),
\]
with {$f_{ik}=(\mathcal D_{ik}^{\mathrm{MS}})^{-1}$}; see \cite{Bothe2011}.  Thus the present
PDE is a normalized model for general thermodynamic Maxwell--Stefan systems
and, with $x$ in place of $y$, exactly the ideal mole-fraction Maxwell--Stefan
system in the molar-average frame.

\subsection{\texorpdfstring{{Main result and proof structure}}{Main result and proof structure}}

The main global theorem, \Cref{thm:global-strong-additive}, gives maximal-$L^p$
strong solvability on bounded $C^{2+\alpha}$ domains, $0<\alpha<1$, for each
$N\ge2$, $d\ge2$, and $p>d+2$, without a smallness condition on the initial
oscillation or on the range of the spectral variables.  Equivalently, the
concentration data are those in the natural trace space that are uniformly
positive on $\overline\Omega$, satisfy $\sum_i y_i^0=1$, and satisfy the
homogeneous Neumann compatibility; see
\Cref{cor:global-positive-y-data,rem:initial-trace-equivalence}.  The solution
remains in a compact subset of the open composition simplex, is classical on
every positive-time strip, and converges exponentially to the homogeneous
equilibrium.

The proof uses complementary state coordinates.  First, the
additive law makes the pair-friction dissipation species-diagonal on the
barycentric constraint and yields an explicit constrained inverse; conversely,
constrained species-diagonality characterizes the real pair-sum form.  The
interlacing roots $z_1,\ldots,z_{N-1}$ of
$m_y(z)=\sum_i y_i/(g_i-z)$ are global spectral coordinates.  Their differentials
are left eigen-covectors of the Maxwell--Stefan matrix, the root principal part is
diagonal, and no $j$-th equation contains a self-square
$|\nabla z_j|^2$.  Scalar comparison therefore gives invariant root rectangles
and compact separation from the simplex boundary.

Second, one-sided multi-EPD truncation entropies cancel the mixed root-gradient
production in a selected level set.  For $N\ge4$, a truncation-weighted mixing
entropy supplies the transverse coercivity needed for the localized
Caccioppoli and logarithmic estimates.  Shrinking and critical mass then yield
space--time H\"older continuity of every root up to the homogeneous Neumann
boundary, without a prior scalar Schauder estimate.

Finally, the affine moments $U_\ell=\sum_i g_i^\ell y_i$ close in divergence
form, and the Hessian of the same mixing entropy symmetrizes their diffusion
matrix.  Root H\"older continuity permits this metric to be frozen locally;
weak-solution conormal estimates give a spatial Lipschitz bound, while the time
H\"older modulus controls short terminal intervals in maximal regularity.  A
finite freezing argument propagates the natural trace norm and excludes a
finite maximal existence time.  Thus the mixing entropy has two distinct
roles: it supplies transverse coercivity in root coordinates and the system
symmetrizer in moment coordinates.  No corresponding global strong result is
asserted here for general non-additive coefficients when $N\ge4$.

\subsection{Relation to existing theory}

Early rigorous strong-solution results include the work of Giovangigli and
Massot, who proved local existence and uniqueness for the Cauchy problem for a
detailed multicomponent reactive-flow model in full vibrational nonequilibrium
by an entropic symmetrization and a hyperbolic--parabolic normal form
\cite{GiovangigliMassot1998Local}; their companion paper established asymptotic
stability of equilibrium states \cite{GiovangigliMassot1998Stability}.
Giovangigli's monograph also contains global classical solvability and
convergence near an isolated positive equilibrium for reversible mass-action
kinetics \cite[Theorem~9.7.4]{Giovangigli1999}.  For nonideal supercritical
multicomponent reactive fluids, Giovangigli and Matuszewski derived local
symmetrized and hyperbolic--parabolic normal forms and proved global existence
and asymptotic stability near a thermodynamically stable equilibrium
\cite{GiovangigliMatuszewski2013}.  These works concern complete reactive-flow
models on the whole space, rather than the bounded-domain isothermal diffusion
system considered here.

In \cite{Bothe2011} we proved normal ellipticity for the isothermal, isobaric
Maxwell--Stefan diffusion system with vanishing molar-average velocity and
explained how quasilinear parabolic theory yields local strong well-posedness.
Herberg, Meyries, Pr\"uss, and Wilke subsequently gave a complete maximal-$L^p$
analysis of a mass-based reactive Maxwell--Stefan system on a bounded domain
\cite{HerbergMeyriesPruessWilke2017}.  Under constant total density, vanishing
barycentric velocity, and constant temperature, they proved local
well-posedness, a local semiflow, instantaneous regularization, and strict
positivity of every initially nontrivial component.  For reversible
mass-action kinetics they identified the positive-equilibrium manifold and
proved normal stability.  Their projected chemical-potential driving forces
sum to zero, and the stability analysis uses the ideal-mixture Helmholtz free
energy relative to a positive chemical equilibrium.  The model is a
bounded-domain reactive diffusion system without Navier--Stokes coupling.

Marion and Temam proved global weak existence and positivity for fully nonlinear
multispecies reaction--\allowbreak diffusion systems whose diffusion coefficients
are obtained from the Stefan--Maxwell relations \cite{MarionTemam2015}.  Other
strong-solution results concern special, perturbative, or coupled-flow
settings.  Boudin, Grec, and Salvarani proved global smooth solvability and
convergence in a coefficient-restricted system with two equal binary coefficients
\cite{BoudinGrecSalvarani2012}, while Bondesan and Briant established a
quantitative perturbative Cauchy theory for a flux-incompressible system
\cite{BondesanBriant2022}.  Piasecki, Shibata, and Zatorska proved local and,
near a constant state, global strong well-posedness for a compressible
two-component mixture in an $L^p$--$L^q$ framework
\cite{PiaseckiShibataZatorskaStrong2019}, as well as local existence and maximal
$L^p$--$L^q$ regularity for an isothermal compressible multicomponent flow
coupled to cross-diffusion \cite{PiaseckiShibataZatorska2019}.  In later joint
work with Druet, we obtained local strong well-posedness and near-equilibrium
strong-solution results for compressible and incompressible multicomponent flow
models with Fick--Onsager or Maxwell--Stefan diffusion
\cite{BotheDruetCompressible2021,BotheDruetIncompressible2021}.  Druet proved
short-time existence for the full nonisothermal compressible system in a mixed
parabolic--hyperbolic maximal-regularity class and, under additional growth
assumptions, excluded breakdown caused solely by blow-up or vanishing of the
temperature or a partial mass density \cite{DruetFull2022}.  These coupled-flow
results do not apply directly to the quiescent isothermal normalized system
considered below.

For arbitrary positive constant binary diffusivities, J\"ungel and Stelzer
established global bounded weak solutions by entropy variables and the
boundedness-by-entropy method \cite{JungelStelzer2013}.  For reactive
Maxwell--Stefan systems, Daus, J\"ungel, and Tang proved global bounded weak
solutions and exponential convergence to equilibrium under detailed-balance or
complex-balance assumptions \cite{DausJungelTang2020}.  Mucha, Pokorn\'y, and
Zatorska proved global weak existence on the three-dimensional torus for a
heat-conducting compressible reactive mixture in which the
Navier--Stokes--Fourier equations are coupled to the full Maxwell--Stefan
relations, under particular constitutive assumptions
\cite{MuchaPokornyZatorska2015}.  Other extensions of the weak theory include
pressure-driven diffusion, compressible and incompressible flow, and
heat-conducting mixtures; see, among others,
\cite{DruetJungel2020,Druet2021,ChenJungel2015,HelmerJungel2021,
GeorgiadisJungel2024}.  Relative-entropy methods yield weak--strong uniqueness
for Maxwell--Stefan systems \cite{HuoJungelTzavaras2022}; Geltner,
Heitzinger, and J\"ungel relaxed the positivity and regularity assumptions on
the strong solution within a general cross-diffusion framework that includes
Maxwell--Stefan diffusion \cite{GeltnerHeitzingerJungel2026}.  Braukhoff,
Raithel, and Zamponi proved partial H\"older regularity for weak solutions of a
broad class of entropy-structured cross-diffusion systems that includes
Maxwell--Stefan diffusion \cite{BraukhoffRaithelZamponi2022}.  Their entropy
modification is used in a Campanato partial-regularity framework; the
level-dependent one-sided multi-EPD construction here serves a different
purpose, namely scalar localization of individual spectral components in the
presence of quadratic cross-root drift.  {For classical uses of
EPD equations in entropy constructions for hyperbolic conservation laws and
in hydrodynamic-type integrable systems, see
\cite{ChenLeFloch1997,KodamaKonopelchenkoSchief2015}; here the EPD identities
instead cancel mixed gradient production in a parabolic level-set argument, that is,
an argument based on one-sided super- and sublevel truncations of an individual root.}

Further work addresses renormalized uniqueness, entropy equality,
homogenization, degenerate thermal conductivity, and the whole-space problem
\cite{GeorgiadisKimTzavaras2024,BerselliGeorgiadisTzavaras2025,
NocitaSalvarani2025,GeorgiadisDegenerate2026,GeorgiadisWholeSpace2026}.
Kinetic derivations of reactive Maxwell--Stefan systems were obtained for an
isothermal simple-reacting-sphere model and for non-isothermal polyatomic gas
mixtures in \cite{AnwasiaGoncalvesSoares2019,
AnwasiaBisiSalvaraniSoares2020}.  For nonreactive mixtures, Boudin, Grec, and
Pavan derived the Maxwell--Stefan diffusion limit for general collision cross
sections \cite{BoudinGrecPavan2017}.  In a non-isothermal setting on the
three-dimensional torus, Chen, Jiang, and Luo obtained a perturbative
global classical theory for sufficiently small deviations from a positive
constant state as part of a rigorous multispecies Boltzmann diffusion-limit
analysis \cite{ChenJiangLuo2025}.  Further results on kinetic limits and diffusion
asymptotics are given in \cite{BoudinGrecSalvarani2015,
HutridurgaSalvarani2017,HutridurgaSalvarani2018,BondesanBriant2021,
JungelPollinoTaguchi2026}.

\subsection{Additive dissipation, root coordinates, and moment coordinates}

{
For additive coefficients, the constrained pair-friction quadratic form
admits a species-diagonal representation; \Cref{prop:diagonal-barycentric-dissipation}
shows that this property characterizes additive pair coefficients.  The same
algebra yields the compressed-diagonal form, its diagonal-plus-rank-one
resolvent, and the spectral representations of the constrained Maxwell--Stefan
operator and its inverse.  The relation to the
core-diagonal closure of Bothe and Druet \cite{BotheDruet2023} is formulated at
the level of diffusion entropy production in
\Cref{subsec:core-diagonal-relation}: additivity gives a species-diagonal
representation of the constrained resistance quadratic form, whereas the
core-diagonal closure prescribes species-diagonal coefficients on the mobility
side before the conservation projection.  The two coefficient classes are not
algebraically equivalent because diagonality is not preserved by constrained
projection and inversion.
}

Let $G=\diag(g_1,\ldots,g_N)$, $s=g\cdot y$, and let
$\Pi_yv:=v-y(\one\cdot v)$ be the projection onto
$E=\{v:\one\cdot v=0\}$ along $\operatorname{span}\{y\}$.  Then
\begin{equation}\label{eq:intro-compressed-E}
        -B(y)|_E=\Pi_yG|_E+sI_E.
\end{equation}
Thus, for additive friction, the positive constrained relaxation operator
$-B(y)|_E$ is a scalar shift of the compression $\Pi_yG|_E$ of the diagonal
species operator $G$.  For $y$ in the simplex interior,
$Y=\diag(y_1,\ldots,y_N):E_y\to E$ is an isomorphism, and this compression
is similar to the weighted-orthogonal compression $P_yG|_{E_y}$, where
$P_yv=v-\one(y\cdot v)$; indeed $\Pi_yY=YP_y$ and $YG=GY$.  The associated
scalar resolvent function
\begin{equation}\label{eq:intro-cauchy-new}
        m_y(z)=\one^{\sf T}(G-zI)^{-1}y
              =\sum_{i=1}^N\frac{y_i}{g_i-z}
\end{equation}
provides the three representations used below: evaluation at $z=-s$ gives the
constrained rank-one inverse, its zeros are the spectral root coordinates, and
its Laurent coefficients at infinity are the affine moments.  Equivalently,
the roots are the eigenvalues of the compressed operator, whereas the moments
determine the coefficients of its characteristic polynomial.

For $y$ in the open simplex, $m_y$ has exactly one zero
$z_j$ in every gap $(g_j,g_{j+1})$, and the roots satisfy $g_1<z_1<g_2<\cdots<z_{N-1}<g_N$.  The map $y\mapsto z$ is a real-analytic diffeomorphism from the open simplex
onto the full interlacing box
$\prod_{j=1}^{N-1}(g_j,g_{j+1})$, with explicit inverse
\[
 y_i(z)=
\prod_{j=1}^{N-1}(g_i-z_j) \left/
      \prod_{m\ne i}^{\mbox{$\;$}}(g_i-g_m).\right.
\]
The corresponding relaxation eigenvalues are $\lambda_j=s+z_j$.  The
differentials $D_yz_j$ are left eigen-covectors of the Maxwell--Stefan
operator, and the root coordinate frame diagonalizes the principal diffusion
operator.

Writing $e_1:=\sum_{i=1}^N g_i$ and $a_j:=\lambda_j^{-1}$, the resulting
root equations have the form
\begin{align*}
 \partial_tz_j-\Div(a_j(z)\nabla z_j)
 +\sum_{k\ne j}C_{jk}(z)\nabla z_j\cdot\nabla z_k&=0,\\[0.5ex]
 C_{jk}=\frac{a_j+a_k}{z_j-z_k}=-C_{kj},\qquad
 \lambda_j&=e_1-\sum_{\ell\ne j}z_\ell.
\end{align*}
Thus $a_j$ is independent of the selected root $z_j$.  In particular, the
$j$-th equation contains no term proportional to $|\nabla z_j|^2$; every
quadratic correction contains $\nabla z_j$ together with another root
gradient.  Because every quadratic correction in the $j$-th equation contains
$\nabla z_j$, the equation admits the componentwise comparison argument used in
the invariant-root rectangle theorem.  Once the initial root values lie in a compact interlacing
rectangle, the whole subsequent trajectory remains in that rectangle.  Via
the explicit inverse root map this yields compact separation of every
composition component from zero and excludes approach to the simplex boundary
{on the strong-existence interval.}

The second coordinate system is affine rather than spectral: set
$U_\ell=\sum_i g_i^\ell y_i$, $\ell=1,\ldots,N-1$.  The moment equations close in divergence form.  Their diffusion matrix is, in
the interior, exactly similar to the inverse weighted relaxation operator and
therefore has eigenvalues $1/\lambda_1,\ldots,1/\lambda_{N-1}$.  It extends
analytically to the closed moment simplex and remains normally elliptic there.
In the interior the Hessian of the mixing entropy symmetrizes it; see
\Cref{prop:moment-PDE-spectrum,prop:closed-simplex-normal-ellipticity,thm:entropy-symmetrization}.  Accordingly, the roots are used for scalar comparison, while the affine moments
provide divergence structure and the entropy metric used for the system bootstrap.

\subsection{The entropy-stabilized one-sided multi-EPD truncation method}

For a state function $\Psi(z)$, testing the root equations by $D\Psi(z)$
produces the quadratic entropy-production form
$\mathfrak Q_\Psi(z;\nabla z)$ defined below.  Its mixed $(j,k)$ contribution contains
$\Psi_{jk}+(\Psi_j-\Psi_k)/(z_j-z_k)$, so the pairwise multi-EPD identities
$(z_k-z_j)\Psi_{jk}=\Psi_j-\Psi_k$ are exactly the cancellation equations for
mixed quadratic production.
{For an individual root $z_i$ and a level $k\in(g_i,g_{i+1})$,
\Cref{sec:multi-EPD-dossier} introduces an explicit level-dependent potential
$\Phi_{i,k}$ and its upper and lower one-sided branches $\Phi_{i,k}^{\pm}$.  The exact
integral formula is deferred to that section; the properties relevant here are that the branches
are $C^{1,1}$ across $z_i=k$, are comparable to $(z_i-k)_\pm^2$, satisfy the multi-EPD
system on their active side, and have vanishing first derivatives at the truncation interface.
Moreover, $(\Phi_{i,k})_{jj}=0$ for $j\ne i$ and
$(\Phi_{i,k})_{ii}>0$.  Hence the pure production controls only the selected
truncation gradient,
$\mathfrak Q_{\Phi_{i,k}^\pm}\simeq|\nabla(z_i-k)_\pm|^2$, with all mixed
production terms canceled exactly.

The pure truncation does not control the transverse-root flux terms in the
localized entropy identity, which have the sizes $r|\nabla r|$ and
$r^2|\nabla z_j|$, where $r=(z_i-k)_\pm$.  With two root variables, such an integration by parts introduces no
third-root derivative.  With three or more root variables, it differentiates
coefficients depending on additional roots and hence produces gradients that
are not controlled by the pure truncation.

{Let $\hz(z):=\sum_{\ell=1}^N y_\ell(z)\log y_\ell(z)$ denote the physical
mixing entropy expressed in root variables.  It satisfies the same pairwise multi-EPD identities and has strictly positive
diagonal production coefficients, so its entropy production controls all root gradients.  Choose
$C_h$ so that $\widehat h:=\hz+C_h>0$ on the compact root range under consideration.  With
$\chi=r^2/2$, set $\Psi_{i,k}^\pm=\Phi_{i,k}^\pm+\varepsilon\chi\widehat h$.}
{We refer to this truncation-weighted mixing-entropy correction as the stabilization.
It supplies the required transverse-gradient control without a further integration by parts in
{a transverse root}.}
The factor $\widehat h$ is an exact multi-EPD entropy, while the product
{$\chi\widehat h$} has a controlled product defect.  Its production decomposes into
a positive piece {$\chi\mathfrak Q_{\hz}$}, a term in the selected direction, and
explicit cross terms of lower truncation order.  Choosing $\varepsilon>0$
small enough allows the latter terms to be absorbed by the coercivity of the
pure truncation and yields
\[
 \mathfrak Q_{\Psi_{i,k}^\pm}
 \ge c|\nabla r|^2
   +c\varepsilon r^2\sum_{j\ne i}|\nabla z_j|^2.
\]
{This coercivity yields a one-sided scalar Caccioppoli inequality for each root
without differentiating an uncontrolled {transverse-root} coefficient}; see
\Cref{prop:root-Caccioppoli-dossier}.

The weighted transverse term is also matched to the logarithmic propagation
step.  The scaled logarithmic-square function is chosen so that the ratio of
its squared first derivative to its second derivative is bounded by a constant
times $r^2$.  {Young's inequality bounds each cross-root drift contribution by a term
absorbed into the selected-gradient dissipation and a constant multiple of
$r^2|\nabla z_j|^2$; the latter is controlled by the stabilized transverse
dissipation.}
These bounds give the logarithmic estimate of
\Cref{lem:root-logarithmic-estimate}.  The subsequent shrinking and critical
mass alternatives are scalar, and yield the boundary H\"older estimate of
\Cref{thm:root-DeGiorgi-holder}.  {The multi-EPD construction therefore yields a level-dependent scalar
localization whose dissipation controls both the localized flux terms and the
weighted cross-gradient term in the logarithmic estimate.}
}

\subsection{From root H\"older control to global continuation}

{The De Giorgi argument yields a positive H\"older exponent for the root
variables.}  Once $z$ is H\"older continuous on positive-time strips, every smooth state
coordinate, in particular the affine moments $U$, inherits the same H\"older continuity.  On a
sufficiently small cylinder the state therefore stays in a fixed neighborhood
of a reference value $U_*$.  Freezing the mixing-entropy Hessian at $U_*$
turns the state-dependent entropy metric into a constant strong symmetrizer for
the moment system on that cylinder.  Weak-solution conormal gradient estimates
then yield a uniform spatial Lipschitz bound for $U$, and hence for the root
and entropy variables as well.

{The spatial Lipschitz estimate and the De Giorgi time H\"older estimate provide
the bounds required for continuation.}  In entropy variables the equation can
be written as a normally elliptic nondivergence system with a quadratic
first-gradient right-hand side.  The Lipschitz bound controls that forcing,
while the time H\"older modulus makes the family of linearized operators
H\"older continuous in time.  Freezing the operator successively on finitely
many short terminal intervals gives a uniform maximal-regularity estimate and
propagates the natural maximal-$L^p$ trace norm, defined in Section~10, uniformly up to any finite endpoint of the strong-existence interval.
The local theory, whose restart time is uniform for data with the resulting compact
state range and trace-norm bound, can then be restarted, contradicting finite maximal time.  The continuation argument uses the root H\"older estimate, the spatial
Lipschitz bound, and the short-interval maximal-regularity estimate.  The
{higher conormal gains, expressed with derived positive H\"older exponents, and in
particular the second-order $C^{2+\beta}$ system regularity, are used subsequently for
positive-time smoothing.}

After global strong solvability has been established, positive-time conormal
regularity gives the stated classical smoothing.  The ordinary mixing entropy
is a Lyapunov functional; the boundary-uniform Fisher-information bound and the
bounded-domain logarithmic Sobolev inequality imply exponential relaxation in
relative entropy and $L^2$.  Uniform positive-time smoothing then upgrades the
convergence to $C^1(\overline\Omega)$.

\subsection{Framework and organization}

The algebraic and coordinate results in Sections~2--9 are formulated for each
$N\ge2$; their PDE applications use bounded $C^2$ domains.  The local
maximal-$L^p$ theory (Section~10), the strong truncation argument (Section~11),
and boundary De Giorgi regularity (Section~12) assume $p>d+2$ and $C^2$
boundary.  The weak-conormal boundary gains in Section~13, and hence the global
theorem in Section~14, use a bounded $C^{2+\alpha}$ domain with
$0<\alpha<1$.  Here $\alpha$ is the fixed boundary-regularity exponent, while
$\beta$ (and later $\beta_1,\beta_2$) denotes a H\"older exponent produced by
the bootstrap and may be chosen smaller than $\alpha$.  The one-sided
piecewise-$C^2$ truncation identities are justified directly in the
positive-time maximal-$L^p$ strong class by Bochner/Sobolev chain rules, so the
De Giorgi and continuation arguments do not pass through an auxiliary
classical solution.  Initial homogeneous Neumann compatibility is built into
the trace space of the global theorem.

\section{The normalized Maxwell--Stefan system and the additive class}

\subsection{Constrained formulation}

Throughout, a domain means a nonempty open connected subset of $\R^d$.
Throughout the PDE analysis we assume $d\ge2$\footnote{{The restriction
$d\ge2$ is technical and avoids a separate one-dimensional variant of the boundary regularity
argument.  For $d=1$, the conormal estimate used below can be brought into the cited
two-dimensional framework by adjoining a dummy spatial variable, but this is an artificial
dimension-raising device; a direct one-dimensional proof should be more elementary.  We do not
pursue either variant here.}}.  Let
$\Omega\subset\R^d$ be a bounded domain of class $C^2$.  {We consider a
quiescent, isothermal and isobaric mixture of $N\ge2$ species.}  The unknowns are normalized
species fractions; throughout the paper we use the mass-fraction convention
\[
        y=(y_1,\ldots,y_N)^{\sf T},
        \qquad y_i\ge0,
        \qquad \sum_{i=1}^N y_i=1.
\]
{We denote the closed and open concentration simplices by}
{
\[
 \overline{\cS}:=\left\{y\in[0,1]^N:\ \sum_{i=1}^N y_i=1\right\},
 \qquad
 \cS^\circ:=\left\{y\in(0,1)^N:\ \sum_{i=1}^N y_i=1\right\}.
\]}
The corresponding diffusional mass fluxes are $J_i\in\R^d$, collected in
$J=(J_1,\ldots,J_N)^{\sf T}\in\R^{N\times d}$.  We use the normalization \eqref{eq:B-general} throughout; all statements about the
coefficients $f_{ij}$ refer to this convention.  {The model below uses the simplified
driving force $\nabla y$.}
The balance laws are
\begin{equation}\label{eq:balance}
        \partial_t y_i+\Div J_i=0,
        \qquad i=1,\ldots,N,
\end{equation}
with no-flux boundary condition
\begin{equation}\label{eq:noflux}
        J_i\cdot\nu=0\qquad\text{on }\partial\Omega,
\end{equation}
and the flux constraint
\begin{equation}\label{eq:flux-constraint}
        \sum_{i=1}^N J_i=0.
\end{equation}

For $k=1,\ldots,d$, let $J^k=(J_1^k,\ldots,J_N^k)^{\sf T}\in\R^N$ be the
$k$-th column of $J$.  The Maxwell--Stefan law is written as
\begin{equation}\label{eq:MS-law}
        \partial_{x_k}y=B(y)J^k,
        \qquad k=1,\ldots,d,
\end{equation}
equivalently $\nabla_x y=B(y)J$, where
\begin{equation}\label{eq:B-general}
        B_{ij}(y)=y_i f_{ij}\quad (i\ne j),
        \qquad
        B_{ii}(y)=-\sum_{\ell\ne i} f_{i\ell}y_\ell.
\end{equation}
{For the symmetric pair coefficients considered below,
$\one^{\sf T}B(y)=0$ and $B(y)y=0$.  Thus $\one=(1,\ldots,1)^{\sf T}$ is the left conservation null vector, whereas
$y$ is a right null vector.  We set}
\begin{equation}\label{eq:E-space}
        E:=\{v\in\R^N:\ \one\cdot v=0\}.
\end{equation}
Since $\sum_i y_i=1$, every derivative $\partial_{x_k}y$ belongs to $E$, and by \eqref{eq:flux-constraint} every $J^k$ belongs to $E$.

\subsection{The additive coefficient cone}

\begin{definition}[Additive Maxwell--Stefan class]
A symmetric coefficient family $(f_{ij})_{i\ne j}$ belongs to the additive Maxwell--Stefan class if there exist numbers $g_1,\ldots,g_N>0$ such that
\begin{equation}\label{eq:fij-gigj}
        f_{ij}=g_i+g_j,
        \qquad i\ne j.
\end{equation}
{We call any such vector $g=(g_1,\ldots,g_N)$ an additive generator of the coefficient family.}
\end{definition}

The algebraic cone characterization and constrained-inverse results below do
not require the values $g_i$ to be pairwise distinct.  After relabeling we may, and do, assume for these sections that
$0<g_1\le\cdots\le g_N$.  Thus $g_1$ and $g_N$ are the minimum and maximum of the $g_i$ in the uniform
bounds below, while repetitions remain allowed.  Pairwise distinctness is imposed only
when the root-coordinate theory is introduced in
Section~\ref{sec:root-coordinates}; there the ordering is strengthened to be
strict.

\begin{proposition}[Characterization of the additive cone for $N\ge4$]\label{prop:additive-cone}
Let $N\ge4$ and let $(f_{ij})_{1\le i<j\le N}$ be a symmetric family of real
numbers.  The following are equivalent.
\begin{enumerate}[label=\textup{(\roman*)}]
\item There is a unique vector $g=(g_1,\ldots,g_N)\in(0,\infty)^N$ such that
      $f_{ij}=g_i+g_j$ for all $i\ne j$.
\item For every four distinct indices $i,j,k,\ell$ one has
      \begin{equation}\label{eq:four-point}
              f_{ij}+f_{k\ell}=f_{ik}+f_{j\ell}=f_{i\ell}+f_{jk},
      \end{equation}
      and, for each $i$, the number
      \begin{equation}\label{eq:gi-recovery}
              g_i:=\frac{f_{ij}+f_{ik}-f_{jk}}2
      \end{equation}
      is positive for one, and hence for every, pair of distinct indices
      $j,k\ne i$.
\end{enumerate}
Under condition~\textup{(ii)}, the value in \eqref{eq:gi-recovery} is
independent of the auxiliary pair $(j,k)$.
\end{proposition}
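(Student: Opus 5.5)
The plan is to prove (i)$\Rightarrow$(ii) by direct substitution, and (ii)$\Rightarrow$(i) by defining $g_i$ through \eqref{eq:gi-recovery}, showing that this value does not depend on the auxiliary pair, and then reconstructing $f_{ij}$. Uniqueness will follow from the recovery formula itself.

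For (i)$\Rightarrow$(ii), insert $f_{ij}=g_i+g_j$ into \eqref{eq:four-point}. All three sums equal $g_i+g_j+g_k+g_\ell$. In the same way,
\[
\tfrac12(f_{ij}+f_{ik}-f_{jk})=\tfrac12\bigl(2g_i+g_j+g_k-g_j-g_k\bigr)=g_i>0
\]
for every pair $j,k$. The same computation gives uniqueness in (i): any representing vector must satisfy $g_i=\tfrac12(f_{ij}+f_{ik}-f_{jk})$, so it is determined by the $f$'s. Here $N\ge3$ guarantees that auxiliary indices exist.

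For (ii)$\Rightarrow$(i), the main step is independence of the auxiliary pair. Fix $i$ and let $\{j,k\}$ and $\{j',k'\}$ be two pairs of indices distinct from $i$. It suffices to handle pairs that differ in one index, say $\{j,k\}$ versus $\{j,\ell\}$, because any two pairs in $\{1,\dots,N\}\setminus\{i\}$ are connected by such one-index changes. For the pairs $\{j,k\}$ and $\{j,\ell\}$ the difference of the two candidate values is
\[
\tfrac12\bigl(f_{ik}-f_{jk}-f_{i\ell}+f_{j\ell}\bigr).
\]
The four indices $i,j,k,\ell$ are distinct, which is exactly where $N\ge4$ enters. Hence \eqref{eq:four-point} gives $f_{ik}+f_{j\ell}=f_{i\ell}+f_{jk}$, and the difference vanishes. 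This also justifies the phrase ``for one, and hence for every'': once the value is pair-independent, its positivity for one pair implies positivity for all pairs.

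Reconstruction is then short. Given $i\ne j$, pick any $k\notin\{i,j\}$ and use the same $k$ in both recovery formulas (allowed by independence):
\[
g_i+g_j=\tfrac12\bigl(f_{ij}+f_{ik}-f_{jk}\bigr)+\tfrac12\bigl(f_{ji}+f_{jk}-f_{ik}\bigr)=f_{ij}.
\]
This uses the symmetry $f_{ji}=f_{ij}$. Together with positivity, this yields (i), and uniqueness was already shown above.

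I expect no real obstacle. The only delicate point is the pair-independence step: one must use the correct one of the two equalities in \eqref{eq:four-point}, and one must observe that the one-index-change moves connect all pairs, which holds because there are at least three indices other than $i$.
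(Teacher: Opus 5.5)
Your proposal is correct and matches the paper's proof essentially step for step. It proves (i)$\Rightarrow$(ii) and uniqueness by substitution and the recovery formula, and proves pair-independence through one-index changes using $f_{ik}+f_{j\ell}=f_{i\ell}+f_{jk}$ together with connectivity of the two-element subsets of $\{1,\ldots,N\}\setminus\{i\}$. The reconstruction $g_i+g_j=f_{ij}$, obtained by recovering $g_i$ and $g_j$ with a common third index $k$, is likewise the paper's argument.
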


\begin{proof}
If $f_{ij}=g_i+g_j$, then \eqref{eq:four-point} is immediate and
\eqref{eq:gi-recovery} recovers $g_i$, which also proves uniqueness.

Conversely, assume~\textup{(ii)}.  Fix $i$.  If $j,k,\ell$ are distinct and
all differ from $i$, then the four-point identity gives
\[
 \frac{f_{ij}+f_{ik}-f_{jk}}2
 -\frac{f_{ij}+f_{i\ell}-f_{j\ell}}2
 =\frac{f_{ik}+f_{j\ell}-f_{i\ell}-f_{jk}}2=0.
\]
Thus replacing one member of the auxiliary pair does not change the recovered
value.  The graph whose vertices are the two-element subsets of
$\{1,\ldots,N\}\setminus\{i\}$ and whose edges join pairs sharing one index is
connected; hence \eqref{eq:gi-recovery} is independent of the pair $(j,k)$.
It is positive by hypothesis.

For distinct $i,j,k$, recover $g_i$ with the pair $(j,k)$ and $g_j$ with the
pair $(i,k)$.  Then
\[
 g_i+g_j
 =\frac{f_{ij}+f_{ik}-f_{jk}}2
  +\frac{f_{ij}+f_{jk}-f_{ik}}2
 =f_{ij}.
\]
Therefore the family is additive.
\end{proof}

\begin{remark}[Geometry of the cone for $N\ge4$]
For $N\ge4$, the additive class has only $N$ free parameters, whereas a
general symmetric Maxwell--Stefan family has $N(N-1)/2$ entries.  Thus the
additive coefficients form an $N$-dimensional subset of the
$N(N-1)/2$-dimensional space of symmetric coefficient families.
\end{remark}

\begin{remark}[The ternary case]\label{rem:ternary-exception}
For $N=3$, every symmetric friction triple has a unique real pair-sum
representation.  The representing values $g_i$ are positive if and only if the
strict triangle inequalities hold for $(f_{12},f_{13},f_{23})$, namely
\[
 f_{12}<f_{13}+f_{23},\qquad
 f_{13}<f_{12}+f_{23},\qquad
 f_{23}<f_{12}+f_{13}.
\]
\end{remark}

\begin{remark}[The binary case]
For $N=2$, every coefficient $f_{12}>0$ admits additive representations, but
the generator is not unique: any $g_1,g_2>0$ with $g_1+g_2=f_{12}$ is
admissible.  {This nonuniqueness has no effect on the physical binary equation.}
{The binary PDE is linear in the present normalization:} $J_2=-J_1$ and the first row of
\eqref{eq:MS-law} gives $\nabla y_1=-f_{12}J_1$, so that
\[
        \partial_t y_1-\frac1{f_{12}}\Delta y_1=0,
        \qquad \partial_\nu y_1=0,
        \qquad y_2=1-y_1.
\]
Standard Neumann heat-equation theory therefore gives global strong/classical
well-posedness for every admissible binary datum in the regularity classes used
below.  Thus the open arbitrary-large-data global strong well-posedness questions
discussed below concern systems with $N\ge3$.  For $N\ge3$ the additive
generator, when it exists, is unique.
\end{remark}

\subsection{Dissipation under the barycentric momentum constraint}
\label{subsec:species-generated-friction}

{We use the mass-based normalization in which the total density is one.  If
$u_i\in\R^d$ denotes the diffusion velocity of species $i$, then}
\begin{equation}\label{eq:J-Yu-physical}
        J_i=y_i u_i,
        \qquad
        \sum_{i=1}^N y_i u_i=0.
\end{equation}
The second relation is the barycentric constraint for diffusion velocities
measured relative to the common barycentric velocity.  {All identities in this subsection use the normalization
\eqref{eq:B-general} and the simplex weights $y_i$.  We first take
$y\in\cS^\circ$, so that the weighted form below is an inner product.}

Let
\begin{equation}\label{eq:Ey-physical}
        E_y^{(d)}:=\left\{u\in(\R^d)^N:\ \sum_{i=1}^Ny_i u_i=0\right\},
        \qquad E_y:=E_y^{(1)}.
\end{equation}
We equip $(\R^d)^N$ with the weighted inner product
\begin{equation}\label{eq:y-weighted-inner-product}
        \langle u,v\rangle_y:=\sum_{i=1}^Ny_i u_i\cdot v_i.
\end{equation}
{With $Y:=\diag(y_1,\ldots,y_N)$, the relation $J_i=y_i u_i$ reads
$J=Yu$ componentwise.  Since $y\in\cS^\circ$, multiplication by $Y$ is an isomorphism
$Y:E_y\to E$ in each spatial component.  Thus $E_y$ is the barycentric velocity constraint
space, whereas $E$ is the flux/tangent constraint space.  In one velocity component we also
define}
{
\begin{equation}\label{eq:Py-def-early}
 P_yv:=v-\one(y\cdot v).
\end{equation}}
{Then $P_y$ is the orthogonal projection onto $E_y$ with respect to
$\langle\cdot,\cdot\rangle_y$.}
{
In continuum thermodynamics, the primary dissipative quantity associated with
diffusion is its contribution to the entropy production.  In a class-II
multi-velocity formulation, each constituent satisfies a partial momentum
balance, and interspecies momentum transfer enters this entropy-production
term through a force--velocity pairing.  Under the isothermal specialization
used here and the diffusive approximation in which the diffusion velocities
are small compared with the speed of sound, the partial momentum balances
reduce to Maxwell--Stefan force balances.  The diffusion entropy production
therefore admits a mechanical representation as the power dissipated by
pairwise interspecies friction; this reduction is developed in
\cite{BotheDreyer2015}.  The thermodynamic force--flux formulation and its
relation to Maxwell--Stefan and Fick--Onsager closures are discussed in
\cite{BotheDruet2023}.

{In the present normalized ideal setting}, this mechanical
representation is the Rayleigh dissipation
\begin{equation}\label{eq:rayleigh-general}
        \cD_y^{\mathrm{fr}}(u)
        :=\frac12\sum_{\substack{i,j=1\\ i\ne j}}^N
        f_{ij}y_i y_j|u_i-u_j|^2,
\end{equation}
and the Maxwell--Stefan force balance gives, with $J_i=y_i u_i$,
\begin{equation}\label{eq:rayleigh-entropy-production}
 -\sum_{i=1}^N J_i\cdot\nabla\log y_i
 =\cD_y^{\mathrm{fr}}(u).
\end{equation}
Thus \eqref{eq:rayleigh-general} is the mechanical friction-power
representation of the diffusion entropy-production density in the present
normalization.  The corresponding entropy inequality for the isothermal
Maxwell--Stefan system, including its representation in terms of individual
diffusion velocities, is also given in \cite{Bothe2011}.
}

\begin{remark}[Rao quadratic entropy and additive diagonalization]
For each fixed diffusion-velocity configuration $u$, set
$d_{ij}^{u}:=\frac12 f_{ij}|u_i-u_j|^2$ for $i\ne j$ and $d_{ii}^{u}:=0$.
Then \eqref{eq:rayleigh-general} is exactly Rao's quadratic diversity (or
quadratic entropy) \cite{Rao1982} of the composition $y$ for the instantaneous
dissimilarity matrix $d^{u}$, namely
$\cD_y^{\mathrm{fr}}(u)=\sum_{i,j}y_i y_jd_{ij}^{u}$.  In the additive class,
\Cref{prop:diagonal-barycentric-dissipation} gives the equivalent barycentric
representation $\cD_y^{\mathrm{fr}}(u)=\sum_i y_i(g_i+s)|u_i|^2$; thus
additivity diagonalizes this Rao quadratic form into a species-weighted second
moment.  The qualification is dynamical rather than algebraic: $d^{u}$ depends
on the instantaneous rate variables, so $\cD_y^{\mathrm{fr}}$ is not a state
entropy for the Maxwell--Stefan evolution; its thermodynamic role remains the
production density of the logarithmic mixing entropy in
\eqref{eq:rayleigh-entropy-production}.
\end{remark}

Because the diagonal terms vanish, whenever an all-index sum is used below we
{adopt the convention $f_{ii}:=0$}; the additive law
$f_{ij}=g_i+g_j$ is imposed only for $i\ne j$.  This convention does not
change the dissipation.

In the continuum-thermodynamic derivation, symmetry and non-negativity of the
$f_{ij}$ follow from binary mechanical interactions, conservation of total
momentum and the entropy inequality; the dependence of $f_{ij}$ on the pair of
species remains constitutive \cite{BotheDreyer2015}.

The next proposition characterizes when the pair-friction quadratic form
$\cD_y^{\mathrm{fr}}$ becomes species-diagonal after restriction to the
barycentric velocity space $E_y^{(d)}$.

{
\begin{proposition}[Species-diagonal Rayleigh dissipation characterizes additivity]
\label{prop:diagonal-barycentric-dissipation}
Let $f_{ij}=f_{ji}$ for $i\ne j$.  The following statements are equivalent.
\begin{enumerate}[label=\textup{(\roman*)}]
\item There exists at least one $y\in\cS^\circ$ and real numbers
{$\kappa_i(y)$} such that
\begin{equation}\label{eq:diagonal-rayleigh-abstract}
        \cD_y^{\mathrm{fr}}(u)
        =\sum_{i=1}^N y_i {\kappa_i(y)}|u_i|^2
        \qquad\text{for every }u\in E_y^{(d)}.
\end{equation}
\item For every $y\in\cS^\circ$ the Rayleigh dissipation admits a
species-diagonal representation of the form
\eqref{eq:diagonal-rayleigh-abstract} on $E_y^{(d)}$.
\item There exist real species parameters $g_1,\ldots,g_N$ such that
\begin{equation}\label{eq:additivity-from-diagonal-dissipation}
        f_{ij}=g_i+g_j,\qquad i\ne j.
\end{equation}
\end{enumerate}
If these equivalent conditions hold and
$ s:=g\cdot y=\sum_i g_i y_i$, then one may take
\begin{equation}\label{eq:diagonal-rayleigh}
        {\kappa_i(y)}=g_i+s,\qquad
        \cD_y^{\mathrm{fr}}(u)
        =\sum_{i=1}^N y_i(g_i+s)|u_i|^2.
\end{equation}
For $N\ge3$ the generator $g$ is unique and is recovered from any three
distinct indices by
\begin{equation}\label{eq:gi-from-diagonal-dissipation}
        g_i=\frac{f_{ij}+f_{ik}-f_{jk}}{2}.
\end{equation}

Under \eqref{eq:additivity-from-diagonal-dissipation}, define
$G:=\diag(g_1,\ldots,g_N)$ and the linear friction operator
$\mathcal L_y$ by
\begin{equation}\label{eq:Ly-def}
        (\mathcal L_yu)_i
        :=\sum_{j=1}^N(g_i+g_j)y_j(u_i-u_j).
\end{equation}
Then, on $E_y^{(d)}$,
\begin{equation}\label{eq:Ly-diagonal-rankone}
        (\mathcal L_yu)_i=(g_i+s)u_i-\mathfrak m_y(u),
        \qquad
        \mathfrak m_y(u):=\sum_{j=1}^Ng_jy_j u_j,
\end{equation}
and
\begin{equation}\label{eq:D-L-inner}
        \cD_y^{\mathrm{fr}}(u)=\langle\mathcal L_yu,u\rangle_y.
\end{equation}
If $Y=\diag(y_1,\ldots,y_N)$, then the Maxwell--Stefan matrix from
\eqref{eq:B-general} satisfies
\begin{equation}\label{eq:BY-minus-YL}
        B(y)Y=-Y\mathcal L_y.
\end{equation}
\end{proposition}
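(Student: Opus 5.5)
The plan is to prove (iii)$\Rightarrow$(ii)$\Rightarrow$(i)$\Rightarrow$(iii), and then derive the operator identities by direct computation.

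\emph{Step (iii)$\Rightarrow$(ii).} Fix $y\in\cS^\circ$ and $u\in E_y^{(d)}$, and use the convention $f_{ii}=0$. Writing $f_{ij}=g_i+g_j$ for $i\ne j$ also holds at $i=j$ in the sum, because the factor $|u_i-u_i|^2$ vanishes. Expanding $|u_i-u_j|^2=|u_i|^2+|u_j|^2-2u_i\cdot u_j$ gives
\[
\tfrac12\sum_{i,j}(g_i+g_j)y_iy_j\bigl(|u_i|^2+|u_j|^2\bigr)=\sum_i y_i(g_i+s)|u_i|^2 ,
\]
using $\sum_j y_j=1$ and symmetry. The cross term is $-\sum_{i,j}(g_i+g_j)y_iy_j\,u_i\cdot u_j=-2\bigl(\sum_i g_iy_iu_i\bigr)\cdot\bigl(\sum_j y_ju_j\bigr)$, which vanishes on $E_y^{(d)}$. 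This yields \eqref{eq:diagonal-rayleigh}. The step (ii)$\Rightarrow$(i) is trivial.

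\emph{Step (i)$\Rightarrow$(iii).} This is the main step. Fix the given $y$ and work with one spatial component, since the form decouples across components. Both sides are quadratic forms on $E_y$, so their polarizations agree: for $u,v\in E_y$,
\[
\sum_{i\ne j}f_{ij}y_iy_j(u_i-u_j)(v_i-v_j)=2\sum_i y_i\kappa_iu_iv_i .
\]
I would test with $u=\delta_a/y_a-\delta_c/y_c$ and $v=\delta_b/y_b-\delta_c/y_c$, for distinct $a,b,c$; both lie in $E_y$. The diagonal right-hand side then reduces to $2\kappa_c/y_c$. The left-hand side is computed pair by pair; the pairs $\{a,b\}$, $\{a,c\}$, $\{b,c\}$ and the pairs involving some other index $\ell$ all contribute explicit terms. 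I expect this to produce the identity
\[
f_{ac}+f_{bc}-f_{ab}=F_c,
\]
where $F_c$ is a quantity depending only on $c$ (through $\kappa_c$, $y$, and $\sum_\ell f_{c\ell}y_\ell$), possibly after also using the tests $u=v$ to eliminate those terms. Bookkeeping this computation is the main obstacle. A cleaner route would use the orthogonality alone: diagonality on $E_y$ means the symmetric matrix $M$ of $\mathcal D$ minus $\diag(y_i\kappa_i)$ has zero form on $E_y$. Hence $M-\diag(y\kappa)=y\otimes w+w\otimes y$ for some $w$, because the annihilator of $E_y$ is spanned by $y$. Reading the off-diagonal entries gives $-f_{ij}y_iy_j=y_iw_j+w_iy_j$, that is, $f_{ij}=g_i+g_j$ with $g_i:=-w_i/y_i$. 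This is the route I would commit to, with $\mathcal D(u)=u^{\sf T}Mu$ for $M_{ij}=-f_{ij}y_iy_j$ ($i\ne j$) and $M_{ii}=y_i\sum_\ell f_{i\ell}y_\ell$.

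\emph{Uniqueness and operator identities.} For $N\ge3$, uniqueness of $g$ and formula \eqref{eq:gi-from-diagonal-dissipation} follow exactly as in \Cref{prop:additive-cone}: solve the three equations $f_{ij}=g_i+g_j$, $f_{ik}=g_i+g_k$, $f_{jk}=g_j+g_k$. For \eqref{eq:Ly-diagonal-rankone}, expand
\[
\sum_j(g_i+g_j)y_j(u_i-u_j)=g_iu_i+su_i-g_i\sum_j y_ju_j-\sum_j g_jy_ju_j ,
\]
and the third term vanishes on $E_y^{(d)}$. Identity \eqref{eq:D-L-inner} follows by symmetrizing $\sum_{i,j}f_{ij}y_iy_j(u_i-u_j)\cdot u_i$, or directly from \eqref{eq:Ly-diagonal-rankone} together with the previous step, since $\langle\mathfrak m_y(u)\one,u\rangle_y=0$. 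Finally, \eqref{eq:BY-minus-YL} is an entrywise check valid for all $u$:
\[
(B Yu)_i=\sum_{j\ne i}y_if_{ij}y_ju_j-\sum_{\ell\ne i}f_{i\ell}y_\ell\,y_iu_i=-y_i\sum_j f_{ij}y_j(u_i-u_j)=-(Y\mathcal L_yu)_i .
\]
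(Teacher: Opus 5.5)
Your committed route for (i)$\Rightarrow$(iii) is exactly the paper's argument. You write the symmetric defect $M-\diag(y_i\kappa_i)$, whose form vanishes on $E_y=y^\perp$, as $y\otimes w+w\otimes y$, and read off $f_{ij}=g_i+g_j$ with $g_i=-w_i/y_i$; your expansions for (iii)$\Rightarrow$(ii), \eqref{eq:Ly-diagonal-rankone}, \eqref{eq:D-L-inner} and \eqref{eq:BY-minus-YL} also match the paper's, and the exploratory test-vector computation is not needed and can be dropped.
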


\begin{proof}
It is enough to work with one velocity component.  For fixed
$y\in\cS^\circ$, write the pairwise form as
\[
        \cD_y^{\mathrm{fr}}(u)=u^{\sf T}L_f(y)u,
\]
where $L_f(y)=L_f(y)^{\sf T}$ has off-diagonal entries
\[
        (L_f(y))_{ij}=-f_{ij}y_i y_j,\qquad i\ne j,
\]
and diagonal entries chosen so that the displayed quadratic identity holds.
Assume (i) and set
\[
        {\Delta_y}:=\diag\bigl(y_1{\kappa_1(y)},\ldots,
        y_N{\kappa_N(y)}\bigr),
        \qquad K:=L_f(y)-{\Delta_y}.
\]
Then $u^{\sf T}Ku=0$ for every $u\in E_y=y^\perp$.  Polarization gives
$v^{\sf T}Kw=0$ for all $v,w\in E_y$.  Thus, with respect to the Euclidean
orthogonal decomposition
\[
        \R^N=E_y\oplus\operatorname{span}\{y\},
\]
the $E_y\times E_y$ block of $K$ vanishes.  Since $K$ is symmetric, there is
a vector $a\in\R^N$ such that
\begin{equation}\label{eq:K-constraint-rank-two}
        K=ya^{\sf T}+ay^{\sf T}.
\end{equation}
Equivalently,
\[
        u^{\sf T}Ku=2(y\cdot u)(a\cdot u),
        \qquad u\in\R^N.
\]
Thus the difference of the two quadratic forms factors through the normal
functional $u\mapsto y\cdot u$.  Comparing off-diagonal entries in
\eqref{eq:K-constraint-rank-two} yields
\[
        -f_{ij}y_i y_j=y_i a_j+a_i y_j,\qquad i\ne j.
\]
Since $y_i>0$, division by $y_i y_j$ and the definition
$g_i:=-a_i/y_i$ give $f_{ij}=g_i+g_j$.  Thus (i) implies (iii).
For $N\ge3$, \eqref{eq:gi-from-diagonal-dissipation} follows immediately
and proves uniqueness.

Assume conversely (iii).  By symmetry in $i$ and $j$, for
$u\in E_y^{(d)}$,
\[
\begin{aligned}
\cD_y^{\mathrm{fr}}(u)
&=\sum_{i,j=1}^N g_i y_i y_j|u_i-u_j|^2\\
&=\sum_{i=1}^Ng_i y_i
  \left(|u_i|^2-2u_i\cdot\sum_jy_j u_j+\sum_jy_j|u_j|^2\right).
\end{aligned}
\]
The barycentric constraint eliminates the middle term, and
$\sum_i g_i y_i=s$ gives \eqref{eq:diagonal-rayleigh}.  Hence (iii)
implies (ii), while (ii) trivially implies (i).

Finally, expanding \eqref{eq:Ly-def} and using $\sum_jy_j=1$ and
$\sum_jy_j u_j=0$ gives
\[
(\mathcal L_yu)_i
=(g_i+s)u_i-\sum_jg_jy_j u_j,
\]
which is \eqref{eq:Ly-diagonal-rankone}.  Taking the weighted inner product
with $u$ eliminates the common rank-one term and yields
\eqref{eq:D-L-inner}.  For arbitrary $u$,
\[
[B(y)Yu]_i
=y_i\sum_{j\ne i}f_{ij}y_j u_j-y_i u_i\sum_{j\ne i}f_{ij}y_j
=-y_i(\mathcal L_yu)_i,
\]
which proves \eqref{eq:BY-minus-YL}.
\end{proof}

{The representation obtained in the converse has $g\in\mathbb R^N$; the global
theory assumes in addition that $g_i>0$.  For $N=3$ this is equivalent to the strict triangle inequalities
in \Cref{rem:ternary-exception}; for $N\ge4$, additivity itself is restrictive and is characterized
by \Cref{prop:additive-cone}.  The algebraic identities
\eqref{eq:diagonal-rayleigh}--\eqref{eq:BY-minus-YL} remain valid for
$y\in\overline{\cS}$, where the weighted form is only nonnegative semidefinite.}
}

{The additive law also admits the following mean-field mesoscopic
realization.  In the ideal molar formulation, let $\nu_i>0$ be a
species-specific momentum-exchange activation rate.  Conditional on an
activation associated with species $i$, the partner species is sampled
according to the local mole fractions, and the mean momentum transfer is taken
proportional to the relative velocity.  The oriented $i\to j$ and $j\to i$
event intensities are then proportional to $x_i x_j\nu_i$ and
$x_i x_j\nu_j$, respectively.  Summation of the two contributions gives,
after a common normalization, $f_{ij}=g_i+g_j$ with $g_i=C\nu_i$, $C>0$.
{This gives an exact constitutive realization of the additive law within the
specified mesoscopic closure.  It is not a derivation valid for general
molecular collision models: reduced masses and pair-specific cross sections
generally yield Maxwell--Stefan coefficients depending on the species pair;
see, for example, \cite{BoudinGrecPavan2017}.}

In the molar-average frame, \eqref{eq:diagonal-rayleigh} with
$y$ replaced by $x$ shows that the effective constrained relaxation coefficient
of species $i$ is $g_i+s_x$, where $s_x:=\sum_jg_jx_j$.
{Here $s_x=\sum_jx_jg_j$ is the composition-weighted arithmetic mean of the
generator values, so $g_i+s_x$ is the sum of a species-specific contribution and an arithmetic
mixture average.}  Hence the closure yields the
diagonal-plus-rank-one constrained operator used in the subsequent spectral
analysis.

An independent approximation argument leads to the same additive form.  Suppose
$f_{ij}=F(\vartheta_i,\vartheta_j)$ with $F(a,b)=F(b,a)$, where $F$ is smooth and symmetric and the $\vartheta_i$ are scalar
constituent parameters close to a common reference value $\vartheta_*$.  With
$\delta:=\max_i|\vartheta_i-\vartheta_*|$, symmetry gives
\[
 F(\vartheta_i,\vartheta_j)
 =F(\vartheta_*,\vartheta_*)
 +\partial_1F(\vartheta_*,\vartheta_*)
  \bigl[(\vartheta_i-\vartheta_*)+(\vartheta_j-\vartheta_*)\bigr]
 +O(\delta^2).
\]
Hence, with
\[
 g_i:=\frac12F(\vartheta_*,\vartheta_*)
      +\partial_1F(\vartheta_*,\vartheta_*)
       (\vartheta_i-\vartheta_*),
\]
one has $f_{ij}=g_i+g_j+O(\delta^2)$; if
$F(\vartheta_*,\vartheta_*)>0$, then $g_i>0$ for sufficiently small
$\delta$.  Thus $g_i+g_j$ is the first-order Taylor approximation of a smooth
symmetric pair law near a common reference state.

Within the mesoscopic closure above, the coefficient is exactly additive.
Independently, the additive form is the first-order Taylor approximation of a
smooth symmetric pair law near a common reference state.  For $N\ge4$, in the normalization \eqref{eq:B-general}, it is a
restrictive constitutive assumption: a coefficient family is additive exactly
when the
four-point identities \eqref{eq:four-point} hold and the parameters recovered
from \eqref{eq:gi-recovery} are positive.
}

\subsection{Relation to the core-diagonal closure}
\label{subsec:core-diagonal-relation}

{
The diffusion contribution to entropy production provides a common
thermodynamic formulation for the additive Maxwell--Stefan class and the
core-diagonal closure of Bothe and Druet \cite{BotheDruet2023}.  In the
isothermal setting, diffusion velocities and thermodynamic driving forces are
conjugate variables in this entropy-production term.  On the constrained
spaces, a linear diffusion closure can equivalently be represented by a
resistance operator mapping diffusion velocities to driving forces or by {its inverse, the
mobility operator, mapping projected driving forces to diffusion velocities or fluxes}.  Species-diagonal representations, however, are not in
general preserved under the conservation projection and constrained
inversion.

For additive Maxwell--Stefan coefficients,
\Cref{prop:diagonal-barycentric-dissipation} gives the resistance-side identity
\[
 \cD_y^{\mathrm{fr}}(u)
 =\sum_i y_i(g_i+g\cdot y)|u_i|^2,
 \qquad y\cdot u=0.
\]
Thus, on the barycentric velocity space, the pair-friction quadratic form
coincides with the restriction of a species-diagonal quadratic form.  This is
the resistance-side diagonal representation associated with the additive
class.

In the core-diagonal closure of \cite[Section~8]{BotheDruet2023}, species-wise
diagonal mobility coefficients are prescribed before the projection required
by total-mass conservation.  The resulting physical flux operator is generally
non-diagonal.  Thus the additive Maxwell--Stefan class gives a species-diagonal
representation on the resistance side of the thermodynamic force--flux
pairing, whereas the core-diagonal closure prescribes species-diagonal
constitutive coefficients on the mobility side.

The two diagonal representations do not define the same coefficient subclass.
On the constrained spaces, resistance and mobility are inverse operators, but
projection and inversion do not preserve diagonality.  In the molar
normalization of \cite[Section~8]{BotheDruet2023}, a core-diagonal mobility with
coefficients $d_i>0$ yields
\begin{equation}\label{eq:core-diagonal-Darken-short}
 \mathcal D_{ij}^{\mathrm{MS}}
 =d_i d_j\sum_{k=1}^N\frac{x_k}{d_k},
 \qquad i\ne j.
\end{equation}
{Define the composition-weighted harmonic mixture diffusivity by
$D_{\mathrm{mix}}(x,d):=(\sum_{k=1}^N x_k/d_k)^{-1}$.  This is the mole-fraction-weighted harmonic mean of the species diffusivities,
and \eqref{eq:core-diagonal-Darken-short} takes the multicomponent Darken form}
{
\[
 \mathcal D_{ij}^{\mathrm{MS}}=\frac{d_i d_j}{D_{\mathrm{mix}}(x,d)},\qquad i\ne j.
\]}
{The common factor is therefore structurally part of the closure: it is the
harmonic-mixture normalization generated by the conservation projection and couples each pair to
the full composition.  To compare the two closures on the same resistance side, introduce the
corresponding Maxwell--Stefan friction coefficients}
{
\[
 f_{ij}^{\mathrm{CD}}
 :=\bigl(\mathcal D_{ij}^{\mathrm{MS}}\bigr)^{-1}
 =\frac{D_{\mathrm{mix}}(x,d)}{d_i d_j}
 =b_i b_j,
 \qquad
 b_i:=\frac{\sqrt{D_{\mathrm{mix}}(x,d)}}{d_i}.
\]}
{Thus, at a fixed thermodynamic state, the core-diagonal resistance pair array is
multiplicatively separated and satisfies, for four distinct indices,}
{
\[
 f_{ij}^{\mathrm{CD}}f_{k\ell}^{\mathrm{CD}}
 =f_{ik}^{\mathrm{CD}}f_{j\ell}^{\mathrm{CD}}
 =f_{i\ell}^{\mathrm{CD}}f_{jk}^{\mathrm{CD}}.
\]}
{The additive resistance coefficients are instead $f_{ij}=g_i+g_j$, and}
\[
 (g_i+g_j)(g_k+g_\ell)-(g_i+g_k)(g_j+g_\ell)
 =(g_i-g_\ell)(g_k-g_j).
\]
{Hence, for $N\ge4$, if $g_i\ne g_j$ whenever $i\ne j$, the additive family does
not belong to the core-diagonal pair class.}  {The arithmetic/harmonic analogy is structural rather
than an identification under inversion: the arithmetic mean $s_x=\sum_kx_kg_k$ enters the
specieswise constrained resistance coefficient $g_i+s_x$, whereas $D_{\mathrm{mix}}$ normalizes
the pair Maxwell--Stefan diffusivity in the core-diagonal closure.}

For the additive Maxwell--Stefan class one additionally has the
compressed-diagonal identity
\[
 \mathcal L_y|_{E_y}=P_yG|_{E_y}+sI,
 \qquad G=\diag(g_1,\ldots,g_N),
 \qquad s=g\cdot y.
\]
The next two sections exploit the same generator matrix $G$ from complementary
sides: first through the compressed-diagonal form and its resolvent, and then
through the associated spectral coordinates in \Cref{sec:root-coordinates}.
}

\section{Explicit inversion of the Maxwell--Stefan law}

The constrained diagonality obtained above has the following operator form.  Let
\begin{equation}\label{eq:s-def}
\begin{gathered}
        G:=\diag(g_1,\ldots,g_N),\qquad
        s:=g^{\sf T}y=\sum_{i=1}^N g_i y_i,\\
        D_s:=(G+sI)^{-1}
        =\diag\!\left(\frac1{g_1+s},\ldots,\frac1{g_N+s}\right),
\end{gathered}
\end{equation}
and let
\[
        \Pi_y:=I-y\one^{\sf T},\qquad E=\ker\one^{\sf T}.
\]
Thus $\Pi_y$ is the projection onto $E$ along $\operatorname{span}\{y\}$.
By the ordering fixed in Section~2, $g_1\le s\le g_N$ on the closed
simplex; in particular $s>0$.

\begin{lemma}[Compressed-diagonal form on the constraint space]
\label{lem:row-formula}
For every $v\in E$,
\begin{equation}\label{eq:B-compressed-E}
        -B(y)|_E=\Pi_yG|_E+sI_E.
\end{equation}
Equivalently,
\begin{equation}\label{eq:B-row-additive}
        B(y)v=-(G+sI)v+y\,(g^{\sf T}v),
\end{equation}
and hence
\[
        [B(y)v]_i=y_i(g\cdot v)-(g_i+s)v_i.
\]
Thus all non-diagonal coupling on $E$ is carried by the one-dimensional
correction required by the conservation constraint.
\end{lemma}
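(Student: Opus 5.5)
The plan is a direct entrywise computation from the definition \eqref{eq:B-general}, with $f_{ij}=g_i+g_j$ for $i\ne j$ and the convention $f_{ii}=0$. First I establish the row formula \eqref{eq:B-row-additive} for all $v\in E$. The compressed form \eqref{eq:B-compressed-E} then follows by rewriting that formula.

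\textbf{Row formula.} Fix $i$ and expand $[B(y)v]_i=\sum_{j\ne i}y_i(g_i+g_j)v_j-v_i\sum_{\ell\ne i}(g_i+g_\ell)y_\ell$. I handle the two sums separately, in each case completing the sum to all indices and subtracting the $j=i$ (resp.\ $\ell=i$) term.

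\emph{Off-diagonal sum.} We have $\sum_{j\ne i}(g_i+g_j)v_j=g_i(\one\cdot v-v_i)+(g\cdot v-g_iv_i)$. Using $\one\cdot v=0$, this equals $g\cdot v-2g_iv_i$.

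\emph{Diagonal sum.} We have $\sum_{\ell\ne i}(g_i+g_\ell)y_\ell=g_i(1-y_i)+(s-g_iy_i)$, using $\sum_\ell y_\ell=1$. This equals $g_i+s-2g_iy_i$.

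\emph{Combination.} Substituting both,
\[
[B(y)v]_i=y_i(g\cdot v)-2g_iy_iv_i-(g_i+s)v_i+2g_iy_iv_i=y_i(g\cdot v)-(g_i+s)v_i .
\]
This is \eqref{eq:B-row-additive} in components.

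\textbf{Compressed form.} Since $\one\cdot v=0$, we have $\Pi_yv=v$. Moreover
\[
\Pi_yGv=Gv-y(\one^{\sf T}Gv)=Gv-y(g^{\sf T}v),
\]
because $\one^{\sf T}G=g^{\sf T}$. Hence $-B(y)v=Gv+sv-y(g^{\sf T}v)=\Pi_yGv+sv$, which is \eqref{eq:B-compressed-E}.

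It remains to check that the right-hand side maps $E$ into $E$, so that \eqref{eq:B-compressed-E} is a genuine identity of operators on $E$. This holds because $\Pi_y$ has range $E$ and $sI_E$ preserves $E$.

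There is no real obstacle here. The only point requiring care is the cancellation of the $2g_iy_iv_i$ terms, which relies on using both constraints $\one\cdot v=0$ and $\one\cdot y=1$.

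As a cross-check, I would also derive the formula from \eqref{eq:BY-minus-YL} when $y\in\cS^\circ$: write $v=Yu$ with $u\in E_y$ and apply \eqref{eq:Ly-diagonal-rankone}, which gives $B(y)v=-Y\mathcal L_yu$. The direct computation above, however, has the advantage that it also covers boundary points $y\in\overline{\cS}$, where $Y$ is not invertible.
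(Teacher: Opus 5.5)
Your proposal is correct and follows essentially the same route as the paper: a direct entrywise expansion of the definition of $B(y)$ using $\one\cdot v=0$ and $\sum_j y_j=1$, followed by the observation that on $E$ the result equals $-\Pi_y(G+sI)v=-(\Pi_yG+sI)v$. The only difference is that you spell out the cancellation of the $2g_iy_iv_i$ terms, which the paper leaves implicit.
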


\begin{proof}
Using $\sum_jv_j=0$ and $\sum_jy_j=1$ in the definition of $B(y)$ gives
\[
 [B(y)v]_i
 =y_i\sum_{j\ne i}(g_i+g_j)v_j
  -v_i\sum_{j\ne i}(g_i+g_j)y_j
 =y_i(g\cdot v)-(g_i+s)v_i.
\]
Since $v\in E$, this is $B(y)v=-\Pi_y(G+sI)v$, proving
\eqref{eq:B-compressed-E}.
\end{proof}

Put
\begin{equation}\label{eq:C-compression-def}
        \mathcal C_y:=\Pi_yG|_E:E\to E.
\end{equation}
Here and below, the compression of an ambient operator to a constraint space means
its composition with the corresponding projection, restricted to that space.
The compression has an explicit real resolvent formula.  For
$\zeta\in\mathbb R\setminus\{g_1,\ldots,g_N\}$ set
\begin{equation}\label{eq:cauchy-transform}
        R_\zeta:=(G-\zeta I)^{-1},\qquad
        m_y(\zeta):=\one^{\sf T}R_\zeta y
        =\sum_{i=1}^N\frac{y_i}{g_i-\zeta}.
\end{equation}

\begin{lemma}[Compressed resolvent]\label{lem:compressed-resolvent}
If $m_y(\zeta)\ne0$, then for every $h\in E$,
\begin{equation}\label{eq:compressed-resolvent}
 (\mathcal C_y-\zeta I_E)^{-1}h
 =R_\zeta h
  -\frac{\one^{\sf T}R_\zeta h}{m_y(\zeta)}\,R_\zeta y.
\end{equation}
Moreover,
\begin{equation}\label{eq:compressed-secular-equivalence}
 m_y(\zeta)=0
 \quad\Longleftrightarrow\quad
 \zeta\in\sigma(\mathcal C_y),
 \qquad \zeta\notin\{g_1,\ldots,g_N\},
\end{equation}
and in that case $R_\zeta y\in E$ is a nonzero eigenvector.
\end{lemma}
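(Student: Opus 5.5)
The plan is to verify the resolvent formula directly. Then I read off the spectral equivalence from it, together with an explicit eigenvector computation. Throughout I use that $\mathcal C_y-\zeta I_E=\Pi_y(G-\zeta I)|_E$ on $E$. This holds because $\Pi_y h=h$ for $h\in E$, so $\Pi_y G h-\zeta h=\Pi_y(G-\zeta I)h$.

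\emph{Step 1 (resolvent).} Fix $h\in E$ and set
\[
w:=R_\zeta h-\frac{\one^{\sf T}R_\zeta h}{m_y(\zeta)}\,R_\zeta y .
\]
First, $w\in E$, since $\one^{\sf T}w=\one^{\sf T}R_\zeta h-\one^{\sf T}R_\zeta h\cdot m_y(\zeta)/m_y(\zeta)=0$ by the definition of $m_y$. Next,
\[
(G-\zeta I)w=h-\frac{\one^{\sf T}R_\zeta h}{m_y(\zeta)}\,y .
\]
Applying $\Pi_y$ annihilates the $y$ term and leaves $h$, because $\Pi_y y=0$ and $\Pi_y h=h$. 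Hence $(\mathcal C_y-\zeta I_E)w=h$. So $\mathcal C_y-\zeta I_E$ is surjective on the finite-dimensional space $E$, therefore invertible, and \eqref{eq:compressed-resolvent} gives its inverse. In particular, $m_y(\zeta)\neq0$ implies $\zeta\notin\sigma(\mathcal C_y)$.

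\emph{Step 2 (eigenvector).} Conversely, suppose $m_y(\zeta)=0$ with $\zeta\notin\{g_i\}$. Put $v:=R_\zeta y$. Then $\one^{\sf T}v=m_y(\zeta)=0$, so $v\in E$. Also $v\neq0$, because $(G-\zeta I)v=y\neq0$ (recall $\one^{\sf T}y=1$). Finally,
\[
(\mathcal C_y-\zeta I_E)v=\Pi_y(G-\zeta I)v=\Pi_y y=0 .
\]
Thus $\zeta\in\sigma(\mathcal C_y)$ with eigenvector $R_\zeta y$. Combined with Step 1, this gives \eqref{eq:compressed-secular-equivalence} for $\zeta$ off the set $\{g_1,\ldots,g_N\}$. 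Since $\zeta$ is real, $R_\zeta$ is well defined, and no complexification is needed.

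\emph{Expected obstacle.} There is essentially none; the only delicate point is bookkeeping. One must use that $\Pi_y$ restricted to $E$ is the identity and that $\Pi_y$ kills $y$. One must also check that the candidate inverse actually lands in $E$, since the formula is an inverse on $E$ and not on $\R^N$.
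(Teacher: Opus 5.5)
Your proof is correct and is essentially the paper's argument. Both rest on $\mathcal C_y-\zeta I_E=\Pi_y(G-\zeta I)|_E$, on the constraint $\mathbf 1^{\sf T}v=0$ fixing the rank-one coefficient, and on $R_\zeta y$ as the eigenvector; the only difference is that the paper derives the inverse by writing $(G-\zeta I)v=h+\alpha y$ and solving for $\alpha$, which gives uniqueness and the kernel as multiples of $R_\zeta y$ directly, whereas you verify the candidate, use finite-dimensionality for injectivity, and get $\zeta\in\sigma(\mathcal C_y)\Rightarrow m_y(\zeta)=0$ as the contrapositive of Step~1.
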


\begin{proof}
For $v\in E$ the equation
$(\mathcal C_y-\zeta I_E)v=h$ is equivalent to
\[
 (G-\zeta I)v=h+\alpha y,
 \qquad
 \alpha:=\one^{\sf T}Gv
          =\one^{\sf T}(G-\zeta I)v.
\]
Hence $v=R_\zeta h+\alpha R_\zeta y$.  The single constraint
$\one^{\sf T}v=0$ gives
\[
 \alpha=-\frac{\one^{\sf T}R_\zeta h}{m_y(\zeta)},
\]
which proves \eqref{eq:compressed-resolvent}.  If $m_y(\zeta)=0$, then
$R_\zeta y\in E$ and
$(\mathcal C_y-\zeta I_E)R_\zeta y=0$.  Conversely, a nonzero vector in the
kernel must have the form $\alpha R_\zeta y$ with $\alpha\ne0$, so its
membership in $E$ forces $m_y(\zeta)=0$.
\end{proof}

The Maxwell--Stefan inverse is now the resolvent at the single point
$\zeta=-s$.  Define
\begin{equation}\label{eq:R0-def}
        R_0(y):=m_y(-s)=\one^{\sf T}D_sy
        =\sum_{i=1}^N\frac{y_i}{g_i+s}>0.
\end{equation}

\begin{proposition}[Explicit inverse on the constraint space $E$]
\label{prop:flux-inverse}
Let $y\in\overline{\cS}$ and $h\in E$.  The equation
\begin{equation}\label{eq:Bv-h}
        B(y)v=h,\qquad v\in E,
\end{equation}
has a unique solution.  As an operator $E\to E$,
\begin{equation}\label{eq:inverse-B-rank-one}
 \bigl(B(y)|_E\bigr)^{-1}
 =\left[-D_s+\frac1{R_0(y)}(D_sy)\bigl(\one^{\sf T}D_s\bigr)\right]\Big|_E.
\end{equation}
Equivalently,
\begin{equation}\label{eq:inverse-B}
        v_i
        =-\frac{h_i}{g_i+s}
        +\frac{y_i}{R_0(y)(g_i+s)}
          \sum_{r=1}^N\frac{h_r}{g_r+s},
        \qquad i=1,\ldots,N.
\end{equation}
Thus one first applies the ambient diagonal inverse and then makes a
rank-one correction to restore the constraint $\one^{\sf T}v=0$.
\end{proposition}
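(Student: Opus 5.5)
The plan is to reduce \eqref{eq:Bv-h} to a resolvent problem for the compression $\mathcal C_y$ and then apply \Cref{lem:compressed-resolvent} at the single point $\zeta=-s$.

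\textbf{Reduction.} By \Cref{lem:row-formula}, for $v\in E$ the equation $B(y)v=h$ is equivalent to
\[
(\mathcal C_y+sI_E)v=-h,\qquad\text{that is,}\qquad (\mathcal C_y-\zeta I_E)v=-h\quad\text{with }\zeta=-s.
\]

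\textbf{Checking the hypotheses of \Cref{lem:compressed-resolvent}.} First, $\zeta=-s$ is not one of the $g_i$: we have $s\ge g_1>0$, so $-s<0<g_i$ for every $i$. Hence $R_{-s}=(G+sI)^{-1}=D_s$ is well defined. Second, $m_y(-s)=R_0(y)$, and
\[
R_0(y)=\sum_i \frac{y_i}{g_i+s}>0,
\]
since every term is nonnegative and $\sum_i y_i=1$ forces at least one $y_i>0$. This positivity holds on all of $\overline{\cS}$, which is why the statement allows boundary points.

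\textbf{Existence and uniqueness.} By \eqref{eq:compressed-secular-equivalence}, $-s\notin\sigma(\mathcal C_y)$. So $\mathcal C_y+sI_E$ is invertible on the finite-dimensional space $E$, and the solution exists and is unique. Equivalently, the first part of the proof of \Cref{lem:compressed-resolvent} shows directly that any solution must equal the explicit formula below, and that this formula does lie in $E$.

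\textbf{The explicit inverse.} Inserting $\zeta=-s$ and right-hand side $-h$ into \eqref{eq:compressed-resolvent} gives
\[
v=-D_sh+\frac{\one^{\sf T}D_sh}{R_0(y)}\,D_sy,
\]
which is exactly \eqref{eq:inverse-B-rank-one}. Writing this out componentwise, with $(D_sh)_i=h_i/(g_i+s)$ and $(D_sy)_i=y_i/(g_i+s)$, yields \eqref{eq:inverse-B}.

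\textbf{Direct check.} As a sanity check independent of the lemma, I would verify the formula directly:
\[
\one^{\sf T}v=-\one^{\sf T}D_sh+\frac{\one^{\sf T}D_sh}{R_0(y)}\,R_0(y)=0,
\]
so $v\in E$. Moreover, by \eqref{eq:B-row-additive},
\[
B(y)v=-(G+sI)v+y\,(g\cdot v)=h-\frac{\one^{\sf T}D_sh}{R_0(y)}\,y+y\,(g\cdot v).
\]
The two $y$-terms must cancel. Since $h\in E$ and $B(y)v\in E$ (because $\one^{\sf T}B=0$), applying $\one^{\sf T}$ and using $\one^{\sf T}y=1$ shows that the total $y$-coefficient vanishes. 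Hence $B(y)v=h$.

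\textbf{Main obstacle.} The only genuinely delicate point is the nonvanishing of $R_0(y)$ at boundary points of $\overline{\cS}$, where $y$ may have zero components. As noted above, this is handled by $s>0$ together with $\sum_i y_i=1$.
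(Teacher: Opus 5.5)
Your proposal is correct and follows essentially the same route as the paper: rewrite $B(y)|_E=-(\mathcal C_y+sI_E)$ via \Cref{lem:row-formula} and apply the compressed resolvent formula of \Cref{lem:compressed-resolvent} at $\zeta=-s$, using $R_{-s}=D_s$ and $m_y(-s)=R_0(y)>0$. Your explicit check that these hypotheses hold on all of $\overline{\cS}$, and your direct verification that the formula lies in $E$ and solves $B(y)v=h$, are sound elaborations of the paper's brief argument.
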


\begin{proof}
By \eqref{eq:B-compressed-E},
$B(y)|_E=-(\mathcal C_y+sI_E)$.  Since
$R_{-s}=D_s$ and $m_y(-s)=R_0(y)>0$, formula
\eqref{eq:compressed-resolvent} at $\zeta=-s$ gives
\eqref{eq:inverse-B-rank-one}; \eqref{eq:inverse-B} is its component form.
Uniqueness follows from the same resolvent formula.
\end{proof}

\begin{corollary}[Flux formula and regularity up to the closed simplex]
\label{cor:inverse-rank-one}
For each spatial direction $k$, the additive Maxwell--Stefan law
$\partial_{x_k}y=B(y)J^k$ gives
\begin{equation}\label{eq:J-formula}
        J_i^k
        =-\frac{\partial_{x_k}y_i}{g_i+s}
        +\frac{y_i}{R_0(y)(g_i+s)}
          \sum_{r=1}^N\frac{\partial_{x_k}y_r}{g_r+s}.
\end{equation}
Moreover, $B(y)|_E$ is invertible for every $y\in\overline{\cS}$, and
$y\mapsto(B(y)|_E)^{-1}$ is real-analytic on a neighbourhood of
$\overline{\cS}$ in the affine hyperplane $\one^{\sf T}y=1$.
\end{corollary}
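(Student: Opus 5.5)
The corollary has two parts: the flux formula and the invertibility/analyticity of $(B(y)|_E)^{-1}$. Both follow from \Cref{prop:flux-inverse}, which already holds on the closed simplex.

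\textbf{Flux formula.} Fix a spatial direction $k$. By \eqref{eq:flux-constraint} the flux column satisfies $J^k\in E$, and since $\sum_i y_i=1$ we also have $h:=\partial_{x_k}y\in E$. The Maxwell--Stefan law \eqref{eq:MS-law} then says that $v=J^k$ solves $B(y)v=h$ with $v\in E$. By the uniqueness part of \Cref{prop:flux-inverse}, $J^k$ is given by \eqref{eq:inverse-B} with $h=\partial_{x_k}y$, and this is exactly \eqref{eq:J-formula}. Invertibility of $B(y)|_E$ for $y\in\overline{\cS}$ is likewise part of \Cref{prop:flux-inverse}.

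\textbf{Analyticity.} We work on the affine hyperplane $\one^{\sf T}y=1$ and need a neighbourhood of $\overline{\cS}$ on which the right-hand side of \eqref{eq:inverse-B-rank-one} is well defined and analytic.

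\begin{itemize}
\item On $\overline{\cS}$ we have $g_1\le s\le g_N$, so $g_i+s\ge g_1+g_1>0$ for every $i$.
\item Also $R_0(y)=\sum_i y_i/(g_i+s)\ge 1/(g_N+s)\ge 1/(2g_N)>0$ on $\overline{\cS}$.
\item The functions $s=g^{\sf T}y$ and $R_0$ are continuous, even real-analytic (rational), in $y$. Since $\overline{\cS}$ is compact, there is an open neighbourhood $U$ of $\overline{\cS}$ in the hyperplane on which $g_i+s>g_1/2$ for all $i$ and $R_0>1/(4g_N)$.
\end{itemize}

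On $U$ the entries of $D_s$ are reciprocals of nonvanishing affine functions of $y$, hence analytic. The rank-one term $(D_sy)(\one^{\sf T}D_s)/R_0$ is a rational function with nonvanishing denominators, hence analytic. So the matrix-valued map on the right of \eqref{eq:inverse-B-rank-one} is real-analytic on $U$.

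It remains to check that this formula gives the inverse of $B(y)|_E$ on all of $U$, not only on $\overline{\cS}$. \Cref{lem:row-formula} and \Cref{lem:compressed-resolvent} are pure algebra and use only $\one^{\sf T}y=1$, not $y\ge0$: the proof of \Cref{lem:row-formula} uses $\sum_j y_j=1$ and $v\in E$, and \Cref{lem:compressed-resolvent} needs only $m_y(\zeta)\ne0$. At $\zeta=-s$ the matrix $G+sI$ is invertible and $m_y(-s)=R_0(y)\ne0$ on $U$, so \eqref{eq:compressed-resolvent} yields \eqref{eq:inverse-B-rank-one} throughout $U$. The restriction to $E$ is harmless: $E$ is a fixed subspace, so composing with a fixed inclusion and projection preserves analyticity.

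The only delicate point is this extension to the neighbourhood, since $R_0>0$ was justified on $\overline{\cS}$ using $y_i\ge0$. Continuity and compactness resolve it.
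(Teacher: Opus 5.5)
Your proof is correct and follows the paper's route: the flux formula is \eqref{eq:inverse-B} applied with $h=\partial_{x_k}y$, and analyticity follows because the bounds $g_i+s\ge 2g_1$ and $R_0\ge 1/(2g_N)$ make \eqref{eq:inverse-B-rank-one} a rational function with denominators bounded away from zero. Your explicit check that \Cref{lem:row-formula} and \Cref{lem:compressed-resolvent} use only $\one^{\sf T}y=1$ fills in a step the paper leaves implicit, namely that the formula still represents the inverse on the neighbourhood off the simplex.
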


\begin{proof}
Both $\partial_{x_k}y$ and $J^k$ belong to $E$, so
\eqref{eq:J-formula} follows from \eqref{eq:inverse-B}.  On the closed
simplex,
\[
        g_i+s\ge2g_1,
        \qquad
        R_0(y)=\one^{\sf T}D_sy\ge\frac1{2g_N}.
\]
Hence all denominators in \eqref{eq:inverse-B-rank-one} are uniformly
separated from zero.  The formula is rational in $y$ and therefore gives the
asserted analytic extension.
\end{proof}

The same scalar function $m_y$ now controls both inversion and spectrum:
$\bigl(B(y)|_E\bigr)^{-1}$ is obtained from the compressed resolvent at $\zeta=-s$, while
the zeros of $m_y$ are precisely the spectral parameters of the compression.
This is the common origin of the rank-one inverse and the root coordinates.

\section{Root coordinates}
\label{sec:root-coordinates}

From this section onward, except where repeated generators are discussed
explicitly, we assume after relabeling that
\begin{equation}\label{eq:g-ordering}
        0<g_1<g_2<\cdots<g_N.
\end{equation}
Repeated values lead to multiple eigenvalues and do not provide the full root
coordinate chart used in the remainder of the paper; see
\Cref{rem:repeated-generators} below.

Away from the fixed poles $g_i$, the compressed-resolvent formula
\eqref{eq:compressed-resolvent} shows that the compression is singular precisely
at the zeros of the scalar function $m_y$.  We therefore use these zeros as
state variables.

\subsection{Definition and interlacing}

For $y\in\cS^\circ$, recall from \eqref{eq:cauchy-transform} that
\[
        m_y(z)=\one^{\sf T}(G-zI)^{-1}y
        =\sum_{i=1}^N\frac{y_i}{g_i-z}.
\]

\begin{proposition}[Interlacing roots]\label{prop:interlacing}
For every $y\in\cS^\circ$, the equation
\begin{equation}\label{eq:root-equation}
        m_y(z)=0
\end{equation}
 has exactly one solution $z_j=z_j(y)$ in each interval $(g_j,g_{j+1})$, $j=1,\ldots,N-1$.  Hence
\begin{equation}\label{eq:interlacing}
        g_1<z_1<g_2<z_2<\cdots<g_{N-1}<z_{N-1}<g_N.
\end{equation}
\end{proposition}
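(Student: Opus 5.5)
The plan is the classical secular-equation argument: intermediate value theorem on each gap, monotonicity for uniqueness in the gap, and a degree count to rule out roots elsewhere.

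\textbf{Step 1 (setting).} Fix $y\in\cS^\circ$, so that every $y_i>0$. On $\R\setminus\{g_1,\ldots,g_N\}$ the function $m_y(z)=\sum_i y_i/(g_i-z)$ is real-analytic. Its derivative is
\[
 m_y'(z)=\sum_{i=1}^N\frac{y_i}{(g_i-z)^2}>0 .
\]
Hence $m_y$ is strictly increasing on each connected component of its domain. This already gives at most one zero in each gap $(g_j,g_{j+1})$.

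\textbf{Step 2 (existence in each gap).} Fix $j\in\{1,\ldots,N-1\}$ and use the strict ordering \eqref{eq:g-ordering}.
\begin{itemize}
\item As $z\downarrow g_j$, the term $y_j/(g_j-z)$ tends to $-\infty$. All other terms stay bounded, so $m_y(z)\to-\infty$.
\item As $z\uparrow g_{j+1}$, the term $y_{j+1}/(g_{j+1}-z)$ tends to $+\infty$, so $m_y(z)\to+\infty$.
\end{itemize}
By continuity and the intermediate value theorem there is a zero $z_j\in(g_j,g_{j+1})$. By Step 1 it is unique in that gap.

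\textbf{Step 3 (no zeros elsewhere).} On $(-\infty,g_1)$ every term $y_i/(g_i-z)$ is positive, so $m_y>0$ there. On $(g_N,\infty)$ every term is negative, so $m_y<0$ there. Thus the $N-1$ gap zeros are all zeros of $m_y$ on $\R\setminus\{g_i\}$.

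Two remarks on the statement itself:
\begin{itemize}
\item The points $g_i$ are poles, not zeros, because $y_i>0$.
\item Alternatively, clearing denominators gives $m_y(z)\prod_i(g_i-z)=\sum_i y_i\prod_{m\ne i}(g_m-z)$. This is a polynomial of degree $N-1$, since its leading coefficient is $(-1)^{N-1}\sum_i y_i=(-1)^{N-1}\ne0$. So there are at most $N-1$ complex zeros, which confirms that the list is complete and that all roots are real and simple.
\end{itemize}

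The interlacing chain \eqref{eq:interlacing} follows immediately from $z_j\in(g_j,g_{j+1})$.

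There is no real obstacle here. The only point needing care is the sign bookkeeping near the poles, which uses both $y_j>0$ and $y_{j+1}>0$. Both positivity conditions and strict ordering are essential: on the simplex boundary, or for repeated $g_i$, roots can merge with poles.
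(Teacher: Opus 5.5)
Your proof is correct and follows the paper's argument: $m_y'>0$ makes $m_y$ strictly increasing on each gap, and its limits, $-\infty$ as $z\downarrow g_j$ and $+\infty$ as $z\uparrow g_{j+1}$, give exactly one zero per gap. Your Step~3 and the degree count go slightly beyond what the proposition asserts, but they are exactly the completeness facts the paper uses immediately afterwards when it writes $Q_y(z)=\prod_{j=1}^{N-1}(z-z_j)$.
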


\begin{proof}
On each interval $(g_j,g_{j+1})$, the function $m_y$ is smooth and strictly increasing, since
\[
        m_y'(z)=\sum_{i=1}^N\frac{y_i}{(g_i-z)^2}>0.
\]
Moreover,
\[
        \lim_{z\downarrow g_j}m_y(z)=-\infty,
        \qquad
        \lim_{z\uparrow g_{j+1}}m_y(z)=+\infty.
\]
Therefore there is exactly one zero in each interval.
\end{proof}

\begin{remark}[Repeated generators]\label{rem:repeated-generators}
The identities in \Cref{prop:diagonal-barycentric-dissipation,prop:flux-inverse}
do not require the $g_i$ to be distinct.  Suppose the distinct values are
$\gamma_1<\cdots<\gamma_M$, let
$I_a=\{i:g_i=\gamma_a\}$, and put $Y_a=\sum_{i\in I_a}y_i$.  The space $E_y$
then decomposes orthogonally into the within-group spaces
\[
 W_a=\left\{v:\operatorname{supp}v\subset I_a,
             \ \sum_{i\in I_a}y_iv_i=0\right\}
\]
and the space of vectors constant on each group.  On $W_a$, the compressed
operator $P_yG|_{E_y}$ equals $\gamma_a I$; on the group-constant space its
remaining $M-1$ eigenvalues are the zeros of
\[
        \sum_{a=1}^M\frac{Y_a}{\gamma_a-z}=0.
\]
{The roots of this reduced rational equation}, together with the moments $\sum_i g_i^\ell y_i$, determine only the
group masses $Y_a$.  If $M<N$, additional variables describing the
composition within each group are therefore necessary.  The remainder of the
paper assumes $M=N$.
\end{remark}

\subsection{The roots as eigenvalues of the compressed operator}
\label{subsec:roots-relaxation-spectrum}

Recall from \eqref{eq:Py-def-early} that
$P_yv=v-\one(y\cdot v)$ is the weighted orthogonal projection onto $E_y$.
Define
\begin{equation}\label{eq:Ay-def}
        \mathcal A_y:=P_yG\big|_{E_y}:E_y\to E_y.
\end{equation}
It is self-adjoint in $\langle\cdot,\cdot\rangle_y$, and
\begin{equation}\label{eq:Ly-Ay-shift}
        \mathcal L_y\big|_{E_y}=\mathcal A_y+sI_{E_y}.
\end{equation}
Moreover, multiplication by $Y=\diag(y_1,\ldots,y_N)$ intertwines the
flux-space and weighted-orthogonal compressions:
\begin{equation}\label{eq:C-A-similarity}
        \mathcal C_yY=Y\mathcal A_y,
        \qquad Y:E_y\to E.
\end{equation}
Thus the flux-space compression $\mathcal C_y$ and the weighted-orthogonal
compression $\mathcal A_y$ are two realizations of the same operator.

\begin{proposition}[Secular equation and eigenvectors]
\label{prop:secular-relaxation-modes}
For $z\notin\{g_1,\ldots,g_N\}$ set
\begin{equation}\label{eq:qz-def}
        q(z):=Y^{-1}(G-zI)^{-1}y
        =\left(\frac1{g_1-z},\ldots,
                    \frac1{g_N-z}\right)^{\sf T}.
\end{equation}
Then
\begin{equation}\label{eq:qz-Ey-iff}
        q(z)\in E_y
        \quad\Longleftrightarrow\quad
        m_y(z)=0.
\end{equation}
For every root $z_j$ one has
\begin{equation}\label{eq:Ay-qj}
        \mathcal A_yq(z_j)=z_jq(z_j),
        \qquad
        \mathcal L_yq(z_j)=\lambda_jq(z_j),
        \qquad
        \lambda_j=s+z_j.
\end{equation}
Consequently,
\begin{equation}\label{eq:spectrum-A-L}
        \sigma(\mathcal A_y)=\{z_1,\ldots,z_{N-1}\},
        \qquad
        \sigma(\mathcal L_y|_{E_y})
        =\{\lambda_1,\ldots,\lambda_{N-1}\}.
\end{equation}
Moreover,
\begin{equation}\label{eq:compression-characteristic-roots}
        \det_{E_y}(zI_{E_y}-\mathcal A_y)
        =\det_E(zI_E-\mathcal C_y)
        =\prod_{j=1}^{N-1}(z-z_j).
\end{equation}
\end{proposition}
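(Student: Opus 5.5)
The plan is a direct computation from the definitions, followed by a counting argument for the spectrum.

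\emph{Step 1: the equivalence \eqref{eq:qz-Ey-iff}.}
Since $Yq(z)=(G-zI)^{-1}y$, we have
\[
 y\cdot q(z)=\sum_i \frac{y_i}{g_i-z}=m_y(z).
\]
Membership in $E_y$ means exactly $y\cdot q(z)=0$, so \eqref{eq:qz-Ey-iff} is immediate.

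\emph{Step 2: the eigenvalue relation \eqref{eq:Ay-qj}.}
Fix a root $z_j$ and write $q=q(z_j)$, so $q_i=1/(g_i-z_j)$. Then
\[
 (Gq)_i=\frac{g_i}{g_i-z_j}=1+z_jq_i .
\]
Hence $Gq=\one+z_jq$. Applying $P_y$ kills $\one$, because $P_y\one=\one-\one(y\cdot\one)=0$. Since $q\in E_y$ by Step 1, $P_yq=q$, and therefore $\mathcal A_yq=z_jq$. The relation $\mathcal L_yq=(s+z_j)q$ then follows from \eqref{eq:Ly-Ay-shift}. Alternatively, one can obtain the same identity from \cref{lem:compressed-resolvent} via the intertwining \eqref{eq:C-A-similarity}, since $Yq=R_{z_j}y$ is the eigenvector found there.

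\emph{Step 3: the full spectrum \eqref{eq:spectrum-A-L}.}
By \cref{prop:interlacing}, the $z_j$ are $N-1$ distinct real numbers. Eigenvectors belonging to distinct eigenvalues are linearly independent, and $q(z_j)\ne0$. So we have $N-1$ independent eigenvectors in the $(N-1)$-dimensional space $E_y$. Consequently $\mathcal A_y$ is diagonalizable with spectrum exactly $\{z_1,\ldots,z_{N-1}\}$, each eigenvalue simple. Shifting by $s$ gives the spectrum of $\mathcal L_y|_{E_y}$.

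This counting step is the only point needing care. The alternative would be to exclude eigenvalues equal to some $g_i$, which \cref{lem:compressed-resolvent} does not cover, and the dimension count avoids that entirely. For completeness, one could check it directly: if $\mathcal A_yv=g_iv$, then $(G-g_i)v=-\one(y\cdot Gv)$. Row $i$ forces $y\cdot Gv=0$, hence $(g_k-g_i)v_k=0$ for all $k$. With distinct $g_k$ this gives $v=ce_i$, and $y\cdot v=cy_i=0$ forces $c=0$. The dimension argument makes this unnecessary.

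\emph{Step 4: the characteristic polynomial \eqref{eq:compression-characteristic-roots}.}
Since $\mathcal A_y$ is diagonalizable with simple eigenvalues $z_j$, its characteristic polynomial on $E_y$ is $\prod_j(z-z_j)$. The similarity \eqref{eq:C-A-similarity}, with $Y:E_y\to E$ an isomorphism for $y\in\cS^\circ$, gives $\mathcal C_y=Y\mathcal A_yY^{-1}$. Similar operators have equal determinants of $zI-\cdot$, so $\det_E(zI_E-\mathcal C_y)=\det_{E_y}(zI_{E_y}-\mathcal A_y)$.

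The paper states \eqref{eq:C-A-similarity} but does not prove it, so I would verify it briefly. For $u\in E_y$,
\[
 \mathcal C_yYu=\Pi_yGYu=GYu-y(\one\cdot GYu)=YGu-y(y\cdot Gu),
\]
while
\[
 Y\mathcal A_yu=Y\bigl(Gu-\one(y\cdot Gu)\bigr)=YGu-y(y\cdot Gu).
\]
The two expressions agree.
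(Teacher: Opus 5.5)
Your proof is correct and follows the paper's argument. The identity $y\cdot q(z)=m_y(z)$, the count of $N-1$ nonzero eigenvectors for the distinct interlacing roots in the $(N-1)$-dimensional space $E_y$, and similarity for the characteristic polynomial are exactly the paper's steps; the only variation is that you check $\mathcal A_yq(z_j)=z_jq(z_j)$ directly from $Gq(z_j)=\one+z_jq(z_j)$ and $P_y\one=0$, instead of transporting the eigenvector $R_{z_j}y$ of \Cref{lem:compressed-resolvent} through \eqref{eq:C-A-similarity}.

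Two small corrections. In your optional aside the relation should read $(G-g_iI)v=\one\,(y\cdot Gv)$, not with a minus sign; this does not affect the conclusion. Also, the paper does justify the intertwining you verify, in the introduction, via $\Pi_yY=YP_y$ and $YG=GY$.
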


\begin{proof}
Since $Yq(z)=(G-zI)^{-1}y$, the equivalence
\eqref{eq:qz-Ey-iff} is exactly the identity
$y\cdot q(z)=m_y(z)$.  If $m_y(z_j)=0$, then
\Cref{lem:compressed-resolvent} gives
\[
 \mathcal C_y\bigl((G-z_jI)^{-1}y\bigr)
 =z_j(G-z_jI)^{-1}y.
\]
The intertwining identity \eqref{eq:C-A-similarity} yields
$\mathcal A_yq(z_j)=z_jq(z_j)$, and
\eqref{eq:Ly-Ay-shift} gives the relaxation eigenvalue $\lambda_j=s+z_j$.
The $N-1$ interlacing roots are distinct and therefore exhaust the spectrum.
The two operators are similar by \eqref{eq:C-A-similarity}; their
characteristic polynomials are monic of degree $N-1$ and have precisely these
roots, which proves \eqref{eq:compression-characteristic-roots}.
\end{proof}

\subsection{Polynomial representation and inverse map}

Set
\begin{equation}\label{eq:P-polynomial}
        P(z):=\det(zI-G)=\prod_{i=1}^N(z-g_i)
\end{equation}
and
\begin{equation}\label{eq:Qy-def}
        Q_y(z):=\sum_{i=1}^N y_i\prod_{m\ne i}(z-g_m).
\end{equation}
Then
\begin{equation}\label{eq:m-Q-over-P}
        m_y(z)=-\frac{Q_y(z)}{P(z)}.
\end{equation}
Since $\sum_i y_i=1$, $Q_y$ is monic of degree $N-1$; by interlacing and
\eqref{eq:m-Q-over-P},
\begin{equation}\label{eq:Q-roots}
        Q_y(z)=\prod_{j=1}^{N-1}(z-z_j).
\end{equation}
Combining this with \eqref{eq:compression-characteristic-roots} gives
\begin{equation}\label{eq:Q-characteristic}
        Q_y(z)=\det_E(zI_E-\mathcal C_y)
              =\det_{E_y}(zI_{E_y}-\mathcal A_y).
\end{equation}
Thus $Q_y$ is exactly the characteristic polynomial of the compression.

The sum of the roots follows directly from the trace.
Indeed, since $\Pi_yG$ has range in $E$,
\[
 \sum_{j=1}^{N-1}z_j
 =\operatorname{tr}_E\mathcal C_y
 =\operatorname{tr}(\Pi_yG)
 =\operatorname{tr}G-\operatorname{tr}(y\one^{\sf T}G)
 =e_1-s,
\]
where $e_1:=\sum_i g_i$.  Thus
\begin{equation}\label{eq:general-root-trace-identity}
        \sum_{j=1}^{N-1} z_j=e_1-s.
\end{equation}
Consequently,
\begin{equation}\label{eq:general-root-gradient-trace}
        \lambda_j=e_1-\sum_{\substack{k=1\\k\ne j}}^{N-1}z_k,
        \qquad
        \nabla s=-\sum_{k=1}^{N-1}\nabla z_k,
        \qquad
        \nabla\lambda_j
        =-\sum_{\substack{k=1\\k\ne j}}^{N-1}\nabla z_k
\end{equation}
along every smooth composition field.  In particular, in root coordinates
$\lambda_j$ and $\lambda_j^{-1}$ are independent of the selected coordinate
$z_j$.

\begin{theorem}[Global root coordinates]\label{thm:root-diffeo}
The map
\[
        y\in\cS^\circ\longmapsto z(y)=(z_1(y),\ldots,z_{N-1}(y))
\]
is a real-analytic diffeomorphism from the open simplex $\cS^\circ$ onto the interlacing box
\begin{equation}\label{eq:interlacing-box}
        \cZ:=\prod_{j=1}^{N-1}(g_j,g_{j+1}).
\end{equation}
Its inverse is
\begin{equation}\label{eq:y-inverse-roots}
        y_i(z)=
        \frac{\displaystyle\prod_{j=1}^{N-1}(g_i-z_j)}
             {\displaystyle\prod_{m\ne i}(g_i-g_m)},
        \qquad i=1,\ldots,N.
\end{equation}
\end{theorem}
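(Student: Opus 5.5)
The plan is to build the inverse map explicitly and check that both compositions are the identity, with analyticity supplied by the formulas and by the implicit function theorem.

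\emph{Well-definedness and analyticity of $y\mapsto z(y)$.} By \Cref{prop:interlacing}, $z(y)\in\cZ$ for every $y\in\cS^\circ$. Each $z_j$ is a simple zero of $m_y$, because $m_y'(z_j)=\sum_i y_i/(g_i-z_j)^2>0$. The function $(y,z)\mapsto m_y(z)$ is real-analytic (in fact rational) on $\cS^\circ\times(g_j,g_{j+1})$. The analytic implicit function theorem therefore shows that each $z_j$ is real-analytic in $y$.

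\emph{Recovering $y$ from the roots.} Evaluate \eqref{eq:Qy-def} at $z=g_i$. Every term with index $\ne i$ contains the factor $(g_i-g_i)$ and vanishes, so
\[
Q_y(g_i)=y_i\prod_{m\ne i}(g_i-g_m).
\]
Combining this with \eqref{eq:Q-roots}, $Q_y(g_i)=\prod_j(g_i-z_j)$, gives \eqref{eq:y-inverse-roots} at once. In particular $y\mapsto z(y)$ is injective, and its left inverse is the polynomial map $\Phi(z):=(y_i(z))_i$ defined by \eqref{eq:y-inverse-roots}.

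\emph{Surjectivity onto $\cZ$.} This is the main step. Fix $z\in\cZ$ and define $y=\Phi(z)$. First, positivity: exactly $i-1$ of the $z_j$ lie below $g_i$, namely $z_1,\dots,z_{i-1}$. Hence $\operatorname{sign}\prod_j(g_i-z_j)=(-1)^{i-1}$. Likewise exactly $i-1$ of the $g_m$ with $m\ne i$ lie below $g_i$, so $\operatorname{sign}\prod_{m\ne i}(g_i-g_m)=(-1)^{i-1}$. Therefore $y_i>0$.

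Second, the sum constraint: set $Q(w):=\prod_j(w-z_j)$, which is monic of degree $N-1$. Lagrange interpolation at the $N$ distinct nodes $g_i$ gives
\[
Q(w)=\sum_i Q(g_i)\prod_{m\ne i}\frac{w-g_m}{g_i-g_m}=\sum_i y_i\prod_{m\ne i}(w-g_m).
\]
Comparing the coefficients of $w^{N-1}$ yields $\sum_i y_i=1$, so $y\in\cS^\circ$. The same identity says $Q_y=Q$. Then \eqref{eq:m-Q-over-P} shows that the zeros of $m_y$ are exactly $z_1,\dots,z_{N-1}$, and by the uniqueness part of \Cref{prop:interlacing}, $z(y)=z$. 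Thus $z\circ\Phi=\mathrm{id}_\cZ$.

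\emph{Conclusion.} The map $z$ is a bijection $\cS^\circ\to\cZ$ with polynomial, hence analytic, inverse $\Phi$, and $z$ itself is analytic. Since both $z$ and $\Phi$ are smooth and mutually inverse, each differential is invertible. So $z$ is a real-analytic diffeomorphism. No separate Jacobian computation is needed, although one could also obtain invertibility of $D_yz$ from implicit differentiation, $\partial z_j/\partial y_i=-(g_i-z_j)^{-1}/m_y'(z_j)$, together with the Cauchy-matrix structure.
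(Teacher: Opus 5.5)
Your proof is correct and follows the paper's argument essentially step for step: evaluate $Q_y$ at the poles $g_i$, build $y$ from $z$ by the product formula, get positivity from interlacing, obtain $\sum_i y_i=1$ from Lagrange interpolation and the leading coefficient, and get analyticity from the implicit function theorem and the polynomial inverse. One small slip: each of the two products $\prod_j(g_i-z_j)$ and $\prod_{m\ne i}(g_i-g_m)$ has exactly $N-i$ negative factors, so both signs are $(-1)^{N-i}$ rather than $(-1)^{i-1}$; because both are off by the same factor $(-1)^{N-1}$, the conclusion $y_i>0$ is unaffected.
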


\begin{proof}
Starting from $y\in\cS^\circ$, evaluation of \eqref{eq:Q-roots} at $z=g_i$ gives
\[
        y_i\prod_{m\ne i}(g_i-g_m)
        =\prod_{j=1}^{N-1}(g_i-z_j),
\]
which is \eqref{eq:y-inverse-roots}.

Conversely, let $z\in\cZ$ and put
\[
        P_z(\xi):=\prod_{j=1}^{N-1}(\xi-z_j),
        \qquad
        y_i:=\frac{P_z(g_i)}{\prod_{m\ne i}(g_i-g_m)}.
\]
Interlacing shows that the numerator and denominator have the same sign, hence
$y_i>0$.  Lagrange interpolation gives
\[
        P_z(\xi)
        =\sum_{i=1}^N y_i\prod_{m\ne i}(\xi-g_m).
\]
Comparison of the leading coefficients yields $\sum_i y_i=1$.  Thus
$y\in\cS^\circ$, and the last identity shows that its scalar resolvent
function $m_y$ has exactly the prescribed zeros.  This proves bijectivity.  The roots depend real analytically
on $y$ by the implicit function theorem because they are simple, while the inverse
is analytic by \eqref{eq:y-inverse-roots}.
\end{proof}

\begin{remark}[Stieltjes-transform interpretation]
The rational function $m_y$ is the Stieltjes transform, up to the sign convention
in the denominator, of the atomic measure $\sum_i y_i\delta_{g_i}$.  Its zeros are
the eigenvalues of $\mathcal A_y=P_yG|_{E_y}$ and provide the coordinates in
\Cref{thm:root-diffeo}.
\end{remark}

\subsection{Differential identities and the spectral root frame}

For vectors in $\mathbb R^N$ we occasionally write $(a\mid b):=a\cdot b$.
For a scalar state function, $D_yz$ denotes its state differential and
$\nabla_yz$ the corresponding Euclidean gradient.  Since
$m_{cy}=c\,m_y$ for $c>0$, each root has the canonical homogeneous extension
$z_j(cy)=z_j(y)$ to the positive cone.  Its differential restricted to
$E$ is therefore the intrinsic differential on the simplex.

For $j=1,\ldots,N-1$, define
\begin{equation}\label{eq:gamma-def}
        \gamma_j(y):=\biggl(\sum_{i=1}^N
        \frac{y_i}{(g_i-z_j)^2}\biggr)^{-1}>0.
\end{equation}
Differentiating $m_y(z_j(y))=0$ gives
\begin{equation}\label{eq:grad-z}
        \partial_{y_i}z_j=-\frac{\gamma_j}{g_i-z_j},
\end{equation}
or, with $q_j:=q(z_j)$,
\begin{equation}\label{eq:gradient-is-mode}
        \nabla_yz_j=-\gamma_jq_j.
\end{equation}
Thus the left spectral covector field is exact.

The inverse root map supplies the dual right spectral frame.  Set
\begin{equation}\label{eq:root-coordinate-frame}
        e_j:=\partial_{z_j}y(z)
        =-(G-z_jI)^{-1}y
        =-Yq_j
        =\frac1{\gamma_j}Y\nabla_yz_j\in E.
\end{equation}
Since $y\leftrightarrow z$ is a diffeomorphism,
\begin{equation}\label{eq:root-frame-duality}
        D_yz_k[e_j]=\delta_{jk}.
\end{equation}
Hence $\{e_j\}$ is the coordinate basis of $E$ and $\{D_yz_j\}$ its dual
basis.

\begin{lemma}[Weighted orthogonality]\label{lem:Y-orthogonality}
For $j,k\in\{1,\ldots,N-1\}$,
\begin{equation}\label{eq:Y-orthogonality}
        (Y\nabla_y z_j\mid \nabla_y z_k)
        =\delta_{jk}\gamma_j.
\end{equation}
Equivalently,
\begin{equation}\label{eq:q-weighted-orthogonality}
        q_j^{\sf T}Yq_k=\frac{\delta_{jk}}{\gamma_j}.
\end{equation}
\end{lemma}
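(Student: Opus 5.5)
The plan is to reduce the claim to the explicit vectors $q_j=q(z_j)$ and to compute the weighted inner products $q_j^{\sf T}Yq_k=\sum_i y_i/((g_i-z_j)(g_i-z_k))$ directly from the secular equation \eqref{eq:root-equation}. By \eqref{eq:gradient-is-mode}, $\nabla_yz_j=-\gamma_jq_j$, so
\[
 (Y\nabla_yz_j\mid\nabla_yz_k)=\gamma_j\gamma_k\,q_j^{\sf T}Yq_k .
\]
The two formulations \eqref{eq:Y-orthogonality} and \eqref{eq:q-weighted-orthogonality} are therefore equivalent once the diagonal value is identified. It suffices to prove \eqref{eq:q-weighted-orthogonality}.

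For $j\ne k$, the roots are distinct by \Cref{prop:interlacing}, so I use the partial fraction identity
\[
 \frac{1}{(g_i-z_j)(g_i-z_k)}=\frac{1}{z_j-z_k}\left(\frac1{g_i-z_j}-\frac1{g_i-z_k}\right).
\]
Multiplying by $y_i$ and summing over $i$ gives
\[
 q_j^{\sf T}Yq_k=\frac{m_y(z_j)-m_y(z_k)}{z_j-z_k}=0,
\]
since both roots are zeros of $m_y$. Alternatively, one can note that $q_j,q_k$ are eigenvectors of the $\langle\cdot,\cdot\rangle_y$-self-adjoint operator $\mathcal A_y$ with distinct eigenvalues, by \Cref{prop:secular-relaxation-modes}. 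I would mention this only as a cross-check, because it relies on the self-adjointness claim stated without proof after \eqref{eq:Ay-def}. The direct computation is self-contained.

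For $j=k$, the sum $q_j^{\sf T}Yq_j=\sum_i y_i/(g_i-z_j)^2$ equals $\gamma_j^{-1}$ by the definition \eqref{eq:gamma-def}. Hence $(Y\nabla_yz_j\mid\nabla_yz_j)=\gamma_j^2\gamma_j^{-1}=\gamma_j$.

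There is no real obstacle in this argument. The only care needed is to confirm that the interlacing roots are simple, so that the division by $z_j-z_k$ is legitimate, and that $y\in\cS^\circ$, so that $\gamma_j$ is finite and positive.
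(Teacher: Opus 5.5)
Your argument is correct, but it is not the route the paper takes. The paper never evaluates $\sum_i y_i/((g_i-z_j)(g_i-z_k))$. Instead it reads the identity off the dual-frame relation \eqref{eq:root-frame-duality}. Differentiating the explicit inverse \eqref{eq:y-inverse-roots} gives $e_j=\partial_{z_j}y=-Yq_j$. Combined with $\nabla_yz_k=-\gamma_kq_k$, the chain rule $D_yz_k[e_j]=\partial_{z_j}(z_k\circ y)=\delta_{jk}$ becomes $\gamma_k\,q_k^{\sf T}Yq_j=\delta_{jk}$, which settles the diagonal and off-diagonal cases in one line.

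Your partial-fraction computation is the classical secular-equation argument. It needs only that $z_j\ne z_k$ are zeros of $m_y$, together with the definition \eqref{eq:gamma-def}. It is therefore independent of \Cref{thm:root-diffeo} and of the inverse root map. It also shows the off-diagonal cancellation explicitly as a vanishing difference quotient of $m_y$. In particular, it would apply verbatim to the reduced secular equation of \Cref{rem:repeated-generators}, where no global root chart is available.

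The paper's version shows something more structural: weighted orthogonality is just the dual-basis property of the coordinate frame $\{e_j\}$ against the differentials $\{D_yz_j\}$. Equivalently, the $e_j$ are orthogonal in the entropy metric $Y^{-1}$, and this is exactly the form reused in \eqref{eq:entropy-affine-Hessian-diagonal}.
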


\begin{proof}
By \eqref{eq:root-coordinate-frame}--\eqref{eq:root-frame-duality},
\[
 \delta_{jk}=D_yz_k[e_j]
 =\gamma_k q_k^{\sf T}Yq_j.
\]
This is \eqref{eq:q-weighted-orthogonality};
\eqref{eq:Y-orthogonality} then follows from
\eqref{eq:gradient-is-mode}.
\end{proof}

\begin{proposition}[Spectral frame and modal resolution]
\label{prop:modal-inverse}
Define the rank-one coordinate projectors on $E$ by
\begin{equation}\label{eq:root-coordinate-projectors}
        \Pi_j:=e_j\otimes D_yz_j,
        \qquad
        \Pi_jh=e_j\,D_yz_j[h]
        =\gamma_j(q_j\cdot h)Yq_j.
\end{equation}
Then
\begin{equation}\label{eq:modal-projectors}
        I_E=\sum_{j=1}^{N-1}\Pi_j,
        \qquad
        \Pi_j\Pi_k=\delta_{jk}\Pi_j,
\end{equation}
and
\begin{align}
 B(y)|_E
   &=-\sum_{j=1}^{N-1}\lambda_j\Pi_j
     =-\sum_{j=1}^{N-1}\lambda_j\gamma_j
       (Yq_j)q_j^{\sf T},
       \label{eq:B-modal-resolution}\\
 \bigl(B(y)|_E\bigr)^{-1}
   &=-\sum_{j=1}^{N-1}\frac1{\lambda_j}\Pi_j
     =-\sum_{j=1}^{N-1}\frac{\gamma_j}{\lambda_j}
       (Yq_j)q_j^{\sf T}.
       \label{eq:B-inverse-modal}
\end{align}
Thus the diagonal-plus-rank-one formula
\eqref{eq:inverse-B-rank-one} and the modal formula
\eqref{eq:B-inverse-modal} are two representations of the same inverse.
\end{proposition}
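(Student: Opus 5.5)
The plan is to read everything off the duality \eqref{eq:root-frame-duality} and the eigenvector relations of \Cref{prop:secular-relaxation-modes}. For the resolution of the identity: since $y\mapsto z$ is a diffeomorphism (\Cref{thm:root-diffeo}), the vectors $e_j=\partial_{z_j}y$ form a basis of the tangent space $E$, and $\{D_yz_j|_E\}$ is the dual basis by \eqref{eq:root-frame-duality}. For $h\in E$ write $h=\sum_j c_je_j$. Applying $D_yz_k$ gives $c_k=D_yz_k[h]$, so $h=\sum_j\Pi_jh$. Idempotence and mutual annihilation follow from
\[
\Pi_j\Pi_kh=e_j\,D_yz_j[e_k]\,D_yz_k[h]=\delta_{jk}\Pi_kh .
\]
The second expression for $\Pi_j$ in \eqref{eq:root-coordinate-projectors} is a rewriting: substitute $e_j=-Yq_j$ and $D_yz_j[h]=\nabla_yz_j\cdot h=-\gamma_j q_j\cdot h$ from \eqref{eq:gradient-is-mode} and \eqref{eq:root-coordinate-frame}; the two minus signs cancel.

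Next I show that each $e_j$ is an eigenvector of $B(y)|_E$ with eigenvalue $-\lambda_j$. By \Cref{lem:compressed-resolvent}, $R_{z_j}y=(G-z_jI)^{-1}y=-e_j$ lies in $E$ and satisfies $\mathcal C_ye_j=z_je_j$. Then \Cref{lem:row-formula} gives
\[
B(y)e_j=-(\mathcal C_y+sI_E)e_j=-(s+z_j)e_j=-\lambda_je_j .
\]
As a consistency check, the same conclusion follows from \eqref{eq:BY-minus-YL} together with $\mathcal L_yq_j=\lambda_jq_j$. Composing with the resolution of the identity,
\[
B(y)h=\sum_j D_yz_j[h]\,B(y)e_j=-\sum_j\lambda_j\Pi_jh ,
\]
which is \eqref{eq:B-modal-resolution}; the outer-product form comes from the explicit $\Pi_j$ formula.

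For the inverse I set $T:=-\sum_j\lambda_j^{-1}\Pi_j$, which is legitimate because $\lambda_j=s+z_j>0$. Using $\Pi_j\Pi_k=\delta_{jk}\Pi_j$ and $\sum_j\Pi_j=I_E$, both compositions $B(y)|_E\,T$ and $T\,B(y)|_E$ equal $I_E$, which proves \eqref{eq:B-inverse-modal}. The concluding sentence, that this agrees with \eqref{eq:inverse-B-rank-one}, then follows from uniqueness of the inverse in \Cref{prop:flux-inverse}.

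I expect no real obstacle. The only care needed is with sign conventions, $e_j=-Yq_j$ versus $\nabla_yz_j=-\gamma_jq_j$, and with the fact that $D_yz_j$ is being used only on $E$. The latter is harmless because the homogeneous extension makes $D_yz_j$ intrinsic on the simplex.
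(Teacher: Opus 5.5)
Your proof is correct and follows essentially the same route as the paper: the projector identities come from the duality \eqref{eq:root-frame-duality}, then the eigenrelation $B(y)e_j=-\lambda_je_j$, and then $B(y)$ and its inverse are applied to $h=\sum_je_j\,D_yz_j[h]$. The only difference is cosmetic. You obtain the eigenrelation directly from \Cref{lem:compressed-resolvent} and \Cref{lem:row-formula}, whereas the paper cites \eqref{eq:BY-minus-YL} together with \eqref{eq:Ay-qj}, the identity you record as a consistency check.
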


\begin{proof}
The projector identities are immediate from the duality
\eqref{eq:root-frame-duality}.  By \eqref{eq:BY-minus-YL} and
\eqref{eq:Ay-qj},
\[
        B(y)e_j=-\lambda_je_j.
\]
Applying $B(y)$ and its inverse to the coordinate decomposition
$h=\sum_j e_jD_yz_j[h]$ gives the two modal resolutions.
\end{proof}

\begin{lemma}[Spectral identity]\label{lem:spectral-identity}
For every $j=1,\ldots,N-1$,
\begin{equation}\label{eq:spectral-identity}
        B(y)^{\sf T}\nabla_y z_j=-\lambda_j(y)\nabla_y z_j,
        \qquad
        \lambda_j(y):=s+z_j.
\end{equation}
In particular, $\lambda_j>0$, and
\begin{equation}\label{eq:lambda-diff}
        \lambda_j-\lambda_k=z_j-z_k.
\end{equation}
\end{lemma}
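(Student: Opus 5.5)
We prove the spectral identity \eqref{eq:spectral-identity} by computing the transpose action directly from the row formula of \Cref{lem:row-formula}. One subtlety: $B(y)^{\sf T}$ acts on covectors, and $\nabla_yz_j=-\gamma_jq_j$ by \eqref{eq:gradient-is-mode}. So the identity should be read on $E$, as an identity of linear functionals restricted to the constraint space. We also check it in the ambient sense, using the homogeneous extension of $z_j$ together with $B(y)y=0$.

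\textbf{Covector identity.} For $v\in E$, \eqref{eq:B-row-additive} gives
\[
q_j\cdot B(y)v=(q_j\cdot y)(g\cdot v)-\sum_i\frac{(g_i+s)v_i}{g_i-z_j}.
\]
The first term vanishes because $q_j\cdot y=m_y(z_j)=0$ by \eqref{eq:qz-Ey-iff}. In the second term write $g_i+s=(g_i-z_j)+(z_j+s)$. The sum then splits as $\sum_iv_i+(s+z_j)\,q_j\cdot v$, and $\sum_iv_i=0$ on $E$. Hence
\[
q_j\cdot B(y)v=-\lambda_j\,q_j\cdot v\qquad\text{for all }v\in E.
\]
Multiplying by $-\gamma_j$ yields $(B(y)^{\sf T}\nabla_yz_j)\cdot v=-\lambda_j\nabla_yz_j\cdot v$ on $E$.

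\textbf{Full space.} Since $B(y)y=0$ and $\mathbb R^N=E\oplus\operatorname{span}\{y\}$, it remains to test the identity on $v=y$. The left side vanishes. The right side is $-\lambda_j\nabla_yz_j\cdot y=\lambda_j\gamma_j\,q_j\cdot y=0$. Equivalently, Euler's relation for the degree-zero homogeneous extension gives $\nabla_yz_j\cdot y=0$. So the identity holds on all of $\mathbb R^N$.

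Alternatively, one could derive the identity by duality from $B(y)e_j=-\lambda_je_j$ in \Cref{prop:modal-inverse} together with \eqref{eq:root-frame-duality}. The direct computation above is cleaner.

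\textbf{Positivity and differences.} Interlacing gives $z_j>g_1>0$, and $s\ge g_1>0$, so $\lambda_j=s+z_j>0$. The relation \eqref{eq:lambda-diff} is immediate because both $\lambda_j$ and $\lambda_k$ contain the same $s$.

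The only delicate point is the ambient versus constrained interpretation of the covector. The Euler-relation check on $v=y$ resolves it, so no step poses a real obstacle.
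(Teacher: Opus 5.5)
Your proof is correct, but its main step differs from the paper's.

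\textbf{The paper's route.} The paper does not compute with the row formula. It takes the right eigenrelation $B(y)e_k=-\lambda_ke_k$ from \Cref{prop:modal-inverse} and the dual-frame relation \eqref{eq:root-frame-duality}. Since the dual basis of an eigenbasis is a left eigenbasis, it concludes $D_yz_j\circ B(y)|_E=-\lambda_jD_yz_j$ intrinsically on $E$. This is exactly the ``alternative'' you mention. The extension off $E$ is then the same as yours. The residual $B(y)^{\sf T}\nabla_yz_j+\lambda_j\nabla_yz_j$ annihilates $E$, so it is a multiple of $\one$. Pairing with $y$ kills it, because $B(y)y=0$ and $y\cdot\nabla_yz_j=0$.

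\textbf{Your route.} You compute directly from \eqref{eq:B-row-additive}, splitting $g_i+s=(g_i-z_j)+(s+z_j)$ and using the secular equation $q_j\cdot y=m_y(z_j)=0$. This is more elementary and self-contained. It needs only the explicit gradient formula \eqref{eq:grad-z}. It does not use the global diffeomorphism of \Cref{thm:root-diffeo} behind the duality. Nor does it use the intertwining \eqref{eq:BY-minus-YL} and \Cref{prop:secular-relaxation-modes} behind $B(y)e_k=-\lambda_ke_k$.

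\textbf{What the paper's route buys.} It is more structural. It shows that the left-eigencovector property is a formal consequence of the root coordinate frame diagonalizing $B(y)$. That viewpoint, exact spectral covectors as the coframe of an eigenframe, is the one the paper emphasizes.
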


\begin{proof}
The modal identity $B(y)e_k=-\lambda_ke_k$ and the duality
\eqref{eq:root-frame-duality} give, intrinsically on $E$,
\[
        D_yz_j\circ B(y)|_E=-\lambda_jD_yz_j.
\]
Thus $B(y)^{\sf T}\nabla_yz_j+\lambda_j\nabla_yz_j$ annihilates $E$ and is a
multiple of $\one$.  Its scalar product with $y$ vanishes because
$B(y)y=0$ and the homogeneous extension satisfies
$y\cdot\nabla_yz_j=0$.  Since $\one\cdot y=1$, that multiple is zero, proving
\eqref{eq:spectral-identity}.
\end{proof}

The same coordinate frame also yields explicit identities for the nonlinear
second derivatives.

\begin{lemma}[Affine root calculus and Hessian identity]
\label{lem:hessian-identity}
The inverse root map satisfies
\begin{equation}\label{eq:affine-root-connection}
        \partial_{z_jz_j}y=0,
        \qquad
        (z_k-z_j)\partial_{z_jz_k}y=e_j-e_k
        \quad(j\ne k).
\end{equation}
For $V,W\in E$, set
$c_r:=D_yz_r[V]$ and $d_r:=D_yz_r[W]$.  Then
\begin{equation}\label{eq:hessian-bilinear}
 D_y^2z_j[V,W]
 =\sum_{r\ne j}\frac{c_jd_r+c_rd_j}{z_j-z_r}.
\end{equation}
In particular,
\begin{equation}\label{eq:hessian-identity}
        D_y^2 z_j[V,V]
        =2c_j\sum_{r\ne j}\frac{c_r}{z_j-z_r}.
\end{equation}
\end{lemma}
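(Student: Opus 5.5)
The plan is to work entirely with the explicit inverse root map \eqref{eq:y-inverse-roots} from \Cref{thm:root-diffeo} and the coordinate frame \eqref{eq:root-coordinate-frame}.

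\textbf{Step 1: the identities \eqref{eq:affine-root-connection}.} Each component $y_i(z)$ is a constant multiple of $\prod_{j}(g_i-z_j)$, so it is affine in every single root $z_j$. This gives $\partial_{z_jz_j}y=0$ at once. Next I would use the frame formula $e_j=\partial_{z_j}y=-(G-z_jI)^{-1}y$, which in components reads $(e_j)_i=-y_i/(g_i-z_j)$. Differentiating in $z_k$ for $k\ne j$ only touches the factor $y_i$, so
\[
(\partial_{z_jz_k}y)_i=-\frac{(e_k)_i}{g_i-z_j}=\frac{y_i}{(g_i-z_j)(g_i-z_k)}.
\]
The partial fraction identity
\[
\frac{z_k-z_j}{(g_i-z_j)(g_i-z_k)}=\frac1{g_i-z_k}-\frac1{g_i-z_j}
\]
then gives $(z_k-z_j)(\partial_{z_jz_k}y)_i=(e_j)_i-(e_k)_i$, which is the second identity.

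\textbf{Step 2: transfer to the Hessian of $z_j$.} Because $z$ is the homogeneous extension, its restriction to the affine simplex is the inverse of $y(z)$. Differentiating the identity $z_j(y(z))=z_j$ twice in root coordinates gives
\[
D_y^2z_j[e_a,e_b]+D_yz_j[\partial_{z_az_b}y]=0,
\]
where the first-order term uses $\partial_{z_a}y=e_a$. Note that $\partial_{z_az_b}y\in E$, since every $y(z)$ sums to one. Now insert Step 1 and the duality \eqref{eq:root-frame-duality}:
\begin{itemize}
\item for $a=b$ the second derivative of $y$ vanishes, so $D_y^2z_j[e_a,e_a]=0$;
\item for $a\ne b$ we get $D_y^2z_j[e_a,e_b]=-(\delta_{ja}-\delta_{jb})/(z_b-z_a)$.
\end{itemize}
This is nonzero only when exactly one of $a,b$ equals $j$. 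For $a=j$ and $b=r\ne j$ it equals $1/(z_j-z_r)$, and symmetrically for $b=j$.

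\textbf{Step 3: the bilinear form.} Expand $V=\sum_r c_re_r$ and $W=\sum_r d_re_r$, which is allowed because $\{e_r\}$ is a basis of $E$ with dual basis $D_yz_r$. Bilinearity then gives \eqref{eq:hessian-bilinear}, and setting $V=W$ gives \eqref{eq:hessian-identity}.

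The main care point is Step 2. The Hessian must be understood intrinsically on $E$, meaning the Hessian of the restriction to the affine hyperplane. For the homogeneous extension, the Euclidean $D^2_y$ applied to $V,W\in E$ coincides with this intrinsic Hessian, because the chain rule along an affine curve $y+tV$ introduces no extra terms. I would state this explicitly; everything else in the argument is direct computation.
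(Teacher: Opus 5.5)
Your proposal is correct and follows essentially the same route as the paper. Both arguments use that the inverse root map is affine in each root, compute $\partial_{z_jz_k}y$ through the resolvent identity (your componentwise partial-fraction step is exactly $R_{z_k}-R_{z_j}=(z_k-z_j)R_{z_j}R_{z_k}$ applied to $y$), differentiate $z(y(z))=z$ twice together with the duality $D_yz_j[e_a]=\delta_{ja}$, and expand $V,W$ in the frame $\{e_r\}$. Your remark that the Euclidean Hessian of the homogeneous extension coincides with the intrinsic Hessian on $E\times E$ is correct and consistent with the paper's use of tangent directions only.
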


\begin{proof}
Formula \eqref{eq:y-inverse-roots} is affine in each individual root and gives
$\partial_{z_j}y=e_j=-(G-z_jI)^{-1}y$.  A second derivative, together with
the resolvent identity
$R_{z_k}-R_{z_j}=(z_k-z_j)R_{z_j}R_{z_k}$, gives
\eqref{eq:affine-root-connection}.  Differentiating the identity
$z(y(z))=z$ twice gives
\[
 D_y^2z_j[e_a,e_b]
 =-D_yz_j[\partial_{z_az_b}y].
\]
Using \eqref{eq:affine-root-connection} and
$D_yz_j[e_a]=\delta_{ja}$ shows that the only nonzero entries are
\[
 D_y^2z_j[e_j,e_r]=D_y^2z_j[e_r,e_j]
 =\frac1{z_j-z_r},\qquad r\ne j.
\]
Expanding $V=\sum_rc_re_r$ and $W=\sum_rd_re_r$ proves
\eqref{eq:hessian-bilinear} and hence \eqref{eq:hessian-identity}.
\end{proof}

\begin{corollary}[Quadratic correction]\label{lem:quadratic-correction}
For every $V\in E$, with $c_r=D_yz_r[V]$,
\begin{equation}\label{eq:quadratic-correction}
        -D_y^2 z_j[B(y)V,V]
        =c_j\sum_{r\ne j}
          \frac{\lambda_j+\lambda_r}{\lambda_j-\lambda_r}\,c_r.
\end{equation}
\end{corollary}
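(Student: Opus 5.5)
The plan is to reduce everything to the Hessian formula \eqref{eq:hessian-bilinear} of \Cref{lem:hessian-identity}. That formula needs only the root coordinates of the two arguments, so the first step is to compute the coordinates of $W:=B(y)V$ in the root frame.

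Write $V=\sum_r c_re_r$ with $c_r=D_yz_r[V]$. Since $V\in E$, \Cref{prop:modal-inverse} applies, and in particular the eigen-relation $B(y)e_r=-\lambda_re_r$ from \eqref{eq:B-modal-resolution}. This gives
\[
 B(y)V=-\sum_r\lambda_rc_re_r .
\]
The vector $B(y)V$ lies in $E$. By the duality \eqref{eq:root-frame-duality}, its coordinates are therefore $d_r:=D_yz_r[B(y)V]=-\lambda_rc_r$. Equivalently, the spectral identity of \Cref{lem:spectral-identity} gives $D_yz_r\circ B(y)|_E=-\lambda_rD_yz_r$ directly.

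Next, insert $c_r$ and $d_r=-\lambda_rc_r$ into \eqref{eq:hessian-bilinear}. This gives
\[
 D_y^2z_j[B(y)V,V]=D_y^2z_j[V,B(y)V]
 =\sum_{r\ne j}\frac{c_j(-\lambda_rc_r)+c_r(-\lambda_jc_j)}{z_j-z_r}
 =-c_j\sum_{r\ne j}\frac{\lambda_j+\lambda_r}{z_j-z_r}\,c_r ,
\]
where the first equality uses the symmetry of the Hessian bilinear form. The last step replaces $z_j-z_r$ by $\lambda_j-\lambda_r$ using \eqref{eq:lambda-diff}. Changing the sign then yields \eqref{eq:quadratic-correction}.

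The only point needing care is that the Hessian must be interpreted consistently. Formula \eqref{eq:hessian-bilinear} was derived for the intrinsic second derivative on the simplex, with arguments in $E$, and $B(y)V\in E$ holds because $\one^{\sf T}B(y)=0$. Once that is checked, the argument is bookkeeping, and I expect no real obstacle.
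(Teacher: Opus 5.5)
Your proposal is correct and follows essentially the same route as the paper. The paper's proof also expands $V=\sum_r c_re_r$, uses $B(y)e_r=-\lambda_re_r$ to write $B(y)V=-\sum_r\lambda_rc_re_r$, inserts both into \eqref{eq:hessian-bilinear}, and replaces $z_j-z_r$ by $\lambda_j-\lambda_r$.
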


\begin{proof}
Since $B(y)e_r=-\lambda_re_r$, insert
$V=\sum_rc_re_r$ and $B(y)V=-\sum_r\lambda_rc_re_r$ into
\eqref{eq:hessian-bilinear}, and use
$\lambda_j-\lambda_r=z_j-z_r$.
\end{proof}

For later use, the preceding identities admit a compact affine interpretation.
If $\Psi=\Psi(z)$ and $\widetilde\Psi(y):=\Psi(z(y))$, define the
concentration-affine Hessian of $\Psi$ in the root frame by
\begin{equation}\label{eq:affine-root-Hessian-def}
 \mathcal H^{\mathrm{aff}}_{jk}(\Psi)
 :=D_y^2\widetilde\Psi[e_j,e_k].
\end{equation}
Only tangent directions $e_j,e_k\in E$ occur here, so this Hessian is
intrinsic to the affine hyperplane $\one^{\sf T}y=1$ and does not depend on
how $\widetilde\Psi$ is extended off that hyperplane.  Then \eqref{eq:hessian-bilinear} and the chain rule give
\begin{equation}\label{eq:affine-root-Hessian-components}
 \mathcal H^{\mathrm{aff}}_{jk}(\Psi)
 =\begin{cases}
   \Psi_{jj}, & j=k,\\[1mm]
   \displaystyle
   \Psi_{jk}+\frac{\Psi_j-\Psi_k}{z_j-z_k}, & j\ne k.
  \end{cases}
\end{equation}
Consequently the multi-EPD relations
\begin{equation}\label{eq:EPD-as-affine-diagonal}
        (z_k-z_j)\Psi_{jk}=\Psi_j-\Psi_k,
        \qquad j\ne k,
\end{equation}
are precisely the condition that the concentration-affine Hessian
$\mathcal H^{\mathrm{aff}}(\Psi)$ be diagonal in the spectral root frame.

\begin{remark}[Relation to earlier spectral representations]
Characteristic-root and eigenvalue representations have appeared earlier in
steady multicomponent Maxwell--Stefan mass-transfer analysis; see, for example,
\cite{Taylor1981Exact}.  Here \eqref{eq:gradient-is-mode},
\eqref{eq:spectral-identity}, and \Cref{thm:root-diffeo} show that the
corresponding left spectral covectors are exact and furnish a global nonlinear
coordinate chart for the time-dependent diffusion system.  For $N=3$, every
friction triple has a unique real pair-sum representation.  If the three
resulting values $g_i$ are pairwise different, the same secular construction
applies without the positivity assumption $g_i>0$; repeated values are covered
by the grouped description of \Cref{rem:repeated-generators}.
\end{remark}

\section{The root-variable parabolic system}

We now derive the PDE satisfied by the root variables.

Let $y$ be a classical solution of \eqref{eq:balance}--\eqref{eq:MS-law}.
From \eqref{eq:spectral-identity}, for each spatial direction
$\alpha=1,\ldots,d$,
\begin{equation}\label{eq:z-flux-identity}
        (\nabla_y z_j\mid J^\alpha)
        =-\frac{\partial_{x_\alpha}z_j}{\lambda_j}.
\end{equation}
{For the normal flux vector write
$J^\nu:=(J_1\cdot\nu,\ldots,J_N\cdot\nu)^{\sf T}$.}
The boundary condition \eqref{eq:noflux} implies
\begin{equation}\label{eq:z-neumann}
        \partial_\nu z_j=0,
        \qquad j=1,\ldots,N-1.
\end{equation}
Indeed,
\[
\partial_\nu z_j
=(\nabla_y z_j\mid \partial_\nu y)
=(\nabla_y z_j\mid B(y)J^\nu)
=(B(y)^{\sf T}\nabla_y z_j\mid J^\nu)=0,
\]
because $J_i\cdot\nu=0$ for every species.

\subsection{Equations in root variables}

\begin{theorem}[Root-variable system]\label{thm:root-system}
{Let $y$ be a classical solution of \eqref{eq:balance}--\eqref{eq:MS-law} in the additive class.}  Then the root variables satisfy, for $j=1,\ldots,N-1$,
\begin{equation}\label{eq:root-system}
        \partial_t z_j
        -\Div\left(\frac{\nabla z_j}{\lambda_j}\right)
        +\sum_{k\ne j}
          \frac{\lambda_j+\lambda_k}
               {\lambda_j\lambda_k(\lambda_j-\lambda_k)}
          \nabla z_j\cdot\nabla z_k=0,
\end{equation}
with the homogeneous Neumann boundary condition \eqref{eq:z-neumann}.
\end{theorem}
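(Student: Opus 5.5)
We start from the transformed balance law \eqref{eq:intro-transformed-balance} with $z=z_j$, using the homogeneous extension so that $\nabla_y z_j$ is defined on the positive cone. The chain rule and \eqref{eq:balance} give
\[
 \partial_t z_j=(\nabla_y z_j\mid\partial_t y)=-\sum_i\partial_{y_i}z_j\,\Div J_i
 =-\Div\bigl(J^{\sf T}\nabla_y z_j\bigr)+\sum_{\alpha=1}^d\bigl(\partial_{x_\alpha}\nabla_y z_j\mid J^\alpha\bigr).
\]
By \eqref{eq:z-flux-identity}, which follows from \Cref{lem:spectral-identity}, the flux term is $J^{\sf T}\nabla_y z_j=-\lambda_j^{-1}\nabla z_j$. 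This yields the principal part $-\Div(\lambda_j^{-1}\nabla z_j)$ with the correct sign. The Neumann condition \eqref{eq:z-neumann} has already been established, so only the lower-order term has to be identified.

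Next we treat the remainder. We have $\partial_{x_\alpha}\nabla_y z_j=D_y^2z_j[\partial_{x_\alpha}y,\cdot\,]$. The vectors $\partial_{x_\alpha}y$ and $J^\alpha$ both lie in $E$, so only the intrinsic Hessian on $E$ enters. This is the point where the homogeneous extension matters: its contribution off $E$ is never tested. The remainder is therefore $\sum_\alpha D_y^2z_j[\partial_{x_\alpha}y,J^\alpha]$. By \Cref{cor:inverse-rank-one}, $J^\alpha=(B(y)|_E)^{-1}\partial_{x_\alpha}y$. To evaluate it we use the modal decomposition $V:=\partial_{x_\alpha}y=\sum_r c_re_r$ with $c_r=D_yz_r[V]=\partial_{x_\alpha}z_r$. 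Then \eqref{eq:B-inverse-modal} gives $J^\alpha=-\sum_r\lambda_r^{-1}c_re_r$, so $d_r=-c_r/\lambda_r$.

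Inserting this into the bilinear Hessian formula \eqref{eq:hessian-bilinear} gives
\[
 D_y^2z_j[V,J^\alpha]=-\sum_{r\ne j}\frac{c_jc_r}{z_j-z_r}\Bigl(\frac1{\lambda_r}+\frac1{\lambda_j}\Bigr)
 =-\sum_{r\ne j}\frac{\lambda_j+\lambda_r}{\lambda_j\lambda_r(\lambda_j-\lambda_r)}\,\partial_{x_\alpha}z_j\,\partial_{x_\alpha}z_r,
\]
where the second equality uses \eqref{eq:lambda-diff}. Summing over $\alpha$ and moving the term to the left-hand side produces exactly the quadratic term in \eqref{eq:root-system}. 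Equivalently, one may use \Cref{lem:quadratic-correction} with $V$ replaced by $J^\alpha$, after writing $\partial_{x_\alpha}y=B(y)J^\alpha$.

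The main obstacle is bookkeeping rather than analysis. Two points need care: the sign conventions in $J=(B|_E)^{-1}\nabla y$ together with $d_r=-c_r/\lambda_r$, and the requirement that the Hessian be evaluated only on $E\times E$. The latter is ensured by $\one^{\sf T}\partial_x y=0$ and by \eqref{eq:flux-constraint}. For a classical solution, all chain-rule steps are justified pointwise because $y$ takes values in $\cS^\circ$, where $z$ is real-analytic by \Cref{thm:root-diffeo}.
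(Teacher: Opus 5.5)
Your proof is correct and follows the paper's argument: the chain rule, then the flux identity \eqref{eq:z-flux-identity} for the principal part, then the root-frame Hessian formula for the quadratic remainder. The only difference is cosmetic. You expand $\partial_{x_\alpha}y$ in the root frame and pass through the modal inverse \eqref{eq:B-inverse-modal}. The paper instead expands $J^\alpha$, with coefficients $-\partial_{x_\alpha}z_r/\lambda_r$, and applies \Cref{lem:quadratic-correction} directly, which is exactly the alternative you mention yourself.
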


For later use we set
\begin{equation}\label{eq:root-diffusion-aj}
 \mathfrak a_j(z):=\lambda_j(z)^{-1},\qquad
 C_{jk}(z):=\frac{\lambda_j+\lambda_k}
 {\lambda_j\lambda_k(\lambda_j-\lambda_k)}
 =\frac{\mathfrak a_j(z)+\mathfrak a_k(z)}{z_j-z_k}
 =-C_{kj}(z).
\end{equation}
Thus $\mathfrak a_j$ is the scalar diffusion coefficient in the $j$-th root equation and $C_{jk}$ is skew-symmetric.

The only nonlinear chain-rule term is the Hessian correction generated by
$D_y^2z_j$.  In the spectral root frame this correction is already given by
\Cref{lem:quadratic-correction}, so no further component calculation is
needed.

\begin{proof}[Proof of \Cref{thm:root-system}]
Using \eqref{eq:balance},
\begin{align*}
\partial_t z_j
&=(\nabla_y z_j\mid \partial_t y)
=-(\nabla_y z_j\mid \Div J)\\
&=-\Div(J^{\sf T}\nabla_y z_j)
  +(\partial_{x_\alpha}\nabla_y z_j\mid J^\alpha),
\end{align*}
where summation over the spatial index $\alpha$ is understood.  By \eqref{eq:z-flux-identity},
\[
        (\nabla_y z_j\mid J^\alpha)
        =-\frac{\partial_{x_\alpha}z_j}{\lambda_j}.
\]
Therefore
\[
        \partial_t z_j-
        \Div\left(\frac{\nabla z_j}{\lambda_j}\right)
        -(\partial_{x_\alpha}\nabla_y z_j\mid J^\alpha)=0.
\]
Since $\partial_{x_\alpha}y=B(y)J^\alpha$,
\[
        (\partial_{x_\alpha}\nabla_y z_j\mid J^\alpha)
        =D_y^2z_j[B(y)J^\alpha,J^\alpha].
\]
Now apply \Cref{lem:quadratic-correction} and use
\[
        c_r^\alpha=(\nabla_y z_r\mid J^\alpha)
        =-\frac{\partial_{x_\alpha}z_r}{\lambda_r}.
\]
This gives exactly \eqref{eq:root-system}.  The boundary condition was shown in \eqref{eq:z-neumann}; the final identity in \eqref{eq:root-diffusion-aj} follows from \eqref{eq:lambda-diff}.

\end{proof}

\begin{remark}[Absence of self-gradient square terms]
The absence of $|\nabla z_j|^2$ follows from two independent identities.  First,
$\partial_{z_jz_j}y=0$ in \eqref{eq:affine-root-connection}, equivalently
$D_y^2z_j[e_j,e_j]=0$, so the nonlinear coordinate Hessian has no self-mode
contribution.  Second, the trace identity
\eqref{eq:general-root-gradient-trace} gives $\partial_{z_j}\lambda_j=0$, so
expanding $-\Div(\lambda_j^{-1}\nabla z_j)$ also produces no self-square.
Thus the $j$-th equation has a scalar principal part depending only on the
transverse roots, and every quadratic correction is mixed.  This is the
structural input for the componentwise comparison argument in
\Cref{thm:root-rectangle}.
\end{remark}

\begin{corollary}[Root-sum conservation law and first-moment flux]
\label{cor:root-sum-first-moment}
For every classical additive Maxwell--Stefan solution,
\begin{equation}\label{eq:general-root-sum-equation}
 \partial_t\left(\sum_{j=1}^{N-1}z_j\right)
 -\Div\left(\sum_{j=1}^{N-1}\frac{\nabla z_j}{\lambda_j}\right)=0,
 \qquad
 \partial_\nu\left(\sum_{j=1}^{N-1}z_j\right)=0.
\end{equation}
Equivalently, since $s=e_1-\sum_jz_j$ {(and $s=U_1$ in the moment notation
introduced in \Cref{eq:U-moments})}, if
$F_1:=\sum_i g_iJ_i=g\cdot J$ denotes the first moment flux, then
\begin{equation}\label{eq:first-moment-flux-root-form}
 \partial_ts+\Div F_1=0,
 \qquad
 F_1=\sum_{j=1}^{N-1}\frac{\nabla z_j}{\lambda_j}.
\end{equation}
Thus the spatial integral of the root sum is conserved, and the explicit
quadratic corrections in the individual root equations cancel pairwise.
\end{corollary}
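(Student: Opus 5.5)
The plan has two parts: sum the root equations of \Cref{thm:root-system} over $j$, then identify the resulting flux with the first moment flux $g\cdot J$.

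\emph{Summing the root equations.}
In \eqref{eq:root-system} the quadratic terms carry the coefficients $C_{jk}=(\mathfrak a_j+\mathfrak a_k)/(z_j-z_k)$. By \eqref{eq:root-diffusion-aj} these satisfy $C_{kj}=-C_{jk}$. Summing \eqref{eq:root-system} over $j$, the quadratic contribution is
\[
\sum_{j}\sum_{k\ne j}C_{jk}\,\nabla z_j\cdot\nabla z_k .
\]
The product $\nabla z_j\cdot\nabla z_k$ is symmetric in $(j,k)$ while $C_{jk}$ is antisymmetric, so pairing the $(j,k)$ and $(k,j)$ terms gives zero. This leaves $\partial_t\sum_jz_j-\Div\sum_j\lambda_j^{-1}\nabla z_j=0$. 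The Neumann condition for the sum follows by summing \eqref{eq:z-neumann}.

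\emph{Identifying the flux with $g\cdot J$.}
The trace identity \eqref{eq:general-root-trace-identity} gives $s=e_1-\sum_jz_j$, so $\partial_ts=-\partial_t\sum_jz_j$. Since $s=g\cdot y$, the balance law \eqref{eq:balance} also gives $\partial_ts+\Div(g\cdot J)=0$ directly. Hence it suffices to show, pointwise, that $g\cdot J^\alpha=\sum_j\lambda_j^{-1}\partial_{x_\alpha}z_j$ in each direction $\alpha$.

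I would do this algebraically by computing $\nabla_y s=g$ in two ways.

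\textbf{First expression.} Since $s=e_1-\sum_jz_j$ holds on the simplex, the tangential parts of the covectors agree:
\[
g\cdot v=-\sum_jD_yz_j[v]\qquad\text{for } v\in E .
\]
One must be careful about extensions off the hyperplane. Here $J^\alpha\in E$, so only tangential values are needed and the choice of extension does not matter.

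\textbf{Second expression.} Apply this with $v=J^\alpha$ and use \eqref{eq:z-flux-identity}, which states $D_yz_j[J^\alpha]=-\lambda_j^{-1}\partial_{x_\alpha}z_j$. This yields
\[
g\cdot J^\alpha=\sum_j\lambda_j^{-1}\partial_{x_\alpha}z_j ,
\]
which is exactly $F_1=\sum_j\lambda_j^{-1}\nabla z_j$. This also re-derives the summed equation independently of the first part.

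\emph{Conservation of the integral.}
Integrating over $\Omega$ and using the Neumann condition shows that $\int_\Omega\sum_jz_j\,\mathrm dx$ is conserved.

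No real obstacle is expected. The only subtle point is that \eqref{eq:general-root-trace-identity} is an identity on the simplex rather than on all of $\R^N$. This is handled by restricting the covector identity to $E$, where both $\partial_{x_\alpha}y$ and $J^\alpha$ lie.
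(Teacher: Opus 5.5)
Your proposal is correct and follows essentially the same route as the paper's proof. Both sum the root equations using the antisymmetry $C_{jk}=-C_{kj}$, and both obtain $F_1=\sum_j\lambda_j^{-1}\nabla z_j$ by restricting the differential identity $g\cdot V=-\sum_r D_yz_r[V]$ to $V\in E$ and evaluating it at $V=J^\alpha$ via \eqref{eq:z-flux-identity}; your remark that the identity holds only on $E$ (the degree-one extension $g\cdot y$ versus the degree-zero extension of $e_1-\sum_j z_j$) matches the paper's intrinsic use of differentials on $E$.
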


\begin{proof}
Sum \eqref{eq:root-system} over $j$.  Since
$C_{jk}=-C_{kj}$ while
$\nabla z_j\cdot\nabla z_k$ is symmetric in $j,k$, the double sum of the
explicit quadratic corrections vanishes.  This proves
\eqref{eq:general-root-sum-equation}, including its Neumann condition.
The trace identity \eqref{eq:general-root-trace-identity} gives the balance
law for $s$.  Its differential on $E$ is
\[
        g\cdot V=D_ys[V]=-\sum_rD_yz_r[V],\qquad V\in E.
\]
Taking $V=J^\alpha$ and using \eqref{eq:z-flux-identity} gives
$g\cdot J^\alpha=\sum_r\partial_{x_\alpha}z_r/\lambda_r$, which proves the pointwise
flux identity in \eqref{eq:first-moment-flux-root-form}.
\end{proof}

\begin{remark}[Mixed-gradient terms from variation of the spectral frame]
\label{rem:modal-meaning-root-PDE}
Write $J^\alpha=Yu^\alpha$ with $u^\alpha\in E_y$.  By
\eqref{eq:gradient-is-mode},
\begin{equation}\label{eq:modal-flux-amplitude}
        c_j^\alpha
        =(\nabla_yz_j\mid J^\alpha)
        =-\gamma_j\langle q(z_j),u^\alpha\rangle_y
        =-\frac{\partial_{x_\alpha}z_j}{\lambda_j}.
\end{equation}
{Thus $-c_j^\alpha$ is the coefficient of $q(z_j)$ in the orthogonal
eigenbasis expansion of $u^\alpha$.}  The explicit mixed-gradient terms outside
the diagonal divergence flux in \eqref{eq:root-system} arise when the
$y$-dependence of these eigenvectors is differentiated.  Their coefficients are
antisymmetric in the root indices $j,r$.
For $N=2$ there is only one root variable, and these terms are absent.
\end{remark}

\section{Invariant rectangles in root coordinates}

Using \eqref{eq:root-diffusion-aj}, the $j$-th equation in
\eqref{eq:root-system} can be written in the scalar divergence form
\begin{equation}\label{eq:zj-scalar-form}
 \partial_tz_j-
 \Div\bigl({\mathfrak a_j(z)}\nabla z_j\bigr)
 +W_j(z,\nabla z_{\ne j})\cdot\nabla z_j=0,
\end{equation}
where
\begin{equation}\label{eq:zj-drift}
 W_j(z,\nabla z_{\ne j})
 =\sum_{k\ne j}C_{jk}(z)\nabla z_k.
\end{equation}
Thus every lower-order term in the $j$-th equation contains the factor
$\nabla z_j$.  On a compact root range in $\cZ$, $\mathfrak a_j$ is
uniformly positive and the coefficients $C_{jk}$ are bounded.  This scalar
form yields the componentwise comparison argument below.

\begin{theorem}[Forward-invariant root rectangle]\label{thm:root-rectangle}
Let $\Omega\subset\mathbb R^d$ be a bounded domain of class $C^2$, and let
$z\in C([0,T);C^1(\overline\Omega;\mathbb R^{N-1}))$ take values in the
{interlacing box $\cZ$.  Assume that there exists a fixed $p>d+2$ such that, for every
$0<\tau<T'<T$,}
\[
 z\in W^{1,p}(\tau,T';L^p(\Omega))
     \cap L^p(\tau,T';W^{2,p}(\Omega)),
\]
that the root system \eqref{eq:root-system} holds distributionally on that
strip, and that $\partial_\nu z_j=0$ in the Sobolev trace sense.  Classical
solutions and the positive-time strong solutions constructed below are
included.  Assume that the initial root variables satisfy
\begin{equation}\label{eq:initial-root-rectangle}
        a_j\le z_j(0,x)\le b_j,
        \qquad x\in\overline\Omega,
        \qquad j=1,\ldots,N-1,
\end{equation}
where
\begin{equation}\label{eq:rectangle-inside}
        g_j<a_j\le b_j<g_{j+1}.
\end{equation}
Then
\begin{equation}\label{eq:rectangle-invariance}
        a_j\le z_j(t,x)\le b_j,
        \qquad (t,x)\in[0,T)\times\overline\Omega,
        \qquad j=1,\ldots,N-1.
\end{equation}
Moreover, with
\[
 \underline z_j(t):=\min_{\overline\Omega}z_j(t,\cdot),
 \qquad
 \overline z_j(t):=\max_{\overline\Omega}z_j(t,\cdot),
\]
one has, for $0\le t_1\le t_2<T$,
\begin{equation}\label{eq:root-extrema-monotonicity}
 g_j<\underline z_j(t_1)\le\underline z_j(t_2)
 \le\overline z_j(t_2)\le\overline z_j(t_1)<g_{j+1}.
\end{equation}
In particular, each coordinatewise oscillation
$\overline z_j(t)-\underline z_j(t)$ is nonincreasing.
Consequently, $y(t,x)$ remains in a compact subset of the open simplex.
\end{theorem}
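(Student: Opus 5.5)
The plan is to treat each root equation in the scalar form \eqref{eq:zj-scalar-form} as a linear scalar parabolic equation for $z_j$. In it the coefficients $\mathfrak a_j(z)$ and the drift $W_j(z,\nabla z_{\ne j})$ are regarded as given functions of $(t,x)$. The key structural facts are these. First, every lower-order term carries the factor $\nabla z_j$. Second, $\mathfrak a_j$ is positive on $\cZ$. Third, by \eqref{eq:general-root-gradient-trace}, $\mathfrak a_j$ does not depend on $z_j$, so expanding the divergence produces no $|\nabla z_j|^2$ term. Since $z$ takes values in $\cZ$ and $z\in C([0,T);C^1)$, on every compact time interval $[0,T']$ the roots stay in a compact subset of $\cZ$. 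Hence $\mathfrak a_j$ is bounded above and below there, and $W_j$ is bounded, with $|C_{jk}|$ bounded because $|z_j-z_k|$ is bounded below by the gaps $g_{j+1}-g_j$ on compact subsets of $\cZ$.

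The core step is a weak maximum principle proved by Stampacchia truncation. Fix $0<\tau<T'<T$ and a level $m\ge\overline z_j(\tau)$, and set $w:=(z_j-m)_+$. Using the strong regularity on $[\tau,T']$, the Neumann condition, and the Bochner--Sobolev chain rule for $(\cdot)_+^2$, I test the equation with $w$ to get
\[
\frac12\frac{d}{dt}\int_\Omega w^2+\int_\Omega \mathfrak a_j|\nabla w|^2=-\int_\Omega w\,W_j\cdot\nabla w .
\]
Young's inequality together with $\|W_j\|_\infty\le M$ on the strip gives $\frac{d}{dt}\int w^2\le C\int w^2$. Since $w(\tau)=0$, Gronwall yields $w\equiv0$ on $[\tau,T']$. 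The same argument applied to $(m'-z_j)_+$ with $m'\le\underline z_j(\tau)$ gives the lower bound.

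From this I get \eqref{eq:root-extrema-monotonicity} for $0<t_1\le t_2$ by taking $\tau=t_1$. Continuity of $z$ in $C(\overline\Omega)$ at $t=0$ lets me pass to $t_1\to0$: I apply the argument on $[\tau,T']$ with $m=\overline z_j(\tau)$ and let $\tau\downarrow0$, using $\overline z_j(\tau)\to\overline z_j(0)\le b_j$. This yields \eqref{eq:rectangle-invariance} and the monotonicity of the oscillations. Finally, the roots then lie in the compact rectangle $\prod_j[a_j,b_j]\subset\cZ$. By \eqref{eq:y-inverse-roots}, every $y_i(z)$ is a continuous positive function there, so it has a positive minimum, and therefore $y$ stays in a compact subset of $\cS^\circ$.

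I expect the main obstacle to be justifying the boundedness of $W_j$ and the truncation calculus rigorously. Boundedness of $\nabla z_k$ on $[\tau,T']\times\overline\Omega$ follows from $z\in C([0,T);C^1)$ on compact time intervals, and this gives $W_j\in L^\infty$. If that were unavailable, I would instead use $p>d+2$ and the embedding of the maximal-regularity class into $C(C^1)$ to get $\nabla z\in L^\infty$. The chain rule for $t\mapsto\int(z_j-m)_+^2$ in $W^{1,p}(L^p)$ is standard, and the boundary term vanishes because $\partial_\nu z_j=0$ in the trace sense.
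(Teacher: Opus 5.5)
Your proposal is correct and is essentially the paper's proof: Stampacchia truncation by $(z_j-m)_\pm$, absorption of the drift $W_j\cdot\nabla z_j$ by Young's inequality, Gronwall from a positive restart time, and positivity of the inverse root map \eqref{eq:y-inverse-roots} on the compact rectangle. The only difference is organizational. The paper wraps the comparison step in a first-exit-time argument from an enlarged rectangle $R_\delta$ and then lets $\delta\downarrow0$, whereas you observe that continuity of $z$ into $C^1(\overline\Omega)$ with values in the open box $\cZ$ already gives a compact range and bounded coefficients on each strip $[\tau,T']$, and you reach $t_1=0$ by letting $\tau\downarrow0$; this is a legitimate shortcut. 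One small correction: for adjacent roots, the lower bound on $z_{j+1}-z_j$ comes from the distance of that compact range to the wall $g_{j+1}$, not from a gap $g_{j+1}-g_j$.
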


\begin{proof}
Fix a sufficiently small $\delta>0$ such that the enlarged rectangle
\[
 R_\delta:=\prod_{j=1}^{N-1}[a_j-\delta,b_j+\delta]
 \Subset\prod_{j=1}^{N-1}(g_j,g_{j+1}).
\]
Let $T_\delta$ be the first exit time of $z$ from $R_\delta$, with the
convention $T_\delta=T$ if no exit occurs.  The initial range has distance at
least $\delta$ from $\partial R_\delta$; hence continuity at $t=0$ gives
$T_\delta>0$.

Suppose for contradiction that $T_\delta<T$.  By continuity, there is
$\tau\in(0,T_\delta)$ such that
\begin{equation}\label{eq:strict-inner-rectangle-at-tau}
 a_j-\frac\delta2\le z_j(\tau,x)\le b_j+\frac\delta2,
 \qquad x\in\overline\Omega,
 \qquad j=1,\ldots,N-1.
\end{equation}
Fix $T'\in(\tau,T_\delta)$.  On the compact strip
$[\tau,T']\times\overline\Omega$ {the root equation holds in the stated strong Sobolev class}, and its
range lies in $R_\delta$.  In \eqref{eq:zj-scalar-form}, the coefficient
{$\mathfrak a_j$} is uniformly positive on $R_\delta$, and the drift $W_j$
is bounded on the strip because $z\in C([\tau,T'];C^1)$ and all $C_{jk}$ are
bounded on $R_\delta$.

We record the comparison step directly in the strong class.  Put
$M_j=b_j+\delta/2$ and $r=(z_j-M_j)_+$.  Testing
\eqref{eq:zj-scalar-form} by $r$ (equivalently, using the standard Lipschitz
truncation approximation) and the homogeneous Neumann trace gives for almost
every time
\[
 \frac12\frac{\mathrm d}{\mathrm dt}\|r\|_2^2
 +a_*\|\nabla r\|_2^2
 \le \|W_j\|_\infty\int_\Omega r|\nabla r|
 \le \frac{a_*}{2}\|\nabla r\|_2^2+C\|r\|_2^2,
\]
where $a_*>0$ is the minimum of {$\mathfrak a_j$} on $R_\delta$.  Since
$r(\tau)=0$ by \eqref{eq:strict-inner-rectangle-at-tau}, Gronwall's lemma
gives $r\equiv0$.  Applying the same argument to
$(a_j-\delta/2-z_j)_+$ gives the lower bound.  Thus
\[
 a_j-\frac\delta2\le z_j(t,x)\le b_j+\frac\delta2,
 \qquad (t,x)\in[\tau,T']\times\overline\Omega.
\]
All integrations are legitimate for the stated maximal-regularity class:
$z_{j,t},D^2z_j\in L^p$, $\nabla z_j\in L^\infty$, and the truncation belongs
to the corresponding energy class.  Hence no classical comparison theorem
is being imported here.
Letting $T'\uparrow T_\delta$ and using continuity shows that the range at
$t=T_\delta$ still has distance at least $\delta/2$ from
$\partial R_\delta$.  This contradicts the definition of the first exit time.
Thus $T_\delta=T$.

The conclusion holds for every sufficiently small $\delta>0$.  Letting
$\delta\downarrow0$ gives \eqref{eq:rectangle-invariance}.

To prove \eqref{eq:root-extrema-monotonicity}, fix
$0\le t_1\le t_2<T$ and restart the preceding invariant-rectangle argument
at time $t_1$, using the actual extrema
$\underline z_j(t_1)$ and $\overline z_j(t_1)$ as the rectangle endpoints.
For $t_1>0$ the shifted solution satisfies the same strong root hypotheses; for
$t_1=0$ the preceding argument applies directly.  Continuity and strict
interlacing on the compact spatial domain place these extrema strictly inside
$(g_j,g_{j+1})$.  The restarted comparison therefore gives
\eqref{eq:root-extrema-monotonicity}.

Finally, the inverse formula \eqref{eq:y-inverse-roots} is continuous and
strictly positive on the compact rectangle $\prod_j[a_j,b_j]$.  Hence
\[
 \min_{1\le i\le N}\min_{z\in\prod_j[a_j,b_j]}y_i(z)>0,
\]
which proves the last assertion.
\end{proof}

\begin{corollary}[Uniform positivity along strong solutions]\label{cor:uniform-positivity-classical}
If $y(0,\cdot)$ is uniformly separated from the boundary of the simplex, then every solution covered by \Cref{thm:root-rectangle} admits $c>0$, depending on the initial upper and lower bounds for the root variables, such that
\[
        y_i(t,x)\ge c,
        \qquad i=1,\ldots,N,
\]
for all times for which the strong solution exists.
\end{corollary}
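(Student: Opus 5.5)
The corollary follows by chaining the root-coordinate chart with the invariant-rectangle theorem, so the plan is short.

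First, translate the hypothesis on $y(0,\cdot)$ into a compact interlacing rectangle for the initial roots. Assume $y_i(0,x)\ge c_0>0$ for all $i$ and $x\in\overline\Omega$. Then the initial composition field takes values in the compact set
\[
K_0:=\{y\in\overline{\cS}:\ y_i\ge c_0\ \text{for all } i\}\subset\cS^\circ .
\]
By \Cref{thm:root-diffeo}, $y\mapsto z(y)$ is continuous (indeed real-analytic) on $\cS^\circ$ with values in $\cZ$. Hence $z(K_0)$ is a compact subset of the open box $\cZ=\prod_j(g_j,g_{j+1})$. Consequently, for each $j$, the numbers
\[
a_j:=\min_{K_0} z_j,\qquad b_j:=\max_{K_0} z_j
\]
satisfy $g_j<a_j\le b_j<g_{j+1}$. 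One can equally take $a_j,b_j$ to be the actual extrema of $z_j(0,\cdot)$, which is what "initial upper and lower bounds for the root variables" refers to. Either way, \eqref{eq:initial-root-rectangle}--\eqref{eq:rectangle-inside} hold.

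Second, apply \Cref{thm:root-rectangle}. Any solution covered by that theorem takes values in $\prod_j[a_j,b_j]$ for all $t$ in its existence interval $[0,T)$. Then use the explicit inverse \eqref{eq:y-inverse-roots}. The numerator $\prod_j(g_i-z_j)$ and the denominator $\prod_{m\ne i}(g_i-g_m)$ have the same sign on the closed rectangle, because there $g_j<z_j<g_{j+1}$ holds strictly. So each $y_i(z)$ is a continuous, strictly positive function on the compact set $\prod_j[a_j,b_j]$. Setting
\[
c:=\min_i\min_{z\in\prod_j[a_j,b_j]} y_i(z)>0
\]
gives the claimed bound. By construction $c$ depends only on the $a_j,b_j$, and it is independent of $t$ and $T$.

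There is no genuine obstacle here; everything is contained in the last assertion of \Cref{thm:root-rectangle}. The only point needing care is that uniform separation from $\partial\cS$ really does place the initial roots strictly inside each gap, uniformly in $x$. This is exactly the compactness of $z(K_0)$ in the open box from the first step, which relies on continuity of the chart on $\cS^\circ$ rather than on $\overline{\cS}$.
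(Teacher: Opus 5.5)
Your proposal is correct and follows essentially the paper's route. The corollary is the last assertion of \Cref{thm:root-rectangle}: invariance of the compact initial root rectangle, followed by continuity and strict positivity of the inverse root map \eqref{eq:y-inverse-roots} on that rectangle. Your compactness argument, which turns uniform positivity of $y(0,\cdot)$ into a compact initial rectangle inside $\cZ$, just makes explicit a step the paper leaves implicit.
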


\begin{remark}[Boundary separation and continuation]
The corollary excludes approach to the boundary of the simplex on the maximal
strong-existence interval.  Thus finite-time continuation can fail only through
loss of the regularity bounds required by the local theory.  The regularity and
continuation estimates developed below exclude this alternative for the
additive class.
\end{remark}

\section{Affine moment coordinates and the closed moment system}

The root variables are adapted to comparison, but the global regularity
argument also needs affine coordinates that preserve divergence form.  Both
coordinate systems are encoded by the same scalar resolvent function.  Indeed,
for $|z|>g_N$,
\begin{equation}\label{eq:resolvent-moment-expansion}
 m_y(z)=\one^{\sf T}(G-zI)^{-1}y
       =-\sum_{\ell=0}^{\infty}\frac{U_\ell}{z^{\ell+1}},
 \qquad
 U_\ell:=\one^{\sf T}G^\ell y=\sum_{i=1}^Ng_i^\ell y_i,
 \qquad U_0=1.
\end{equation}
Together with $m_y=-Q_y/P$, this says that the roots are the zeros of the
numerator $Q_y$ of the scalar resolvent function, while the moments are its
Laurent coefficients at infinity.

\subsection{Moment map}

Define
\begin{equation}\label{eq:U-moments}
        U_\ell:=\sum_{i=1}^N g_i^\ell y_i,
        \qquad \ell=1,\ldots,N-1,
\end{equation}
and set $U_0:=1$.  Thus $U_1=s$.  Let
\begin{equation}\label{eq:moment-vertices}
        \Gamma_i:=(g_i,g_i^2,\ldots,g_i^{N-1})^{\sf T}\in\R^{N-1},
\end{equation}
and
\begin{equation}\label{eq:moment-simplex}
        \cT:=\co\{\Gamma_1,\ldots,\Gamma_N\}\subset\R^{N-1}.
\end{equation}
The vertices are affinely independent by the Vandermonde determinant.

\begin{proposition}[Affine moment bijection]\label{prop:moment-bijection}
The map $y\mapsto U=(U_1,\ldots,U_{N-1})$ is an affine bijection from the
concentration simplex onto $\cT$.  If
\begin{equation}\label{eq:Lagrange-polynomial}
        L_i(\xi):=\prod_{m\ne i}\frac{\xi-g_m}{g_i-g_m}
        =\sum_{\ell=0}^{N-1}\ell_{i\ell}\xi^\ell,
\end{equation}
then its inverse is
\begin{equation}\label{eq:y-inverse-moments}
        y_i(U)=\sum_{\ell=0}^{N-1}\ell_{i\ell}U_\ell,
        \qquad U_0=1.
\end{equation}
\end{proposition}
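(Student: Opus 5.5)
The map $y\mapsto U$ is linear in $y$. On the concentration simplex it can be written as $U=\sum_i y_i\Gamma_i$ with $\sum_i y_i=1$, because $U_\ell=\sum_i g_i^\ell y_i$ is exactly the $\ell$-th component of $\sum_i y_i\Gamma_i$. The plan is therefore to treat it as the barycentric-coordinate map of the simplex $\cT$ and to read off the inverse from Lagrange interpolation.

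\emph{Surjectivity and image.} The image of $\overline{\cS}$ under $y\mapsto\sum_iy_i\Gamma_i$ is, by definition, the set of convex combinations of the $\Gamma_i$, that is, $\co\{\Gamma_1,\ldots,\Gamma_N\}=\cT$. The same map sends $\cS^\circ$ onto the relative interior of $\cT$. This step is immediate.

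\emph{Injectivity.} Suppose $\sum_iy_i\Gamma_i=\sum_iy_i'\Gamma_i$ with $\sum_iy_i=\sum_iy_i'=1$. Put $w=y-y'$. Then $\sum_i g_i^\ell w_i=0$ for $\ell=0,\ldots,N-1$, where the case $\ell=0$ is $\sum_i w_i=0$. The coefficient matrix of this system is the Vandermonde matrix $(g_i^\ell)$, which is invertible because the $g_i$ are pairwise distinct by \eqref{eq:g-ordering}. Hence $w=0$. This is the same fact as the affine independence of the vertices already noted before the statement.

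\emph{Explicit inverse.} For $U$ in the image, write $U=\sum_m y_m\Gamma_m$ and set $U_0=1$. Then
\[
\sum_{\ell=0}^{N-1}\ell_{i\ell}U_\ell=\sum_m y_m\sum_{\ell}\ell_{i\ell}g_m^\ell=\sum_m y_mL_i(g_m)=y_i,
\]
using the interpolation property $L_i(g_m)=\delta_{im}$. This proves \eqref{eq:y-inverse-moments}. Equivalently, the matrix $(\ell_{i\ell})$ is the inverse of the Vandermonde matrix.

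The only point needing care is the interpretation of ``affine bijection.'' The map is affine on the affine hyperplane $\one^{\sf T}y=1$, and the formula \eqref{eq:y-inverse-moments} is affine in $(U_1,\ldots,U_{N-1})$ because $U_0$ is fixed at $1$. Beyond checking this, there is no real obstacle.
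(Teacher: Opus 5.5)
Your proof is correct and follows the paper's argument. The paper also reads off the inverse from the identity $\sum_i y_i p(g_i)=\sum_{\ell=0}^{N-1}p_\ell U_\ell$ with $p=L_i$ (your computation using $L_i(g_m)=\delta_{im}$ and $U_0=\sum_m y_m=1$) and gets bijectivity onto $\cT$ from the Vandermonde determinant; your barycentric description of the image just makes surjectivity explicit where the paper leaves it implicit.
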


\begin{proof}
For every polynomial $p(\xi)=\sum_{\ell=0}^{N-1}p_\ell\xi^\ell$ of degree at
most $N-1$,
\[
        \sum_i y_ip(g_i)=\sum_{\ell=0}^{N-1}p_\ell U_\ell.
\]
Taking $p=L_i$ gives \eqref{eq:y-inverse-moments}.  The Vandermonde
determinant gives affine independence and hence bijectivity.
\end{proof}

We use the constant derivative of the affine moment map as standing notation:
\begin{equation}\label{eq:moment-linear-isomorphisms}
 \mathsf L:E\longrightarrow\mathbb R^{N-1},\qquad
 \mathsf Lv=\left(\sum_{i=1}^N g_i^\ell v_i\right)_{\ell=1}^{N-1},
 \qquad
 \mathsf R:=\mathsf L^{-1}.
\end{equation}
Thus $dU=\mathsf L\,dy$ and $dy=\mathsf R\,dU$.

\subsection{Characteristic polynomial and moments}

Let $e_r=e_r(g_1,\ldots,g_N)$ be the elementary symmetric polynomial of
degree $r$, with $e_0=1$.

\begin{proposition}[Characteristic coefficients and moment variables]
\label{prop:characteristic-moment-coordinates}
The characteristic polynomial of the compression has the representation
\begin{equation}\label{eq:Q-moment-coefficients}
        Q_y(z)
        =\det_E(zI_E-\mathcal C_y)
        =\det_{E_y}(zI_{E_y}-\mathcal A_y)
        =\sum_{r=0}^{N-1}(-1)^r c_r(U)z^{N-1-r},
\end{equation}
where
\begin{equation}\label{eq:cr-moment-formula}
        c_r(U)=\sum_{\ell=0}^r(-1)^\ell e_{r-\ell}U_\ell,
        \qquad r=0,\ldots,N-1.
\end{equation}
Hence every characteristic coefficient is affine in
$U_1,\ldots,U_{N-1}$.
\end{proposition}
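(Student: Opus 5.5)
The first two equalities in \eqref{eq:Q-moment-coefficients} are already contained in \eqref{eq:Q-characteristic}. So the plan is only to compute the coefficients of
\[
Q_y(z)=\sum_{i=1}^N y_i\prod_{m\ne i}(z-g_m)
\]
from \eqref{eq:Qy-def}, and to express them through the moments $U_\ell$.

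The key step is a generating-function expansion of each factor $\prod_{m\ne i}(z-g_m)$. Write $P(z)=\prod_m(z-g_m)=\sum_{k=0}^N(-1)^ke_kz^{N-k}$. Since $g_i$ is a root of $P$, polynomial division gives
\[
\prod_{m\ne i}(z-g_m)=\frac{P(z)}{z-g_i}.
\]
For $|z|>g_i$ expand $(z-g_i)^{-1}=\sum_{\ell\ge0}g_i^\ell z^{-\ell-1}$ and multiply by $P(z)$. Because the quotient is a polynomial, only the nonnegative powers of $z$ survive. The coefficient of $z^{N-1-r}$ in the product is
\[
\sum_{k+\ell=r}(-1)^ke_kg_i^\ell .
\]
This gives the elementary identity
\[
\prod_{m\ne i}(z-g_m)=\sum_{r=0}^{N-1}z^{N-1-r}\sum_{\ell=0}^r(-1)^{r-\ell}e_{r-\ell}\,g_i^\ell .
\]
As a consistency check, at $r=N$ the same sum equals $(-1)^N\,g_i^{-1}\cdots$ and vanishes, which is exactly $P(g_i)=0$. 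Alternatively, the identity can be verified directly by induction on $N$.

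Next, multiply the identity by $y_i$ and sum over $i$. Using $\sum_i y_ig_i^\ell=U_\ell$ and $U_0=\sum_iy_i=1$, the coefficient of $z^{N-1-r}$ in $Q_y$ becomes
\[
\sum_{\ell=0}^r(-1)^{r-\ell}e_{r-\ell}U_\ell=(-1)^r\sum_{\ell=0}^r(-1)^\ell e_{r-\ell}U_\ell=(-1)^rc_r(U).
\]
This is \eqref{eq:Q-moment-coefficients} with $c_r$ given by \eqref{eq:cr-moment-formula}. It is affine in $U_1,\dots,U_{N-1}$ because only $\ell\le r\le N-1$ occurs. A cross-check is available: by \eqref{eq:resolvent-moment-expansion}, the coefficient identity is equivalent to the Laurent identity $Q_y=-Pm_y$, truncated to its polynomial part.

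The only real obstacle is sign bookkeeping. The convention $(-1)^r c_r$ together with the inner sign $(-1)^\ell$ has to combine correctly to $(-1)^{r-\ell}$. I will confirm this on $r=1$. There $c_1=e_1-U_1$, which matches the trace identity \eqref{eq:general-root-trace-identity}: $\sum_jz_j=e_1-s$.
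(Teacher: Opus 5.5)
Your argument is correct and is essentially the paper's proof: the paper compares coefficients of $z^{N-1-r}$ in $Q_y=P\sum_{\ell\ge0}U_\ell z^{-\ell-1}$, obtained from $m_y=-Q_y/P$ and the Laurent expansion of $m_y$, and your expansion of each $P(z)/(z-g_i)$ followed by weighting with $y_i$ is that same computation, carried out before summing over $i$. The only blemish is your garbled $r=N$ consistency check, whose coefficient is simply $\sum_{k=0}^N(-1)^ke_kg_i^{N-k}=P(g_i)=0$; the check is not needed anyway, since uniqueness of Laurent coefficients already identifies the polynomial part.
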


\begin{proof}
By \eqref{eq:m-Q-over-P} and \eqref{eq:resolvent-moment-expansion},
\[
 \frac{Q_y(z)}{P(z)}
 =\sum_{\ell=0}^{\infty}\frac{U_\ell}{z^{\ell+1}}
 \qquad(|z|>g_N).
\]
Write $P(z)=\sum_{r=0}^N(-1)^re_rz^{N-r}$ and compare the coefficient of
$z^{N-1-r}$ in
$Q_y=P\sum_{\ell\ge0}U_\ell z^{-\ell-1}$.  This gives
\eqref{eq:cr-moment-formula}.
\end{proof}

Thus the roots and moments are complementary representations of the same
finite-dimensional spectral data: the former are the characteristic roots,
the latter affine coordinates for the characteristic coefficients.

\subsection{Closed moment system by operator conjugation}

Define the moment diffusion matrix directly from the constrained
Maxwell--Stefan inverse by
\begin{equation}\label{eq:AN-constrained-inverse-identity}
 A_N(U):=-\mathsf L\bigl(B(y(U))|_E\bigr)^{-1}\mathsf R.
\end{equation}
This formula is the affine-coordinate representation of the positive diffusion
operator $-\bigl(B(y)|_E\bigr)^{-1}$.

For the moment flux set
\begin{equation}\label{eq:F-ell-def}
        F:=\mathsf LJ,
        \qquad
        F_\ell=\sum_{i=1}^N g_i^\ell J_i.
\end{equation}

\begin{theorem}[Closed moment fluxes]\label{thm:moment-fluxes}
Let $y$ be a sufficiently smooth composition field with values in
$\overline{\cS}$, let $J$ satisfy $\one^{\sf T}J=0$ and
$\nabla y=B(y)J$, and let $U$ be its moment vector.  Then
\begin{equation}\label{eq:F-ell-formula}
        F=-A_N(U)\nabla U.
\end{equation}
If the species balances hold, then
\begin{equation}\label{eq:moment-system}
        \partial_tU-\Div\bigl(A_N(U)\nabla U\bigr)=0.
\end{equation}
Moreover, $A_N$ extends real-analytically to a neighbourhood of the closed
moment simplex $\cT$ and remains invertible there.
\end{theorem}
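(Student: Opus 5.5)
The proof is a direct conjugation argument built on the constant linear maps $\mathsf L$ and $\mathsf R$ from \eqref{eq:moment-linear-isomorphisms} and the explicit inverse of \Cref{prop:flux-inverse}.

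\emph{Flux formula.} Fix a spatial direction $k$. Since $\one^{\sf T}J=0$, the column $J^k$ lies in $E$. Since $\sum_iy_i=1$, also $\partial_{x_k}y\in E$. By \Cref{prop:flux-inverse} and \Cref{cor:inverse-rank-one}, $B(y)|_E$ is invertible for every $y\in\overline{\cS}$, so the law $\partial_{x_k}y=B(y)J^k$ can be solved as $J^k=(B(y)|_E)^{-1}\partial_{x_k}y$.

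Because $U=\mathsf L y$ up to the constant $U_0$-component, and $\mathsf L$ is constant, we have $\partial_{x_k}U=\mathsf L\,\partial_{x_k}y$, hence $\partial_{x_k}y=\mathsf R\,\partial_{x_k}U$. Applying $\mathsf L$ to $J^k$ then gives
\[
F^k=\mathsf LJ^k=\mathsf L(B(y)|_E)^{-1}\mathsf R\,\partial_{x_k}U=-A_N(U)\partial_{x_k}U ,
\]
using the definition \eqref{eq:AN-constrained-inverse-identity}. This is \eqref{eq:F-ell-formula}.

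\emph{Moment system.} Multiply the species balances \eqref{eq:balance} by $g_i^\ell$ and sum over $i$. Since the weights are constant, this gives $\partial_tU_\ell+\Div F_\ell=0$ for $\ell=1,\ldots,N-1$. Inserting the flux formula yields \eqref{eq:moment-system}. The smoothness assumed on $y$ is only what the chain rule for the constant linear map $\mathsf L$ requires; no nonlinear chain rule appears, which is the point of using affine coordinates.

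\emph{Analytic extension and invertibility.} This is the only step with any content, and it is still mild. By \Cref{prop:moment-bijection}, $U\mapsto y(U)$ is affine, so it extends to an affine isomorphism from $\R^{N-1}$ onto the hyperplane $\one^{\sf T}y=1$. It maps a neighbourhood of $\cT$ onto a neighbourhood of $\overline{\cS}$ in that hyperplane. By \Cref{cor:inverse-rank-one}, $y\mapsto(B(y)|_E)^{-1}$ is real-analytic on such a neighbourhood, since it is rational with denominators $g_i+s$ and $R_0(y)$ bounded away from zero on $\overline{\cS}$. These bounds persist on a small enough neighbourhood by continuity. Composing with the constant maps $\mathsf L,\mathsf R$ then makes $A_N$ real-analytic near $\cT$.

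Invertibility follows in the same way. $A_N(U)$ is conjugate, through the isomorphisms $\mathsf L$ and $\mathsf R$, to $-(B(y)|_E)^{-1}$, whose inverse is $-B(y)|_E$. Hence $A_N(U)^{-1}=-\mathsf L B(y(U))|_E\mathsf R$, which exists wherever $(B(y)|_E)^{-1}$ does.

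The one point needing care is that $B(y)$ maps $E$ into $E$ for $y$ slightly off the simplex but on the hyperplane. This holds because $\one^{\sf T}B(y)=0$ is an identity in $y$ for symmetric $f_{ij}$. The formula \eqref{eq:B-row-additive} then shows that $B(y)|_E+sI$ is still a compression, and near $\overline{\cS}$ its invertibility is again given by the resolvent formula with $m_y(-s)=R_0(y)>0$.
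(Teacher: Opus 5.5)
Your proposal is correct and follows essentially the same route as the paper. The paper also obtains $F=-A_N(U)\nabla U$ by applying $\mathsf L$ to $J=(B(y)|_E)^{-1}\mathsf R\nabla U$, gets the moment system by applying $\mathsf L$ to the species balances, and deduces analyticity and invertibility from the affine inverse moment map and \Cref{prop:flux-inverse}. Your explicit inverse $A_N(U)^{-1}=-\mathsf L\,B(y(U))|_E\,\mathsf R$ and your check that $\one^{\sf T}B(y)=0$ holds off the simplex spell out details the paper leaves implicit.
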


\begin{proof}
Since $\nabla y=\mathsf R\nabla U$ and
$J=(B(y)|_E)^{-1}\mathsf R\nabla U$, applying $\mathsf L$ gives
\eqref{eq:F-ell-formula} directly from
\eqref{eq:AN-constrained-inverse-identity}.  Applying $\mathsf L$ to the
species balances gives \eqref{eq:moment-system}.  The analytic extension and
invertibility follow from the affine inverse moment map and
\Cref{prop:flux-inverse}.
\end{proof}

\begin{proposition}[Operator similarity and spectrum of the moment diffusion]
\label{prop:moment-PDE-spectrum}
\label{prop:moment-relaxation-similarity}
Let $y\in\cS^\circ$ and let $U$ be its moment vector.  Multiplication by
$Y=\diag(y_1,\ldots,y_N)$ is an isomorphism $Y:E_y\to E$, and
\begin{equation}\label{eq:B-inverse-relaxation-similarity}
  \bigl(B(y)|_E\bigr)^{-1}
  =-Y\bigl(\mathcal L_y|_{E_y}\bigr)^{-1}Y^{-1}.
\end{equation}
Consequently, with
\[
        \mathfrak S_y:=\mathsf LY:E_y\to\mathbb R^{N-1},
\]
\begin{equation}\label{eq:AN-relaxation-inverse-similarity}
  A_N(U)
  =\mathfrak S_y\bigl(\mathcal L_y|_{E_y}\bigr)^{-1}\mathfrak S_y^{-1}.
\end{equation}
Hence
\begin{equation}\label{eq:spectrum-AN-reciprocal}
        \sigma(A_N(U))=
        \left\{\lambda_1^{-1},\ldots,\lambda_{N-1}^{-1}\right\}.
\end{equation}
If $\mathsf J_U(z):=D_zU(z)$, then
\begin{equation}\label{eq:AN-root-Jacobian-diagonalization}
        A_N(U(z))\mathsf J_U(z)
        =\mathsf J_U(z)\operatorname{diag}
        (\lambda_1^{-1},\ldots,\lambda_{N-1}^{-1}).
\end{equation}
Equivalently, for $p_j(U):=\nabla_Uz_j(U)$,
\begin{equation}\label{eq:left-eigenvectors-moment-A}
        p_j(U)^{\sf T}A_N(U)=\lambda_j^{-1}p_j(U)^{\sf T}.
\end{equation}
\end{proposition}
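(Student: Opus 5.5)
The plan is to derive everything from the relation \eqref{eq:BY-minus-YL}, $B(y)Y=-Y\mathcal L_y$, and then conjugate through the constant moment map. For $y\in\cS^\circ$ the map $Y:E_y\to E$ is an isomorphism: $\one\cdot Yu=y\cdot u$, and $Y$ is invertible. By \Cref{prop:diagonal-barycentric-dissipation} together with \eqref{eq:Ly-Ay-shift}, the operator $\mathcal L_y$ maps $E_y$ into itself. Restricting \eqref{eq:BY-minus-YL} to $E_y$ gives
\[
B(y)|_E=-Y\,\mathcal L_y|_{E_y}\,Y^{-1}.
\]
Invertibility of $\mathcal L_y|_{E_y}$ follows from \eqref{eq:spectrum-A-L}, since all $\lambda_j=s+z_j>0$. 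Inverting both sides yields \eqref{eq:B-inverse-relaxation-similarity}.

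Inserting this into the definition \eqref{eq:AN-constrained-inverse-identity} gives
\[
A_N=\mathsf LY(\mathcal L_y|_{E_y})^{-1}Y^{-1}\mathsf R .
\]
Since $\mathsf R=\mathsf L^{-1}$, we have $Y^{-1}\mathsf R=(\mathsf LY)^{-1}=\mathfrak S_y^{-1}$. Here $\mathfrak S_y$ is an isomorphism as a composition of the isomorphisms $Y:E_y\to E$ and $\mathsf L:E\to\R^{N-1}$. This proves \eqref{eq:AN-relaxation-inverse-similarity}. The spectrum \eqref{eq:spectrum-AN-reciprocal} is then read off from \eqref{eq:spectrum-A-L} by similarity and inversion.

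For the Jacobian identity \eqref{eq:AN-root-Jacobian-diagonalization}, use that $U$ is affine in $y$. Then $\mathsf J_U(z)e'_j=\mathsf L\,\partial_{z_j}y=\mathsf L e_j$, where $e'_j$ is the $j$-th standard basis vector and $e_j$ is the root frame \eqref{eq:root-coordinate-frame}. \Cref{prop:modal-inverse} gives $(B(y)|_E)^{-1}e_j=-\lambda_j^{-1}e_j$, because $B(y)e_j=-\lambda_je_j$. Hence
\[
A_N\mathsf Le_j=-\mathsf L(B|_E)^{-1}e_j=\lambda_j^{-1}\mathsf Le_j,
\]
which is \eqref{eq:AN-root-Jacobian-diagonalization} column by column.

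For \eqref{eq:left-eigenvectors-moment-A}, note that $\mathsf J_U(z)$ is invertible because $z\mapsto U$ is a diffeomorphism by \Cref{thm:root-diffeo,prop:moment-bijection}. Moreover $\mathsf J_U^{-1}=D_Uz$, whose rows are $p_j^{\sf T}$. Multiplying \eqref{eq:AN-root-Jacobian-diagonalization} on both sides by $\mathsf J_U^{-1}$ gives
\[
\mathsf J_U^{-1}A_N=\operatorname{diag}(\lambda_j^{-1})\,\mathsf J_U^{-1},
\]
which is the row statement. Alternatively, apply the chain rule to \Cref{lem:spectral-identity} via $p_j^{\sf T}\mathsf L=D_yz_j|_E$.

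The only step needing care is the bookkeeping of domains: $\mathcal L_y$ must be restricted to $E_y$ rather than to $\R^N$, and the identity $Y^{-1}\mathsf R=\mathfrak S_y^{-1}$ must be checked on the correct spaces. There is no genuine analytic obstacle.
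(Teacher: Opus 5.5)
Your proposal is correct and follows the paper's own argument. The similarity comes from restricting $B(y)Y=-Y\mathcal L_y$ to $E_y$ and substituting into the definition of $A_N$; the Jacobian identity comes from $\partial_{z_j}U=\mathsf Le_j$ together with $B(y)e_j=-\lambda_je_j$, followed by dualization. Your extra bookkeeping (invertibility of $\mathcal L_y|_{E_y}$ and $Y^{-1}\mathsf R=\mathfrak S_y^{-1}$) just makes explicit what the paper leaves implicit.
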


\begin{proof}
The identity $B(y)Y=-Y\mathcal L_y$ on $E_y$ gives
\eqref{eq:B-inverse-relaxation-similarity}; substitution into
\eqref{eq:AN-constrained-inverse-identity} gives
\eqref{eq:AN-relaxation-inverse-similarity}.  This proves the spectral
statement.  Alternatively, the $j$-th column of $D_zU$ is
$\mathsf Le_j$, and
\[
 A_N(\mathsf Le_j)
 =-\mathsf L(B|_E)^{-1}e_j
 =\lambda_j^{-1}\mathsf Le_j,
\]
which is \eqref{eq:AN-root-Jacobian-diagonalization}.  Dualizing gives
\eqref{eq:left-eigenvectors-moment-A}.
\end{proof}

The signs in the three representations are summarized by
\begin{equation}\label{eq:sign-consistency-ledger}
 B(y)e_j=-\lambda_je_j,
 \qquad
 B(y)^{\sf T}\nabla_yz_j=-\lambda_j\nabla_yz_j,
 \qquad
 A_N(U)(\mathsf Le_j)=\lambda_j^{-1}\mathsf Le_j.
\end{equation}
Thus the negative constrained Maxwell--Stefan spectrum and the positive moment
diffusion spectrum are reciprocal.

\begin{proposition}[Uniform normal ellipticity up to the closed moment simplex]
\label{prop:closed-simplex-normal-ellipticity}
With
\[
 f_{\min}:=\min_{i\ne j}f_{ij}=g_1+g_2,
 \qquad
 f_{\max}:=\max_{i\ne j}f_{ij}=g_{N-1}+g_N,
\]
one has
\begin{equation}\label{eq:closed-simplex-spectral-bounds}
 \sigma(A_N(U))\subset
 \left[\frac1{f_{\max}},\frac1{f_{\min}}\right],
 \qquad U\in\cT.
\end{equation}
In particular, $A_N$ is uniformly normally elliptic on the whole closed moment
simplex and
\[
 \sup_{U\in\cT}
 \bigl(\|A_N(U)\|+\|A_N(U)^{-1}\|\bigr)<\infty.
\]
\end{proposition}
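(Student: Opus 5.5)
Since the spectrum of $A_N(U)$ consists of the eigenvalues of $-(B(y)|_E)^{-1}$ conjugated by the constant isomorphism $\mathsf L$ (see \eqref{eq:AN-constrained-inverse-identity}), I would bound the spectrum of the relaxation operator $-B(y)|_E$ and then take reciprocals. In the interior this is almost explicit. By \Cref{prop:secular-relaxation-modes} and \eqref{eq:spectrum-AN-reciprocal}, the eigenvalues are $\lambda_j^{-1}$ with $\lambda_j=s+z_j$. By interlacing, $g_j<z_j<g_{j+1}$. Moreover $s=\sum_i g_iy_i\in[g_1,g_N]$, so a cruder bound $\lambda_j\in(g_1+g_1,g_N+g_N)$ is immediate. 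This crude bound is not sharp, so I would refine it through the trace identity \eqref{eq:general-root-gradient-trace}, namely $\lambda_j=e_1-\sum_{k\ne j}z_k$.

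Using the interlacing bounds $z_k>g_k$ for $k<j$ and $z_k>g_{k}$ for $k>j$ gives an upper bound
\[
\lambda_j<e_1-\sum_{k<j}g_k-\sum_{k>j}g_k=g_j+g_N.
\]
Since $j\le N-1$, this is at most $g_{N-1}+g_N=f_{\max}$. Using $z_k<g_{k+1}$ instead gives a lower bound
\[
\lambda_j>e_1-\sum_{k<j}g_{k+1}-\sum_{k>j}g_{k+1}=g_1+g_{j+1}\ge g_1+g_2=f_{\min}.
\]
Hence $\lambda_j\in(f_{\min},f_{\max})$ on $\cS^\circ$, and $\sigma(A_N(U))\subset(1/f_{\max},1/f_{\min})$ for $U$ in the interior of $\cT$.

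To reach the closed simplex, I would use continuity. By \Cref{thm:moment-fluxes}, $A_N$ is real-analytic on a neighbourhood of $\cT$, and the eigenvalues of a continuous matrix family depend continuously on the parameter as a multiset. Each $U\in\partial\cT$ is a limit of interior points, so its eigenvalues are limits of eigenvalues lying in the compact interval $[1/f_{\max},1/f_{\min}]$. This gives \eqref{eq:closed-simplex-spectral-bounds}. Alternatively, one can note that $Q_y=\det(z-\mathcal C_y)$ is affine in $y$, and its roots on the closed simplex are limits of interlacing roots, hence lie in $[g_j,g_{j+1}]$. Either way, the eigenvalues on $\partial\cT$ remain real and positive.

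Normal ellipticity follows because the spectrum lies in $(0,\infty)$, which sits inside the open right half-plane. For the uniform bound I would argue by compactness. The map $U\mapsto\|A_N(U)\|$ is continuous on the compact set $\cT$, so it is bounded. The map $U\mapsto\|A_N(U)^{-1}\|$ is also continuous, because $A_N(U)$ is invertible for every $U\in\cT$ (its eigenvalues are at least $1/f_{\max}>0$), so it is bounded too. The only subtle point is the boundary passage, where $Y$ degenerates and the similarity to a $\langle\cdot,\cdot\rangle_y$-self-adjoint operator is lost. Continuity of the spectrum sidesteps this, since it needs no diagonalizability.
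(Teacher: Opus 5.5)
Your proof is correct. The boundary passage and the norm bounds are the same as in the paper, but the interior spectral bound is obtained differently.

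\textbf{How the paper gets the interior bound.} It never uses the explicit eigenvalues. It combines the weighted variance identity $\tfrac12\sum_{i,j}y_iy_j|u_i-u_j|^2=\|u\|_y^2$ on $E_y$ with $f_{\min}\le f_{ij}\le f_{\max}$ in the Rayleigh dissipation. This gives $f_{\min}\|u\|_y^2\le\langle\mathcal L_yu,u\rangle_y\le f_{\max}\|u\|_y^2$. It then applies min--max to the $\langle\cdot,\cdot\rangle_y$-self-adjoint operator $\mathcal L_y|_{E_y}$ and uses the similarity $A_N=\mathfrak S_y(\mathcal L_y|_{E_y})^{-1}\mathfrak S_y^{-1}$.

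\textbf{How yours differs.} You use the additive spectral structure instead: $\lambda_j=s+z_j=e_1-\sum_{k\ne j}z_k$ together with interlacing. This yields the sharper mode-by-mode localization $g_1+g_{j+1}\le\lambda_j\le g_j+g_N$, and the stated interval follows by monotonicity in $j$.

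\textbf{What each buys.} The paper's argument uses only symmetry and the bounds on $f_{ij}$, so it is the general Maxwell--Stefan estimate and is independent of the root chart. Its quadratic-form bound $\mathcal L_y\le f_{\max}$ is also reused in the Fisher-information corollary. Yours depends on additivity and interlacing, but says more about the individual relaxation modes.

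\textbf{Two small points.} First, your strict inequalities need at least one index $k\ne j$, i.e.\ $N\ge3$. For $N=2$ one has $\lambda_1=g_1+g_2=f_{\min}=f_{\max}$ exactly, so the claim $\lambda_j\in(f_{\min},f_{\max})$ fails there. The closed inclusion, which is all that is needed, holds for every $N$. Second, ``uniform'' normal ellipticity means a uniform sectorial resolvent bound, not just a spectrum in $(0,\infty)$ at each point. The paper derives this by compactness from the uniform spectral interval, and you have all the ingredients for it but do not state it.
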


\begin{proof}
For $u\in E_y$ the weighted variance identity gives
\[
 \frac12\sum_{i,j=1}^N y_iy_j|u_i-u_j|^2=\|u\|_y^2.
\]
Since $f_{\min}\le f_{ij}\le f_{\max}$,
\eqref{eq:rayleigh-general} and \eqref{eq:D-L-inner} imply
\[
 f_{\min}\|u\|_y^2
 \le\langle\mathcal L_yu,u\rangle_y
 \le f_{\max}\|u\|_y^2.
\]
The min--max principle and \eqref{eq:AN-relaxation-inverse-similarity} yield
\eqref{eq:closed-simplex-spectral-bounds} in $\cT^\circ$.  The analytic
extension from \Cref{thm:moment-fluxes} and continuity of characteristic roots
extend the same bound to $\cT$.

For completeness, fix a closed sector
$\overline\Sigma_{\pi-\vartheta}$ with $0<\vartheta<\pi/2$ and normalize
$|\lambda|+|\xi|^2=1$.  By \eqref{eq:closed-simplex-spectral-bounds},
$\lambda+|\xi|^2A_N(U)$ is invertible for every
$U\in\cT$ and every normalized
$(\lambda,\xi)\in\overline\Sigma_{\pi-\vartheta}\times\mathbb R^d$.
Compactness and continuity of inversion give a uniform resolvent bound;
parabolic scaling yields the usual uniform sectorial estimate.  The final
operator-norm bound follows from continuity of $A_N$ and $A_N^{-1}$ on the
compact simplex.
\end{proof}

\begin{remark}[Boundary behavior of ellipticity and entropy coercivity]
\label{rem:boundary-spectrum-versus-entropy}
Vanishing species do not cause spectral degeneration of the affine moment
diffusion matrix.  At the concentration boundary, however, the entropy Hessian
$H(U)$ introduced in \eqref{eq:H-formula} becomes unbounded in directions
associated with vanishing species, and the interior root coordinates degenerate
when roots reach the generator walls.  Thus normal ellipticity persists on the
closed simplex, whereas the entropy and Schauder estimates below use compact
subsets of its interior.
\end{remark}

The moment no-flux condition is $A_N(U)\partial_\nu U=0$.  Since $A_N$ is
invertible on $\cT$, it is equivalent to $\partial_\nu U=0$.

\begin{proposition}[Equivalence of the constrained and moment formulations]
\label{prop:constrained-moment-equivalence}
Let $U$ be sufficiently smooth with values in $\cT$, set
$y=y(U)\in\overline\cS$, and for every spatial direction $k$ let $J^k\in E$
be the unique vector satisfying
\begin{equation}\label{eq:reconstructed-flux}
        B(y)J^k=\partial_k y.
\end{equation}
Then
\begin{equation}\label{eq:reconstructed-moment-flux}
        \mathsf LJ^k=-A_N(U)\partial_kU.
\end{equation}
Consequently, in every strong class in which the displayed chain rules are
valid, the constrained Maxwell--Stefan system is equivalent to the moment
system \eqref{eq:moment-system}; the specieswise no-flux condition is
equivalent to $A_N(U)\partial_\nu U=0$, hence to $\partial_\nu U=0$.
\end{proposition}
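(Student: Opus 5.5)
The identity \eqref{eq:reconstructed-moment-flux} is essentially the definition \eqref{eq:AN-constrained-inverse-identity} read backwards, so I would start there. By \Cref{cor:inverse-rank-one}, $B(y)|_E$ is invertible for every $y\in\overline{\cS}$, so $J^k$ in \eqref{eq:reconstructed-flux} exists and is unique. Since $y(U)$ is affine with derivative $\mathsf R$ (\Cref{prop:moment-bijection} and \eqref{eq:moment-linear-isomorphisms}), the chain rule gives $\partial_ky=\mathsf R\partial_kU\in E$. Hence
\[
J^k=(B(y)|_E)^{-1}\mathsf R\,\partial_kU
\]
and
\[
\mathsf LJ^k=\mathsf L(B(y)|_E)^{-1}\mathsf R\,\partial_kU=-A_N(U)\partial_kU,
\]
which is \eqref{eq:reconstructed-moment-flux}. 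This step needs no interior assumption, because the inverse formula \eqref{eq:inverse-B-rank-one} is valid on the closed simplex.

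For the equivalence of the evolution equations I would argue in both directions through the linear isomorphism $\mathsf L\colon E\to\R^{N-1}$.
\begin{itemize}
\item \emph{From species to moments.} Given a solution $(y,J)$ of \eqref{eq:balance}--\eqref{eq:MS-law}, uniqueness of the constrained inverse identifies $J$ with the reconstructed flux. Apply $\mathsf L$ (extended to all of $\R^N$ by the same formula) to \eqref{eq:balance}. Since $\partial_t$ and $\Div$ commute with constant matrices, this gives $\partial_tU+\Div(\mathsf LJ)=0$, which is \eqref{eq:moment-system} by \eqref{eq:reconstructed-moment-flux}.
\item \emph{From moments to species.} Given a solution $U$ of \eqref{eq:moment-system}, define $y=y(U)$ and $J$ by \eqref{eq:reconstructed-flux}. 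Then $J$ satisfies \eqref{eq:flux-constraint} and \eqref{eq:MS-law} by construction, and $\mathsf L(\partial_ty+\Div J)=\partial_tU-\Div(A_N\nabla U)=0$. The vector $\partial_ty+\Div J$ lies in $E$: $\partial_ty\in E$ because $\one\cdot y=1$, and $\Div J\in E$ because each $J^k\in E$. Since $\mathsf L$ is injective on $E$, the species balances \eqref{eq:balance} follow.
\end{itemize}

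The boundary condition is handled the same way. On $\partial\Omega$, the normal flux $J^\nu=\sum_k\nu_kJ^k$ lies in $E$, and $\mathsf LJ^\nu=-A_N(U)\partial_\nu U$ by \eqref{eq:reconstructed-moment-flux}. By injectivity of $\mathsf L$ on $E$, $J^\nu=0$ (which is \eqref{eq:noflux}) holds if and only if $A_N(U)\partial_\nu U=0$. Invertibility of $A_N$ on $\cT$ (\Cref{thm:moment-fluxes} or \Cref{prop:closed-simplex-normal-ellipticity}) then makes this equivalent to $\partial_\nu U=0$.

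The only point needing care is the justification in strong classes. The chain rules used are $\nabla y=\mathsf R\nabla U$, commutation of $\mathsf L$ with $\partial_t$ and $\Div$, and the traces of $\partial_\nu U$ and $J^\nu$. All maps involved are linear or analytic up to $\cT$, so these hold in any Sobolev class where $U$ and its traces are defined, such as the maximal-$L^p$ class with $p>d+2$. This is exactly the hedge already built into the statement.
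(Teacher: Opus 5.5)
Your proposal is correct and takes essentially the same approach as the paper. You read the flux identity off the definition of $A_N$ via $\partial_k y=\mathsf R\,\partial_kU$, and you obtain the forward direction by applying $\mathsf L$ to the species balances. For the converse and the boundary condition you use that the residual $\partial_ty+\Div J$ and the normal flux $J^\nu$ lie in $E$, where $\mathsf L$ is injective, together with the invertibility of $A_N$ on $\cT$.
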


\begin{proof}
Since $\partial_ky=\mathsf R\partial_kU$, formula
\eqref{eq:reconstructed-moment-flux} is exactly
\eqref{eq:AN-constrained-inverse-identity}.  Applying $\mathsf L$ to the
species balances gives the moment system.  Conversely, if $U$ solves the
moment system, reconstruct $J$ by \eqref{eq:reconstructed-flux}.  The residual
$r:=\partial_ty+\Div J$ belongs to $E$ and satisfies
\[
 \mathsf Lr
 =\partial_tU-\Div(A_N(U)\nabla U)=0.
\]
Since $\mathsf L:E\to\mathbb R^{N-1}$ is injective, $r=0$.  The same
injectivity identifies the boundary fluxes.
\end{proof}

\begin{corollary}[Strong-Sobolev root equations for moment solutions]
\label{cor:strong-sobolev-root-system}
Let $I\Subset(0,T)$, $p>d+2$, and suppose
\[
 U\in W^{1,p}(I;L^p(\Omega))\cap L^p(I;W^{2,p}(\Omega)),
\]
takes values in a compact subset of $\cT^\circ$, solves
\eqref{eq:moment-system}, and satisfies $\partial_\nu U=0$.  Then the
associated $y$ and $z$ belong to the same maximal-regularity class, and every
root satisfies \eqref{eq:root-system} distributionally together with
$\partial_\nu z_j=0$.
\end{corollary}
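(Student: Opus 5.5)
The regularity transfer comes first. On the compact set $K\subset\cT^\circ$ containing the range of $U$, the inverse moment map $U\mapsto y(U)$ from \Cref{prop:moment-bijection} is affine. The root map $y\mapsto z(y)$ from \Cref{thm:root-diffeo} is real-analytic on a neighbourhood of the compact image $y(K)\subset\cS^\circ$, with all derivatives bounded there.

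Since $p>d+2$, the maximal-regularity space
\[
\mathbb E(I):=W^{1,p}(I;L^p(\Omega))\cap L^p(I;W^{2,p}(\Omega))
\]
embeds into $C(\overline I;C^1(\overline\Omega))$. Hence $U$ and $\nabla U$ are bounded and continuous. For a smooth map $\Phi$ on a neighbourhood of $K$, the chain rule then gives
\[
\partial_t\Phi(U)=D\Phi(U)\partial_tU\in L^p,\qquad
\partial_{kl}\Phi(U)=D\Phi(U)\partial_{kl}U+D^2\Phi(U)[\partial_kU,\partial_lU].
\]
The first term of the second derivative lies in $L^p$. The quadratic term lies in $L^\infty\subset L^p$. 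One justifies this by approximating $U$ in $\mathbb E(I)$ by smooth functions that take values in a slightly larger compact set, which is possible by the uniform convergence coming from the embedding. So $y=y(U)$ and $z=z(y(U))$ lie in $\mathbb E(I)$. The Neumann traces satisfy $\partial_\nu z_j=D_Uz_j(U)\,\partial_\nu U=0$, because $\nabla U$ has a continuous trace.

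Next, the equations. Define $J^k\in E$ by \eqref{eq:reconstructed-flux}, so that $J^k=(B(y)|_E)^{-1}\partial_ky$. This $J$ is in $L^p(I;W^{1,p})$, since $(B|_E)^{-1}$ is analytic by \Cref{cor:inverse-rank-one}. By \Cref{prop:constrained-moment-equivalence}, the species balances $\partial_ty+\Div J=0$ hold a.e. with $J\cdot\nu=0$. I then repeat the computation in the proof of \Cref{thm:root-system} pointwise a.e.
\begin{enumerate}
\item The chain rule gives $\partial_tz_j=(\nabla_yz_j\mid\partial_ty)=-(\nabla_yz_j\mid\Div J)$.
\item The product rule for the $W^{1,p}$ field $J^{\sf T}\nabla_yz_j(y)$ splits this into $-\Div(J^{\sf T}\nabla_yz_j)+D^2_yz_j[\partial_\alpha y,J^\alpha]$.
\item The spectral identity \eqref{eq:z-flux-identity} is purely algebraic, so the first term equals $\Div(\lambda_j^{-1}\nabla z_j)$.
\item \Cref{lem:quadratic-correction} identifies the second term with the mixed quadratic correction.
\end{enumerate}
All these identities hold a.e. as equalities of $L^p$ functions. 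This gives \eqref{eq:root-system} a.e., and hence distributionally.

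The main technical point is the product and chain rules for compositions in the strong class. These rules fail for general Sobolev functions but hold here thanks to $C^1$ boundedness and the compact range. The rest is the algebra already established.
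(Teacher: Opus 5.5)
Your proposal is correct and follows the paper's proof essentially step by step. Both arguments use $\nabla U\in L^\infty$ from the parabolic embedding with the Sobolev chain rule to put $y,z$ in the maximal-regularity class. Both reconstruct the flux, where your $J^k=(B(y)|_E)^{-1}\partial_ky$ is exactly the paper's $J^k=-\mathsf R A_N(U)\partial_kU$. Both obtain the species balance from \Cref{prop:constrained-moment-equivalence}, and then apply the product rule together with the pointwise identities \eqref{eq:z-flux-identity} and \eqref{eq:quadratic-correction}.
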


\begin{proof}
All coordinate maps are smooth with bounded derivatives on the compact state
range.  Parabolic Sobolev embedding gives $\nabla U\in L^\infty$, and the
Sobolev chain rule puts $y,z$ in the same maximal-regularity class.  Reconstruct
\[
        J^k=-\mathsf RA_N(U)\partial_kU.
\]
Then $B(y)J^k=\partial_ky$ by
\eqref{eq:AN-constrained-inverse-identity}, and
$J^k\in L^p(I;W^{1,p})$ because $A_N$ is smooth,
$\nabla U\in L^\infty$, and $D^2U\in L^p$.  Hence
$\partial_ty+\Div J=0$ in $L^p$ by
\Cref{prop:constrained-moment-equivalence}.

The Sobolev product rule gives
\[
 \partial_tz_j
 =-\Div(J^{\sf T}\nabla_yz_j)
   +\sum_kD_y^2z_j[\partial_ky,J^k].
\]
The pointwise identities \eqref{eq:z-flux-identity} and
\eqref{eq:quadratic-correction} convert this formula into
\eqref{eq:root-system}; the normal trace follows from the Sobolev chain rule.
\end{proof}

\section{Entropy symmetrization of the moment system}

The matrix $A_N(U)$ in \eqref{eq:moment-system} is generally not symmetric.  The
Hessian of the mixing entropy provides a symmetrizer.

\subsection{Entropy Hessian}

On the interior $\cT^\circ$ of the moment simplex define
\begin{equation}\label{eq:entropy-h}
        h(U):=\sum_{i=1}^N y_i(U)\log y_i(U).
\end{equation}
Since $U\mapsto y(U)$ is affine, the Hessian is
\begin{equation}\label{eq:H-formula}
        H(U):=D^2h(U)={\mathsf R^{\sf T}} Y^{-1}{\mathsf R},
\end{equation}
where {$\mathsf R:\R^{N-1}\to E$} is the constant derivative of the inverse moment map $U\mapsto y(U)$, and $Y=\diag(y_1,\ldots,y_N)$.  Equivalently,
\begin{equation}\label{eq:H-ai}
        {H(U)=\sum_{i=1}^N\frac{\nabla_U y_i\otimes\nabla_U y_i}{y_i(U)}.}
\end{equation}
Thus $H(U)$ is symmetric positive definite on $\cT^\circ$.

\subsection{Flux-space dissipation and the entropy metric}

For $V,W\in E$, write $V=Yu$ and $W=Yw$ with $u,w\in E_y$ and define
\begin{equation}\label{eq:dissipation-bilinear}
 \cD_y(V,W):=\langle \mathcal L_yu,w\rangle_y
 =\frac12\sum_{i,j=1}^N f_{ij}y_i y_j
 \left(\frac{V_i}{y_i}-\frac{V_j}{y_j}\right)
 \left(\frac{W_i}{y_i}-\frac{W_j}{y_j}\right).
\end{equation}
Thus $\cD_y(V,V)=\cD_y^{\mathrm{fr}}(u)$.  The intertwining identity
$B(y)Y=-Y\mathcal L_y$ gives immediately
\begin{equation}\label{eq:D-B-identity}
 \cD_y(V,W)=-\langle B(y)V,W\rangle_{y,*},
 \qquad
 \langle V,W\rangle_{y,*}:=V^{\sf T}Y^{-1}W.
\end{equation}
Since $\mathcal L_y|_{E_y}$ is self-adjoint and strictly positive in
$\langle\cdot,\cdot\rangle_y$, the constrained Maxwell--Stefan operator
$B(y)|_E$ is self-adjoint and strictly negative in the entropy metric
$\langle\cdot,\cdot\rangle_{y,*}$.  Hence
$-\bigl(B(y)|_E\bigr)^{-1}$ is self-adjoint and strictly positive in the same
metric.  The moment symmetrization below is exactly the pullback of this
operator statement by the affine coordinate map.

\subsection{Symmetrization by the entropy Hessian}

\begin{theorem}[Entropy-Hessian symmetrization]\label{thm:entropy-symmetrization}
Let $A_N(U)$ be the moment coefficient matrix from \eqref{eq:AN-constrained-inverse-identity}.  Then, for every $U\in\cT^\circ$,
\begin{equation}\label{eq:HA-symmetric}
        H(U)A_N(U)=A_N(U)^{\sf T} H(U).
\end{equation}
Moreover, $H(U)A_N(U)$ is positive definite.  Consequently, $A_N(U)$ is
self-adjoint and positive in the inner product
{$\langle v_1,v_2\rangle_{H(U)}:=v_1^{\sf T}H(U)v_2$}.
\end{theorem}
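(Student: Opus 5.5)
We reduce the statement to the operator fact recorded just before it: $-\bigl(B(y)|_E\bigr)^{-1}$ is self-adjoint and strictly positive in the entropy metric $\langle V,W\rangle_{y,*}=V^{\sf T}Y^{-1}W$ on $E$. Everything else is a pullback through the constant affine chart $\mathsf R:\R^{N-1}\to E$.

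\textbf{Step 1: factor $HA_N$.} By \eqref{eq:H-formula} and \eqref{eq:AN-constrained-inverse-identity}, using $\mathsf L\mathsf R=I$,
\[
 H(U)A_N(U)=-\mathsf R^{\sf T}Y^{-1}\mathsf R\,\mathsf L\,\bigl(B(y)|_E\bigr)^{-1}\mathsf R .
\]
Since $\bigl(B(y)|_E\bigr)^{-1}$ takes values in $E$, and $\mathsf R\mathsf L=I_E$ on $E$, this simplifies to
\[
 H(U)A_N(U)=-\mathsf R^{\sf T}Y^{-1}\bigl(B(y)|_E\bigr)^{-1}\mathsf R .
\]
For $v_1,v_2\in\R^{N-1}$ put $V=\mathsf Rv_1$ and $W=\mathsf Rv_2$. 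The formula then reads
\[
 v_2^{\sf T}HA_Nv_1=\bigl\langle W,-\bigl(B(y)|_E\bigr)^{-1}V\bigr\rangle_{y,*}.
\]

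\textbf{Step 2: self-adjointness of the inverse.} We justify the operator fact from \eqref{eq:D-B-identity}. For $V,W\in E$,
\[
 \langle B(y)V,W\rangle_{y,*}=-\cD_y(V,W).
\]
The form $\cD_y$ is symmetric, as is visible from its explicit pairwise expression \eqref{eq:dissipation-bilinear}. Hence $B(y)|_E$ is self-adjoint in $\langle\cdot,\cdot\rangle_{y,*}$. The inverse of an invertible self-adjoint operator is self-adjoint, and $B(y)|_E$ is invertible by \Cref{prop:flux-inverse}.

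\textbf{Step 3: definiteness.} With $V=Yu$ and $u\in E_y$, \eqref{eq:D-L-inner} and the bound in \Cref{prop:closed-simplex-normal-ellipticity} give
\[
 \cD_y(V,V)=\cD_y^{\mathrm{fr}}(u)\ge f_{\min}\|u\|_y^2>0 \qquad (u\ne0).
\]
Thus $-B(y)|_E$ is positive definite in the entropy metric. Writing $V=-B(y)X$ with $X=-(B(y)|_E)^{-1}V$,
\[
 \bigl\langle V,-(B(y)|_E)^{-1}V\bigr\rangle_{y,*}=\langle -B(y)X,X\rangle_{y,*}=\cD_y(X,X)>0
\]
whenever $V\ne0$.

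\textbf{Step 4: conclusion.} Since $\mathsf R$ is injective, Steps 1--3 give that $HA_N$ is symmetric and positive definite, which is \eqref{eq:HA-symmetric}. Self-adjointness of $A_N$ in $\langle\cdot,\cdot\rangle_H$ is a restatement:
\[
 \langle A_Nv_1,v_2\rangle_H=v_2^{\sf T}HA_Nv_1=v_1^{\sf T}HA_Nv_2=\langle v_1,A_Nv_2\rangle_H .
\]
Positivity in $\langle\cdot,\cdot\rangle_H$ is the definiteness from Step 3.

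The only delicate point is the simplification in Step 1, namely checking that $\mathsf R\mathsf L$ acts as the identity on the range of $\bigl(B(y)|_E\bigr)^{-1}$. This holds because that range lies in $E$ and $\mathsf L:E\to\R^{N-1}$ is an isomorphism with inverse $\mathsf R$.
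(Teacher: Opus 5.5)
Your proposal is correct and follows the paper's proof. Both arguments use $\mathsf R\mathsf L=I_E$ to write $d^{\sf T}H(U)A_N(U)c=-\bigl\langle \mathsf Rd,\bigl(B(y)|_E\bigr)^{-1}\mathsf Rc\bigr\rangle_{y,*}$, and then pull back the self-adjointness and strict positivity of $-\bigl(B(y)|_E\bigr)^{-1}$ in the entropy metric. The only difference is that you derive that operator fact explicitly, from the symmetry of $\cD_y$ and the bound $f_{\min}\|u\|_y^2$, whereas the paper reads it off directly from \eqref{eq:D-B-identity}.
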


\begin{proof}
For $c,d\in\mathbb R^{N-1}$, use
$H=\mathsf R^{\sf T}Y^{-1}\mathsf R$ and
$A_N=-\mathsf L(B|_E)^{-1}\mathsf R$.  Since
$\mathsf R\mathsf L=I_E$,
\[
 d^{\sf T}H(U)A_N(U)c
 =\left\langle \mathsf Rd,\mathsf R A_N(U)c\right\rangle_{y,*}
 =-\left\langle \mathsf Rd,\bigl(B(y)|_E\bigr)^{-1}\mathsf Rc\right\rangle_{y,*}.
\]
The operator $-\bigl(B(y)|_E\bigr)^{-1}$ is self-adjoint and strictly
positive in $\langle\cdot,\cdot\rangle_{y,*}$ by
\eqref{eq:D-B-identity}.  The displayed form is therefore symmetric in
$c,d$ and strictly positive for $c=d\ne0$, proving
\eqref{eq:HA-symmetric} and the positivity of $HA_N$.
\end{proof}

\subsection{Entropy variables}

Define the entropy variables (cf. \cite{JungelStelzer2013})
\begin{equation}\label{eq:entropy-variables}
        w:=Dh(U)\in\R^{N-1}.
\end{equation}
The Legendre dual potential is
\begin{equation}\label{eq:Phi-dual}
        {h^*(w)}:=\log\left(\sum_{i=1}^N
        \exp\left(\sum_{\ell=1}^{N-1}g_i^\ell w_\ell\right)\right).
\end{equation}
Then
\begin{equation}\label{eq:dual-relations}
        U=D{h^*}(w),
        \qquad
        h(U)=U\cdot w-{h^*(w)},
\end{equation}
and
\begin{equation}\label{eq:y-softmax}
        y_i(w)=
        \frac{\exp\left(\sum_{\ell=1}^{N-1}g_i^\ell w_\ell\right)}
             {\sum_{r=1}^N\exp\left(\sum_{\ell=1}^{N-1}g_r^\ell w_\ell\right)}.
\end{equation}
We refer to \eqref{eq:y-softmax} as the softmax map, i.e. the normalized
exponential parametrization of the open simplex.
Furthermore,
\begin{equation}\label{eq:K-H-inverse}
        K(w):=D^2{h^*}(w)=H(U(w))^{-1}.
\end{equation}
The vectors $\Gamma_i$ in \eqref{eq:moment-vertices} are affinely independent;
hence $D^2{h^*}(w)$ is positive definite for every $w\in\mathbb R^{N-1}$.
Standard Legendre duality therefore gives mutually inverse real-analytic
diffeomorphisms
\[
 D{h^*}:\mathbb R^{N-1}\longrightarrow\mathcal T^\circ,
 \qquad
 Dh:\mathcal T^\circ\longrightarrow\mathbb R^{N-1}.
\]
We shall refer to $\mathcal W:=\mathbb R^{N-1}$ as the entropy-variable state
space.

The moment system becomes
\begin{equation}\label{eq:entropy-variable-system}
        K(w)\partial_t w-
        \Div\bigl(B_N(w)\nabla w\bigr)=0,
        \qquad
        B_N(w):=A_N(U(w))K(w).
\end{equation}
By Theorem~\ref{thm:entropy-symmetrization}, $B_N(w)$ is symmetric positive definite.  For every $\mathcal K_U\Subset\cT^\circ$, both $K(w)$ and $B_N(w)$ are smooth and uniformly positive definite on $Dh(\mathcal K_U)\Subset\mathcal W$.

\begin{corollary}[Equivalence of all three formulations]
\label{cor:formulation-equivalence}
In the regularity classes of \Cref{prop:constrained-moment-equivalence}, the
constrained Maxwell--Stefan system, the moment system, and the entropy-variable
system \eqref{eq:entropy-variable-system} are equivalent as long as their
states remain in the corresponding open state spaces.  Their specieswise
no-flux, moment no-flux, and homogeneous Neumann boundary conditions are
equivalent.
\end{corollary}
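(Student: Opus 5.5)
The plan is to chain the two equivalences already proved and add one link between the moment system and the entropy-variable system. \Cref{prop:constrained-moment-equivalence} gives constrained Maxwell--Stefan $\Leftrightarrow$ moment system, together with specieswise no-flux $\Leftrightarrow A_N(U)\partial_\nu U=0\Leftrightarrow\partial_\nu U=0$. So it remains to show that, for states in $\cT^\circ$ (equivalently $w\in\mathcal W$), the moment system \eqref{eq:moment-system} is equivalent to \eqref{eq:entropy-variable-system}, and that $\partial_\nu U=0$ is equivalent to $\partial_\nu w=0$.

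First I use the Legendre duality recorded after \eqref{eq:entropy-variables}. The maps $U\mapsto w=Dh(U)$ and $w\mapsto U=Dh^*(w)$ are mutually inverse real-analytic diffeomorphisms $\cT^\circ\leftrightarrow\mathcal W$, with $D_wU=K(w)$ and $K=H^{-1}$ by \eqref{eq:K-H-inverse}. I restrict to time intervals on which the state range is compact in the open state space. There all derivatives of $Dh$ and $Dh^*$ are bounded. The Sobolev chain rule in the class of \Cref{cor:strong-sobolev-root-system}, with $p>d+2$ so that $\nabla U\in L^\infty$, then shows that $U$ lies in the maximal-regularity class if and only if $w$ does. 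It also gives the identities
\[
 \partial_tU=K(w)\partial_tw,\qquad \nabla U=K(w)\nabla w
\]
almost everywhere.

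Substituting these identities gives $A_N(U)\nabla U=A_N(U(w))K(w)\nabla w=B_N(w)\nabla w$. Hence the divergence terms coincide as $L^p$ functions, and $\partial_tU-\Div(A_N\nabla U)=K\partial_tw-\Div(B_N\nabla w)$ holds identically. Either system therefore holds if and only if the other does. For the boundary condition, the trace identity $\partial_\nu U=K(w)\partial_\nu w$ holds because the chain rule commutes with the trace for $W^{1,p}$-functions composed with smooth maps. Since $K(w)$ is invertible, $\partial_\nu U=0\Leftrightarrow\partial_\nu w=0$. The same argument shows that the conormal form $B_N\partial_\nu w=0$ is equivalent as well, because $B_N$ is positive definite by \Cref{thm:entropy-symmetrization}.

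The main obstacle is not algebraic but the justification of the chain rules and traces in the strong class. One must make sure that composing with the nonlinear maps $Dh$ and $Dh^*$ preserves $W^{1,p}(L^p)\cap L^p(W^{2,p})$ and commutes with the Neumann trace. I would handle this exactly as in \Cref{cor:strong-sobolev-root-system}. The key inputs are the compact state range, so that the coordinate maps are smooth with bounded derivatives, and the embedding into $C^{0,1}$ in space for $p>d+2$. Together these make the second-derivative term $D^2(Dh^*)[\nabla w,\nabla w]$ an $L^p$ function. The phrase ``as long as their states remain in the corresponding open state spaces'' is used precisely to localize to such compact ranges.
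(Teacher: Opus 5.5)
Your proposal is correct and follows essentially the paper's proof: chain \Cref{prop:constrained-moment-equivalence} with the Legendre change of variables $\partial_tU=K(w)\partial_tw$, $\nabla U=K(w)\nabla w$, $B_N=A_NK$, and use invertibility of $K$ and $A_N$ to get equivalence of the boundary conditions. Your added justification of the chain rules and Neumann traces in the strong class, via compact state range and $p>d+2$, spells out what the paper's short proof leaves implicit.
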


\begin{proof}
The constrained and moment formulations are equivalent by
\Cref{prop:constrained-moment-equivalence}.  The analytic diffeomorphisms
$U=D{h^*}(w)$ and $w=Dh(U)$ give
$\partial_tU=K(w)\partial_tw$ and $\nabla U=K(w)\nabla w$.
Substitution in the moment equation yields
\eqref{eq:entropy-variable-system}, with $B_N=A_NK$, and the converse follows
by the inverse change of variables.  Since $K(w)$ and $A_N(U)$ are invertible,
{$A_N(U)\partial_\nu U=0$}, $\partial_\nu U=0$, and
$\partial_\nu w=0$ are equivalent.  The specieswise boundary condition is
then equivalent by \Cref{prop:constrained-moment-equivalence}.
\end{proof}

\begin{corollary}[Entropy dissipation identity]\label{cor:entropy-dissipation}
{For a matrix $M$ acting on the component indices we use
$\nabla U:M\nabla U:=\sum_{k=1}^d(\partial_{x_k}U)^{\sf T}M\partial_{x_k}U$.}
For smooth solutions satisfying the no-flux condition,
\begin{equation}\label{eq:entropy-dissipation}
        \frac{d}{dt}\int_\Omega h(U)\,dx
        +\int_\Omega \nabla U:H(U)A_N(U)\nabla U\,dx=0.
\end{equation}
Equivalently,
\begin{equation}\label{eq:entropy-dissipation-w}
        \frac{d}{dt}\int_\Omega h(U)\,dx
        +\int_\Omega \nabla w:B_N(w)\nabla w\,dx=0.
\end{equation}
\end{corollary}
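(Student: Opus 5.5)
We obtain \eqref{eq:entropy-dissipation} by differentiating $\int_\Omega h(U)\,dx$ in time and using the moment system \eqref{eq:moment-system} together with the homogeneous Neumann condition. We then rewrite the result in entropy variables via \Cref{cor:formulation-equivalence}. Throughout, the solution is smooth and takes values in a compact subset of $\cT^\circ$, so $h$, $Dh$ and $H$ are smooth and bounded along it. This is the situation on the strong-existence interval by \Cref{thm:root-rectangle}.

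\textbf{Step 1 (time derivative and integration by parts).} By the chain rule, $\frac{d}{dt}\int_\Omega h(U)\,dx=\int_\Omega Dh(U)\cdot\partial_tU\,dx$. Substituting $\partial_tU=\Div(A_N(U)\nabla U)$ and integrating by parts gives
\[
 \int_{\partial\Omega} Dh(U)\cdot A_N(U)\partial_\nu U\,dS-\int_\Omega \sum_{k=1}^d(\partial_kU)^{\sf T}H(U)A_N(U)\partial_kU\,dx,
\]
because $\partial_k[Dh(U)]=H(U)\partial_kU$ with $H=D^2h$ symmetric. The boundary integral vanishes since the moment no-flux condition reads $A_N(U)\partial_\nu U=0$; this is equivalent to the specieswise no-flux condition by \Cref{prop:constrained-moment-equivalence}. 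The result is exactly \eqref{eq:entropy-dissipation}.

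\textbf{Step 2 (entropy-variable form).} Set $w=Dh(U)$. Then $\nabla U=K(w)\nabla w$ with $K=H^{-1}$ by \eqref{eq:K-H-inverse}. Since $K$ is symmetric, for each spatial direction
\[
 (\partial_kU)^{\sf T}HA_N\partial_kU=(\partial_kw)^{\sf T}K\,H A_N K\,\partial_kw=(\partial_kw)^{\sf T}A_NK\,\partial_kw=(\partial_kw)^{\sf T}B_N(w)\partial_kw,
\]
using $KH=I$ and $B_N=A_NK$. This gives \eqref{eq:entropy-dissipation-w}. Alternatively, one can test \eqref{eq:entropy-variable-system} by $w$ and use $w\cdot K\partial_tw=Dh(U)\cdot\partial_tU$, with the boundary term vanishing because $\partial_\nu w=0$.

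\textbf{Remark on positivity.} By \Cref{thm:entropy-symmetrization}, the dissipation integrand is nonnegative, so $\int_\Omega h$ is nonincreasing. This is not needed for the identity itself.

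The only point needing care is justifying the differentiation under the integral and the integration by parts. That requires the compact interior state range, which keeps $Dh$ and $H$ bounded near the singular faces of $\cT$, together with enough regularity for the boundary trace. Both are granted by the smoothness hypothesis. There is no genuine obstacle: the identity is purely structural.
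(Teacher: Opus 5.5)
Your proposal is correct and follows essentially the same route as the paper: test the moment system with $w=Dh(U)$, integrate by parts using $\nabla w=H(U)\nabla U$ and the no-flux condition, then convert via $\nabla U=K(w)\nabla w$ and $B_N=A_NK$. Your explicit computation $KHA_NK=A_NK=B_N$, which uses the symmetry of $K$, simply spells out the paper's one-line final step.
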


\begin{proof}
Multiply \eqref{eq:moment-system} by $w=Dh(U)$, integrate over $\Omega$, and use the no-flux boundary condition.  Since $\nabla w=H(U)\nabla U$, one obtains \eqref{eq:entropy-dissipation}.  The entropy-variable form follows from $\nabla U=K(w)\nabla w$ and $B_N=A_NK$.
\end{proof}

\begin{corollary}[Boundary-uniform Fisher-information coercivity]
\label{cor:boundary-uniform-Fisher}
Let $f_{\max}:=\max_{i\ne j}f_{ij}$.  For every positive smooth solution,
\begin{equation}\label{eq:general-Fisher-coercivity}
 \int_\Omega \nabla U:H(U)A_N(U)\nabla U\,dx
 \ge \frac1{f_{\max}}
 \int_\Omega\sum_{i=1}^N\frac{|\nabla y_i|^2}{y_i}\,dx
 =\frac4{f_{\max}}\sum_{i=1}^N
 \|\nabla\sqrt{y_i}\|_2^2.
\end{equation}
Consequently the ordinary mixing entropy controls the concentration Fisher
information without any compact-interior assumption.
\end{corollary}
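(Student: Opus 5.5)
The plan is to convert the entropy dissipation density from moment coordinates to flux-space coordinates, where it equals the Rayleigh friction dissipation. The lower friction bound $f_{ij}\le f_{\max}$ then turns into a lower bound on dissipation in terms of $\sum_i|\nabla y_i|^2/y_i$.

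\emph{Step 1 (rewrite the integrand).} Fix a spatial direction $k$ and put $V:=\partial_k y=\mathsf R\,\partial_kU\in E$. By \eqref{eq:H-formula}, $H=\mathsf R^{\sf T}Y^{-1}\mathsf R$, and \eqref{eq:AN-constrained-inverse-identity} gives $\mathsf RA_N=-(B(y)|_E)^{-1}\mathsf R$, using $\mathsf R\mathsf L=I_E$. Hence
\[
(\partial_kU)^{\sf T}HA_N\partial_kU=-\bigl\langle V,(B(y)|_E)^{-1}V\bigr\rangle_{y,*}.
\]
Let $J^k:=(B(y)|_E)^{-1}V$, the flux of \Cref{thm:moment-fluxes}. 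The right-hand side becomes
\[
-\langle B(y)J^k,J^k\rangle_{y,*}=\cD_y(J^k,J^k),
\]
by the symmetry of $B(y)|_E$ in the entropy metric and \eqref{eq:D-B-identity}. With $J^k=Yu$, $u\in E_y$, this equals $\cD_y^{\mathrm{fr}}(u)=\langle\mathcal L_yu,u\rangle_y$.

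\emph{Step 2 (bound the operator).} The proof of \Cref{prop:closed-simplex-normal-ellipticity} gives $\langle\mathcal L_yu,u\rangle_y\le f_{\max}\|u\|_y^2$. It follows that $(\mathcal L_y|_{E_y})^{-1}\ge f_{\max}^{-1}$ in $\langle\cdot,\cdot\rangle_y$. Now use the similarity \eqref{eq:B-inverse-relaxation-similarity}: $-(B|_E)^{-1}=Y\mathcal L_y^{-1}Y^{-1}$. Write $V=Yv$ with $v\in E_y$, i.e.\ $v_i=\partial_k y_i/y_i$. This lies in $E_y$ because $\sum_iy_iv_i=\sum_i\partial_ky_i=0$. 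Then
\[
-\langle V,(B|_E)^{-1}V\rangle_{y,*}=\langle v,\mathcal L_y^{-1}v\rangle_y\ge f_{\max}^{-1}\|v\|_y^2=f_{\max}^{-1}\sum_i\frac{|\partial_ky_i|^2}{y_i}.
\]
Spelling out the inverse inequality: for a self-adjoint positive operator $\mathcal L\le f_{\max}$, the inequality $\langle v,\mathcal L^{-1}v\rangle\ge f_{\max}^{-1}\|v\|^2$ follows from the spectral theorem. It also follows from Cauchy--Schwarz in the form $\|v\|^2=\langle \mathcal L^{1/2}\mathcal L^{-1/2}v,v\rangle$.

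\emph{Step 3 (conclude).} Summing over $k$ and integrating over $\Omega$ gives the inequality. The final equality is the pointwise identity $|\nabla y_i|^2/y_i=4|\nabla\sqrt{y_i}|^2$. This is valid since positive smooth solutions have $y_i>0$. No compactness of the state range enters, because $f_{\max}$ is a constant.

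The only delicate point is keeping the signs and metrics consistent when moving between $E$ and $E_y$. Concretely, one must check that $v$ lies in $E_y$ so that the spectral bound on $\mathcal L_y|_{E_y}$ applies. Everything else is a direct combination of \eqref{eq:D-B-identity}, \eqref{eq:B-inverse-relaxation-similarity} and the friction bounds.
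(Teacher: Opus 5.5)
Your proof is correct and is essentially the paper's argument. The paper works with the velocity $u^k=Y^{-1}J^k$ and uses $\|\mathcal L_yu^k\|_y^2\le f_{\max}\langle\mathcal L_yu^k,u^k\rangle_y$; this is exactly your inverse bound $\langle v,\mathcal L_y^{-1}v\rangle_y\ge f_{\max}^{-1}\|v\|_y^2$ under the substitution $v=Y^{-1}\partial_ky=-\mathcal L_yu^k$. The identification with $\cD_y^{\mathrm{fr}}(u)$ at the end of your Step~1 is harmless but unnecessary, since Step~2 works directly with $-\langle V,(B|_E)^{-1}V\rangle_{y,*}$.
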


\begin{proof}
Fix a spatial direction $k$ and write $J^k=Yu^k$ with $u^k\in E_y$.  By
$B(y)Y=-Y\mathcal L_y$,
\[
 \partial_ky=-Y\mathcal L_yu^k,
 \qquad
 \sum_i\frac{|\partial_ky_i|^2}{y_i}
 =\|\mathcal L_yu^k\|_y^2.
\]
Moreover, by \eqref{eq:D-B-identity} and the definition of $A_N$,
\[
 (\partial_kU)^{\sf T}H(U)A_N(U)\partial_kU
 =\cD_y(J^k,J^k)
 =\langle\mathcal L_yu^k,u^k\rangle_y.
\]
On $E_y$, the weighted variance identity and
\eqref{eq:rayleigh-general} give
\[
 0<\mathcal L_y\le f_{\max}I
 \quad\text{in }\langle\cdot,\cdot\rangle_y.
\]
Hence the spectral theorem yields
$\|\mathcal L_yu^k\|_y^2\le
 f_{\max}\langle\mathcal L_yu^k,u^k\rangle_y$.
Summing over $k$ proves \eqref{eq:general-Fisher-coercivity}.
\end{proof}

\section{Compact root ranges and analytical setup}
\label{sec:compact-range-setup}

The global analysis starts from the forward-invariant rectangles of
\Cref{thm:root-rectangle}.  A compact interlacing root range provides more
than positivity: all coordinate changes are uniformly regular, all root gaps
are bounded away from zero, and the entropy metric provides a uniformly
coercive quadratic form.  These facts will be used below in a scalar De Giorgi
argument for each individual root.

\subsection{Positive-time compact ranges}

Let $T_{\max}\in(0,\infty]$ and let
\[
        y:[0,T_{\max})\times\overline\Omega\to \cS^\circ
\]
be continuous in $C^1$ up to the initial time and, on every compact
positive-time strip, belong to the strong root class of
\Cref{thm:root-rectangle}.  For $\tau<T_{\max}$ define
\[
 a_j^\tau:=\min_{\overline\Omega}z_j(\tau,\cdot),\qquad
 b_j^\tau:=\max_{\overline\Omega}z_j(\tau,\cdot),
\]
and
\begin{equation}\label{eq:R-tau}
 R_\tau:=\prod_{j=1}^{N-1}[a_j^\tau,b_j^\tau]
 \Subset\prod_{j=1}^{N-1}(g_j,g_{j+1}).
\end{equation}

\begin{theorem}[Propagation of a compact root rectangle]\label{thm:compact-range-after-tau}
Suppose that $y(\tau,x)\in\cS^\circ$ for every $x\in\overline\Omega$.  Then
\begin{equation}\label{eq:R-tau-invariance}
 z(t,x)\in R_\tau,
 \qquad (t,x)\in[\tau,T_{\max})\times\overline\Omega.
\end{equation}
Consequently there is $c_\tau>0$ such that
\begin{equation}\label{eq:eventual-uniform-positivity}
 y_i(t,x)\ge c_\tau,
 \qquad i=1,\ldots,N,
 \qquad (t,x)\in[\tau,T_{\max})\times\overline\Omega.
\end{equation}
\end{theorem}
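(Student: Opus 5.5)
The plan is to reduce the statement to the time-shifted version of \Cref{thm:root-rectangle}, with initial time $\tau$ and the actual extrema of the roots at time $\tau$ as rectangle endpoints.

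First, I will check that $R_\tau$ is a compact subrectangle of the interlacing box. Since $y(\tau,\cdot)$ is continuous on the compact set $\overline\Omega$ with values in $\cS^\circ$, its image is a compact subset of $\cS^\circ$. The root map $z$ is a real-analytic diffeomorphism onto $\cZ$ by \Cref{thm:root-diffeo}, so $z(\tau,\overline\Omega)$ is a compact subset of $\cZ$. Hence each coordinate satisfies
\[
g_j<a_j^\tau\le b_j^\tau<g_{j+1},
\]
which is exactly condition \eqref{eq:rectangle-inside} with $a_j=a_j^\tau$ and $b_j=b_j^\tau$. The extrema are attained because $z_j(\tau,\cdot)$ is continuous on $\overline\Omega$.

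Second, I will apply \Cref{thm:root-rectangle} to the shifted solution $\tilde z(t,x):=z(\tau+t,x)$ on $[0,T_{\max}-\tau)$. The hypotheses need to be checked in turn.
\begin{enumerate}
\item Continuity $\tilde z\in C([0,T_{\max}-\tau);C^1(\overline\Omega))$ follows from the assumed $C^1$-continuity of $y$ and the smoothness of $y\mapsto z$ on compact subsets of $\cS^\circ$, at least on time intervals where $y$ stays in such a compact set.
\item Membership in the maximal-regularity class on every compact positive-time strip holds by assumption, since those strips for $\tilde z$ are compact positive-time strips for $z$.
\item The root system and the Neumann trace hold because $y$ belongs to the strong root class.
\item The initial bounds \eqref{eq:initial-root-rectangle} hold at $t=0$ by the definition of $a_j^\tau,b_j^\tau$.
\end{enumerate}
The theorem's proof only uses strong regularity on strips $[\tau',T']$ with $\tau'>0$ together with continuity at the initial time, so the case $\tau>0$ is covered. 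This is the same restart argument already used for \eqref{eq:root-extrema-monotonicity}.

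The one point needing care is that the exit-time argument in the proof of \Cref{thm:root-rectangle} requires $z$ to be defined and $C^1$-continuous while it remains inside $\cZ$. Here it is cleanest to define the exit time from the enlarged rectangle $R_\delta$. Before that time $z$ stays in a compact subset of $\cZ$, so all coordinate maps are uniformly smooth, and the Gronwall comparison applies verbatim. The conclusion is \eqref{eq:R-tau-invariance}.

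Finally, for \eqref{eq:eventual-uniform-positivity} I will use the explicit inverse \eqref{eq:y-inverse-roots}. Each $y_i(z)$ is continuous and strictly positive on the compact set $R_\tau\subset\cZ$, because interlacing fixes the signs of numerator and denominator to agree. I will therefore set
\[
c_\tau:=\min_i\min_{R_\tau}y_i>0.
\]
I expect the only delicate step to be the verification of the hypotheses of \Cref{thm:root-rectangle} after the time shift; there is no new analytic estimate.
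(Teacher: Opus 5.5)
Your proposal is correct and is the paper's argument: apply \Cref{thm:root-rectangle} to the time-shifted solution with the extrema of $z_j(\tau,\cdot)$ as rectangle endpoints, then take $c_\tau$ as the minimum over $R_\tau$ of the explicit inverse \eqref{eq:y-inverse-roots}. Your hypothesis checks only spell out what the paper leaves implicit. The hedge in your continuity step is unnecessary: $y$ is $C^1$-continuous with values in $\cS^\circ$, so its range on every compact time interval is automatically a compact subset of $\cS^\circ$.
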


\begin{proof}
Apply \Cref{thm:root-rectangle} to the time-shifted solution
$\widetilde y(s,x)=y(\tau+s,x)$.  The inverse root map
\eqref{eq:y-inverse-roots} is continuous and strictly positive on the compact
rectangle $R_\tau$, so
\[
 c_\tau:=\min_{1\le i\le N}\min_{z\in R_\tau}y_i(z)>0.
\]
\end{proof}

\begin{corollary}[Uniformly positive initial data]\label{cor:positive-initial-compact-range}
If $y(0,\cdot)$ is continuous and takes values in a compact subset of
$\cS^\circ$, then its initial root rectangle $\mathcal R_0$ is forward invariant and
there is $c_0>0$ such that $y_i(t,x)\ge c_0$ for all $i$ and all
$0\le t<T_{\max}$.
\end{corollary}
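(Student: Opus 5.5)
The argument reduces \Cref{cor:positive-initial-compact-range} to \Cref{thm:root-rectangle}. First I convert the compactness hypothesis on $y(0,\cdot)$ into a compact initial root rectangle. Let $\mathcal K\Subset\cS^\circ$ be a compact set containing $y(0,\overline\Omega)$. By \Cref{thm:root-diffeo} the root map $y\mapsto z(y)$ is a real-analytic diffeomorphism $\cS^\circ\to\cZ$. Hence $z(\mathcal K)$ is a compact subset of the open box $\cZ=\prod_j(g_j,g_{j+1})$. Its coordinate projections are compact subsets of $(g_j,g_{j+1})$.

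Next I define the rectangle. Since $x\mapsto z_j(y(0,x))$ is continuous on the compact set $\overline\Omega$, the values
\[
a_j:=\min_{\overline\Omega}z_j(0,\cdot),\qquad b_j:=\max_{\overline\Omega}z_j(0,\cdot)
\]
are attained and satisfy $g_j<a_j\le b_j<g_{j+1}$. These are exactly conditions \eqref{eq:initial-root-rectangle}--\eqref{eq:rectangle-inside}. Set $\mathcal R_0:=\prod_j[a_j,b_j]$. This is the case $\tau=0$ of \eqref{eq:R-tau}.

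I then check the hypotheses of \Cref{thm:root-rectangle}. This is the only point needing care, since the corollary assumes only continuity of the datum. In the standing setting of \Cref{sec:compact-range-setup}, the solution is continuous in $C^1$ up to the initial time and lies in the strong root class on every compact positive-time strip. By \Cref{cor:strong-sobolev-root-system}, it satisfies the root system \eqref{eq:root-system} with Neumann traces there. Composition with the smooth root map preserves these properties, because the state stays in $\cS^\circ$. So $z\in C([0,T_{\max});C^1(\overline\Omega))$ with values in $\cZ$, and \Cref{thm:root-rectangle} applies with $T=T_{\max}$. Equivalently, this is \Cref{thm:compact-range-after-tau} with $\tau=0$. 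It gives $z(t,x)\in\mathcal R_0$ for all $0\le t<T_{\max}$ and $x\in\overline\Omega$, which is the forward invariance claim.

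Finally I obtain the lower bound. The explicit inverse \eqref{eq:y-inverse-roots} is continuous on the compact rectangle $\mathcal R_0$. Each $y_i(z)$ is strictly positive there: on $\mathcal R_0\subset\cZ$, interlacing forces the numerator and denominator in \eqref{eq:y-inverse-roots} to have the same nonzero sign. Therefore
\[
c_0:=\min_i\min_{z\in\mathcal R_0}y_i(z)>0,
\]
and $y_i(t,x)=y_i(z(t,x))\ge c_0$ on $[0,T_{\max})\times\overline\Omega$. The constant $c_0$ depends only on $\mathcal R_0$, that is, only on the initial extrema of the roots.
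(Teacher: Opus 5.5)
Your proposal is correct and follows the paper's route. The corollary is the case $\tau=0$ of \Cref{thm:compact-range-after-tau}: \Cref{thm:root-rectangle} is applied with the initial root extrema as rectangle endpoints, and $c_0$ then comes from the continuity and positivity of the explicit inverse \eqref{eq:y-inverse-roots} on the compact rectangle. You were right to flag that the $C^1$-continuity up to $t=0$ required by \Cref{thm:root-rectangle} comes from the standing setting of \Cref{sec:compact-range-setup}, not from the bare continuity hypothesis on the datum.
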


\subsection{Uniform coordinate and entropy constants on a root box}

Fix for the remainder of the regularity analysis a compact root rectangle
\begin{equation}\label{eq:fixed-root-box-dossier}
 R=\prod_{j=1}^{m}[a_j,b_j]
 \Subset\prod_{j=1}^{m}(g_j,g_{j+1}),
 \qquad m:=N-1.
\end{equation}
Let $\cK_y,\cK_U,\cK_w$ be its images under the concentration, moment, and
entropy coordinate maps.  In this section, constants carrying a subscript $R$ may depend on this rectangle,
on $g=(g_1,\ldots,g_N)$, and on $N$, but not on the particular solution or on
time.  Later analytical estimates state any additional dependence on the space
dimension or local boundary geometry explicitly.

\begin{lemma}[Equivalence of first derivatives on compact subsets]\label{lem:derivative-equivalence}
There is $C_R\ge1$ such that every smooth state with $z\in R$ satisfies
\begin{equation}\label{eq:derivative-equivalence}
 C_R^{-1}|\nabla U|
 \le |\nabla y|+|\nabla z|+|\nabla w|
 \le C_R|\nabla U|.
\end{equation}
The same assertion holds with any one of $y,z,U,w$ used as reference variable.
\end{lemma}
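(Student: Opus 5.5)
The plan is to reduce \Cref{lem:derivative-equivalence} to the chain rule and a compactness bound for Jacobians. All four state descriptions $y$, $z$, $U$, $w$ are linked by real-analytic diffeomorphisms between open state spaces:
\begin{itemize}
\item $y\mapsto U$ is the affine bijection of \Cref{prop:moment-bijection}, with constant derivative $\mathsf L:E\to\R^{N-1}$ and inverse $\mathsf R$;
\item $y\mapsto z$ is the root diffeomorphism of \Cref{thm:root-diffeo}, onto the interlacing box $\cZ$;
\item $U\mapsto w=Dh(U)$ is the Legendre diffeomorphism $\cT^\circ\to\mathcal W$, with inverse $Dh^*$.
\end{itemize}
For a smooth state field, the chain rule gives $\nabla y=\mathsf R\nabla U$ and $\nabla z=D_Uz(U)\nabla U$. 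It also gives $\nabla w=H(U)\nabla U$ by \eqref{eq:H-formula}, and conversely $\nabla U=\mathsf L\nabla y=\mathsf J_U(z)\nabla z=K(w)\nabla w$. Here the spatial gradient is taken componentwise and $|\cdot|$ denotes the Frobenius norm.

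The key step is to bound every Jacobian and its inverse uniformly on the fixed compact rectangle $R$. Its images $\cK_y\Subset\cS^\circ$, $\cK_U\Subset\cT^\circ$ and $\cK_w\Subset\mathcal W$ are compact, since they are continuous images of the compact set $R$ under the analytic inverse root map \eqref{eq:y-inverse-roots} and the subsequent diffeomorphisms.

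The matrices $\mathsf L$ and $\mathsf R$ are constant, so they are trivially bounded. The matrix $D_Uz=D_yz\circ\mathsf R$ is continuous on $\cK_U$; its entries are given explicitly by \eqref{eq:grad-z}, with $\gamma_j$ and $(g_i-z_j)^{-1}$ bounded because $R$ stays away from the walls $g_i$. The inverse $\mathsf J_U(z)=\mathsf L\,\partial_zy$ is polynomial in $z$, by \eqref{eq:y-inverse-roots}. The matrix $H(U)=\mathsf R^{\sf T}Y^{-1}\mathsf R$ is bounded because $y_i\ge c_R>0$ on $\cK_y$, and $K(w)=H^{-1}$ is continuous on $\cK_w$. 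Continuity on compact sets therefore yields a uniform constant $C_R$ bounding all of these matrices.

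The upper bound $|\nabla y|+|\nabla z|+|\nabla w|\le C_R|\nabla U|$ then follows by summing the three forward estimates. The lower bound already follows from any single one of them, for instance $|\nabla U|\le\|\mathsf L\||\nabla y|$.

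For the final sentence, with another reference variable $X\in\{y,z,w\}$, I compose the estimates through $U$. For example, $|\nabla X|\le C|\nabla U|\le C^2|\nabla X'|$ for all $X,X'$. After enlarging $C_R$, each of $|\nabla y|,|\nabla z|,|\nabla U|,|\nabla w|$ is comparable to each of the others, which gives the same two-sided inequality with any variable as reference.

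There is no real obstacle; the only point needing care is a notational one. The $y$-differential acts intrinsically on $E$, so $\nabla y$ takes values in $E^d$. One should therefore use $D_yz$ restricted to $E$, which is the homogeneous-extension convention, and use $\mathsf L$ and $\mathsf R$ only between $E$ and $\R^{N-1}$, so that all the inverse relations above are genuine.
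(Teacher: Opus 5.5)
Your proposal is correct and follows the same route as the paper: the chain rule through the coordinate diffeomorphisms $z\mapsto y\mapsto U\mapsto w$, plus uniform bounds on their Jacobians and inverse Jacobians over the compact images of $R$. The paper states this in two lines; you add the explicit Jacobian formulas ($\mathsf L$, $\mathsf R$, \eqref{eq:grad-z}, $H$, $K$) and the restriction to $E$, which the paper leaves implicit.
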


\begin{proof}
The maps $z\mapsto y\mapsto U\mapsto w$ are smooth diffeomorphisms between
compact subsets of their state spaces.  Their Jacobians and inverse Jacobians
are uniformly bounded.
\end{proof}

\begin{proposition}[Uniform entropy coercivity on a compact root box]
\label{prop:uniform-coercivity}
There are $0<c_R\le C_R<\infty$ such that, for $U\in\cK_U$ and
{$v\in\mathbb R^m$},
\begin{equation}\label{eq:H-HA-uniform-dossier}
 c_R|{v}|^2\le {v}^{\sf T}H(U){v}\le C_R|{v}|^2,
 \qquad
 c_R|{v}|^2\le {v}^{\sf T}H(U)A_N(U){v}\le C_R|{v}|^2.
\end{equation}
\end{proposition}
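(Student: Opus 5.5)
The plan is a compactness-and-continuity argument on the compact moment image $\cK_U$ of the root box $R$. First, $\cK_U\Subset\cT^\circ$, because the root rectangle $R$ is a compact subset of the open interlacing box $\cZ$. \Cref{thm:root-diffeo} and \Cref{prop:moment-bijection} then give that the composite map $z\mapsto y\mapsto U$ is a real-analytic diffeomorphism of $\cZ$ onto $\cT^\circ$. Hence $\cK_U$ is compact, and $y(U)$ is bounded below on $\cK_U$ by $c_R':=\min_i\min_{z\in R}y_i(z)>0$, exactly as in the proof of \Cref{thm:root-rectangle}.

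For the Hessian bounds, I will use the representation \eqref{eq:H-formula}, $H(U)=\mathsf R^{\sf T}Y^{-1}\mathsf R$. Here $\mathsf R:\R^m\to E$ is a fixed linear isomorphism, so there are constants with $c_{\mathsf R}|v|\le|\mathsf Rv|\le C_{\mathsf R}|v|$. On $\cK_U$ the diagonal weights satisfy $1\le y_i^{-1}\le 1/c_R'$. This gives
\[
 c_{\mathsf R}^2|v|^2\le v^{\sf T}H(U)v\le C_{\mathsf R}^2|v|^2/c_R'.
\]

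For the symmetrized diffusion form, I will use the identity from the proof of \Cref{thm:entropy-symmetrization} and \Cref{cor:boundary-uniform-Fisher}. With $V:=\mathsf Rv\in E$ and $u:=Y^{-1}V\in E_y$, the diffusion form becomes the constrained dissipation:
\[
 v^{\sf T}H(U)A_N(U)v=-\bigl\langle V,(B(y)|_E)^{-1}V\bigr\rangle_{y,*}.
\]
By \eqref{eq:B-inverse-relaxation-similarity} this equals $\langle (\mathcal L_y|_{E_y})^{-1}u,u\rangle_y$.

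The bounds $f_{\min}\|u\|_y^2\le\langle\mathcal L_yu,u\rangle_y\le f_{\max}\|u\|_y^2$ from the proof of \Cref{prop:closed-simplex-normal-ellipticity} invert, via the spectral theorem in the weighted inner product, to
\[
 f_{\max}^{-1}\|u\|_y^2\le\langle\mathcal L_y^{-1}u,u\rangle_y\le f_{\min}^{-1}\|u\|_y^2.
\]
Finally, $\|u\|_y^2=V^{\sf T}Y^{-1}V=v^{\sf T}H(U)v$. The second pair of bounds in \eqref{eq:H-HA-uniform-dossier} therefore follows from the first pair, multiplied by $f_{\max}^{-1}$ and $f_{\min}^{-1}$ respectively. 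I then take $c_R$ and $C_R$ as the minimum and maximum of the resulting constants.

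I expect no serious obstacle. The only point needing care is that the weighted spectral inequalities are applied on $E_y$, where $\mathcal L_y$ is self-adjoint and positive. The passage $v\mapsto u$ is an isomorphism $\R^m\to E_y$, since $Y:E_y\to E$ is an isomorphism at interior states. As an alternative, one could argue by continuity and positive definiteness of $H$ and $HA_N$ (\Cref{thm:entropy-symmetrization}) on the compact set $\cK_U$, taking the extremal eigenvalues over $\cK_U$ as the constants.
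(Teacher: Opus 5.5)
Your argument is correct, but your main line differs from the paper's proof. The paper uses exactly the soft argument you mention at the end as an alternative: $H(U)$ and $H(U)A_N(U)$ are symmetric positive definite at each $U\in\cT^\circ$ by \Cref{thm:entropy-symmetrization}, they depend continuously on $U$, so their extremal eigenvalues stay bounded away from $0$ and $\infty$ on the compact set $\cK_U$.

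Your main line makes the constants explicit instead. From $H=\mathsf R^{\sf T}Y^{-1}\mathsf R$ and $c_R'\le y_i\le 1$ you get $c_{\mathsf R}^2|v|^2\le v^{\sf T}Hv\le C_{\mathsf R}^2|v|^2/c_R'$. The identity $v^{\sf T}HA_Nv=\langle\mathcal L_y^{-1}u,u\rangle_y$, together with $\|u\|_y^2=v^{\sf T}Hv$ and the friction bounds $f_{\min}\le\mathcal L_y|_{E_y}\le f_{\max}$, then gives $f_{\max}^{-1}H(U)\le H(U)A_N(U)\le f_{\min}^{-1}H(U)$ as quadratic forms at every interior state.

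This yields more than the paper needs here. The lower constants $c_{\mathsf R}^2$ and $c_{\mathsf R}^2/f_{\max}$ do not depend on $R$ at all. Only the upper constants degenerate, like the reciprocal of the minimal concentration on $R$. The comparison between the dissipation form and the entropy metric involves only $f_{\min}$ and $f_{\max}$, the same mechanism that gives \Cref{cor:boundary-uniform-Fisher}.

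The paper's compactness argument is shorter and needs nothing beyond pointwise definiteness and continuity. Its price is that the constants show no visible dependence on $R$ or on the friction coefficients.
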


\begin{proof}
Both matrix inequalities follow immediately from
\Cref{thm:entropy-symmetrization} and compactness of $\cK_U$.
\end{proof}

\begin{corollary}[{Mixing entropy and the concentration-affine root Hessian}]
\label{cor:mixing-entropy-multi-EPD}
On the whole interlacing root {box} $\cZ$, the pulled-back mixing entropy
\[
 \hz(z):=\sum_{\ell=1}^N y_\ell(z)\log y_\ell(z)
\]
satisfies, for all $j\ne k$,
\begin{equation}\label{eq:mixing-entropy-multi-EPD}
 (z_k-z_j)(\hz)_{jk}=(\hz)_j-(\hz)_k,
\end{equation}
and its diagonal second derivatives have the explicit positive form
\begin{equation}\label{eq:mixing-entropy-root-diagonal-positivity}
 (\hz)_{jj}(z)
 =\sum_{\ell=1}^N\frac{y_\ell(z)}{(g_\ell-z_j)^2}>0,
 \qquad j=1,\ldots,m.
\end{equation}
The same differential identities hold after adding an arbitrary constant to
$\hz$.
\end{corollary}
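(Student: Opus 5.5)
The plan is to use the concentration-affine Hessian characterization \eqref{eq:affine-root-Hessian-components}--\eqref{eq:EPD-as-affine-diagonal}. The multi-EPD relations hold exactly when $\mathcal H^{\mathrm{aff}}(\hz)$ is diagonal in the root frame. The diagonal entries of that matrix are the second derivatives $(\hz)_{jj}$. So both claims follow once we compute
\[
\mathcal H^{\mathrm{aff}}_{jk}(\hz)=D_y^2\widetilde h[e_j,e_k],\qquad \widetilde h(y)=\sum_\ell y_\ell\log y_\ell .
\]
Here $\widetilde h$ is viewed as a function on the positive cone near the hyperplane $\one^{\sf T}y=1$. Only tangent directions $e_j,e_k\in E$ enter, so the choice of extension is irrelevant.

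First step: the Euclidean Hessian of $\widetilde h$ is $D^2_y\widetilde h=Y^{-1}$. This gives
\[
D_y^2\widetilde h[e_j,e_k]=e_j^{\sf T}Y^{-1}e_k .
\]

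Second step: insert the root frame \eqref{eq:root-coordinate-frame}, $e_j=-Yq_j$. Then
\[
e_j^{\sf T}Y^{-1}e_k=q_j^{\sf T}Yq_k=\frac{\delta_{jk}}{\gamma_j}
\]
by the weighted orthogonality \Cref{lem:Y-orthogonality}. Thus $\mathcal H^{\mathrm{aff}}(\hz)$ is diagonal, and \eqref{eq:EPD-as-affine-diagonal} yields \eqref{eq:mixing-entropy-multi-EPD} for all $j\ne k$.

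Third step: the diagonal entry is $(\hz)_{jj}=1/\gamma_j$, and by \eqref{eq:gamma-def},
\[
\frac1{\gamma_j}=\sum_\ell\frac{y_\ell}{(g_\ell-z_j)^2}.
\]
This is \eqref{eq:mixing-entropy-root-diagonal-positivity}. It is strictly positive on $\cZ$ because every $y_\ell(z)>0$ there, by \Cref{thm:root-diffeo}.

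As a consistency check, one can instead use $\partial_{z_jz_j}y=0$ from \eqref{eq:affine-root-connection}. The chain rule then gives $(\hz)_{jj}=\sum_\ell(\partial_{z_j}y_\ell)^2/y_\ell$ directly, and $\partial_{z_j}y_\ell=-y_\ell/(g_\ell-z_j)$ reproduces the same formula.

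The one point needing care is the validity of \eqref{eq:affine-root-Hessian-components} for $\hz$. That identity rests on the chain rule together with \Cref{lem:hessian-identity}, and it applies to any smooth $\Psi$ on $\cZ$. Since $\hz$ is real-analytic on $\cZ$, no obstruction arises.

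Adding a constant to $\hz$ leaves all first and second derivatives unchanged, so the final statement is immediate.
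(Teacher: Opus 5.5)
Your proposal is correct and follows the paper's proof essentially verbatim: $D_y^2\widetilde h=Y^{-1}$, the root frame $e_j=-Yq_j$, and weighted orthogonality give $\mathcal H^{\mathrm{aff}}_{jk}(\hz)=\delta_{jk}/\gamma_j$, and the component formula for $\mathcal H^{\mathrm{aff}}$ then yields both the multi-EPD identities and the diagonal formula. Your extra check via $\partial_{z_jz_j}y=0$ and $\partial_{z_j}y_\ell=-y_\ell/(g_\ell-z_j)$ is a valid independent confirmation of the diagonal entries.
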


\begin{proof}
Let
\[
        \widetilde h(y):=\sum_{\ell=1}^Ny_\ell\log y_\ell,
        \qquad \hz(z)=\widetilde h(y(z)).
\]
On the tangent space $E$, $D_y^2\widetilde h=Y^{-1}$.  Hence, by the concentration-affine
root Hessian \eqref{eq:affine-root-Hessian-def}, the root frame
$e_j=-Yq_j$, and the weighted orthogonality
\eqref{eq:q-weighted-orthogonality},
\begin{equation}\label{eq:entropy-affine-Hessian-diagonal}
 \mathcal H^{\mathrm{aff}}_{jk}(\hz)
 =e_j^{\sf T}Y^{-1}e_k
 =q_j^{\sf T}Yq_k
 =\frac{\delta_{jk}}{\gamma_j}.
\end{equation}
For $j\ne k$, the component formula
\eqref{eq:affine-root-Hessian-components} therefore gives exactly
\eqref{eq:mixing-entropy-multi-EPD}.  For $j=k$ it gives
$(\hz)_{jj}=\gamma_j^{-1}$, which is
\eqref{eq:mixing-entropy-root-diagonal-positivity} by
\eqref{eq:gamma-def}.  Adding a constant does not change any of these
differential identities.
\end{proof}

The positivity assertion concerns these diagonal EPD entries and,
consequently, the diagonalized entropy production below; it does \emph{not} assert that the full
Euclidean Hessian $D_z^2\hz$ is positive definite in root coordinates.
Consequently, when the root production form $\mathfrak Q_\Psi$ is introduced
in \Cref{eq:Q-Psi-definition}, all of its mixed coefficients vanish for $\Psi=\hz$.
On each compact root rectangle, the remaining diagonal coefficients are uniformly positive and
bounded; the resulting coercivity estimate is recorded in \eqref{eq:root-entropy-qform-coercive}.

\section{Local strong solutions and continuation}
\label{sec:classical-local-theory}

Throughout this section, $m=N-1$, $p>d+2$, and
$\Omega\subset\mathbb R^d$ is a bounded domain of class $C^2$.  No stronger
boundary regularity is used in the local existence or continuation
alternative; the $C^{2+\alpha}$ assumption needed for positive-time
conormal smoothing is introduced separately in \Cref{sec:DG-to-strong}.

We use time-first anisotropic H\"older notation:
$C_{t,x}^{\theta/2,\theta}$ denotes parabolic H\"older regularity with
exponent $\theta/2$ in time and $\theta$ in space.  Thus, for example,
$C_{t,x}^{(1+\theta)/2,1+\theta}$ is the first-order parabolic H\"older class.
By \Cref{cor:formulation-equivalence}, solving the entropy-variable equation
below and transforming back reconstructs the original constrained
Maxwell--Stefan fluxes and balances.

In entropy variables the system is
\begin{equation}\label{eq:entropy-system-recalled}
 K(w)\partial_tw-\Div(B_N(w)\nabla w)=0,
 \qquad \partial_\nu w=0,
\end{equation}
where $w$ takes values in $\mathcal W=\mathbb R^m$ and $K(w),B_N(w)$ are
symmetric positive definite.  Since $B_N(w)$ acts only in component space, the
no-flux condition $B_N(w)\nabla w\,\nu=0$ is equivalent to
$\partial_\nu w=0$.

Set
\begin{equation}\label{eq:maximal-spaces}
 X_0:=L^p(\Omega;\mathbb R^m),\qquad
 X_1:=\{v\in W^{2,p}(\Omega;\mathbb R^m):\partial_\nu v=0\},
 \qquad X_{\gamma,p}:=(X_0,X_1)_{1-1/p,p}.
\end{equation}
The real-interpolation theorem for elliptic realizations with boundary
conditions, specifically Theorem~5.2 together with the boundary-space
definition~(5.23) and the $C^2$-boundary refinement in Remark~5.3(c) of
\cite{Amann1993Boundary}, gives, provided
$2\theta\neq 1+1/p$,
\[
 (L^p(\Omega),W^{2,p}_N(\Omega))_{\theta,q}
 =B^{2\theta}_{p,q,N}(\Omega),
\]
where the subscript $N$ means that the homogeneous normal derivative is imposed
exactly when $2\theta>1+1/p$.  At $\theta=1-1/p$ and $q=p$ this yields
\begin{equation}\label{eq:trace-space-identification}
 X_{\gamma,p}
 =\{v\in B^{2-2/p}_{p,p}(\Omega;\mathbb R^m):\partial_\nu v=0\}
\end{equation}
with equivalent norms.  Since $d\ge2$ and $p>d+2$, one has $p>4$, so the
exceptional equality $2-2/p=1+1/p$ is excluded, and
$2-2/p>1+1/p$.  {Thus the normal derivative trace is part of the
initial trace space rather than an additional compatibility condition.}  Because $p>d+2$,
\begin{equation}\label{eq:trace-holder-embedding}
 X_{\gamma,p}\hookrightarrow C^{1+{\eta}}(\overline\Omega;\mathbb R^m)
 \qquad\left(0<{\eta}<1-\frac{d+2}{p}\right).
\end{equation}

\begin{lemma}[Trace compatibility and quantitative restart control]
\label{lem:trace-restart-control}
Under the standing assumptions $p>d+2$ and $\Omega\in C^2$, the trace space
$X_{\gamma,p}$ already contains the homogeneous Neumann compatibility:
\[
 X_{\gamma,p}
 =\{v\in B^{2-2/p}_{p,p}(\Omega;\mathbb R^m):\partial_\nu v=0\}.
\]
Moreover,
\begin{equation}\label{eq:X1-to-Xgamma-restart}
 X_1\hookrightarrow X_{\gamma,p}\hookrightarrow C^{1+{\eta}}(\overline\Omega),
 \qquad 0<{\eta}<1-\frac{d+2}{p},
\end{equation}
and, for every {$0<\beta<1$},
\begin{equation}\label{eq:C2beta-to-X1-restart}
 \partial_\nu v=0,\quad v\in C^{2+\beta}(\overline\Omega;\mathbb R^m)
 \quad\Longrightarrow\quad
 \|v\|_{X_1}\le C_{\Omega,p,\beta}\|v\|_{C^{2+\beta}}.
\end{equation}
Consequently, a family whose pointwise ranges lie in one compact subset of
$\mathcal W$ and which is uniformly bounded in $C^{2+\beta}$ with homogeneous
Neumann data satisfies the uniform state-range and $X_{\gamma,p}$ hypotheses
needed for a family-uniform local restart.
\end{lemma}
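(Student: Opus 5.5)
The trace identification is \eqref{eq:trace-space-identification}, which the paper has already derived from Amann's interpolation theorem. Since $p>d+2\ge4$, we have $2-2/p>1+1/p$, so the Neumann condition is part of the interpolation space. This is the first assertion and needs nothing beyond the computation preceding the lemma.

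For \eqref{eq:X1-to-Xgamma-restart}, the first embedding is the standard continuous embedding $X_1\hookrightarrow(X_0,X_1)_{\theta,p}$ of the domain of an interpolation couple into every real interpolation space between its members. The second embedding is \eqref{eq:trace-holder-embedding}. I would justify it through the Besov--H\"older embedding $B^{2-2/p}_{p,p}(\Omega)\hookrightarrow C^{1+\eta}(\overline\Omega)$ on bounded $C^2$ domains. This holds whenever $2-2/p-d/p>1+\eta$, that is, $\eta<1-(d+2)/p$. One can reach it via a bounded extension operator to $\R^d$ and the whole-space embedding $B^{s}_{p,p}\hookrightarrow B^{s-d/p}_{\infty,\infty}=C^{s-d/p}$ for nonintegral $s-d/p$; since $\eta<1$, the value $1+\eta$ is nonintegral.

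For \eqref{eq:C2beta-to-X1-restart}, $\Omega$ is bounded, so $\|D^kv\|_{L^p}\le|\Omega|^{1/p}\|D^kv\|_{L^\infty}$ for $k\le2$. This gives $\|v\|_{W^{2,p}}\le C\|v\|_{C^2}\le C\|v\|_{C^{2+\beta}}$. Together with $\partial_\nu v=0$, which holds pointwise and hence in the trace sense, this yields $v\in X_1$ with the stated bound; the constant depends only on $|\Omega|$ and $p$.

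The consequence follows by chaining: a $C^{2+\beta}$-bounded family with homogeneous Neumann data is bounded in $X_1$, hence in $X_{\gamma,p}$. Its pointwise range lies in a fixed compact subset of $\mathcal W$ by hypothesis. These are exactly the two quantities on which a family-uniform local existence time depends. No step is a real obstacle. The only point needing care is to cite the Besov embedding with the correct exponent bookkeeping and with the $C^2$ extension operator, rather than with a smoother boundary.
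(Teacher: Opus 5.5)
Your proposal is correct and follows the paper's proof step for step: the trace identification from \eqref{eq:trace-space-identification}, the canonical interpolation embedding combined with \eqref{eq:trace-holder-embedding}, the bounded-domain inclusion $C^{2+\beta}(\overline\Omega)\hookrightarrow W^{2,p}(\Omega)$ together with $\partial_\nu v=0$, and chaining these bounds for the restart consequence. Your justification of the Besov--H\"older embedding via an extension operator and $B^{s}_{p,p}\hookrightarrow B^{s-d/p}_{\infty,\infty}$ is a correct expansion of a step the paper only cites; there the relevant exponent $2-(d+2)/p$ lies in $(1,2)$ and is therefore nonintegral.
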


\begin{proof}
The first identity is \eqref{eq:trace-space-identification}; the inequality
$2-2/p>1+1/p$ ensures that the normal derivative trace is part of the
interpolation space.  The first embedding in
\eqref{eq:X1-to-Xgamma-restart} is the canonical interpolation embedding and
the second is \eqref{eq:trace-holder-embedding}.  Since $\Omega$ is bounded,
$C^{2+\beta}(\overline\Omega)\hookrightarrow W^{2,p}(\Omega)$ continuously;
with $\partial_\nu v=0$ this is exactly
\eqref{eq:C2beta-to-X1-restart}.  The final assertion follows by combining
these bounds with the compact pointwise state-range hypothesis.
\end{proof}

Expanding the divergence gives
\begin{equation}\label{eq:entropy-system-nondivergence}
 \partial_tw-C(w)\Delta w=F(w,\nabla w),
 \qquad C(w):=K(w)^{-1}B_N(w),
\end{equation}
where
\begin{equation}\label{eq:entropy-quadratic-F}
 F(w,\nabla w)
 :=K(w)^{-1}\sum_{k=1}^d
 DB_N(w)[\partial_k w] \,\partial_k w.
\end{equation}
The matrix $C(w)=K(w)^{-1}A_N(U(w))K(w)$ is similar to $A_N(U(w))$ and has
the positive eigenvalues $\lambda_j(U(w))^{-1}$.

\begin{lemma}[Exact strong-Sobolev equivalence of the entropy formulations]
\label{lem:entropy-div-nondiv-equivalence}
Let $I\Subset(0,T)$ and suppose
\[
 w\in W^{1,p}(I;X_0)\cap L^p(I;X_1)\cap C(\overline I;X_{\gamma,p})
\]
has range in a compact subset of $\mathcal W$.  Then
$B_N(w)\nabla w\in L^p(I;W^{1,p}(\Omega))$ componentwise and, almost
everywhere on $I\times\Omega$,
\begin{equation}\label{eq:entropy-divergence-expansion-strong}
 \Div(B_N(w)\nabla w)
 =B_N(w)\Delta w
  +\sum_{k=1}^d
   DB_N(w)[\partial_k w]\,\partial_k w.
\end{equation}
Consequently the divergence-form entropy system
\eqref{eq:entropy-system-recalled} and the nondivergence equation
\eqref{eq:entropy-system-nondivergence}--\eqref{eq:entropy-quadratic-F}
are equivalent in the maximal-$L^p$ strong class, with the same homogeneous
Neumann boundary condition.
\end{lemma}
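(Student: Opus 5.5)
The proof is a product-rule computation in Sobolev spaces. It rests on three facts: the gradient is bounded, $B_N$ is smooth on the compact state range, and the time derivatives match.

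\emph{Step 1 (pointwise bounds).} Since $p>d+2$, the mixed-derivative/trace embedding together with \eqref{eq:trace-holder-embedding} and $w\in C(\overline I;X_{\gamma,p})$ gives $w\in C(\overline I;C^{1+\eta}(\overline\Omega))$. In particular $\nabla w\in L^\infty(I\times\Omega)$. The range of $w$ lies in a compact set $\mathcal K\Subset\mathcal W$, on which $B_N$, $DB_N$, $K$ and $K^{-1}$ are smooth and bounded. Hence $B_N(w)$ is bounded and Lipschitz in $x$, with $\partial_k[B_N(w)]=DB_N(w)[\partial_kw]$ by the chain rule for $C^1$ functions composed with $W^{1,p}$ (here even $C^1$) functions.

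\emph{Step 2 (product rule).} For a.e.\ $t\in I$ we have $w(t)\in W^{2,p}$. The Sobolev product rule for a $W^{1,\infty}$ matrix times a $W^{1,p}$ vector field gives
\[
\partial_k\bigl(B_N(w)\partial_kw\bigr)=B_N(w)\partial_k^2w+DB_N(w)[\partial_kw]\,\partial_kw .
\]
The first term is bounded by $C\|D^2w\|_{L^p}$. The second is bounded by $C\|\nabla w\|_\infty\|\nabla w\|_{L^p}$. Integrating the $p$-th power in time gives $B_N(w)\nabla w\in L^p(I;W^{1,p})$. Measurability in $t$ follows by approximating $w$ with smooth functions in the maximal-regularity norm: with the uniform $C^1$ bound and dominated convergence, the identity passes to the limit. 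Summing over $k$ yields \eqref{eq:entropy-divergence-expansion-strong}.

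\emph{Step 3 (equivalence).} Inserting \eqref{eq:entropy-divergence-expansion-strong} into \eqref{eq:entropy-system-recalled}, both equations become identities in $L^p(I\times\Omega)$. Multiplying by the bounded invertible matrix $K(w)^{-1}$, or by $K(w)$, converts one equation into the other, giving \eqref{eq:entropy-system-nondivergence} with $F$ as in \eqref{eq:entropy-quadratic-F}. The boundary condition is $\partial_\nu w=0$ in both cases, since it is built into $X_1$. The conormal condition $B_N(w)\partial_\nu w=0$ is equivalent to it because $B_N(w)$ is invertible.

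\emph{Main obstacle.} The only delicate point is the uniform $L^\infty$ bound on $\nabla w$ on $I\times\Omega$, which is needed to place the quadratic term in $L^p$. This is exactly where $p>d+2$ and the $C(\overline I;X_{\gamma,p})$ hypothesis enter through \eqref{eq:trace-holder-embedding}. Everything else is the standard chain and product rule.
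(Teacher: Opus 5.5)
Your proposal is correct and follows essentially the same route as the paper. The embedding $X_{\gamma,p}\hookrightarrow C^{1+\eta}(\overline\Omega)$ and $w\in C(\overline I;X_{\gamma,p})$ give $\nabla w\in L^\infty(I\times\Omega)$, and the Sobolev product and chain rules then give $\partial_\ell(B_N(w)\partial_k w)=DB_N(w)[\partial_\ell w]\,\partial_k w+B_N(w)\partial_{\ell k}w\in L^p$. Multiplication by $K(w)^{\pm1}$ and pointwise invertibility of $B_N(w)$ yield the equivalence of the equations and of the boundary conditions, and your extra remark on time measurability is a harmless detail the paper leaves implicit.
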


\begin{proof}
Because $p>d+2$, the trace embedding
\eqref{eq:trace-holder-embedding} and
$w\in C(\overline I;X_{\gamma,p})$ give
$w,\nabla w\in L^\infty(I\times\Omega)$.  On the compact state range all
first derivatives of $B_N$ are bounded.  For every pair of spatial indices
$k,\ell$, the Sobolev product and chain rules therefore give
\[
 \partial_\ell\bigl(B_N(w)\partial_k w\bigr)
 =DB_N(w)[\partial_\ell w]\,\partial_k w
  +B_N(w)\partial_{\ell k}w\in L^p(I\times\Omega).
\]
Thus the flux has the asserted $W^{1,p}$ regularity, and summing the preceding
identity with $\ell=k$ proves
\eqref{eq:entropy-divergence-expansion-strong}.  Since $K(w)$ is uniformly
invertible on the same compact range, multiplication by $K(w)^{-1}$ gives
exactly
\[
 \partial_tw-K(w)^{-1}B_N(w)\Delta w
 =K(w)^{-1}\sum_{k=1}^d
 DB_N(w)[\partial_k w]\,\partial_k w,
\]
with no derivative of $K$: the matrix $K(w)$ multiplies the time derivative
and is not contained in the spatial divergence.  Conversely, multiplying
this identity by $K(w)$ and using
\eqref{eq:entropy-divergence-expansion-strong} reconstructs the divergence
form.  Finally $B_N(w)$ is invertible pointwise, so
$B_N(w)\nabla w\,\nu=0$ is equivalent to $\partial_\nu w=0$.
\end{proof}

\begin{definition}[Strong and classical solutions]\label{def:solution-classes}
Let $0<T\le\infty$.  A \emph{maximal-$L^p$ strong solution} on $[0,T)$ is a function
\[
 w\in W^{1,p}_{\rm loc}([0,T);X_0)\cap L^p_{\rm loc}([0,T);X_1)
       \cap C([0,T);X_{\gamma,p})
\]
which satisfies \eqref{eq:entropy-system-nondivergence} in $X_0$ for almost every
time and has trace $w(0)=w_0$.  A \emph{positive-time classical solution} is a
strong solution which belongs to $C_{t,x}^{1+\sigma/2,2+\sigma}([\tau,T']\times
\overline\Omega)$ for every $0<\tau<T'<T$ and some $\sigma=\sigma(\tau,T')>0$.
When a solution is called classical on a parabolic cylinder, this latter
regularity is meant on a neighbourhood of the closed cylinder, except that no
regularity across its lower time face is required.
\end{definition}

\begin{lemma}[Uniform parameter ellipticity and complementing condition]
\label{lem:uniform-complementing}
Let $\mathcal K_w\Subset\mathcal W$.  There are $0<c<C<\infty$ and an angle
$\vartheta<\pi/2$, depending only on $\mathcal K_w$, such that for every
$w_*\in\mathcal K_w$ the spectrum of $C(w_*)$ lies in $[c,C]$ and the frozen
Neumann problem
\[
 \lambda v-C(w_*)\Delta v=f,\qquad \partial_\nu v=0,
\]
is parameter elliptic in $\Sigma_{\pi-\vartheta}$ and satisfies the full
parameter-dependent Lopatinskii--Shapiro condition, uniformly in $w_*$.  After
complexification, the half-space boundary solution map extends holomorphically
to one uniform complex neighbourhood of the normalized real parameter set.
\end{lemma}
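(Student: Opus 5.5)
The plan is to reduce everything to properties of the frozen matrix $C(w_*)$ on the compact set $\mathcal K_w$, then invoke the standard theory for Neumann problems with constant-coefficient normally elliptic operators. There are three steps: a spectral bound, parameter ellipticity, and the Lopatinskii--Shapiro condition with a holomorphic extension.

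\emph{Spectral bound.} By the remark after \eqref{eq:entropy-quadratic-F}, $C(w_*)=K(w_*)^{-1}A_N(U(w_*))K(w_*)$ is similar to $A_N(U(w_*))$. By \eqref{eq:closed-simplex-spectral-bounds} its spectrum therefore lies in $[1/f_{\max},1/f_{\min}]$, uniformly and even independently of $\mathcal K_w$.

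\emph{Parameter ellipticity.} Since $B_N=A_NK$ is symmetric positive definite, $C(w_*)=K^{-1}B_N$ is self-adjoint and positive in the inner product $\langle a,b\rangle_K:=a^{\sf T}K(w_*)b$. Hence $C(w_*)$ is diagonalizable by a $K$-orthonormal basis, with condition number controlled by the eigenvalue bounds of $K$ on $\mathcal K_w$; these bounds come from compactness and smoothness of $K$. For $\lambda\in\overline\Sigma_{\pi-\vartheta}$ and $|\lambda|+|\xi|^2=1$, the symbol $\lambda+|\xi|^2C(w_*)$ has eigenvalues $\lambda+|\xi|^2\mu$ with $\mu\in[c,C]$, and these stay uniformly away from $0$. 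In the $K$-norm one gets $|(\lambda+|\xi|^2C)^{-1}|_K\le c_\vartheta^{-1}$. Transferring back to the Euclidean norm costs only the factor $\sup\|K\|\,\|K^{-1}\|$ over $\mathcal K_w$.

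\emph{Lopatinskii--Shapiro condition.} I will diagonalize $C(w_*)$ via $K$-orthonormal eigenvectors, $C=S\,\diag(\mu_1,\dots,\mu_m)S^{-1}$, with $S$ and $S^{-1}$ uniformly bounded. In the half space $x_d>0$, the ODE $\lambda v-\mu_j(|\xi'|^2v-\partial_d^2v)=0$ together with $\partial_dv(0)=h_j$ then decouples. Its unique decaying solution is $v=-h_j\rho_j^{-1}e^{-\rho_jx_d}$ with $\rho_j=\sqrt{|\xi'|^2+\lambda/\mu_j}$, and $\operatorname{Re}\rho_j\ge c'>0$ on the normalized set by the sector condition. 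This gives the parameter-dependent Lopatinskii--Shapiro condition with bounds uniform in $w_*$, because the Neumann boundary operator commutes with the constant similarity $S$.

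\emph{Holomorphic extension.} For the complexified statement, I will note that $\rho_j^{-1}e^{-\rho_jx_d}$ is holomorphic in $(\lambda,\xi')$ wherever $|\xi'|^2+\lambda/\mu_j$ avoids $(-\infty,0]$. On the compact normalized real parameter set this quantity lies in a compact subset of the open right half-plane shifted by the sector. By continuity in $(\lambda,\xi',\mu)$ with $\mu\in[c,C]$ compact, one complex neighbourhood works for all $w_*$.

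The main obstacle is the uniformity of the neighbourhood. Here the point is that the solution map depends on $w_*$ only through $\mu\in[c,C]$ and the bounded $S$, so a compactness argument over $[c,C]$ suffices. Finally, $\vartheta<\pi/2$ is arbitrary because the spectrum is real and positive.
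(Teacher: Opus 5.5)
Your proposal is correct and follows essentially the same route as the paper: your $K(w_*)$-orthonormal eigenbasis of $C(w_*)$ is exactly the paper's orthogonal diagonalization of the symmetrized matrix $S_*=K(w_*)^{-1/2}B_N(w_*)K(w_*)^{-1/2}$, after which both arguments solve the decoupled scalar Neumann half-line problems with amplitude $-h_j/\rho_j$ and get uniformity from compactness of $\mathcal K_w$. The differences are minor: you take the spectral bounds from \eqref{eq:closed-simplex-spectral-bounds} rather than from compactness, and you do the holomorphic extension in the fixed frozen eigenbasis rather than via the paper's basis-free principal square root of $(\zeta\cdot\zeta)I+\lambda S_*^{-1}$, which is legitimate because that basis does not depend on $(\lambda,\zeta)$. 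In that step, write $\zeta\cdot\zeta$ for the complexified $|\xi'|^2$, state that $\zeta\cdot\zeta+\lambda/\mu_j$ stays in a compact subset of $\mathbb C\setminus(-\infty,0]$ (not the right half-plane, since $\lambda$ ranges over a sector of opening larger than $\pi/2$), and fix the sign in your displayed ODE, which should read $\lambda v-\mu_j(\partial_d^2v-|\xi'|^2v)=0$.
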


\begin{proof}
Put $P_*=K(w_*)^{1/2}$.  The canonical similarity transform
\begin{equation}\label{eq:canonical-symmetric-similarity}
 P_*C(w_*)P_*^{-1}=K(w_*)^{-1/2}B_N(w_*)K(w_*)^{-1/2}=:S_*
\end{equation}
is symmetric positive definite.  Compactness and smooth dependence imply
uniform bounds for $P_*$ and $P_*^{-1}$ and $cI\le S_*\le CI$.  This proves uniform
normal ellipticity without choosing eigenvectors through possible eigenvalue
crossings.

For the half-space model, an orthogonal diagonalization of the single frozen
matrix $S_*$ reduces the transformed equation to scalar equations
\[
 \lambda {\psi_j}-\mu_j(\partial_d^2-|\xi'|^2){\psi_j}=0,
 \qquad c\le\mu_j\le C.
\]
The decaying mode equals {$c_j e^{-\rho_jx_d}$}, where
$\rho_j=(|\xi'|^2+\lambda/\mu_j)^{1/2}$ is chosen with positive real part.
For $(\lambda,\xi')\ne(0,0)$ one has $\rho_j\ne0$.  The full
Lopatinskii--Shapiro condition asks for existence and uniqueness for every
prescribed boundary datum, not only homogeneous uniqueness.  If
{$\partial_d\psi_j(0)=h_j$}, then explicitly
\begin{equation}\label{eq:LS-Neumann-explicit-amplitude}
 {c_j=-\frac{h_j}{\rho_j}}.
\end{equation}
Thus existence and uniqueness are immediate.  Moreover, sector geometry and
$\mu_j\in[c,C]$ give
\[
 |\rho_j|\asymp |\xi'|+|\lambda|^{1/2},
 \qquad
 \operatorname{Re}\rho_j\ge c_\vartheta
 (|\xi'|+|\lambda|^{1/2}),
\]
uniformly after normalization.  Here and below, $A\asymp B$ means two-sided
comparability up to positive multiplicative constants; a subscript, as in
$\asymp_R$, records the parameters on which those constants may depend.  Hence the stable subspace and Neumann
boundary map are uniformly transverse.

We also close directly the complex-neighbourhood point needed for holomorphic
boundary solution operators.  Augner notes \cite[Remark~4.4]{Augner2024} that this technical extension was not addressed in the original
Denk--Hieber--Pr\"uss argument.  For a complexified tangential frequency
$\zeta\in\mathbb C^{d-1}$ put
\[
 M_*(\zeta,\lambda)
 :=(\zeta\cdot\zeta)I+\lambda S_*^{-1}.
\]
On the real normalized set
$|\xi'|+|\lambda|^{1/2}=1$ its spectrum is
$\{|\xi'|^2+\lambda/\mu_j\}_j$.  Uniformly for
$w_*\in\mathcal K_w$, choose once and for all
$\vartheta<\phi<\pi/2$.  This compact spectral family has positive distance
from the branch cut $(-\infty,0]$ when
$\lambda\in\overline\Sigma_{\pi-\phi}\Subset\Sigma_{\pi-\vartheta}$.
By continuity, the same remains true
for $\zeta$ in one uniform complex neighbourhood of the real normalized
parameter set.  The principal matrix square root
\[
 {\mathcal R_*(\zeta,\lambda)}:=M_*(\zeta,\lambda)^{1/2}
\]
is therefore holomorphic there, uniformly invertible, and has spectrum in
$\{\operatorname{Re}z>0\}$.  The stable solution is
\[
 {\psi(x_d)=e^{-x_d\mathcal R_*}c},
 \qquad
 {\psi'(0)=-\mathcal R_*c=h},
 \qquad
 {c=-\mathcal R_*^{-1}h}.
\]
This proves both the full parameter-dependent Lopatinskii--Shapiro condition
and the required holomorphic complex-neighbourhood extension, without any
choice of eigenvectors through possible crossings.  The constants remain
uniform because $S_*$ and $P_*$ and $P_*^{-1}$ range over compact matrix families.
\end{proof}

\begin{lemma}[Uniform linear maximal regularity]\label{lem:uniform-linear-MR}
Fix a compact $\mathcal K_w\Subset\mathcal W$.  There is an open neighbourhood
$\mathcal O_w\Subset\mathcal W$ of $\mathcal K_w$ with the following property.
For every $v\in X_{\gamma,p}$ satisfying $v(\overline\Omega)\subset\mathcal O_w$,
the realization
\[
 \mathcal A(v)u=-C(v(\cdot))\Delta u,\qquad D(\mathcal A(v))=X_1,
\]
has maximal $L^p$ regularity on $X_0$.  On sets on which
$\|v\|_{X_{\gamma,p}}$ is bounded and $v(\overline\Omega)\subset\mathcal O_w$,
the maximal-regularity estimates are uniform on every fixed finite time
horizon $[0,T_0]$, and hence uniformly for all $0<T\le T_0$.
\end{lemma}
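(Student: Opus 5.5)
The plan is to reduce the statement to the classical maximal-regularity theorem for nonautonomous operators with continuous top-order coefficients. The relevant results are the Denk--Hieber--Pr\"uss theory for parameter-elliptic systems with Lopatinskii--Shapiro boundary conditions, together with its perturbation/localization extension to continuous coefficients. The needed uniformity comes from \Cref{lem:uniform-complementing}. First I will choose the neighbourhood: take $\mathcal O_w$ to be a small open $\delta$-neighbourhood of $\mathcal K_w$ with compact closure $\overline{\mathcal O_w}\Subset\mathcal W$. Applying \Cref{lem:uniform-complementing} to the compact set $\overline{\mathcal O_w}$ gives uniform spectral bounds $[c,C]$ for $C(w_*)$. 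It also gives a uniform sector angle $\vartheta<\pi/2$, uniform Lopatinskii--Shapiro constants, and the holomorphic complex-neighbourhood extension, all uniformly for $w_*\in\overline{\mathcal O_w}$.

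Second, I will fix $v\in X_{\gamma,p}$ with $v(\overline\Omega)\subset\mathcal O_w$. By \eqref{eq:trace-holder-embedding}, $v\in C^{1+\eta}(\overline\Omega)$ with $\|v\|_{C^{1+\eta}}\le C\|v\|_{X_{\gamma,p}}$. Since $C(\cdot)$ is smooth on $\overline{\mathcal O_w}$, the coefficient $x\mapsto C(v(x))$ is H\"older continuous. Its modulus of continuity is controlled uniformly by a bound on $\|v\|_{X_{\gamma,p}}$; this modulus uniformity is the key quantitative input. I will then run the standard localization argument. Cover $\overline\Omega$ by balls of radius $r$ chosen so that the oscillation of $C(v)$ on each ball is below the perturbation threshold of the frozen problems. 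Flatten the boundary with $C^2$ charts and freeze the coefficients at the ball centres. The frozen half-space and whole-space problems have maximal $L^p$ regularity with constants depending only on the data from step one. Small-oscillation perturbation handles the variable coefficients, and the commutator terms from the cut-offs are lower order (at most first derivatives of $u$). These are absorbed by interpolation, at the cost of a shift $\mu\ge\mu_0$, giving maximal regularity of $\mu+\mathcal A(v)$ on $\mathbb R_+$. The number of balls, $r$, and $\mu_0$ depend only on $\Omega$, $p$, $\mathcal K_w$ and the $X_{\gamma,p}$ bound, so all constants are uniform over the admissible set of $v$.

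Third, I will remove the shift on finite horizons. On $[0,T_0]$ the substitution $u=e^{\mu t}\tilde u$ transfers the estimate with a factor $e^{\mu_0T_0}$. For $0<T\le T_0$ the estimate with initial data in $X_{\gamma,p}$ is uniform because extension by zero of the right-hand side (and use of the uniform trace-extension operator) reduces it to $[0,T_0]$. The homogeneous Neumann condition is built into $X_1$, consistent with \eqref{eq:trace-space-identification}.

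I expect the main obstacle to be the uniformity bookkeeping in the localization step. Specifically, the perturbation threshold and the frozen constants must not depend on the individual frozen point or on eigenvector choices; \Cref{lem:uniform-complementing} supplies exactly this, including the holomorphic extension noted via Augner. The coefficient modulus must also be controlled purely through $\|v\|_{X_{\gamma,p}}$. I would first try to quote a uniform version of the Denk--Hieber--Pr\"uss continuous-coefficient theorem, where constants depend only on ellipticity/LS constants and the coefficient modulus, rather than redo the localization in full.
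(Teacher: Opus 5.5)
Your proposal is correct, but it obtains the uniformity by a different mechanism than the paper. You make the localization argument quantitative. The frozen half-space constants are uniform by \Cref{lem:uniform-complementing}. The localization radius, the number of charts and the shift $\mu_0$ are controlled by the modulus of continuity of $C(v(\cdot))$, which \eqref{eq:trace-holder-embedding} bounds in terms of $\|v\|_{X_{\gamma,p}}$.

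The paper instead uses the Augner/Denk--Hieber--Pr\"uss theorem only qualitatively, one coefficient field at a time, and gets uniformity by a soft argument:
\begin{itemize}
\item the admissible set $\{v:\|v\|_{X_{\gamma,p}}\le M,\ v(\overline\Omega)\subset\overline{\mathcal O_w}\}$ is relatively compact in $C^{1+\eta'}(\overline\Omega)$;
\item every coefficient field in the closure still satisfies the ellipticity and Lopatinskii--Shapiro hypotheses, so the qualitative theorem applies to each;
\item one has $\|\mathcal A(v)-\mathcal A(\widetilde v)\|_{\mathcal L(X_1,X_0)}\le C\|C(v)-C(\widetilde v)\|_{L^\infty}$;
\item the inverse of $\partial_t+A$ on the zero-trace maximal-regularity space over $(0,T_0)$ survives small perturbations of $A$ in $\mathcal L(X_1,X_0)$ by a Neumann series, with at most a doubled norm;
\item a finite cover of the compact coefficient image then yields one constant.
\end{itemize}
Your shift removal via $e^{\pm\mu t}$ and your zero extension of the forcing to shorter intervals coincide with the paper's steps.

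What the paper's route buys is that no constants have to be tracked through the localization. In particular one never checks that the half-space constants, the inhomogeneous Neumann data $u\,\partial_\nu\phi$ created by the cutoffs, and the chart-induced lower-order terms depend only on the listed structural data. That is exactly the bookkeeping you identified as the main obstacle.

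Your route buys explicit dependence of the constants, and a shifted half-line estimate with explicit $\mu_0$, on $\Omega$, $p$, $\mathcal K_w$ and the $X_{\gamma,p}$ bound. The price is either reopening the Denk--Hieber--Pr\"uss localization or finding a reference that states its constants in that quantitative form.
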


\begin{proof}
Choose $\mathcal O_w$ with compact closure in $\mathcal W$.  For each fixed
$v\in X_{\gamma,p}$ with range in $\mathcal O_w$, the embedding
\eqref{eq:trace-holder-embedding} gives
$C(v(\cdot))\in C^{1+{\eta}}(\overline\Omega)$.  The principal symbol is
$|\xi|^2C(v(x))$.  The pointwise argument of
\Cref{lem:uniform-complementing}, with the canonical square root
$K(v(x))^{1/2}$, verifies parameter ellipticity and the
Lopatinskii--Shapiro condition.  Since the Neumann boundary operator is fixed,
after complexifying the finite-dimensional component space Augner's bounded-domain
maximal-regularity theorem \cite[Theorem~5.2]{Augner2024} applies with
$E=\mathbb C^m$, differential-order parameter $m_{\rm A}=1$, one boundary
operator $B_1=\partial_\nu I_E$, and temporal exponent chosen equal to the
spatial exponent $p$.  In Augner's mixed-order notation the only nonzero
boundary projection is $P_{1,1}=I_E$.  The real subspace is invariant and is
recovered by restriction.  This theorem follows the Denk--Hieber--Pr\"uss
strategy while making the boundary-symbol holomorphy issue explicit.  The
original second-order theorem \cite[Theorem~8.2]{DenkHieberPruess2003} gives
the same realization once that point is supplied by
\Cref{lem:uniform-complementing}.  Let us check the second-order hypotheses
explicitly.  The domain has $C^2$ boundary; the
top-order coefficients $C(v(x))\delta_{k\ell}$ are bounded and
uniformly continuous (indeed $C^{1+{\eta}}$); there are no lower-order
coefficients; and the order-one Neumann boundary operator has coefficients
$\nu_k(x)I_m\in C^1(\partial\Omega;\mathcal L(E))$, as required for a
$C^2$ boundary.  Parameter ellipticity and the Lopatinskii--Shapiro condition
are exactly those verified above.  Thus the realization in that theorem is
the common domain $X_1$ in \eqref{eq:maximal-spaces}.  With the choice
$\vartheta<\phi<\pi/2$ above, the required estimates hold uniformly on the
closed parameter sector $\overline\Sigma_{\pi-\phi}$, while the corresponding
ellipticity angle is strictly smaller than $\pi/2$.  Hence
\cite[Theorem~5.2]{Augner2024} yields maximal $L^p$ regularity for a scalar
shift $\mu+\mathcal A(v)$.  On a finite interval this
is exactly equivalent to maximal regularity of the unshifted realization, and
we record the elementary transfer because it is the only shift argument used
here.  If
\[
 u_t+\mathcal A(v)u=f,\qquad u(0)=0,
\]
set $q(t)=e^{-\mu t}u(t)$.  Then
\[
 q_t+(\mu+\mathcal A(v))q=e^{-\mu t}f,\qquad q(0)=0.
\]
Conversely $u=e^{\mu t}q$.  Multiplication by $e^{\pm\mu t}$ is an
isomorphism of both $L^p(0,T;X_0)$ and
$W^{1,p}(0,T;X_0)\cap L^p(0,T;X_1)$, with constants depending only on
$|\mu|T$.  Hence shifted maximal regularity implies unshifted maximal
regularity on every finite interval.  No additional operator-valued
multiplier or perturbation theorem is invoked at this stage.

It remains to justify the asserted uniformity.  Fix $M<\infty$ and choose
{$0<\eta'<\eta$}.  The set
\[
 \mathfrak B_M:=\{v\in X_{\gamma,p}:\|v\|_{X_{\gamma,p}}\le M,
                  \ v(\overline\Omega)\subset\overline{\mathcal O_w}\}
\]
is relatively compact in $C^{1+{\eta'}}(\overline\Omega)$ and hence the
coefficient family $\{C(v):v\in\mathfrak B_M\}$ is relatively compact in
$C^{1+{\eta'}}(\overline\Omega)$.  Every coefficient field in the closure
still takes values in the same compact normally elliptic matrix class and
satisfies the same complementing condition.  The preceding Augner/Denk--Hieber--Pr\"uss argument therefore applies to every member of this
closure.  Moreover,
\begin{equation}\label{eq:operator-coefficient-continuity}
 \|\mathcal A(v)-\mathcal A(\widetilde v)\|_{\mathcal L(X_1,X_0)}
 \le C_M\|C(v)-C(\widetilde v)\|_{L^\infty}.
\end{equation}
The family-uniform bound on a fixed horizon can be obtained directly, without
an additional perturbation theorem.  Fix $T_0>0$ and write
\[
 \mathbb E_{T_0}^0
 :=\{u\in W^{1,p}(0,T_0;X_0)\cap L^p(0,T_0;X_1):u(0)=0\},
 \qquad
 \mathbb F_{T_0}:=L^p(0,T_0;X_0).
\]
For a frozen realization $A$ define
\[
 \mathcal L_A:\mathbb E_{T_0}^0\to\mathbb F_{T_0},
 \qquad \mathcal L_Au:=\partial_tu+Au.
\]
Maximal regularity is exactly the statement that $\mathcal L_A$ is an
isomorphism.  If $A_0$ has maximal regularity, then for another realization
$A$ on the same domain,
\[
 \mathcal L_A\mathcal L_{A_0}^{-1}
 =I+\mathcal M_{A-A_0}\mathcal L_{A_0}^{-1},
\]
where
$\|\mathcal M_{A-A_0}\|_{\mathcal L(\mathbb E_{T_0}^0,\mathbb F_{T_0})}
\le\|A-A_0\|_{\mathcal L(X_1,X_0)}$.  Hence
\begin{equation}\label{eq:MR-Neumann-neighbourhood}
 \|A-A_0\|_{\mathcal L(X_1,X_0)}
 <\frac{1}{2\|\mathcal L_{A_0}^{-1}\|}
 \quad\Longrightarrow\quad
 \|\mathcal L_A^{-1}\|
 \le2\|\mathcal L_{A_0}^{-1}\|,
\end{equation}
by the Neumann series.  The compact coefficient closure above has compact
image in $\mathcal L(X_1,X_0)$ by
\eqref{eq:operator-coefficient-continuity}; covering it by finitely many
neighbourhoods of the form \eqref{eq:MR-Neumann-neighbourhood} gives one
maximal-regularity constant for the entire frozen family on $(0,T_0)$.

The same constant works for every shorter interval $(0,T)$, $T\le T_0$: extend
a forcing $f\in L^p(0,T;X_0)$ by zero to $(0,T_0)$, solve the zero-trace
problem there, and restrict the solution to $(0,T)$.  This proves exactly the
family-uniform zero-trace estimate needed for the fixed-point and restart
arguments.  The separate uniform extension operator for maximal-regularity
functions, used below only for the short-time H\"older embedding, is supplied
by \cite[Lemma~7.2]{Amann2005MaxReg}.
\end{proof}

\begin{theorem}[Local strong solutions and continuation alternative]
\label{thm:classical-local-theory}
Let $p>d+2$ and $w_0\in X_{\gamma,p}$.  Then
\eqref{eq:entropy-system-recalled} has a unique maximal-$L^p$ strong solution
\begin{equation}\label{eq:maximal-Lp-class}
 w\in W^{1,p}_{\rm loc}([0,T_{\max});X_0)
       \cap L^p_{\rm loc}([0,T_{\max});X_1)
       \cap C([0,T_{\max});X_{\gamma,p}).
\end{equation}
For every compact $\mathcal K_w\Subset\mathcal W$ and every $M<\infty$ there
is $\tau_*=\tau_*(\mathcal K_w,M)>0$ such that every datum $v_0$ with
\[
 v_0(\overline\Omega)\subset\mathcal K_w,
 \qquad \|v_0\|_{X_{\gamma,p}}\le M,
\]
generates a solution on $[0,\tau_*]$.  Consequently, if $T_{\max}<\infty$ and
\[
 w([\tau,T_{\max})\times\overline\Omega)\subset\mathcal K_w
\]
for some $\tau<T_{\max}$ and compact $\mathcal K_w$, then
\begin{equation}\label{eq:norm-blow-up}
 \lim_{t\uparrow T_{\max}}\|w(t)\|_{X_{\gamma,p}}=\infty.
\end{equation}
\end{theorem}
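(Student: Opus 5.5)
The plan is the standard quasilinear maximal-regularity fixed-point argument in the style of Prüss, run for the nondivergence form \eqref{eq:entropy-system-nondivergence}. By \Cref{lem:entropy-div-nondiv-equivalence}, solving this form in the strong class is the same as solving \eqref{eq:entropy-system-recalled}.

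Fix $\mathcal K_w$ and $M$, and let $\mathcal O_w$ be the neighbourhood supplied by \Cref{lem:uniform-linear-MR}. For a datum $v_0$ with $v_0(\overline\Omega)\subset\mathcal K_w$ and $\|v_0\|_{X_{\gamma,p}}\le M$, define the reference solution $u_*$ of the frozen linear problem
\[
\partial_tu_*+\mathcal A(v_0)u_*=0,\qquad u_*(0)=v_0.
\]
This solution exists in $\mathbb E_{T_0}:=W^{1,p}(0,T_0;X_0)\cap L^p(0,T_0;X_1)$ with a bound depending only on $(\mathcal K_w,M)$, uniformly for the whole family, by \Cref{lem:uniform-linear-MR}. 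The fixed-point set is the ball
\[
\mathbb B_{T,r}:=\{u\in\mathbb E_T:\ u(0)=v_0,\ \|u-u_*\|_{\mathbb E_T}\le r\}.
\]
On $\mathbb B_{T,r}$ I consider the map $u\mapsto\Phi(u)$, where $\Phi(u)$ solves
\[
\partial_t\Phi+\mathcal A(v_0)\Phi=\bigl(C(u)-C(v_0)\bigr)\Delta u+F(u,\nabla u),\qquad \Phi(0)=v_0.
\]

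The key estimates use the embedding $\mathbb E_T\hookrightarrow C([0,T];X_{\gamma,p})\hookrightarrow C([0,T];C^{1+\eta}(\overline\Omega))$. The embedding constant must be uniform in $T\le T_0$, so I apply it to $u-u_*$, which has zero trace, or else use the uniform extension operator of \cite[Lemma~7.2]{Amann2005MaxReg}. With this embedding I will show three things.
\begin{enumerate}
\item For $T$ and $r$ small, $u-u_*$ is small in $C([0,T];C^1)$. Since $u_*(t)\to v_0$ in $X_{\gamma,p}$ as $t\to0$, uniformly over the relatively compact data family, the range of $u$ stays in $\mathcal O_w$.
\item The quasilinear remainder is small:
\[
\|(C(u)-C(v_0))\Delta u\|_{L^p(0,T;X_0)}\le \sup_t\|u(t)-v_0\|_\infty\,\|u\|_{\mathbb E_T}.
\]
The supremum is small for small $T$ and $r$.
\item The quadratic term is small: $\|F(u,\nabla u)\|_{L^p(0,T;X_0)}\le C\,T^{1/p}\sup_t\|\nabla u\|_\infty^2$, because $F$ is smooth with bounded coefficients on the compact range.
\end{enumerate}
Lipschitz differences are handled in the same way, using the $C^1$ bounds on $C$ and $F$ over $\overline{\mathcal O_w}$. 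The contraction principle then gives a unique solution on $[0,\tau_*]$, with $\tau_*$ depending only on $(\mathcal K_w,M)$.

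\textbf{Main obstacle.} The hardest step is making the smallness in the first item uniform over the data family, since convergence $u_*(t)\to v_0$ is not uniform on a merely bounded set of $X_{\gamma,p}$. I will first try to obtain it from the compactness of the embedding of $\{\|v_0\|_{X_{\gamma,p}}\le M\}$ into $C^{1+\eta'}$, in the same way \Cref{lem:uniform-linear-MR} covers its family by finitely many Neumann-series neighbourhoods. A cleaner alternative is to estimate $\sup_t\|u_*(t)-v_0\|_{C^1}$ through the interpolation inequality
\[
\|u_*(t)-v_0\|_{C^{1}}\lesssim\|u_*(t)-v_0\|_{C^{1+\eta}}^{\theta}\,\|u_*(t)-v_0\|_{C^0}^{1-\theta},
\]
combined with the bound $\|u_*(t)-v_0\|_{X_0}\le t^{1-1/p}\|u_*\|_{\mathbb E}$. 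Both factors on the right are then uniformly controlled, and the $C^0$ factor is small for small $t$.

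\textbf{Uniqueness and maximal solution.} Uniqueness in the strong class follows by applying the same contraction estimate to the difference of two solutions on short intervals. Gluing local solutions then yields the maximal solution in the class \eqref{eq:maximal-Lp-class}; continuity into $X_{\gamma,p}$ comes from the trace embedding.

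\textbf{Continuation alternative.} Suppose $T_{\max}<\infty$ and the range stays in $\mathcal K_w$ on $[\tau,T_{\max})$, but \eqref{eq:norm-blow-up} fails. Then there are times $t_n\uparrow T_{\max}$ with $\|w(t_n)\|_{X_{\gamma,p}}\le M$. Restarting at $t_n$ with uniform time $\tau_*(\mathcal K_w,M)$ extends the solution beyond $T_{\max}$ once $t_n>T_{\max}-\tau_*$. The restarted solution agrees with $w$ by uniqueness, which contradicts maximality.
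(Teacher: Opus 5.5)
Your scheme is the paper's: freeze at $v_0$, build the reference solution $u_*$ on a fixed $(0,T_0)$ with a family-uniform bound $M_{\rm ref}$, contract on the zero-trace remainder, and restart with the uniform lifespan. However, the self-map and contraction estimates do not close as written.

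For $h=u-u_*$ you only use the $T$-uniform trace embedding, i.e.\ $\sup_{t\le T}\|h(t)\|_{C^1}\le C_e\|h\|_{\mathbb E_T}\le C_e r$. You then assert that $\sup_t\|u(t)-v_0\|_\infty$ is small ``for small $T$ and $r$''. But the factor it multiplies, $\|\Delta u\|_{L^p(0,T;X_0)}\le\|u_*\|_{\mathbb E_T}+r$, cannot be made small uniformly over a family that is merely bounded in $X_{\gamma,p}$. High-frequency Neumann data of fixed trace norm concentrate $\|u_*\|_{L^p(0,T;X_1)}$ near $t=0$, so only the bound $M_{\rm ref}+r$ is available. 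Let $C_{\rm MR}$ be the uniform zero-trace maximal-regularity constant, $L$ the Lipschitz constant of $C$ on $\overline{\mathcal O_w}$, and $\epsilon(T)\ge\sup_{t\le T}\|u_*(t)-v_0\|_\infty$. The self-map condition then reads $C_{\rm MR}L(\epsilon(T)+C_e r)(M_{\rm ref}+r)\le r$. In the difference estimate, the term $(C(u_1)-C(u_2))\Delta u_1$ contributes a Lipschitz constant of size $C_{\rm MR}LC_e(M_{\rm ref}+r)$, and shrinking $T$ does not reduce it uniformly over the family. Both conditions close only if $C_{\rm MR}LC_eM_{\rm ref}<1$, i.e.\ for small data, whereas the theorem asserts a lifespan for every $M<\infty$.

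The missing ingredient is a positive power of $T$ for zero-trace functions in the coefficient norm: $\sup_{t\le T}\|h(t)\|_{C^1}\le CT^{\kappa}\|h\|_{\mathbb E_T}$ whenever $h(0)=0$, uniformly in $T\le T_0$. You need it for $u-u_*$ and for $u_1-u_2$, not only for $u_*-v_0$. Your own interpolation device supplies it: $\|h(t)\|_{X_0}\le t^{1-1/p}\|h\|_{\mathbb E_T}$ and $\|h(t)\|_{C^{1+\eta}}\le C\|h\|_{\mathbb E_T}$, combined with an inequality $\|f\|_{C^1}\lesssim\|f\|_{C^{1+\eta}}^{\theta}\|f\|_{L^p}^{1-\theta}+\|f\|_{L^p}$ on the bounded domain. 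Note that the small factor you actually control is the $L^p$ norm, not the $C^0$ norm you wrote. With this, you fix $r$ for the range condition and then shrink $T$. The estimates then acquire the factors $T^{\kappa}(M_{\rm ref}+r)$ and $T^{1/p}(M_{\rm ref}+r)$, exactly as in \eqref{eq:uniform-fixed-point-self-map}--\eqref{eq:uniform-fixed-point-contraction}.

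The paper gets the same gain differently. It uses the mixed embedding $\mathbb E_T\hookrightarrow C^{\theta_0}(\overline J;(X_0,X_1)_{\xi,1})\hookrightarrow C^{\theta_0}(\overline J;C^1(\overline\Omega))$ with $(1+d/p)/2<\xi<1-1/p$, made $T$-uniform by Amann's extension operators. It applies this in \eqref{eq:uniform-short-time-C1-control} to both the reference solution and the zero-trace part. Your route is a more elementary substitute once it is used in all three places.

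Finally, drop the item-1 justification that $u_*(t)\to v_0$ in $X_{\gamma,p}$ uniformly over a ``relatively compact'' family. The family is only bounded in $X_{\gamma,p}$, and that convergence is not uniform; only the $C^1$ version holds uniformly.
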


\begin{proof}
For $v\in X_{\gamma,p}$ set
\[
 \mathcal A(v)u=-C(v)\Delta u,
 \qquad
 \mathcal F(v)=K(v)^{-1}\sum_{k=1}^d
 DB_N(v)[\partial_k v] \,\partial_k v.
\]
On every bounded $X_{\gamma,p}$ set with values in a fixed relatively compact
open subset of $\mathcal W$, smoothness of $K,B_N$ and
$X_{\gamma,p}\hookrightarrow C^{1+{\eta}}$ imply
\begin{align}
 \|\mathcal A(v)-\mathcal A(\widetilde v)\|_{\mathcal L(X_1,X_0)}
 &\le L\|v-\widetilde v\|_{X_{\gamma,p}},
 \label{eq:A-local-Lipschitz}\\
 \|\mathcal F(v)-\mathcal F(\widetilde v)\|_{X_0}
 &\le L\|v-\widetilde v\|_{X_{\gamma,p}}.
 \label{eq:F-local-Lipschitz}
\end{align}
The quadratic term is estimated using the bounded embedding into
$W^{1,\infty}$.

We now prove local existence directly by freezing at the initial state; no
nonautonomous maximal-regularity theorem is needed.  {The same contraction also yields the family-uniform restart time used in the
global argument.}  For a finite interval $J=(0,T)$ put
\[
 \mathbb E_T:=W^{1,p}(J;X_0)\cap L^p(J;X_1),
 \qquad
 \mathbb E_T^0:=\{h\in\mathbb E_T:h(0)=0\}.
\]
The trace theorem gives
$\mathbb E_T\hookrightarrow C(\overline J;X_{\gamma,p})$.
For a single datum $w_0$, choose a compact set
$\mathcal K_w\Subset\mathcal W$ containing its pointwise range in its
interior and choose $M>\|w_0\|_{X_{\gamma,p}}$.  {The argument establishes a common local existence statement for all data whose ranges lie in
$\mathcal K_w$ and whose $X_{\gamma,p}$ norms are at most $M$.}  Applied to
this one-element family, it gives the asserted local solution.

The fixed-point equation is obtained by setting
$\mathcal A_0:=\mathcal A(v_0)$ and writing the unknown as
$u=u_*^{v_0}+h$, where $u_*^{v_0}$ solves the homogeneous frozen equation
with initial value $v_0$.  Then $h\in\mathbb E_T^0$ must satisfy
\[
 \partial_t h+\mathcal A_0h
 = [\mathcal A_0-\mathcal A(u_*^{v_0}+h)](u_*^{v_0}+h)
   +\mathcal F(u_*^{v_0}+h).
\]
Since $\mathcal A_0$ has maximal $L^p$ regularity by
\Cref{lem:uniform-linear-MR}, the zero-trace solution operator for the left
side is bounded $L^p(J;X_0)\to\mathbb E_T^0$.  The estimates below show that
its composition with the right side is a strict contraction for sufficiently
small $T$.  Thus the local quasilinear problem is obtained from frozen
maximal regularity alone.

We now carry out that contraction quantitatively and simultaneously prove the
family-uniform lower existence time.  Choose
$\mathcal K_w\Subset\mathcal O_w\Subset\mathcal W$ and put
\[
 d_0:=\operatorname{dist}(\mathcal K_w,\partial\mathcal O_w)>0.
\]
Fix $T_0>0$.  For an admissible initial value $v_0$, let $u_*^{v_0}$ be the
solution of the frozen homogeneous problem
\begin{equation}\label{eq:uniform-reference-problem}
 \partial_tu_*^{v_0}+\mathcal A(v_0)u_*^{v_0}=0,
 \qquad u_*^{v_0}(0)=v_0,
\end{equation}
on $(0,T_0)$.  The uniform maximal-regularity estimate from
\Cref{lem:uniform-linear-MR}, together with a fixed bounded coretraction of
the trace map $\mathbb E_{T_0}\to X_{\gamma,p}$---the trace is a
retraction and an explicit semigroup coretraction is given in
\cite[(2.5)--(2.6)]{Amann2004NonautonomousMR}---gives
\begin{equation}\label{eq:uniform-reference-MR-bound}
 \sup_{v_0}\|u_*^{v_0}\|_{\mathbb E_{T_0}}\le {M_{\rm ref}}={M_{\rm ref}}(\mathcal K_w,M,T_0).
\end{equation}
Indeed, after subtracting the coretraction of $v_0$, one applies the uniform
zero-trace maximal-regularity estimate; the uniform bound of
$\mathcal A(v_0)$ in $\mathcal L(X_1,X_0)$ controls the resulting forcing.
Here and below the supremum is over
$v_0(\overline\Omega)\subset\mathcal K_w$ with
$\|v_0\|_{X_{\gamma,p}}\le M$.  Enlarging {$M_{\rm ref}$} if necessary, we assume
{$M_{\rm ref}\ge M$}.

Choose $\xi$ and a time H\"older exponent $\theta_0$ such that
\[
 \frac{1+d/p}{2}<\xi<1-\frac1p,
 \qquad
 0<\theta_0<1-\frac1p-\xi.
\]
The standard maximal-regularity interpolation embedding gives, on a fixed
time interval,
\[
 W^{1,p}(J;X_0)\cap L^p(J;X_1)
 \hookrightarrow
 C^{\theta_0}\bigl(\overline J;(X_0,X_1)_{\xi,1}\bigr)
 \hookrightarrow C^{\theta_0}(\overline J;C^1(\overline\Omega)),
\]
where the second embedding follows by exactness of real interpolation and
\[
 (X_0,X_1)_{\xi,1}\hookrightarrow
 (L^p(\Omega),W^{2,p}(\Omega))_{\xi,1}
 =B^{2\xi}_{p,1}(\Omega)
 \hookrightarrow C^1(\overline\Omega)
\]
because $2\xi>1+d/p$.  The strict inequality in the choice of $\theta_0$
is intentional: the standard time-H\"older embedding is asserted below the
endpoint $1-1/p-\xi$; no endpoint statement is needed here.  This is precisely the sub-endpoint case of the maximal-regularity
embedding \cite[(5.8)]{Amann2004NonautonomousMR}.

The constant can be chosen uniformly for $0<T\le T_0$.  Indeed, \cite[Lemma~7.2]{Amann2005MaxReg} constructs extension operators
whose norms are uniform in the short time $T$ and gives exactly the estimate
\[
 \|\operatorname{ext}_T u\|_{\mathbb E_{T_0}}
 \le C\bigl(\|u\|_{\mathbb E_T}
       +\|u(0)\|_{X_{\gamma,p}}\bigr),
\]
with $C$ independent of $0<T\le T_0$.
Applying the fixed-interval embedding to this extension yields
\begin{equation}\label{eq:uniform-short-time-C1-control}
 \|u(t)-u(s)\|_{C^1(\overline\Omega)}
 \le C|t-s|^{\theta_0}
 \bigl(\|u\|_{\mathbb E_T}+\|u(0)\|_{X_{\gamma,p}}\bigr),
 \qquad 0\le s,t\le T.
\end{equation}
For functions with zero initial trace, the initial-trace term in the uniform
extension estimate is absent.
Consequently,
\begin{equation}\label{eq:uniform-reference-state-control}
 \sup_{v_0}\sup_{0\le t\le T}
 \|u_*^{v_0}(t)-v_0\|_{C^1}
 \le C{M_{\rm ref}}T^{\theta_0}.
\end{equation}
This is the uniform short-time state control that is not supplied by a
pointwise local-existence statement alone.

For completeness, we record the uniform contraction.  Write
$u=u_*^{v_0}+h$, where $h(0)=0$, and let
$\mathbb E_T^0:=\{h\in\mathbb E_T:h(0)=0\}$.  Solving with the frozen
operator $\partial_t+\mathcal A(v_0)$ defines the fixed-point map
\begin{equation}\label{eq:uniform-fixed-point-map}
 \mathfrak T_{v_0,T}(h)
 :=(\partial_t+\mathcal A(v_0))^{-1}_0
 \left(
 [\mathcal A(v_0)-\mathcal A(u_*^{v_0}+h)](u_*^{v_0}+h)
 +\mathcal F(u_*^{v_0}+h)
 \right),
\end{equation}
where the subscript denotes zero initial trace.  Its inverse norm is uniform
by \Cref{lem:uniform-linear-MR}.  If $\|h\|_{\mathbb E_T}\le r$, then
\eqref{eq:uniform-short-time-C1-control}--\eqref{eq:uniform-reference-state-control} give
\begin{equation}\label{eq:uniform-fixed-point-state-distance}
 \sup_{0\le t\le T}
 \|u_*^{v_0}(t)+h(t)-v_0\|_{C^1}
 \le C({M_{\rm ref}}+r)T^{\theta_0}.
\end{equation}
Choose first a fixed $r>0$ and then $T$ so small that the right-hand side is
less than $d_0/2$.  Thus every function in the fixed-point ball takes values
in $\mathcal O_w$, uniformly in $v_0$.  Smoothness of the pointwise
coefficient gives
\[
 \|\mathcal A(v)-\mathcal A(\widetilde v)\|_{\mathcal L(X_1,X_0)}
 \le L\|v-\widetilde v\|_{C(\overline\Omega)}
\]
on this state neighbourhood.  Combining this estimate with
\eqref{eq:F-local-Lipschitz}, the uniform trace estimates
\[
 \|u\|_{C([0,T];X_{\gamma,p})}
 \le C\bigl(\|u\|_{\mathbb E_T}+\|u(0)\|_{X_{\gamma,p}}\bigr),
 \qquad
 \|h\|_{C([0,T];X_{\gamma,p})}\le C\|h\|_{\mathbb E_T}
 \quad(h\in\mathbb E_T^0),
\]
and \eqref{eq:uniform-fixed-point-state-distance} yields constants independent
of $v_0$ such that
\begin{align}
 \|\mathfrak T_{v_0,T}(h)\|_{\mathbb E_T}
 &\le C\left[T^{\theta_0}({M_{\rm ref}}+r)^2
       +T^{1/p}({M_{\rm ref}}+r)^2\right],
 \label{eq:uniform-fixed-point-self-map}\\
 \|\mathfrak T_{v_0,T}(h_1)-\mathfrak T_{v_0,T}(h_2)\|_{\mathbb E_T}
 &\le C\left[T^{\theta_0}({M_{\rm ref}}+r)+T^{1/p}({M_{\rm ref}}+r)\right]
 \|h_1-h_2\|_{\mathbb E_T}.
 \label{eq:uniform-fixed-point-contraction}
\end{align}
The factor $T^{\theta_0}$ comes from the coefficient difference in
$C([0,T];\mathcal L(X_1,X_0))$, while $T^{1/p}$ comes from integrating the
quadratic lower-order term in time; the displayed factors {$M_{\rm ref}+r$} record its
size on the fixed-point ball.  Taking $T$ smaller once more
makes \eqref{eq:uniform-fixed-point-map} a strict contraction of the closed
$r$-ball in $\mathbb E_T^0$ into itself.  Banach's fixed-point theorem gives a
strong solution of the nondivergence equation on $[0,T]$.  The fixed-point
state control keeps its range in $\mathcal O_w$, so
\Cref{lem:entropy-div-nondiv-equivalence} reconstructs the divergence-form
entropy system \eqref{eq:entropy-system-recalled} exactly, including the
homogeneous Neumann condition.  The same estimates applied to the difference
of two solutions with the same initial trace give uniqueness after possibly
shortening the interval; iteration in time yields uniqueness on every common
existence interval.  We have therefore proved the asserted common existence
time $\tau_*(\mathcal K_w,M)>0$ directly.

Starting from any single datum and repeatedly restarting at positive times
constructs a unique maximal interval $[0,T_{\max})$ and a solution in
\eqref{eq:maximal-Lp-class}.  If $T_{\max}<\infty$ while the late-time range
stays in a compact $\mathcal K_w\Subset\mathcal W$, then the norm satisfies
\eqref{eq:norm-blow-up}.  Indeed, if that limit failed, there would
exist $M<\infty$ and a sequence $t_n\uparrow T_{\max}$ such that
$\|w(t_n)\|_{X_{\gamma,p}}\le M$.  The just-proved family-uniform lifespan
then supplies the same number $\tau_*(\mathcal K_w,M)>0$ for the problem
restarted from every datum $w(t_n)$.  For $n$ sufficiently large,
$T_{\max}-t_n<\tau_*/2$; uniqueness identifies the restarted solution with
the original one on their common interval and consequently extends it past
$T_{\max}$, contradicting maximality.  This proves the stated continuation
alternative with limit $+\infty$, rather than only an unbounded
limsup.

Positive-time classicality is not used in the local contraction or in the
continuation alternative.  It will be obtained later, after the
weak-solution conormal estimates have been established in
\Cref{sec:DG-to-strong}.  Thus the local maximal interval and the
family-uniform restart statement rest only on frozen maximal regularity and
the contraction argument above.
\end{proof}

\begin{remark}[Compact range for entropy-variable initial data]
\label{rem:entropy-data-compact-range}
For the data in \Cref{thm:classical-local-theory}, the embedding
\eqref{eq:trace-holder-embedding} gives a bounded range for $w_0$.  The
softmax formula \eqref{eq:y-softmax} therefore places
$y_0(\overline\Omega)$ in a compact subset of $\mathcal S^\circ$, so the
initial root range lies in a compact rectangle $\mathcal R_0$.

For the corresponding maximal-$L^p$ solution, on every finite time interval
$[0,T']\Subset[0,T_{\max})$ the continuity
$w\in C([0,T'];X_{\gamma,p})$ and
$X_{\gamma,p}\hookrightarrow C^1(\overline\Omega)$ give, after smooth
coordinate composition, $z\in C([0,T'];C^1(\overline\Omega))$.  On every
compact positive-time strip, \Cref{cor:formulation-equivalence} turns the
entropy solution into a strong moment solution and
\Cref{cor:strong-sobolev-root-system} supplies the distributional root system
and homogeneous Neumann trace.  Thus all hypotheses of
\Cref{thm:root-rectangle,thm:compact-range-after-tau} are verified for the
maximal-$L^p$ solution, and $\mathcal R_0$ is forward invariant from time zero.
\end{remark}

\section{Entropy-stabilized one-sided multi-EPD truncations and root regularity}
\label{sec:multi-EPD-dossier}

Throughout \Cref{sec:multi-EPD-dossier,sec:root-DeGiorgi},
$\Omega\subset\mathbb R^d$ is a bounded domain of class $C^2$,
$p>d+2$, $m=N-1$, and
\[
 R=\prod_{j=1}^m[a_j,b_j]
 \Subset\prod_{j=1}^m(g_j,g_{j+1})
\]
is a fixed compact root rectangle whenever a root-box constant is used.

This section establishes the estimates used in the subsequent De Giorgi
argument.  The algebraic cancellations and level estimates are proved directly
in the positive-time maximal-$L^p$ strong-solution class, without assuming
prior classical root regularity or using a Morrey-to-$L^p$ bootstrap.  The
resulting Caccioppoli and logarithmic estimates, together with shrinking and
critical mass, yield a positive H\"older exponent for every root.

{In this section the EPD relations are used as a design equation for nonlinear
level-set test functionals, rather than as a device for generating
conservation-law entropy families.  They diagonalize the entropy production in
the selected root direction before localization.}

{Recall from \eqref{eq:root-diffusion-aj} that, for $z\in R$,}
\begin{equation}\label{eq:aj-Cjk-dossier}
 {\mathfrak a_j(z)=\frac1{\lambda_j(z)},\qquad
 C_{jk}(z)=\frac{\mathfrak a_j(z)+\mathfrak a_k(z)}{z_j-z_k}=-C_{kj}(z).}
\end{equation}
Then the root system \eqref{eq:root-system} reads
\begin{equation}\label{eq:root-system-dossier}
 \partial_tz_j-\Div({\mathfrak a_j(z)}\nabla z_j)
 +\sum_{k\ne j}C_{jk}(z)\nabla z_j\cdot\nabla z_k=0.
\end{equation}
All ${\mathfrak a_j}$, $C_{jk}$ and their derivatives are bounded on $R$, and
${\mathfrak a_j}\ge a_*>0$.  Moreover, the trace identity
\begin{equation}\label{eq:lambda-independent-selected-root}
 \lambda_j=e_1-\sum_{\ell\ne j}z_\ell,
\end{equation}
so ${\mathfrak a_j}$ is independent of the selected root $z_j$.

\subsection{A chain-rule production identity}

For a twice differentiable scalar state function $\Psi$ and
$\Xi=(\Xi_1,\ldots,\Xi_m)$ with $\Xi_j\in\mathbb R^d$, define
\begin{align}
 \mathfrak Q_\Psi(z;\Xi)
 :={}&\frac12\sum_{j,k=1}^m
 (\mathfrak a_j+\mathfrak a_k)
 \mathcal H^{\mathrm{aff}}_{jk}(\Psi)\,
 \Xi_j\cdot\Xi_k \notag\\
 ={}&\sum_{j=1}^m \mathfrak a_j\Psi_{jj}|\Xi_j|^2
 +\sum_{1\le j<k\le m}(\mathfrak a_j+\mathfrak a_k)
 \left(\Psi_{jk}+\frac{\Psi_j-\Psi_k}{z_j-z_k}\right)
 \Xi_j\cdot\Xi_k.
 \label{eq:Q-Psi-definition}
\end{align}
Thus the production form is the concentration-affine Hessian from
\eqref{eq:affine-root-Hessian-def} weighted by the scalar diffusion
coefficients.  In particular, the multi-EPD equations
\eqref{eq:EPD-as-affine-diagonal} are exactly the condition that this
production be diagonal in the spectral root frame.

\begin{lemma}[Strong root chain rule, including the one-sided truncations]
\label{lem:root-chain-rule-dossier}
{Let $I=(t_-,t_+)$} and let $z$ take values in the compact root rectangle $R$.
Assume, for some $p>d+2$, that
\begin{equation}\label{eq:strong-root-chain-class}
 z\in W^{1,p}(I;L^p(\Omega;\mathbb R^m))
 \cap L^p(I;W^{2,p}(\Omega;\mathbb R^m))
 \cap C(\overline I;C^1(\overline\Omega;\mathbb R^m)),
\end{equation}
that \eqref{eq:root-system-dossier} holds distributionally, and that
$\partial_\nu z_j=0$ in the Sobolev trace sense.

\emph{(i)} If $\Psi\in C^2(R)$, then
\begin{equation}\label{eq:Psi-local-identity}
 \partial_t\Psi(z)
 -\Div\left(\sum_{j=1}^m {\mathfrak a_j(z)}\Psi_j(z)\nabla z_j\right)
 +\mathfrak Q_\Psi(z;\nabla z)=0
\end{equation}
holds in $\mathcal D'(I\times\Omega)$; all nondivergence terms in this identity
belong to $L^p(I\times\Omega)+L^\infty(I\times\Omega)$.

\emph{(ii)} Fix an index $i$, a level $k$, and a sign.  Let
$F\in C^2(R)$ satisfy
\begin{equation}\label{eq:one-sided-branch-vanishing}
 F(z)=0,\qquad D F(z)=0
 \quad\text{whenever }z_i=k.
\end{equation}
Define the upper and lower branches, respectively, by
\[
 \Psi^+(z)=\mathbf 1_{\{z_i>k\}}F(z),
 \qquad
 \Psi^-(z)=\mathbf 1_{\{z_i<k\}}F(z).
\]
Then $\Psi^\pm\in C^{1,1}(R)$ and
\eqref{eq:Psi-local-identity} holds in the sense of distributions, with
$D^2\Psi^\pm=D^2F$ on the active side and $D^2\Psi^\pm=0$ on the inactive
side and on the interface (the latter value being an immaterial a.e.
representative).  In particular, no hypersurface measure is produced on
$\{z_i=k\}$.

In either case, the entropy flux in \eqref{eq:Psi-local-identity} belongs
locally to $L^p(I;W^{1,p}(\Omega;\mathbb R^d))$ and has zero normal trace.
Moreover {$t\mapsto\int_\Omega\zeta_0(x)\Psi(z(t,x))\,dx$} is absolutely
continuous for every bounded spatial cutoff {$\zeta_0$}; hence the identity may be
integrated between arbitrary time slices in $\overline I$.
\end{lemma}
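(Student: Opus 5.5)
The plan is to prove (i) first for smooth $\Psi$ through a pointwise chain rule in the strong class. Part (ii) will then follow once we know that the one-sided branches are $C^{1,1}$ with a chain rule that does not see the interface.

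\emph{Step 1: chain rule for part (i).} The class \eqref{eq:strong-root-chain-class} gives $z_t, D^2z\in L^p$ and $\nabla z\in L^\infty$. For $\Psi\in C^2(R)$ the Sobolev chain rule then gives
\[
\partial_t\Psi(z)=\sum_j\Psi_j(z)\partial_tz_j\quad\text{a.e.}
\]
Next we insert the root equation \eqref{eq:root-system-dossier}, written in nondivergence form: $\Div(\mathfrak a_j\nabla z_j)$ lies in $L^p+L^\infty$ because $\mathfrak a_j$ is smooth and $\nabla z\in L^\infty$. The product rule gives
\[
\Psi_j\Div(\mathfrak a_j\nabla z_j)=\Div(\mathfrak a_j\Psi_j\nabla z_j)-\sum_k\mathfrak a_j\Psi_{jk}\nabla z_k\cdot\nabla z_j .
\]
This identity is a.e. valid, and the flux $\mathfrak a_j\Psi_j\nabla z_j$ lies in $L^p(I;W^{1,p})$. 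The residual quadratic terms are
\[
\sum_{j,k}\mathfrak a_j\Psi_{jk}\nabla z_j\cdot\nabla z_k+\sum_j\sum_{k\ne j}\Psi_jC_{jk}\nabla z_j\cdot\nabla z_k .
\]
We symmetrize the first sum in $(j,k)$, which produces $\tfrac12(\mathfrak a_j+\mathfrak a_k)\Psi_{jk}$. Using $C_{jk}=(\mathfrak a_j+\mathfrak a_k)/(z_j-z_k)$, the second sum symmetrizes to $(\mathfrak a_j+\mathfrak a_k)(\Psi_j-\Psi_k)/(z_j-z_k)$ over $j<k$. Together these give exactly $\mathfrak Q_\Psi$ in the form \eqref{eq:Q-Psi-definition}, with the diagonal terms $\mathfrak a_j\Psi_{jj}$. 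This establishes \eqref{eq:Psi-local-identity} a.e., hence distributionally. The normal trace of the flux is $\sum_j\mathfrak a_j\Psi_j\partial_\nu z_j=0$. Absolute continuity of $t\mapsto\int\zeta_0\Psi(z)$ follows because $\partial_t\Psi(z)=D\Psi(z)\cdot z_t\in L^p(I\times\Omega)$ and $\Psi(z)\in W^{1,p}(I;L^p)$.

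\emph{Step 2: regularity of the branches in part (ii).} By \eqref{eq:one-sided-branch-vanishing}, $F$ and $DF$ vanish on the hyperplane $\{z_i=k\}$. Hence $D\Psi^\pm=\mathbf 1_{\{\pm(z_i-k)>0\}}DF$ is continuous across the interface, and on each side it is Lipschitz with constant $\|D^2F\|_\infty$. Therefore $D\Psi^\pm$ is globally Lipschitz on the convex box $R$, so $\Psi^\pm\in C^{1,1}(R)$. Its a.e. Hessian is $D^2F$ on the active side and $0$ on the inactive side. The interface is a Lebesgue-null hyperplane in state space, so the value assigned there is immaterial at the level of state space.

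\emph{Step 3: the chain rule for $C^{1,1}$ functions, the main obstacle.} The difficulty is that $D^2\Psi^\pm(z(t,x))$ must be interpreted on the set $\{z_i(t,x)=k\}$, which may have positive space-time measure. The first-order identities pose no problem: $\Psi^\pm(z)$ and $D\Psi^\pm(z)$ compose with Sobolev $z$ through the Lipschitz chain rule, so the flux is still in $L^p(I;W^{1,p})$ and $\partial_t\Psi^\pm(z)=D\Psi^\pm(z)\cdot z_t$. For the second-order step one must differentiate $\Psi^\pm_j(z)$ in $x$. The approach is as follows. $\Psi^\pm_j=\mathbf 1 F_j$ with $F_j$ vanishing on the interface, so we write $\Psi^\pm_j(z)=G_j(z)\,\mathbf 1_{\{\pm(z_i-k)>0\}}$. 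Applying the Lipschitz chain rule (Ambrosio–Dal Maso / Stampacchia) to the composition gives
\[
\nabla[\Psi^\pm_j(z)]=\mathbf 1_{\mathrm{active}}\sum_lF_{jl}(z)\nabla z_l\quad\text{a.e.}
\]
On the set $\{z_i=k\}$ the gradient $\nabla z_i$ vanishes a.e. by Stampacchia's theorem. The only remaining problem would be the transverse contributions $F_{jl}\nabla z_l$, but these are killed by the vanishing of the derivative of $\Psi^\pm_j(z)$ a.e. on the level set. This is where the $C^{1,1}$ structure is needed: $\Psi^\pm_j(z)=0$ on $\{z_i=k\}$ and is Sobolev, so its gradient vanishes a.e. there. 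Consequently the a.e. formula holds with $D^2\Psi^\pm=0$ on the interface, as claimed, and no singular measure arises.

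With Step 3 established, the computation of Step 1 goes through verbatim, which yields \eqref{eq:Psi-local-identity} for $\Psi^\pm$. As a check, one can also mollify $\Psi^\pm$ in state space, apply (i), and pass to the limit. The convergence $D^2\Psi_\varepsilon(z)\to D^2\Psi^\pm(z)$ a.e. off the level set, combined with $\nabla z_i=0$ there and bounded convergence, gives the same identity.
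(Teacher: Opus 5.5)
Your proposal is correct and follows essentially the paper's route. For (i) it uses the pointwise Bochner/Sobolev chain rule followed by the symmetrization with $C_{jk}=(\mathfrak a_j+\mathfrak a_k)/(z_j-z_k)$, and for (ii) global Lipschitz continuity of $D\Psi^\pm=\mathbf 1_{\rm active}DF$ together with Stampacchia's zero-set property applied to $D_\ell\Psi^\pm(z)$ on $\{z_i=k\}$.

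Only your optional mollification check is under-justified. On $\{z_i=k\}$ the mollified entries $\partial_{jl}\Psi_\varepsilon(z)$ with $j,l\ne i$ multiply transverse gradients that need not vanish, so $\nabla z_i=0$ alone is not enough. You also need $F_{jl}=0$ on the hyperplane for $(j,l)\ne(i,i)$, which follows from $DF\equiv0$ there.
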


\begin{proof}
{We first justify the exact production identity at the strong
maximal-regularity level, without using scalar Schauder regularity.}  By
\eqref{eq:strong-root-chain-class} and $p>d+2$, the gradients are bounded on
the closed strip.  The coefficients ${\mathfrak a_j},C_{jk}$ and all state derivatives
appearing below are bounded on $R$.  Thus
${\mathfrak a_j(z)}\nabla z_j\in L^p(I;W^{1,p})$, the quadratic root terms are bounded,
and every term in the root equation is represented by an $L^p$ function.

For $\Psi\in C^2(R)$, the Bochner chain rule gives
\[
 \partial_t\Psi(z)=\sum_j\Psi_j(z)\partial_tz_j
 \quad\text{in }L^p(I\times\Omega),
\]
and the Sobolev spatial chain rule gives
\[
 \nabla\Psi_j(z)=\sum_k\Psi_{jk}(z)\nabla z_k.
\]
Multiply the $j$-th root equation by the bounded function $\Psi_j(z)$ and
use the distributional product rule
\[
 \Psi_j\Div({\mathfrak a_j}\nabla z_j)
 =\Div({\mathfrak a_j}\Psi_j\nabla z_j)-{\mathfrak a_j}\nabla\Psi_j\cdot\nabla z_j.
\]
After summing in $j$, the diagonal Hessian terms are
${\mathfrak a_j}\Psi_{jj}|\nabla z_j|^2$, while for $j<k$ the mixed Hessian coefficient is
$({\mathfrak a_j}+{\mathfrak a_k})\Psi_{jk}$.  The explicit quadratic terms pair as
\[
 C_{jk}(\Psi_j-\Psi_k)\nabla z_j\cdot\nabla z_k
 =({\mathfrak a_j}+{\mathfrak a_k})\frac{\Psi_j-\Psi_k}{z_j-z_k}
   \nabla z_j\cdot\nabla z_k.
\]
This proves part~(i) distributionally.

For part~(ii), in target space
\[
 D\Psi^\pm=\mathbf 1_{\rm active}DF,
\]
where $\mathbf 1_{\rm active}$ denotes the indicator of $\{z_i>k\}$ in the
upper branch and of $\{z_i<k\}$ in the lower branch.  Because $DF=0$ on $\{z_i=k\}$ and $D^2F$ is bounded, this vector field is
globally Lipschitz on $R$ and its weak target derivative is
\begin{equation}\label{eq:weak-Hessian-one-sided-branch}
 D(D\Psi^\pm)=\mathbf 1_{\rm active}D^2F.
\end{equation}
There is no surface measure because the trace of $D\Psi^\pm$ has no jump.
The Lipschitz Sobolev chain rule therefore applies to
$D\Psi^\pm(z)$.  On the preimage
$E=\{(t,x):z_i(t,x)=k\}$ all components of $D\Psi^\pm(z)$ vanish.
For completeness, the zero-set property used here is the elementary
Stampacchia consequence of the positive/negative-part chain rules: if
$G\in W^{1,p}$, then $\nabla G=0$ almost everywhere on $\{G=0\}$.
Applying this componentwise to $G=D_\ell\Psi^\pm(z)$ gives
$\nabla_x(D\Psi^\pm(z))=0$ almost everywhere on $E$.  Choosing the right-hand
side of \eqref{eq:weak-Hessian-one-sided-branch} to be zero on the target
interface yields globally
\[
 \nabla_x\Psi_j^\pm(z)
 =\sum_{\ell=1}^m\Psi_{j\ell}^\pm(z)\nabla_x z_\ell
 \qquad\text{a.e.}
\]
The first-order Bochner time chain rule uses only that
$\Psi^\pm\in C^1$ with bounded derivative.  Repeating the algebra from
part~(i) proves the distributional identity.  This is the strong-solution
version of the familiar identities
$\nabla(z_i-k)_+=\mathbf 1_{\{z_i>k\}}\nabla z_i$ and
$\nabla(k-z_i)_+=-\mathbf 1_{\{z_i<k\}}\nabla z_i$.

For either part, differentiating the entropy flux spatially produces only
$D^2z\in L^p$ and products of bounded first derivatives, so the flux belongs
to $L^p(I;W^{1,p})$.  Its normal trace is
$\sum_j {\mathfrak a_j}\Psi_j\partial_\nu z_j=0$.  Finally,
$\Psi(z)\in W^{1,p}(I;L^p(\Omega))$ and
$\partial_t\Psi(z)=D\Psi(z)\cdot z_t$; multiplying by a bounded spatial cutoff
and integrating in $x$ gives the asserted absolute continuity in time.
Standard smooth time cutoffs therefore justify integration of
\eqref{eq:Psi-local-identity} up to arbitrary time slices.  No classical
approximation of the nonlinear root system is used.
\end{proof}

Applying the just-proved multi-EPD identity
\eqref{eq:mixing-entropy-multi-EPD} in the production formula
\eqref{eq:Q-Psi-definition} cancels every mixed coefficient and gives the
pointwise identity
\begin{equation}\label{eq:root-entropy-production-explicit}
 \mathfrak Q_{\hz}(z;{\Xi})
 =\sum_{j=1}^m {\mathfrak a_j(z)}(\hz)_{jj}(z)|{\Xi_j}|^2.
\end{equation}
{By \eqref{eq:mixing-entropy-root-diagonal-positivity}, compactness of $R$, and
the uniform bounds for $\mathfrak a_j$ on $R$, there are constants}
$0<\kappa_R\le K_R$ such that {for every $\Xi\in(\mathbb R^d)^m$}
\begin{equation}\label{eq:root-entropy-qform-coercive}
 \kappa_R\sum_{j=1}^m|{\Xi_j}|^2
 \le \mathfrak Q_{\hz}(z;{\Xi})
 \le K_R\sum_{j=1}^m|{\Xi_j}|^2,
 \qquad z\in R.
\end{equation}

\subsection{The multi-EPD truncation family}

Fix $i\in\{1,\ldots,m\}$ and $k\in[a_i,b_i]$.  Define
\begin{equation}\label{eq:multi-Phi-dossier}
 \Phi_{i,k}(z)
 :=(-1)^{i-1}\int_k^{z_i}(z_i-{\sigma})
 \prod_{j\ne i}(z_j-{\sigma})\,\dd{\sigma}.
\end{equation}
For the upper and lower truncations set
\begin{equation}\label{eq:Phi-plus-minus-dossier}
 \Phi_{i,k}^+(z):=\mathbf 1_{\{z_i>k\}}\Phi_{i,k}(z),
 \qquad
 \Phi_{i,k}^-(z):=\mathbf 1_{\{z_i<k\}}\Phi_{i,k}(z).
\end{equation}
Because both $\Phi_{i,k}$ and its first derivatives vanish on $z_i=k$, these
functions belong to $C^{1,1}(R)$.  Their weak Hessians are the classical
Hessians of $\Phi_{i,k}$ on the active side and zero on the inactive side,
with no hypersurface measure on $z_i=k$.  This is precisely the special
one-sided situation covered by part~(ii) of
\Cref{lem:root-chain-rule-dossier}.  The distinction matters: a generic smooth
mollification would not preserve the exact multi-EPD identity below, whereas
the direct truncation chain rule does.

\begin{lemma}[Exact multi-EPD identities and truncation bounds]
\label{lem:multi-EPD-dossier}
There are constants $0<c_R\le C_R<\infty$ such that, with
$r=(z_i-k)_+$ for the upper truncation and $r=(k-z_i)_+$ for the lower
truncation, one has on the corresponding active side
\begin{align}
 c_Rr^2&\le \Phi_{i,k}^{\pm}(z)\le C_Rr^2,
 \label{eq:Phi-r2-comparison-dossier}\\
 |\partial_i\Phi_{i,k}|&\le C_Rr,
 \qquad
 |\partial_j\Phi_{i,k}|\le C_Rr^2\quad(j\ne i),
 \label{eq:Phi-first-derivative-bounds-dossier}\\
 \partial_{ii}\Phi_{i,k}
 &=\prod_{j\ne i}|z_j-z_i|\ge c_R,
 \qquad
 \partial_{jj}\Phi_{i,k}=0\quad(j\ne i),
 \label{eq:Phi-diagonal-Hessian-dossier}
\end{align}
and, for every $a\ne b$,
\begin{equation}\label{eq:multi-EPD-dossier}
 (z_b-z_a)\partial_{ab}\Phi_{i,k}
 =\partial_a\Phi_{i,k}-\partial_b\Phi_{i,k}.
\end{equation}
Consequently
\begin{equation}\label{eq:Q-Phi-exact-dossier}
 \mathfrak Q_{\Phi_{i,k}^{\pm}}(z;\nabla z)
 ={\mathfrak a_i(z)}\partial_{ii}\Phi_{i,k}(z)|\nabla r|^2
 \ge c_R|\nabla r|^2
\end{equation}
almost everywhere.
\end{lemma}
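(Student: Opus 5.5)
The plan is to write $\Phi_{i,k}$ as a weighted integral with a positive weight. Then every claim in \Cref{lem:multi-EPD-dossier} follows from differentiation under the integral sign together with one algebraic identity. Set $s_i:=(-1)^{i-1}$ and $P(\sigma):=\prod_{j\ne i}(z_j-\sigma)$, so that $\Phi_{i,k}(z)=s_i\int_k^{z_i}(z_i-\sigma)P(\sigma)\,\dd\sigma$.

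The first step, which I expect to be the only non-mechanical point, is a uniform sign and size statement for $P$. Let $\sigma$ lie between $k$ and $z_i$; then $\sigma\in[a_i,b_i]\subset(g_i,g_{i+1})$. For $j<i$ we have $z_j\in[a_j,b_j]\subset(g_j,g_{j+1})$ with $g_{j+1}\le g_i$, so $z_j-\sigma<0$. For $j>i$ we have $z_j-\sigma>0$. Hence
\[
 s_iP(\sigma)=\prod_{j\ne i}|z_j-\sigma|=:w(\sigma).
\]
Moreover $\delta_R\le|z_j-\sigma|\le C_R$, where $\delta_R>0$ is the smallest distance between the disjoint compact intervals $[a_j,b_j]$. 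So $c_R\le w\le C_R$ uniformly on $R$.

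With this, \eqref{eq:Phi-r2-comparison-dossier} follows from $\Phi_{i,k}=\int_k^{z_i}(z_i-\sigma)w(\sigma)\,\dd\sigma$. On either side the integrand, taken with the orientation, is nonnegative, and it integrates to a quantity comparable to $\tfrac12(z_i-k)^2=\tfrac12r^2$.

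The derivatives are computed next. The boundary term from differentiating the upper limit vanishes because of the factor $(z_i-\sigma)$. This gives
\[
 \partial_i\Phi_{i,k}=s_i\int_k^{z_i}P\,\dd\sigma,\qquad
 \partial_j\Phi_{i,k}=s_i\int_k^{z_i}(z_i-\sigma)\prod_{l\ne i,j}(z_l-\sigma)\,\dd\sigma\quad(j\ne i).
\]
These yield the bounds $Cr$ and $Cr^2$ in \eqref{eq:Phi-first-derivative-bounds-dossier}, since all factors are bounded on $R$. Differentiating $\partial_i\Phi_{i,k}$ once more in $z_i$ gives $\partial_{ii}\Phi_{i,k}=s_iP(z_i)=\prod_{j\ne i}|z_j-z_i|\ge\delta_R^{m-1}$. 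Since $\Phi_{i,k}$ is affine in each $z_j$ with $j\ne i$, we get $\partial_{jj}\Phi_{i,k}=0$. This is \eqref{eq:Phi-diagonal-Hessian-dossier}.

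For the EPD identity \eqref{eq:multi-EPD-dossier} I would use $(z_b-\sigma)-(z_a-\sigma)=z_b-z_a$ inside the integrand, which splits into two cases.
\begin{itemize}
\item If $a,b\ne i$, then $(z_b-z_a)\prod_{l\ne i,a,b}(z_l-\sigma)=\prod_{l\ne i,a}-\prod_{l\ne i,b}$. Multiplying by $s_i(z_i-\sigma)$ and integrating gives $\partial_a\Phi_{i,k}-\partial_b\Phi_{i,k}$.
\item If $a=i\ne b$, then $\partial_{ib}\Phi_{i,k}=s_i\int_k^{z_i}\prod_{l\ne i,b}(z_l-\sigma)\,\dd\sigma$, again with no boundary term. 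Inserting $z_b-z_i=(z_b-\sigma)-(z_i-\sigma)$ gives $\partial_i\Phi_{i,k}-\partial_b\Phi_{i,k}$.
\end{itemize}
The case $b=i$ follows by symmetry.

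Finally, for \eqref{eq:Q-Phi-exact-dossier} I would apply \Cref{lem:root-chain-rule-dossier}(ii) with $F=\Phi_{i,k}$, which is admissible because $\Phi_{i,k}$ and $D\Phi_{i,k}$ vanish on $\{z_i=k\}$. On the inactive side and on the interface, the a.e. Hessian is zero, $D\Phi_{i,k}^\pm=0$, and $\nabla r=0$ a.e., so both sides vanish. On the active side, the EPD identities kill every mixed coefficient in \eqref{eq:Q-Psi-definition}. The vanishing $\partial_{jj}\Phi_{i,k}$ leaves only $\mathfrak a_i\partial_{ii}\Phi_{i,k}|\nabla z_i|^2$, and $\nabla r=\pm\nabla z_i$ there. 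The lower bound then follows from $\mathfrak a_i\ge a_*$ and $\partial_{ii}\Phi_{i,k}\ge c_R$.
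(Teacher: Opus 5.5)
Your proposal is correct and follows essentially the same route as the paper. Both arguments differentiate under the integral sign (the upper-limit term vanishes because of the factor $z_i-\sigma$), use the interlacing rectangle for uniform sign and size control of the factors between $k$ and $z_i$, and get the multi-EPD identities by writing $z_b-z_a=(z_b-\sigma)-(z_a-\sigma)$ to remove one linear factor, after which substitution into $\mathfrak Q$ cancels every mixed term. Your explicit check that $(-1)^{i-1}\prod_{j\ne i}(z_j-\sigma)=\prod_{j\ne i}|z_j-\sigma|$, and your handling of the inactive side via \Cref{lem:root-chain-rule-dossier}(ii), only spell out details the paper leaves implicit.
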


\begin{proof}
Differentiate \eqref{eq:multi-Phi-dossier} under the integral sign.  The upper
endpoint produces no term because of the factor $z_i-{\sigma}$, and hence
\[
 \partial_i\Phi_{i,k}=(-1)^{i-1}\int_k^{z_i}
 \prod_{j\ne i}(z_j-{\sigma})\,\dd{\sigma},
\]
while for $j\ne i$,
\[
 \partial_j\Phi_{i,k}=(-1)^{i-1}\int_k^{z_i}(z_i-{\sigma})
 \prod_{\ell\ne i,j}(z_\ell-{\sigma})\,\dd{\sigma}.
\]
A second $z_i$ derivative gives
$(-1)^{i-1}\prod_{j\ne i}(z_j-z_i)=\prod_{j\ne i}|z_j-z_i|$, and a second
$z_j$ derivative vanishes for $j\ne i$.  The compact interlacing rectangle
supplies uniform upper and lower bounds for all factors along the segment
between $k$ and $z_i$, proving
\eqref{eq:Phi-r2-comparison-dossier}--\eqref{eq:Phi-diagonal-Hessian-dossier}.

For $a=i$, $b=j\ne i$, subtraction of the first-derivative formulas yields
\[
 \partial_i\Phi_{i,k}-\partial_j\Phi_{i,k}
 =(z_j-z_i)\partial_{ij}\Phi_{i,k}.
\]
If $a,b\ne i$, the same subtraction removes one of the two linear factors in
the integrand.  This proves \eqref{eq:multi-EPD-dossier}.  Substitution in
\eqref{eq:Q-Psi-definition} cancels every mixed pair exactly and leaves only
the selected diagonal term.
\end{proof}

\begin{remark}[Localization for three or more root variables]
For $m\ge3$, integration by parts in the transverse-root fluxes
${\mathfrak a_j}\Phi_j\nabla z_j$ differentiates coefficients depending on additional
roots and therefore produces gradients not controlled by the pure truncation.
Consequently this integration-by-parts argument does not yield the required
Caccioppoli estimate.  The stabilization below provides the weighted
transverse-root dissipation required in the localized estimate without this
secondary integration by parts.
\end{remark}

\subsection{Entropy stabilization of the one-sided truncation}

{For $m\ge3$, the pure EPD truncation controls the selected-gradient production
but not the transverse-root flux terms in the localized estimate.  The
following construction provides the required transverse-gradient control.}

By \Cref{cor:mixing-entropy-multi-EPD}, the mixing entropy itself satisfies
the same pairwise multi-EPD equations as the pure truncation potentials, but
{with strictly positive diagonal second derivatives in every root direction.}  We use it
as a small coercive stabilizer.  Choose a constant $C_h$ so large that
\begin{equation}\label{eq:hhat-positive-dossier}
 \widehat h(z):=\hz(z)+C_h\ge1
 \qquad(z\in R).
\end{equation}
For the upper and lower signs, respectively, set
\[
 r=(z_i-k)_+,\qquad r=(k-z_i)_+,\qquad {\chi:=\frac12r^2}.
\]
On the active set ${\chi_i}=\sigma r$, where $\sigma\in\{1,-1\}$, and
${\chi_{ii}}=1$, while all other first derivatives of ${\chi}$ vanish.

\begin{lemma}[Product identity for the root production form]
\label{lem:Q-product-dossier}
On the active set,
\begin{equation}\label{eq:Q-product-dossier}
 \mathfrak Q_{{\chi}\widehat h}
 ={\chi}\mathfrak Q_{\hz}+\widehat h\,\mathfrak Q_{{\chi}}
 +2{\mathfrak a_i}{\chi_i}(\hz)_i|\nabla z_i|^2
 +{\chi_i}\sum_{j\ne i}({\mathfrak a_i}+{\mathfrak a_j})(\hz)_j\nabla z_i\cdot\nabla z_j.
\end{equation}
where
\begin{equation}\label{eq:Q-chi-dossier}
 \mathfrak Q_{{\chi}}
 ={\mathfrak a_i}|\nabla r|^2
 +{\chi_i}\sum_{j\ne i}\frac{{\mathfrak a_i}+{\mathfrak a_j}}{z_i-z_j}
 \nabla z_i\cdot\nabla z_j.
\end{equation}
\end{lemma}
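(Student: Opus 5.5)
The plan is to expand the production form of a product, using bilinearity of the concentration-affine Hessian. Write $\Psi=\chi\widehat h$. Since $\mathcal H^{\mathrm{aff}}_{jk}(\Psi)=D_y^2\widetilde\Psi[e_j,e_k]$ is a genuine second derivative along the tangent frame, the Leibniz rule gives
\[
\mathcal H^{\mathrm{aff}}_{jk}(\chi\widehat h)=\chi\,\mathcal H^{\mathrm{aff}}_{jk}(\widehat h)+\widehat h\,\mathcal H^{\mathrm{aff}}_{jk}(\chi)+\chi_j\widehat h_k+\chi_k\widehat h_j ,
\]
because $D_y\widetilde\chi[e_j]=\chi_j$ by the duality \eqref{eq:root-frame-duality}. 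One can also check this directly from the component formula \eqref{eq:affine-root-Hessian-components}. In the diagonal case, $(\chi\widehat h)_{jj}$ gives the stated expression. In the off-diagonal case, the divided difference of first derivatives satisfies
\[
(\chi\widehat h)_j-(\chi\widehat h)_k=\chi(\widehat h_j-\widehat h_k)+\widehat h(\chi_j-\chi_k),
\]
which distributes correctly. Adding a constant does not change derivatives, so $\mathcal H^{\mathrm{aff}}(\widehat h)=\mathcal H^{\mathrm{aff}}(\hz)$ and $\widehat h_j=(\hz)_j$.

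Next I insert this identity into the definition \eqref{eq:Q-Psi-definition},
\[
\mathfrak Q_\Psi=\tfrac12\sum_{j,k}(\mathfrak a_j+\mathfrak a_k)\mathcal H^{\mathrm{aff}}_{jk}\,\Xi_j\cdot\Xi_k .
\]
The first two pieces give $\chi\mathfrak Q_{\hz}+\widehat h\mathfrak Q_\chi$ immediately. For the cross piece $\chi_j(\hz)_k+\chi_k(\hz)_j$, I use that on the active set only $\chi_i=\sigma r$ is nonzero. The surviving terms are then $j=k=i$, which gives $\tfrac12\cdot 2\mathfrak a_i\cdot 2\chi_i(\hz)_i|\nabla z_i|^2=2\mathfrak a_i\chi_i(\hz)_i|\nabla z_i|^2$. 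The remaining terms are the pairs $(i,j)$ and $(j,i)$ with $j\ne i$, each contributing $\tfrac12(\mathfrak a_i+\mathfrak a_j)\chi_i(\hz)_j\nabla z_i\cdot\nabla z_j$, and these sum to the last term in \eqref{eq:Q-product-dossier}.

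Finally, I compute $\mathfrak Q_\chi$ for \eqref{eq:Q-chi-dossier}. The second derivatives are $\chi_{ii}=1$, $\chi_{jk}=0$ otherwise, and $\chi_j=0$ for $j\ne i$. Hence the diagonal term is $\mathfrak a_i|\nabla z_i|^2$, which equals $\mathfrak a_i|\nabla r|^2$ on the active set since $\nabla r=\sigma\nabla z_i$ there. The mixed terms for pairs $\{i,j\}$ carry the coefficient $(\chi_i-\chi_j)/(z_i-z_j)=\chi_i/(z_i-z_j)$. Pairs not containing $i$ vanish.

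No real obstacle is expected. The only care needed is the bookkeeping of the factor $\tfrac12$ against the symmetric double sum, and noting that the identity holds pointwise a.e.\ on the active set, where $\chi$ is smooth (the polynomial $\tfrac12(z_i-k)^2$).
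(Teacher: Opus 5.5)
Your proposal is correct and follows essentially the same route as the paper. The paper's proof likewise inserts the Leibniz rules $(\chi\widehat h)_{jk}=\chi_{jk}\widehat h+\chi_j\widehat h_k+\chi_k\widehat h_j+\chi\widehat h_{jk}$ and $(\chi\widehat h)_j=\chi_j\widehat h+\chi\widehat h_j$ into the definition of $\mathfrak Q$, uses $\chi_j=0$ for $j\ne i$, and then reads off $\mathfrak Q_\chi$ directly.

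Your coordinate-free packaging as a product rule for $\mathcal H^{\mathrm{aff}}$, via $D_y^2\widetilde\Psi[e_j,e_k]$ and the duality $D_yz_\ell[e_j]=\delta_{\ell j}$, is just a restatement of that computation. Your bookkeeping is correct: the $j=k=i$ term contributes $2\mathfrak a_i\chi_i(\hz)_i|\nabla z_i|^2$, each pair $\{i,j\}$ contributes $(\mathfrak a_i+\mathfrak a_j)\chi_i(\hz)_j\nabla z_i\cdot\nabla z_j$, and the divided-difference coefficient in $\mathfrak Q_\chi$ is $\chi_i/(z_i-z_j)$.
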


\begin{proof}
Use
$({\chi}\widehat h)_{jk}={\chi_{jk}}\widehat h+{\chi_j}\widehat h_k
+{\chi_k}\widehat h_j+{\chi}\widehat h_{jk}$ and the corresponding first-derivative
identity in \eqref{eq:Q-Psi-definition}.  Since ${\chi_j}=0$ for $j\ne i$, only
the displayed terms remain.  Formula \eqref{eq:Q-chi-dossier} follows directly
from the definition of $\mathfrak Q_{{\chi}}$.
\end{proof}

\begin{proposition}[Coercive entropy-stabilized truncation]
\label{prop:stabilized-truncation-coercivity}
There is $\varepsilon_R>0$ such that for every
$0<\varepsilon\le\varepsilon_R$, every $i$, every $k\in[a_i,b_i]$, and either
sign, the function
\begin{equation}\label{eq:Psi-stabilized-dossier}
 \Psi_{i,k}^{\pm}(z)
 :=\Phi_{i,k}^{\pm}(z)+\varepsilon {\chi(z)}\widehat h(z)
\end{equation}
is itself exactly a one-sided branch of the form covered by
\Cref{lem:root-chain-rule-dossier}: on the active side it equals
\[
 F_{i,k,\varepsilon}(z)
 :=\Phi_{i,k}(z)+\frac{\varepsilon}{2}(z_i-k)^2\widehat h(z),
\]
and it is zero on the inactive side.  Both
$F_{i,k,\varepsilon}$ and $DF_{i,k,\varepsilon}$ vanish on $z_i=k$.
Consequently the exact production identity applies to
$\Psi_{i,k}^{\pm}$ with no interface measure.  It satisfies, almost everywhere on the active set,
\begin{equation}\label{eq:Psi-coercivity-dossier}
 \mathfrak Q_{\Psi_{i,k}^{\pm}}(z;\nabla z)
 \ge c_R|\nabla r|^2
 +c_R\varepsilon r^2\sum_{j\ne i}|\nabla z_j|^2.
\end{equation}
Moreover
\begin{equation}\label{eq:Psi-size-flux-bounds-dossier}
 c_Rr^2\le\Psi_{i,k}^{\pm}\le C_Rr^2,
 \qquad
 |\Psi_i|\le C_Rr,
 \qquad
 |\Psi_j|\le C_Rr^2\quad(j\ne i).
\end{equation}
\end{proposition}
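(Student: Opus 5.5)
The proof splits into three parts: the structural claims, the size and flux bounds, and the coercivity estimate. Only the last needs real work.

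\emph{Structural claims.} On the active side $\chi=\tfrac12(z_i-k)^2$, so $\Psi^\pm_{i,k}$ is the indicated branch of $F_{i,k,\varepsilon}=\Phi_{i,k}+\tfrac{\varepsilon}{2}(z_i-k)^2\widehat h$. Both $\Phi_{i,k}$ and $D\Phi_{i,k}$ vanish on $z_i=k$ by \Cref{lem:multi-EPD-dossier}. The added term vanishes there to second order in $z_i-k$, and its gradient $\varepsilon(z_i-k)\widehat h\,e_i+\tfrac\varepsilon2(z_i-k)^2D\widehat h$ also vanishes. Hence \eqref{eq:one-sided-branch-vanishing} holds, and \Cref{lem:root-chain-rule-dossier}(ii) applies with no interface measure.

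\emph{Size and flux bounds.} These follow from \eqref{eq:Phi-r2-comparison-dossier}--\eqref{eq:Phi-first-derivative-bounds-dossier} together with $1\le\widehat h\le C_R$ and $|D\widehat h|\le C_R$ on $R$. The term $\varepsilon\chi\widehat h$ is nonnegative and at most $C\varepsilon r^2$. Its $i$-derivative is $O(r)$ and its transverse derivatives are $O(r^2)$. For $\varepsilon\le1$ this gives \eqref{eq:Psi-size-flux-bounds-dossier}.

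\emph{Coercivity.} Since $\mathfrak Q_\Psi$ is linear in $\Psi$, we have $\mathfrak Q_{\Psi^\pm}=\mathfrak Q_{\Phi^\pm}+\varepsilon\mathfrak Q_{\chi\widehat h}$, and $\mathfrak Q_{\widehat h}=\mathfrak Q_{\hz}$. By \eqref{eq:Q-Phi-exact-dossier} the first term is at least $c_R|\nabla r|^2$, where $|\nabla r|=|\nabla z_i|$ on the active set. Expand the second term with \Cref{lem:Q-product-dossier} and \eqref{eq:Q-chi-dossier}:
\begin{itemize}
\item $\chi\mathfrak Q_{\hz}\ge \tfrac{\kappa_R}{2}r^2\sum_{j}|\nabla z_j|^2$ by \eqref{eq:root-entropy-qform-coercive}. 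This supplies the transverse dissipation $\tfrac{\kappa_R}2 r^2\sum_{j\ne i}|\nabla z_j|^2$.
\item $\widehat h\,\mathfrak a_i|\nabla r|^2\ge0$, so it can be dropped.
\item $2\mathfrak a_i\chi_i(\hz)_i|\nabla z_i|^2$ is bounded by $C_Rr|\nabla r|^2\le C_R|\nabla r|^2$, since $r$ is bounded on $R$.
\item The mixed terms $\widehat h\chi_i\sum_{j\ne i}\frac{\mathfrak a_i+\mathfrak a_j}{z_i-z_j}\nabla z_i\cdot\nabla z_j$ and $\chi_i\sum_{j\ne i}(\mathfrak a_i+\mathfrak a_j)(\hz)_j\nabla z_i\cdot\nabla z_j$ are bounded by $C_R\,r|\nabla r|\sum_{j\ne i}|\nabla z_j|$. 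This uses $|\chi_i|=r$ and the fact that the root gaps are bounded below on $R$.
\end{itemize}

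The mixed terms are the main obstacle. Pointwise they are of lower truncation order, but they carry only one factor of $r$, whereas the good transverse term carries $r^2$. Young's inequality splits them exactly along this mismatch:
\[
C_R r|\nabla r||\nabla z_j|\le \tfrac{\kappa_R}{4(m-1)}r^2|\nabla z_j|^2+C'_R|\nabla r|^2 ,
\]
with $C'_R$ depending only on $R$. The first piece is absorbed into $\tfrac{\kappa_R}2r^2|\nabla z_j|^2$. Everything multiplied by $\varepsilon$ then satisfies
\[
\varepsilon\mathfrak Q_{\chi\widehat h}\ge \tfrac{\varepsilon\kappa_R}{4}r^2\sum_{j\ne i}|\nabla z_j|^2-\varepsilon C''_R|\nabla r|^2 .
\]
The crucial point is that the $|\nabla r|^2$ defect is multiplied by $\varepsilon$, while the pure truncation gives $c_R|\nabla r|^2$ independently of $\varepsilon$. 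Choosing $\varepsilon_R$ with $\varepsilon_RC''_R\le c_R/2$ yields \eqref{eq:Psi-coercivity-dossier} with constant $\min(c_R/2,\kappa_R/4)$. All constants are uniform in $i$, in $k\in[a_i,b_i]$ and in the sign, because they are built only from bounds on the compact box $R$.
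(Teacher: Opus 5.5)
Your proof is correct and follows the paper's argument. It uses linearity of $\mathfrak Q_\Psi$ in $\Psi$ and the product identity of \Cref{lem:Q-product-dossier}, discards $\widehat h\,\mathfrak a_i|\nabla r|^2$, and absorbs the remaining $O(r)$ terms by Young's inequality and a small $\varepsilon$.

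The only difference is where $\varepsilon$ enters Young's inequality. You split before multiplying by $\varepsilon$, so the transverse part is absorbed by the stabilizer's own $\tfrac{\kappa_R}{2}r^2|\nabla z_j|^2$ and the single condition $\varepsilon_R C''_R\le c_R/2$ suffices. The paper instead splits $\varepsilon C_0 r|\nabla z_i||\nabla z_j|\le\tfrac{c_0}{4}|\nabla z_i|^2+C_1\varepsilon^2r^2|\nabla z_j|^2$ and imposes two successive smallness conditions. For $m=1$ your weight $\kappa_R/(4(m-1))$ is undefined, but there are no mixed terms then, so nothing needs absorbing.
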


\begin{proof}
{Write $\Xi_j=\nabla z_j$} and let
\[
 D_R:=\max_{1\le \ell\le m}(b_\ell-a_\ell),
 \qquad 0\le r\le D_R.
\]
By \eqref{eq:root-entropy-qform-coercive},
\begin{equation}\label{eq:qh-good-dossier}
 {\chi}\mathfrak Q_{\hz}(z;{\Xi})
 \ge \frac{\kappa_R}{2}r^2\sum_{j=1}^m|{\Xi_j}|^2.
\end{equation}
Let $c_0>0$ be a uniform lower bound for
${\mathfrak a_i}\partial_{ii}\Phi_{i,k}$ on all admissible $i,k,z$.  By
\Cref{lem:Q-product-dossier}, boundedness of $\widehat h$, $D\hz$, the ${\mathfrak a_j}$,
and the reciprocal root gaps on $R$, there is $C_0<\infty$, independent of
$i,k$ and of the sign, such that after discarding the nonnegative term
$\widehat h {\mathfrak a_i}|{\Xi_i}|^2$,
\begin{equation}\label{eq:qh-lower-explicit-dossier}
 \mathfrak Q_{{\chi}\widehat h}(z;{\Xi})
 \ge \frac{\kappa_R}{2}r^2\sum_{j=1}^m|{\Xi_j}|^2
      -C_0r|{\Xi_i}|^2
      -C_0r|{\Xi_i}|\sum_{j\ne i}|{\Xi_j}|.
\end{equation}
Combining this with the exact EPD identity gives
\begin{equation}\label{eq:Psi-preabsorb-dossier}
 \mathfrak Q_{\Psi_{i,k}^{\pm}}(z;{\Xi})
 \ge c_0|{\Xi_i}|^2+\frac{\varepsilon\kappa_R}{2}r^2\sum_{j=1}^m|{\Xi_j}|^2
 -\varepsilon C_0r|{\Xi_i}|^2-\varepsilon C_0r|{\Xi_i}|\sum_{j\ne i}|{\Xi_j}|.
\end{equation}
Choose first
\begin{equation}\label{eq:epsilon-first-choice-dossier}
 \varepsilon\le \frac{c_0}{4C_0\max\{D_R,1\}}.
\end{equation}
Then the third term on the right of
\eqref{eq:Psi-preabsorb-dossier} is at most $(c_0/4)|{\Xi_i}|^2$.
If $m\ge2$, Young's inequality applied to
$|\Xi_i|\sum_{j\ne i}|\Xi_j|$, followed by Cauchy--Schwarz in the finite sum,
yields a constant $C_1=C_1(R)$ such that
\begin{equation}\label{eq:cross-young-explicit-dossier}
 \varepsilon C_0r|{\Xi_i}|\sum_{j\ne i}|{\Xi_j}|
 \le \frac{c_0}{4}|{\Xi_i}|^2
      +C_1\varepsilon^2r^2\sum_{j\ne i}|{\Xi_j}|^2.
\end{equation}
For $m=1$ the sum is empty.  Reducing $\varepsilon$ once more so that
$C_1\varepsilon\le\kappa_R/4$ (with no restriction needed when $m=1$), we
obtain
\[
 \mathfrak Q_{\Psi_{i,k}^{\pm}}(z;{\Xi})
 \ge \frac{c_0}{2}|{\Xi_i}|^2
 +\frac{\varepsilon\kappa_R}{4}r^2\sum_{j\ne i}|{\Xi_j}|^2.
\]
On the active set ${\Xi_i}=\nabla z_i=\pm\nabla r$, which proves
\eqref{eq:Psi-coercivity-dossier} after decreasing the generic constant
$c_R$.  This construction gives one common $\varepsilon_R>0$ because only
finitely many selected indices occur and all constants above are uniform on
the compact root rectangle.

Finally, $\widehat h\ge1$, {$\chi=r^2/2$}, and the pure EPD truncation is
nonnegative and comparable with $r^2$.  Hence
$\Psi_{i,k}^{\pm}\asymp_R r^2$.  Moreover
\[
 \partial_i({\chi}\widehat h)={\chi_i}\widehat h+{\chi}(\hz)_i=O_R(r),
 \qquad
 \partial_j({\chi}\widehat h)={\chi}(\hz)_j=O_R(r^2)\quad(j\ne i),
\]
where $r\le D_R$ is used in the first estimate.  Together with
\eqref{eq:Phi-first-derivative-bounds-dossier} this proves
\eqref{eq:Psi-size-flux-bounds-dossier}.
\end{proof}

\section{Scalar De Giorgi regularity for every additive root}
\label{sec:root-DeGiorgi}

The quadratic estimate alone does not provide the required two-time
propagation estimate, because the temporal energy of the stabilized truncation
is only uniformly equivalent to $(z_i-k)_\pm^2$.  We therefore derive the
time-propagation estimate directly from the scalar root equation.  The
additional positive term
\[
 (z_i-k)_\pm^2\sum_{j\ne i}|\nabla z_j|^2
\]
created by the entropy stabilization controls the cross-root drift in the
logarithmic test of the selected scalar root equation.

For $x_0\in\overline\Omega$ and $\rho>0$ put
\[
 D_\rho(x_0):=B_\rho(x_0)\cap\Omega,
 \qquad
 Q_\rho^\Omega(t_0,x_0):=(t_0-\rho^2,t_0)\times D_\rho(x_0).
\]
We suppress the centers when no confusion can arise.

\begin{lemma}[Uniform local geometry up to the physical boundary]
\label{lem:uniform-local-geometry-DG}
There are $\rho_\Omega>0$ and constants depending only on $d$ and the
$C^2$-geometry of $\Omega$ such that, for every
$x_0\in\overline\Omega$ and $0<\rho\le\rho_\Omega$:
\begin{enumerate}
\item $|D_\rho(x_0)|\asymp_\Omega \rho^d$, and there is a uniform thin-annulus modulus: for every $\varepsilon>0$ there is $\lambda_\varepsilon\in(0,1)$, depending only on $\varepsilon$ and the local geometry, such that
\[
 |D_\rho(x_0)\setminus D_{\lambda_\varepsilon\rho}(x_0)|
 \le \varepsilon |D_\rho(x_0)|
 \qquad(0<\rho\le\rho_\Omega);
\]
\item the Sobolev and parabolic Sobolev inequalities on $D_\rho(x_0)$ hold
with constants independent of $x_0$ and $\rho$, for functions which vanish
near the artificial boundary $\partial B_\rho(x_0)\cap\Omega$ but need not
vanish on $\partial\Omega$.  In particular, if $I$ is an interval and
{$\phi$} has this spatial support property, then
\begin{equation}\label{eq:boundary-parabolic-sobolev-DG}
 \iint_{I\times D_\rho}|{\phi}|^{2(d+2)/d}
 \le C_\Omega
 \left(\operatorname*{ess\,sup}_{t\in I}
       \int_{D_\rho}|{\phi(t)}|^2\right)^{2/d}
 \iint_{I\times D_\rho}|\nabla {\phi}|^2,
\end{equation}
\item for {$\varphi\in W^{1,1}(D_\rho)$} and $h<\ell$,
\begin{equation}\label{eq:relative-DeGiorgi-isoperimetric}
 (\ell-h)
 |\{{\varphi}\le h\}\cap D_\rho|
 |\{{\varphi}\ge \ell\}\cap D_\rho|
 \le C_\Omega\rho^{d+1}
 \int_{D_\rho\cap\{h<{\varphi}<\ell\}}|\nabla {\varphi}|\,dx.
\end{equation}
\end{enumerate}
\end{lemma}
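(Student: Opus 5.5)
The plan is to reduce all three assertions to a single uniform bi-Lipschitz flattening of the boundary and then use the corresponding facts on half-balls or balls. Since $\partial\Omega$ is compact and of class $C^2$, there are finitely many charts, which give a radius $r_0>0$ and a constant $L$ with the following property. For every $x_0\in\partial\Omega$ there is a rotation and a $C^2$ graph function $\psi$ with $\psi(0)=0$, $\nabla\psi(0)=0$ and $\|\psi\|_{C^2}\le L$, such that $\Omega\cap B_{r_0}(x_0)$ is the supergraph $\{x_d>\psi(x')\}$ in the rotated coordinates. The flattening map $\Phi(x',x_d)=(x',x_d-\psi(x'))$ is then bi-Lipschitz with constants close to $1$ on $B_{r_0}(x_0)$, and it maps $D_\rho(x_0)$ onto a set $\widetilde D_\rho$ that is trapped between half-balls $B^+_{\rho/2}$ and $B^+_{2\rho}$. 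Indeed, the curvature bound gives $|\psi(x')|\le L|x'|^2\le L\rho^2\ll\rho$.

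For $x_0\in\Omega$, two cases arise. If $\operatorname{dist}(x_0,\partial\Omega)\ge\rho$, then $D_\rho=B_\rho$ and everything is classical. Otherwise we pick a nearest boundary point $\bar x_0$. We then use the inclusions $D_\rho(x_0)\subset D_{2\rho}(\bar x_0)$, and the observation that $D_\rho(x_0)$ is the image of a set that is again uniformly Lipschitz (star-shaped with respect to a ball of radius $c\rho$). We choose $\rho_\Omega\le r_0/4$ so small that $L\rho_\Omega\le 1/8$.

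Item 1. The volume comparability $|D_\rho|\asymp\rho^d$ follows from the sandwich between half-balls and the Jacobian bounds. For the thin annulus, note that $D_\rho\setminus D_{\lambda\rho}\subset B_\rho\setminus B_{\lambda\rho}$, whose measure is $(1-\lambda^d)|B_1|\rho^d$. Dividing by the lower bound $|D_\rho|\ge c\rho^d$, it suffices to choose $\lambda_\varepsilon$ with $1-\lambda_\varepsilon^d\le c\varepsilon/|B_1|$.

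Item 2. We first observe that the domains $D_\rho(x_0)$, rescaled by $\rho^{-1}$, form a family of Lipschitz domains with uniformly bounded Lipschitz character. They are intersections of a unit ball with rescaled $C^2$ supergraphs whose Lipschitz constants are at most $L\rho\le1/8$, and the transversality of the sphere and the graph is uniform. A function $\phi$ vanishing near $\partial B_\rho\cap\Omega$ can be extended by reflection across the flattened boundary, i.e.\ even reflection in $x_d$ after applying $\Phi$. The result is a $W^{1,2}$ function with compact support in $B_{2\rho}$ whose norms are controlled by those of $\phi$. The Gagliardo--Nirenberg--Sobolev inequality on $\mathbb R^d$ then gives a uniform Sobolev inequality of the form $\|\phi\|_{2^*}\le C\|\nabla\phi\|_2$ for $d\ge3$, and the Ladyzhenskaya/Gagliardo--Nirenberg interpolation $\|\phi\|_{2(d+2)/d}^{2(d+2)/d}\le C\|\phi\|_2^{4/d}\|\nabla\phi\|_2^2$ in all $d\ge2$. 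Applying the latter for a.e.\ $t$ and integrating in time over $I$ yields \eqref{eq:boundary-parabolic-sobolev-DG}. The constants are scale invariant, because the reflected function has compact support and the zero-order term is never needed.

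Item 3. This is the main point requiring care, namely the relative isoperimetric (De Giorgi) inequality up to the boundary with the uniform factor $\rho^{d+1}$. I would follow De Giorgi's standard proof on convex sets. For a.e.\ pair of points $x\in\{\varphi\le h\}$ and $y\in\{\varphi\ge\ell\}$, one writes $\ell-h\le\int_0^1|\nabla\varphi(x+s(y-x))||y-x|\,ds$, with the integrand supported where $h<\varphi<\ell$. Integrating over $x,y$ and changing variables gives the bound $C\,\mathrm{diam}^{d+1}\int|\nabla\varphi|$, with a constant depending only on $d$, provided the domain is convex.

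Since $D_\rho$ is not convex, we transfer through $\Phi$. The image $\widetilde D_\rho$ is not exactly convex either. So we instead use a uniformly bi-Lipschitz map $\Theta_\rho$ from $D_\rho(x_0)$ onto the unit half-ball (or unit ball in the interior case), scaled by $\rho$. Such a map exists uniformly because the rescaled domains are uniformly Lipschitz and star-shaped with respect to a ball of radius comparable to $\rho$: one takes the radial map along rays from the center of that ball. Under $\Theta_\rho$, the level sets $\{\varphi\le h\}$, $\{\varphi\ge\ell\}$ and $\{h<\varphi<\ell\}$ correspond to those of $\varphi\circ\Theta_\rho^{-1}$, measures change by bounded factors, and $|\nabla\varphi|$ changes by a bounded factor. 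The convex inequality on the half-ball of radius $\rho$ therefore transfers with a constant $C_\Omega$ depending only on $d$ and $L$.

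The expected obstacle is verifying the uniform star-shapedness and bi-Lipschitz radial chart for off-center points $x_0$ near, but not on, $\partial\Omega$. I would handle this by the case split already described: if $\operatorname{dist}(x_0,\partial\Omega)\ge\rho/4$, a uniform interior cone condition gives star-shapedness with respect to $B_{c\rho}(x_0)$; otherwise we work in the flattened chart of the nearest boundary point, where $\widetilde D_\rho$ is a ball intersected with a near-half-space $\{x_d>\text{small }C^2\text{ perturbation}\}$. The latter set is star-shaped with respect to a ball centered at height $\rho/2$ over the boundary point.
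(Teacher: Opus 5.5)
Your proof is correct, and for items~1 and~2 it is essentially the paper's argument. The thin-annulus bound comes from the same comparison with the Euclidean annulus $B_\rho\setminus B_{\lambda\rho}$, combined with the lower measure-density bound. The parabolic Sobolev inequality comes from flattening, even reflection across the physical boundary, and the Euclidean multiplicative inequality; the paper names this as an equivalent route.

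The genuine difference is item~3.
\begin{itemize}
\item The paper rescales the caps $\rho^{-1}(D_\rho(x_0)-x_0)$ into a uniformly Lipschitz family. It then quotes uniform Sobolev--Poincar\'e and relative isoperimetric inequalities for classes with fixed cone data, and derives the two-level form \eqref{eq:relative-DeGiorgi-isoperimetric} by truncation and the coarea formula.
\item You instead build, for each cap, an explicit bi-Lipschitz radial chart onto a ball, from star-shapedness with respect to a ball of radius $c\rho$, with constants depending only on $d$ and $L\rho_\Omega$. You then transfer De Giorgi's elementary convex-domain lemma, which already contains the two-level form with the factor $(\operatorname{diam})^{d+1}$, so no isoperimetric or coarea step is needed.
\end{itemize}
Your route is self-contained and gives explicit constants. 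The paper's route is shorter and, through the quoted theorem, also gives the Sobolev--Poincar\'e inequalities without any support condition.

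One justification needs repair. In the regime $\rho/4\le\operatorname{dist}(x_0,\partial\Omega)<\rho$, star-shapedness of $D_\rho(x_0)$ with respect to $B_{c\rho}(x_0)$ is not a consequence of an interior cone condition by itself. It follows from the $C^2$ near-flatness at scale $\rho$, exactly as in your boundary case.
\begin{itemize}
\item Work in the chart of the nearest boundary point, rotated but not flattened, where $D_\rho(x_0)$ is a ball intersected with the supergraph of $\psi$.
\item Take $q$ in the small ball and $x\in D_\rho(x_0)$, and let $\gamma$ be the segment from $q$ to $x$.
\item The function $f(s)=\gamma_d(s)-\psi(\gamma'(s))$ satisfies $|f''|\le 4L\rho^2$, $f(0)\ge\rho/8$ and $f(1)>0$.
\item Hence $f(s)\ge(1-s)\bigl(f(0)-2L\rho^2 s\bigr)>0$ on $[0,1)$ once $L\rho_\Omega<1/16$.
\end{itemize}
Also note that the radial map lands on a ball, not a half-ball. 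This is harmless, since only convexity of the target is used.
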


\begin{proof}
For interior balls these are the standard Euclidean estimates.  Near
$\partial\Omega$, use a finite $C^2$ graph atlas and rescale every cap by
$x=x_0+\rho X$.  Because the rescaled graph has uniformly bounded Lipschitz
constant (and, as $\rho\downarrow0$, uniformly small curvature), the family
of rescaled sets
\[
 \rho^{-1}(D_\rho(x_0)-x_0)
\]
is a uniformly Lipschitz family.  At the flat-model level the possible center heights range over the compact
cap family $B_1\cap\{X_d>c\}$, $c\in[-1,0]$, together with the full ball;
curvature contributes only a uniformly small $C^1$ perturbation after
rescaling.  After decreasing $\rho_\Omega$ once, the flattening maps and their
inverses have one common bi-Lipschitz bound after rescaling.  The rescaled
sets therefore have a uniform Lipschitz character and a uniform non-thinning
(measure-density) constant; equivalently, they form a uniformly controlled
perturbation of the half-ball cap family.  Uniform Sobolev--Poincar\'e and
relative isoperimetric inequalities for classes with fixed cone data are
proved in \cite[Theorem~3.1 and Corollaries~3.1--3.2]{Thomas2015}.
Applying these estimates to the normalized caps, and using stability under the
uniformly bi-Lipschitz flattening maps, gives constants uniform for the entire
family.  The level-set form \eqref{eq:relative-DeGiorgi-isoperimetric} follows
from the relative isoperimetric inequality by the standard truncation/coarea
argument and scaling; no artificial-boundary support assumption is needed for
{$\varphi\in W^{1,1}(D_\rho)$}.
For the functions used in the parabolic Sobolev step, which do vanish before
the artificial spherical boundary, one may equivalently flatten, reflect
evenly across the physical boundary, and apply the Euclidean inequality on a
fixed enlarged ball; integrating the resulting spatial inequality in time
gives \eqref{eq:boundary-parabolic-sobolev-DG}.  Compactness of the finite
atlas gives one common $\rho_\Omega$ and the uniform measure-density estimate
$|D_\rho(x_0)|\ge c_\Omega\rho^d$.  The thin-annulus assertion then follows
even more directly from
\[
 |D_\rho\setminus D_{\lambda\rho}|
 \le |B_\rho\setminus B_{\lambda\rho}|
 =\omega_d(1-\lambda^d)\rho^d
 \le \frac{\omega_d}{c_\Omega}(1-\lambda^d)|D_\rho|.
\]  Below we use only these geometric consequences of boundary flattening---the uniform
Lipschitz character, volume-density bound, and thin-annulus estimate.  We do not
extend the Maxwell--Stefan solution across the physical boundary by reflection.
\end{proof}

\subsection{Quadratic and two-time stabilized estimates}

The one-sided Caccioppoli estimate from the preceding section remains our
basic level-set estimate.  We record in addition the two-time form which
retains the intrinsic stabilized energy at the initial slice.

\begin{proposition}[One-sided root Caccioppoli inequality]
\label{prop:root-Caccioppoli-dossier}
Let $z$ satisfy the strong root hypotheses of
\Cref{lem:root-chain-rule-dossier} on a neighbourhood of
$Q_\rho^\Omega$, assume that its range lies in the compact root rectangle
$R$, and fix a root $i$ and a level $k\in[a_i,b_i]$.  For the upper and lower
truncations put $r=(z_i-k)_+$ and $r=(k-z_i)_+$, respectively.  There are constants $c_R,C_R>0$ such that,
for every $C^1$ cutoff $0\le\eta\le1$ which vanishes near the lower time face
and the artificial lateral boundary but may be nonzero on the physical
boundary,
\begin{align}
 &\operatorname*{ess\,sup}_{s}
   \int_{D_\rho}\eta^2r^2(s)
 +c_R\iint_{Q_\rho^\Omega}\eta^2|\nabla r|^2 \notag\\
 &\quad
 +c_R\varepsilon_R\iint_{Q_\rho^\Omega}
   \eta^2r^2\sum_{j\ne i}|\nabla z_j|^2
 \le C_R\iint_{Q_\rho^\Omega}
 r^2\bigl(|\nabla\eta|^2+\eta|\partial_t\eta|\bigr).
 \label{eq:root-Caccioppoli-cutoff-dossier-v11}
\end{align}
Consequently, for $0<\varrho<\rho$,
\begin{align}
 &\operatorname*{ess\,sup}_{t\in(t_0-\varrho^2,t_0)}
   \int_{D_\varrho}r^2(t)
 +\iint_{Q_\varrho^\Omega}|\nabla r|^2 \notag\\
 &\quad
 +c_R\varepsilon_R\iint_{Q_\varrho^\Omega}
 r^2\sum_{j\ne i}|\nabla z_j|^2
 \le \frac{C_R}{(\rho-\varrho)^2}
 \iint_{Q_\rho^\Omega}r^2.
 \label{eq:root-Caccioppoli-dossier-v11}
\end{align}
The constants are uniform for interior and boundary cylinders below
$\rho_\Omega$.
\end{proposition}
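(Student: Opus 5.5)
The plan is to test the exact production identity of \Cref{lem:root-chain-rule-dossier}(ii) for the stabilized branch $\Psi=\Psi_{i,k}^{\pm}$ from \Cref{prop:stabilized-truncation-coercivity} against $\eta^2$. We integrate in space over $D_\rho$ and in time from the lower face, where $\eta$ vanishes, up to a slice $s$. The entropy flux $\sum_j\mathfrak a_j\Psi_j\nabla z_j$ has zero normal trace on the physical boundary. Since $\eta$ vanishes near the artificial lateral boundary, integration by parts produces no boundary terms, and we obtain
\[
 \int_{D_\rho}\eta^2\Psi(s)+\int_{t_0-\rho^2}^{s}\!\!\int_{D_\rho}\eta^2\mathfrak Q_\Psi
 =\iint 2\eta\,\partial_t\eta\,\Psi
 -\iint 2\eta\nabla\eta\cdot\sum_j\mathfrak a_j\Psi_j\nabla z_j .
\]
Absolute continuity in time, also from \Cref{lem:root-chain-rule-dossier}, justifies evaluation at arbitrary slices. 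On the left we use $\Psi\ge c_Rr^2$ and the coercivity \eqref{eq:Psi-coercivity-dossier}. This gives $c_R|\nabla r|^2+c_R\varepsilon r^2\sum_{j\ne i}|\nabla z_j|^2$ with $\varepsilon=\varepsilon_R$ fixed. On the inactive set everything vanishes.

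For the right-hand side, the time term is bounded by $C_R\iint r^2\eta|\partial_t\eta|$ via $\Psi\le C_Rr^2$. The flux term splits into the selected index and the transverse indices. The selected term uses $|\Psi_i|\le C_Rr$ and $\nabla z_i=\pm\nabla r$ on the active set, giving $C\iint\eta|\nabla\eta|\,r|\nabla r|$. By Young's inequality this is at most $\tfrac{c_R}{4}\iint\eta^2|\nabla r|^2+C\iint r^2|\nabla\eta|^2$. The transverse terms use $|\Psi_j|\le C_Rr^2$, giving $C\iint\eta|\nabla\eta|\,r^2|\nabla z_j|$. Young's inequality gives $\tfrac{c_R\varepsilon_R}{4}\iint\eta^2r^2|\nabla z_j|^2+C\varepsilon_R^{-1}\iint r^2|\nabla\eta|^2$. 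The first piece is absorbed by the stabilized transverse dissipation.

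This absorption is the crux. It is exactly why the stabilization was introduced: the $r^2|\nabla z_j|$ flux is absorbed without differentiating transverse coefficients. Since $\varepsilon_R$ is a fixed root-box constant, the resulting constant depends only on $R$. After absorption we take the ess sup over $s$ and add the dissipation integrals taken up to $t_0$, which yields \eqref{eq:root-Caccioppoli-cutoff-dossier-v11}.

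For the consequence \eqref{eq:root-Caccioppoli-dossier-v11}, we choose $\eta=\eta_1(x)\eta_2(t)$. Here $\eta_1=1$ on $B_\varrho(x_0)$, $\eta_1$ is supported in $B_\rho(x_0)$, and $|\nabla\eta_1|\le C/(\rho-\varrho)$. The time factor $\eta_2$ equals $1$ on $(t_0-\varrho^2,t_0)$, vanishes near $t_0-\rho^2$, and satisfies $|\partial_t\eta_2|\le C/(\rho^2-\varrho^2)\le C/(\rho-\varrho)^2$. The cutoff $\eta$ is nonzero on $\partial\Omega$, which is allowed. Uniformity for boundary cylinders needs no geometric input beyond the zero normal trace, since no Sobolev inequality is used here.
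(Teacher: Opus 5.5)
Your proposal is correct and follows the paper's own argument. You test the exact stabilized production identity for $\Psi_{i,k}^{\pm}$ (with $\varepsilon=\varepsilon_R$) against $\eta^2$, and you use the zero normal trace of the entropy flux. You split the flux into the selected part $O(r|\nabla r|)$ and the transverse parts $O(r^2|\nabla z_j|)$, absorb these by Young's inequality into $|\nabla r|^2$ and the stabilized term $\varepsilon_R r^2|\nabla z_j|^2$, and use $\Psi\asymp_R r^2$ before choosing product cutoffs. Your write-up makes explicit several details the paper leaves implicit: the signs of the time and flux terms, the bound $\rho^2-\varrho^2\ge(\rho-\varrho)^2$, and the fact that no geometric input is needed for boundary cylinders.
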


\begin{proof}
Use
$\Psi=\Psi_{i,k}^{\pm}$ from
\Cref{prop:stabilized-truncation-coercivity} with the fixed
$\varepsilon=\varepsilon_R$ in the distributional identity
\eqref{eq:Psi-local-identity}, multiply by $\eta^2$, and integrate up to an
arbitrary time.  The physical boundary contributes nothing because every
root has homogeneous Neumann data.  The flux bound
\[
 \left|\sum_j {\mathfrak a_j}\Psi_j\nabla z_j\right|
 \le C_R\left(r|\nabla z_i|+r^2\sum_{j\ne i}|\nabla z_j|\right)
\]
and Young's inequality absorb the selected flux by
$|\nabla r|^2$ and every {transverse-root} flux by
$\varepsilon_Rr^2|\nabla z_j|^2$.  Finally
$\Psi\asymp_R r^2$.  Choosing standard nested cutoffs gives
\eqref{eq:root-Caccioppoli-dossier-v11}.
\end{proof}

\begin{lemma}[Two-time stabilized level energy]
\label{lem:two-time-stabilized-energy}
Under the hypotheses of
\Cref{prop:root-Caccioppoli-dossier}, {let $t_1<t_2$ lie in the time interval of existence} and let $\zeta=\zeta(x)$ be a time-independent spatial cutoff which
vanishes near the artificial boundary and may be nonzero on
$\partial\Omega$.  Then, with $r=(z_i-k)_\pm$ and
$\Psi=\Psi_{i,k}^{\pm}$,
\begin{align}
 &\int \zeta^2\Psi(z({t_2}))
 +c_R\int_{{t_1}}^{{t_2}}\!\!\int \zeta^2|\nabla r|^2
 +c_R\varepsilon_R\int_{{t_1}}^{{t_2}}\!\!\int
     \zeta^2r^2\sum_{j\ne i}|\nabla z_j|^2 \notag\\
 &\qquad\le
 \int \zeta^2\Psi(z({t_1}))
 +C_R\int_{{t_1}}^{{t_2}}\!\!\int r^2|\nabla\zeta|^2.
 \label{eq:two-time-stabilized-energy}
\end{align}
All spatial integrals are taken over the corresponding physical domain
slice.
\end{lemma}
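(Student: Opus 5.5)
The plan is to test the distributional production identity \eqref{eq:Psi-local-identity} of \Cref{lem:root-chain-rule-dossier}(ii) with the stabilized one-sided branch $\Psi=\Psi_{i,k}^{\pm}$ of \Cref{prop:stabilized-truncation-coercivity}, where $\varepsilon=\varepsilon_R$ is fixed. The spatial weight is the time-independent cutoff $\zeta^2$, and we integrate over $(t_1,t_2)\times\Omega$. Unlike the Caccioppoli proof, no time cutoff is used. The time derivative is therefore handled exactly: by the absolute continuity statement at the end of \Cref{lem:root-chain-rule-dossier}, $t\mapsto\int\zeta^2\Psi(z(t))$ is absolutely continuous. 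Its increment between the slices $t_1$ and $t_2$ produces the two boundary terms $\int\zeta^2\Psi(z(t_2))$ and $\int\zeta^2\Psi(z(t_1))$, and no $\partial_t\zeta$ term appears.

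For the divergence term, integrate by parts in space against $\zeta^2$. By \Cref{lem:root-chain-rule-dossier}, the entropy flux $\sum_j\mathfrak a_j\Psi_j\nabla z_j$ lies in $L^p(I;W^{1,p})$ and has zero normal trace on $\partial\Omega$. Since $\zeta$ vanishes near the artificial boundary, the only contribution left is
\[
 -\int_{t_1}^{t_2}\!\!\int 2\zeta\nabla\zeta\cdot\sum_j\mathfrak a_j\Psi_j\nabla z_j .
\]
The production term $\int\!\!\int\zeta^2\mathfrak Q_\Psi$ is bounded below by \eqref{eq:Psi-coercivity-dossier}, which holds a.e. on the active set. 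On the inactive set both $\Psi$ and $\mathfrak Q_\Psi$ vanish. This gives the good terms $c_R\zeta^2|\nabla r|^2$ and $c_R\varepsilon_R\zeta^2r^2\sum_{j\ne i}|\nabla z_j|^2$.

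The flux term is then absorbed using the bounds \eqref{eq:Psi-size-flux-bounds-dossier}, namely $|\Psi_i|\le C_Rr$ and $|\Psi_j|\le C_Rr^2$, together with $|\nabla z_i|=|\nabla r|$ on the active set. This yields
\[
 \left|2\zeta\nabla\zeta\cdot\sum_j\mathfrak a_j\Psi_j\nabla z_j\right|
 \le C_R|\zeta||\nabla\zeta|\Bigl(r|\nabla r|+r^2\sum_{j\ne i}|\nabla z_j|\Bigr).
\]
Young's inequality bounds the first piece by $\tfrac{c_R}{2}\zeta^2|\nabla r|^2+C r^2|\nabla\zeta|^2$. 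For the second piece we write $r^2|\nabla z_j|=r\cdot r|\nabla z_j|$, and Young's inequality gives $\tfrac{c_R\varepsilon_R}{2}\zeta^2r^2|\nabla z_j|^2+C\varepsilon_R^{-1}r^2|\nabla\zeta|^2$. Since $\varepsilon_R$ depends only on $R$, the constant $C\varepsilon_R^{-1}$ is absorbed into $C_R$. Absorbing half of each coercive term into the left-hand side and rearranging gives \eqref{eq:two-time-stabilized-energy}.

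The main point requiring care is the justification rather than the algebra. Testing with a $t$-independent weight up to two arbitrary slices requires that $z(t_1)$ and $z(t_2)$ be well-defined traces and that the pairing be legitimate with no interface measure on $\{z_i=k\}$. Both facts are supplied by \Cref{lem:root-chain-rule-dossier}(ii), since $F_{i,k,\varepsilon}$ and $DF_{i,k,\varepsilon}$ vanish on $z_i=k$, and by the continuity $z\in C(\overline I;C^1)$. I would cite these facts and note that all integrands are in $L^1$ because $\nabla z\in L^\infty$.
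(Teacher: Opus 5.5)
Your proposal is correct and follows the same route as the paper: test the stabilized chain-rule identity of \Cref{lem:root-chain-rule-dossier} with the time-independent weight $\zeta^2$ between $t_1$ and $t_2$, use the zero normal flux on $\partial\Omega$, bound the production from below by \eqref{eq:Psi-coercivity-dossier}, and absorb the localization flux via \eqref{eq:Psi-size-flux-bounds-dossier} and Young's inequality, exactly as in the Caccioppoli proof. Your version makes explicit two points the paper leaves implicit: the Young splitting $r^2|\nabla z_j|=r\cdot r|\nabla z_j|$, with the factor $\varepsilon_R^{-1}$ absorbed into $C_R$, and the justification for testing up to the time slices.
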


\begin{proof}
Integrate the exact stabilized chain-rule identity between {$t_1$ and $t_2$} and
multiply by $\zeta^2$.  There is now no lower-time cutoff, so the initial
term is retained with coefficient one.  The spatial localization term is
estimated exactly as in the proof of
\Cref{prop:root-Caccioppoli-dossier}.  The coercivity
\eqref{eq:Psi-coercivity-dossier} absorbs the selected and {transverse-root}
pieces, while the physical Neumann boundary contributes no flux.
\end{proof}

\subsection{A scalar logarithmic estimate from the root equation}

Fix one root $u:=z_i$.  In divergence form its equation is
\begin{equation}\label{eq:selected-root-scalar-drift}
 u_t-\Div({\mathfrak a_i(z)}\nabla u)
 +\sum_{j\ne i}C_{ij}(z)\nabla u\cdot\nabla z_j=0.
\end{equation}
All ${\mathfrak a_i}$, ${\mathfrak a_i^{-1}}$ and $C_{ij}$ are uniformly bounded on the compact
root rectangle.  We shall use the following elementary orientation fact to
avoid treating the upper and lower phases by an informal symmetry argument.
If $\varsigma\in\{-1,1\}$ and $c\in\mathbb R$, then along the given
solution
\begin{equation}\label{eq:oriented-selected-root-equation}
 {\widetilde u}:=\varsigma z_i+c
 \quad\Longrightarrow\quad
 {\widetilde u_t}-\Div({\mathfrak a_i(z)}\nabla {\widetilde u})
 +\sum_{j\ne i}C_{ij}(z)\nabla {\widetilde u}\cdot\nabla z_j=0,
 \qquad \partial_\nu {\widetilde u}=0.
\end{equation}
Indeed, this is $\varsigma$ times the $i$-th root equation; the coefficient
field is evaluated along the original state $z$ and is unchanged.  Thus every
upper-phase argument for an oriented variable is also a lower-phase argument
for the original root.  The next lemma controls the drift in
\eqref{eq:selected-root-scalar-drift}--\eqref{eq:oriented-selected-root-equation}
by the additional weighted dissipation in
\eqref{eq:two-time-stabilized-energy}.

Fix once and for all a support parameter $\sigma_0\in(0,1/2)$; below we
may take $\sigma_0=1/4$.  For $H>0$, $0<\delta<\sigma_0/2$, and
$0\le v\le H$, define the scaled logarithmic square
\begin{align}
 \ell_{H,\delta}(v)
 &:={\left[\log\frac{(1-\sigma_0+\delta)H}
 {H-v+\delta H}\right]}_+,
 \label{eq:scaled-log-ell}\\
 {\Lambda_{H,\delta}}(v)&:=H^2\ell_{H,\delta}(v)^2.
 \label{eq:scaled-log-F}
\end{align}
Thus ${\Lambda_{H,\delta}}=0$ on $[0,\sigma_0H]$.  On the active interval
$(\sigma_0H,H]$, putting
$D(v):=H-v+\delta H$, direct differentiation gives
\begin{equation}\label{eq:scaled-log-derivatives}
 {\Lambda_{H,\delta}}'(v)=\frac{2H^2\ell_{H,\delta}(v)}{D(v)},
 \qquad {\Lambda_{H,\delta}}''(v)=\frac{2H^2(1+\ell_{H,\delta}(v))}{D(v)^2}.
\end{equation}
and hence
\begin{equation}\label{eq:scaled-log-ratio}
 \frac{({\Lambda_{H,\delta}}'(v))^2}{{\Lambda_{H,\delta}}''(v)}
 =2H^2\frac{\ell_{H,\delta}(v)^2}{1+\ell_{H,\delta}(v)}
 \le 2\sigma_0^{-2}L_\delta\,v^2,
 \qquad
 L_\delta:=\log\frac{1-\sigma_0+\delta}{\delta},
\end{equation}
where the quotient is defined as zero on the inactive interval.  By
\eqref{eq:scaled-log-ratio}, the error generated by the root drift is a factor
$L_\delta$ times the same weighted square $v^2|\nabla z_j|^2$ controlled by
the stabilized truncation energy.
The function ${\Lambda_{H,\delta}}$ is $C^{1,1}$ on $[0,H]$; the one-sided second
derivative in \eqref{eq:scaled-log-derivatives} is its weak second derivative
on the active side.  The standard scalar Lipschitz chain rule applies, and
there is no interface measure at $v=\sigma_0H$ because
${\Lambda_{H,\delta}}'(\sigma_0H)=0$.

\begin{lemma}[Scaled logarithmic-square root estimate]
\label{lem:root-logarithmic-estimate}
{Let $t_1<t_2$} and let $k\in[a_i,b_i]$.  For the upper and lower
truncations let $v=(u-k)_+$ and $v=(k-u)_+$, respectively, and suppose that
$0\le v\le H$ on the cylinder under consideration.  Let $\eta,\zeta$ be time-independent spatial cutoffs with
$0\le\eta,\zeta\le1$, $\zeta\equiv1$ on a neighborhood of
$\operatorname{supp}\eta$, and both vanishing near the artificial spatial
boundary.  Then, for $0<\delta<\sigma_0/2$,
\begin{align}
 &\int \eta^2{\Lambda_{H,\delta}}(v({t_2}))
 +c_R\int_{{t_1}}^{{t_2}}\!\!\int
   \eta^2{\Lambda_{H,\delta}}''(v)|\nabla v|^2
 \notag\\
 &\quad\le
 \int \eta^2{\Lambda_{H,\delta}}(v({t_1}))
 +C_RL_\delta\left[\int \zeta^2v^2({t_1})
 +\int_{{t_1}}^{{t_2}}\!\!\int
 v^2\bigl(|\nabla\eta|^2+|\nabla\zeta|^2\bigr)\right].
 \label{eq:root-logarithmic-estimate}
\end{align}
The constants are independent of $H$, $\delta$, {$t_1$}, and {$t_2$}.  The same
estimate holds on a boundary patch, with no additional boundary term.
\end{lemma}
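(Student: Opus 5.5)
We test the oriented selected-root equation \eqref{eq:oriented-selected-root-equation} with $\eta^2\Lambda_{H,\delta}'(v)$, treating the upper truncation directly; the lower one follows by taking $\widetilde u=-z_i+c$. Since $\Lambda_{H,\delta}$ is $C^{1,1}$ with $\Lambda_{H,\delta}'(\sigma_0H)=0$, the function $\Lambda_{H,\delta}(v)$ is a one-sided branch of a function of $z_i$ alone, vanishing to first order at the level $z_i=k+\sigma_0H$. The scalar version of \Cref{lem:root-chain-rule-dossier}(ii) therefore gives, without interface measure,
\[
 \partial_t\Lambda(v)-\Div\bigl(\mathfrak a_i\Lambda'(v)\nabla v\bigr)+\mathfrak a_i\Lambda''(v)|\nabla v|^2+\Lambda'(v)\sum_{j\ne i}C_{ij}\nabla v\cdot\nabla z_j=0,
\]
where $\Lambda=\Lambda_{H,\delta}$. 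Here we use $\nabla v=\nabla z_i$ on the active set. The Neumann condition removes the physical-boundary flux. Multiplying by the time-independent cutoff $\eta^2$ and integrating over $(t_1,t_2)$ gives the slice terms $\int\eta^2\Lambda(v(t_2))$ and $\int\eta^2\Lambda(v(t_1))$, a dissipation $\ge a_*\iint\eta^2\Lambda''|\nabla v|^2$, a localization term $2\iint\eta\,\mathfrak a_i\Lambda'\nabla\eta\cdot\nabla v$, and the drift.

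For the localization term, Young's inequality gives a bound by $\tfrac{a_*}4\iint\eta^2\Lambda''|\nabla v|^2+C\iint(\Lambda')^2/\Lambda''\,|\nabla\eta|^2$. By \eqref{eq:scaled-log-ratio}, the second piece is at most $CL_\delta\iint v^2|\nabla\eta|^2$. The drift is handled the same way: it is bounded by
\[
 \tfrac{a_*}4\iint\eta^2\Lambda''|\nabla v|^2+C\iint\eta^2\frac{(\Lambda')^2}{\Lambda''}\sum_{j\ne i}|\nabla z_j|^2 \le \tfrac{a_*}4\iint\eta^2\Lambda''|\nabla v|^2+C\sigma_0^{-2}L_\delta\iint_{t_1}^{t_2}\eta^2v^2\sum_{j\ne i}|\nabla z_j|^2 .
\]
This is the key structural point: the squared weight ratio is exactly $O(L_\delta v^2)$. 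It is the same weighted transverse square supplied by the entropy stabilization.

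The main step, and the only real obstacle, is to control $\iint\eta^2 v^2\sum_{j\ne i}|\nabla z_j|^2$ independently of the logarithm and of $H$. For this we invoke \Cref{lem:two-time-stabilized-energy} at the same level $k$ and with the cutoff $\zeta$. Since $\zeta\equiv1$ on $\operatorname{supp}\eta$, and $v=r$ is the truncation there, we get
\[
 c_R\varepsilon_R\iint\eta^2 v^2\sum_{j\ne i}|\nabla z_j|^2\le\int\zeta^2\Psi(z(t_1))+C_R\iint v^2|\nabla\zeta|^2 .
\]
Then $\Psi\le C_Rr^2=C_Rv^2$ by \eqref{eq:Psi-size-flux-bounds-dossier}, and $\varepsilon_R$ depends only on $R$. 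Inserting this bound, absorbing the two $\tfrac{a_*}4$ pieces into the dissipation, and noting that all constants come from $R$ and $\sigma_0$ yields \eqref{eq:root-logarithmic-estimate}. The constants are independent of $H$, $\delta$, $t_1$, $t_2$ because the only $\delta$-dependence enters through $L_\delta$, and the $H$-scaling is built into $\Lambda_{H,\delta}$.

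Two technical points must be checked. First, the time integration is justified by the absolute-continuity statement of \Cref{lem:root-chain-rule-dossier}. Second, on boundary patches the lateral boundary is handled by the support of the cutoffs and the physical boundary by the zero normal trace, so no additional boundary term arises.
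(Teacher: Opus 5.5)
Your proposal is correct and follows essentially the paper's own argument. You test with $\eta^2\Lambda_{H,\delta}'(v)$ and absorb the localization and cross-root drift errors by Young's inequality together with the ratio bound \eqref{eq:scaled-log-ratio}. You then control $\iint\eta^2v^2\sum_{j\ne i}|\nabla z_j|^2$ by \Cref{lem:two-time-stabilized-energy} at the same level with cutoff $\zeta$, using $\eta^2\le\zeta^2$ and $\Psi\asymp_R v^2$. The only cosmetic difference is how you obtain the tested identity: you specialize the one-sided chain rule of \Cref{lem:root-chain-rule-dossier}(ii) to a function of $z_i$ alone, whereas the paper invokes the scalar $C^{1,1}$ chain rule directly.
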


\begin{proof}
We treat $v=(u-k)_+$; for the lower truncation one tests with
$-\eta^2{\Lambda_{H,\delta}}'(v)$.  On the active set of $v$, multiply
\eqref{eq:selected-root-scalar-drift} by
$\eta^2{\Lambda_{H,\delta}}'(v)$ and integrate from {$t_1$ to $t_2$}.  The temporal term is
${\Lambda_{H,\delta}}(v)$.  After integration by parts, the principal diffusion
contributes
\[
 \int_{{t_1}}^{{t_2}}\!\!\int
 {\mathfrak a_i}\eta^2{\Lambda_{H,\delta}}''(v)|\nabla v|^2.
\]
The localization and cross-root drift errors have the forms
\[
 C_R\eta|{\Lambda_{H,\delta}}'(v)|\,|\nabla v|\,|\nabla\eta|,
 \qquad C_R\eta^2|{\Lambda_{H,\delta}}'(v)|\,|\nabla v|
 \sum_{j\ne i}|\nabla z_j|.
\]
Young's inequality, the uniform lower bound for {$\mathfrak a_i$}, and
\eqref{eq:scaled-log-ratio} therefore yield
\begin{align*}
 &\int \eta^2{\Lambda_{H,\delta}}(v({t_2}))
 +c_R\int_{{t_1}}^{{t_2}}\!\!\int
 \eta^2{\Lambda_{H,\delta}}''(v)|\nabla v|^2\\
 &\qquad\le
 \int \eta^2{\Lambda_{H,\delta}}(v({t_1}))
 +C_RL_\delta\int_{{t_1}}^{{t_2}}\!\!\int
 v^2\left(|\nabla\eta|^2+\eta^2\sum_{j\ne i}|\nabla z_j|^2\right).
\end{align*}
The last integral is controlled by the extra positive term in the two-time
stabilized estimate.  Apply \Cref{lem:two-time-stabilized-energy} at the same
level and sign with cutoff $\zeta$.  Since
$\zeta\equiv1$ on $\operatorname{supp}\eta$ and
$\Psi\asymp_Rv^2$,
\[
 \int_{{t_1}}^{{t_2}}\!\!\int
 \eta^2v^2\sum_{j\ne i}|\nabla z_j|^2
 \le C_R
 \left(
   \int\zeta^2v^2({t_1})
   +\int_{{t_1}}^{{t_2}}\!\!\int v^2|\nabla\zeta|^2
 \right),
\]
where the fixed factor $\varepsilon_R^{-1}$ has been absorbed into $C_R$.
Substitution proves \eqref{eq:root-logarithmic-estimate}.

The scalar $C^{1,1}$ chain rule used above is harmless: unlike the
multivariable EPD cancellation, no target-space identity is being preserved
through an approximation.  Equivalently, one may use the weak derivative of
${\Lambda_{H,\delta}}'$; on the level set $v=\sigma_0H$ the standard Stampacchia
property gives $\nabla v=0$ almost everywhere.  At the physical boundary,
$\partial_\nu u=0$, and the auxiliary stabilized entropy flux also has zero
normal component, so no physical boundary term is produced.
\end{proof}

\begin{lemma}[Short-time propagation of a level-set fraction]
\label{lem:root-measure-propagation}
There exist numbers
\[
 \delta_0,\;\lambda_0,\;\epsilon_0,\;\gamma_0\in(0,1),
\]
depending only on the structural data, the compact root rectangle and the
local geometry of $\Omega$, with the following property.  Let
$0<\rho\le\rho_\Omega$, let {$t_*$} be a time, and choose $k,H$ with
\[
 [k,k+H]\subset[a_i,b_i].
\]
Suppose that $u\le k+H$ on $[{t_*},{t_*}+\delta_0\rho^2]\times D_\rho$.
If
\begin{equation}\label{eq:initial-low-fraction}
 |\{u({t_*})\le k\}\cap D_\rho|\ge\frac12|D_\rho|,
\end{equation}
then, for every $t\in[{t_*},{t_*}+\delta_0\rho^2]$,
\begin{equation}\label{eq:propagated-low-fraction}
 |\{u(t)\le k+(1-\epsilon_0)H\}\cap D_{\lambda_0\rho}|
 \ge\gamma_0|D_{\lambda_0\rho}|.
\end{equation}
The symmetric statement holds when the initial half-measure information is
at the upper side of an interval.
\end{lemma}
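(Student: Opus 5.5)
We apply the scaled logarithmic estimate of \Cref{lem:root-logarithmic-estimate} to the upper truncation $v=(u-k)_+$ at level $k$, on the time interval $[t_*,t_*+\delta_0\rho^2]$. The hypothesis $u\le k+H$ gives $0\le v\le H$ on the whole cylinder. For the cutoffs we take spatial functions $\eta,\zeta$ with $\eta\equiv1$ on $D_{\lambda_0\rho}$, $\zeta\equiv1$ on $\operatorname{supp}\eta$, both supported in $D_\rho$, and $|\nabla\eta|+|\nabla\zeta|\le C/((1-\lambda_0)\rho)$. Near the physical boundary both may be nonzero on $\partial\Omega$. Since $\Lambda_{H,\delta}''\ge0$, we drop the dissipation term, and \eqref{eq:root-logarithmic-estimate} gives for every $t\in[t_*,t_*+\delta_0\rho^2]$
\[
 \int_{D_{\lambda_0\rho}}\Lambda_{H,\delta}(v(t))
 \le\int\eta^2\Lambda_{H,\delta}(v(t_*))
 +C_RL_\delta\Bigl[\int\zeta^2v^2(t_*)+\frac{C\delta_0}{(1-\lambda_0)^2}H^2|D_\rho|\Bigr].
\]

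\textbf{The initial terms.} Here we would use the orientation. By \eqref{eq:initial-low-fraction}, $v(t_*)=0$ on at least half of $D_\rho$. This kills $\Lambda_{H,\delta}(v(t_*))$ and $v^2(t_*)$ there, but only on half the ball, so it does not make them small. The initial terms therefore have to be beaten by the left side rather than shown to be small. On the active set, $\Lambda_{H,\delta}\le H^2\ell_{H,\delta}(H)^2=H^2L_\delta^2$. Hence
\[
 \int\eta^2\Lambda_{H,\delta}(v(t_*))\le H^2L_\delta^2\,|\{u(t_*)>k\}\cap D_\rho|\le \tfrac12 H^2L_\delta^2|D_\rho|,
\]
and $\int\zeta^2v^2(t_*)\le\tfrac12H^2|D_\rho|$.

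\textbf{The lower bound and the counting.} On the set $\{u(t)>k+(1-\epsilon_0)H\}$ we have $H-v+\delta H\le(\epsilon_0+\delta)H$, so $\Lambda_{H,\delta}(v)\ge H^2\log^2\frac{1-\sigma_0+\delta}{\epsilon_0+\delta}$. We choose $\epsilon_0=\delta$, which makes this $H^2(L_\delta-\log2)^2$. Dividing by $H^2$ and using $|D_\rho|\le C_\Omega|D_{\lambda_0\rho}|$ for fixed $\lambda_0$ (\Cref{lem:uniform-local-geometry-DG}) then gives
\[
 \frac{|\{u(t)>k+(1-\delta)H\}\cap D_{\lambda_0\rho}|}{|D_{\lambda_0\rho}|}
 \le \frac{\tfrac12L_\delta^2+C_RL_\delta(1+\delta_0(1-\lambda_0)^{-2})}{(L_\delta-\log2)^2}\cdot\frac{|D_\rho|}{|D_{\lambda_0\rho}|}.
\]

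\textbf{Main obstacle.} The ratio $|D_\rho|/|D_{\lambda_0\rho}|$ is never below $1$, and the leading $\tfrac12$ does not help because $|D_{\lambda_0\rho}|<|D_\rho|$. The repair is to keep the full $\eta^2$-weighted integral on the left instead of restricting to $D_{\lambda_0\rho}$, and to bound the initial term by $\tfrac12 H^2L_\delta^2|D_\rho|$ as above. The left side can then be compared with $\int\eta^2$ over the whole bad set. The thin-annulus modulus of \Cref{lem:uniform-local-geometry-DG} lets us choose $\lambda_0$ close to $1$ with $|D_\rho\setminus D_{\lambda_0\rho}|\le\tfrac1{8}|D_\rho|$, so the bad set has at most $(\tfrac12+\tfrac18+o(1))|D_\rho|$ mass inside $D_{\lambda_0\rho}$ and the ratio stays $<1$.

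\textbf{Order of the choices.} We fix $\lambda_0$ first by the thin-annulus modulus. Then $\delta$ is taken small so that $(L_\delta-\log2)^{-2}(\tfrac12L_\delta^2+2C_RL_\delta)\le\tfrac12+\tfrac1{16}$, which works since $L_\delta\to\infty$. Then $\delta_0\le(1-\lambda_0)^2$, and we set $\epsilon_0=\delta$. This yields a bad-set fraction in $D_{\lambda_0\rho}$ of at most some $1-\gamma_0<1$, which is \eqref{eq:propagated-low-fraction}.

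\textbf{The symmetric statement.} Apply the same argument to the oriented variable $\widetilde u=-z_i+c$ from \eqref{eq:oriented-selected-root-equation}, that is, use the lower-truncation version of \Cref{lem:root-logarithmic-estimate}.
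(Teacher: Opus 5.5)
Your proof is correct and follows the paper's own argument: the logarithmic test with $v=(u-k)_+$, the initial bounds $\tfrac12H^2L_\delta^2|D_\rho|$ and $\tfrac12H^2|D_\rho|$, the target bound $H^2(L_\delta-\log 2)^2$ with $\epsilon_0=\delta$, and the order $\lambda_0\to\epsilon_0\to\delta_0$ (the paper uses a $\tfrac1{16}$ annulus, ratio $\tfrac9{16}$ and $\gamma_0=\tfrac25$). The ``main obstacle'' you describe is not real, because the factor $\tfrac12$ is exactly what makes the argument work: $(\tfrac12+o(1))\,|D_\rho|/|D_{\lambda_0\rho}|\le\tfrac{9/16}{7/8}=\tfrac9{14}$ once $\lambda_0$ is fixed by the thin-annulus bound, so no reweighting of the left-hand side is needed and your final constants already give $\gamma_0=\tfrac5{14}$.
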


\begin{proof}
Fix $\sigma_0=1/4$.  Choose $\lambda_0\in(0,1)$ so close to one that
\begin{equation}\label{eq:thin-annulus-choice-scaled-log}
 |D_\rho\setminus D_{\lambda_0\rho}|
 \le \frac1{16}|D_\rho|
\end{equation}
for every admissible center and radius.  This follows uniformly from the
volume-density bound in \Cref{lem:uniform-local-geometry-DG}, since
$|D_\rho\setminus D_{\lambda\rho}|$ is bounded by the Euclidean annulus
$|B_\rho\setminus B_{\lambda\rho}|$.  Set
\[
 \lambda_1:=\lambda_0+\frac{1-\lambda_0}{3},
 \qquad
 \lambda_2:=\lambda_0+\frac{2(1-\lambda_0)}{3},
\]
so that $\lambda_0<\lambda_1<\lambda_2<1$.  Choose time-independent
spatial cutoffs $\eta,\zeta$ with
\[
 \eta\equiv1\ \hbox{on }D_{\lambda_0\rho},\qquad
 \operatorname{supp}\eta\subset D_{\lambda_1\rho},
\]
\[
 \zeta\equiv1\ \hbox{on }D_{\lambda_1\rho},\qquad
 \operatorname{supp}\zeta\subset D_{\lambda_2\rho}.
\]
They may be chosen to vanish near the artificial spherical boundary while
remaining unrestricted on the physical boundary, with
\[
 |\nabla\eta|+|\nabla\zeta|
 \le \frac{C}{(1-\lambda_0)\rho}.
\]
In particular, $\zeta\equiv1$ on a neighborhood of
$\operatorname{supp}\eta$, exactly as required in
\Cref{lem:root-logarithmic-estimate}.
Put $v=(u-k)_+$, so $0\le v\le H$.  At time $t_*$,
\eqref{eq:initial-low-fraction} implies
$|\{v({t_*})>0\}\cap D_\rho|\le|D_\rho|/2$.  Consequently,
\begin{equation}\label{eq:scaled-log-initial-bound}
 \int\eta^2{\Lambda_{H,\delta}}(v({t_*}))
 \le \frac12 H^2L_\delta^2|D_\rho|,
 \qquad
 \int\zeta^2v^2(t_*)\le\frac12H^2|D_\rho|.
\end{equation}
For $0<\delta<\sigma_0/2$ put
\[
 L_\delta^-:=\log\frac{1-\sigma_0+\delta}{2\delta}
 =L_\delta-\log2.
\]
If $v>(1-\delta)H$, then
\begin{equation}\label{eq:scaled-log-target-lower-bound}
 {\Lambda_{H,\delta}}(v)\ge H^2(L_\delta^-)^2.
\end{equation}
Apply \Cref{lem:root-logarithmic-estimate} between {$t_*$} and an arbitrary
$t\in[{t_*},{t_*}+\delta_0\rho^2]$.  Using $v\le H$ and the cutoff bounds gives
\begin{equation}\label{eq:scaled-log-propagation-ratio}
 H^2(L_\delta^-)^2|\{v(t)>(1-\delta)H\}\cap D_{\lambda_0\rho}|
 \le H^2\left[\frac12L_\delta^2+C_RL_\delta\left(1+\frac{\delta_0}{(1-\lambda_0)^2}\right)\right]|D_\rho|.
\end{equation}
The constants are now chosen in a non-circular order.  The number
$\lambda_0$ has already been fixed by the thin-annulus requirement.  Hence
\[
 A_0:=C_R\left(1+(1-\lambda_0)^{-2}\right)
\]
is a structural constant.  Since
\[
 \frac{L_\delta}{L_\delta^-}\longrightarrow1,
 \qquad
 \frac{L_\delta}{(L_\delta^-)^2}\longrightarrow0
 \qquad(\delta\downarrow0),
\]
we may choose one
$\epsilon_0\in(0,\sigma_0/2)$ so small that
\begin{equation}\label{eq:epsilon0-explicit-log-choice}
 \frac{\frac12L_{\epsilon_0}^2+A_0L_{\epsilon_0}}
 {(L_{\epsilon_0}^-)^2}\le\frac9{16}.
\end{equation}
We then set $\delta=\epsilon_0$ in the logarithmic test and choose, only
afterward,
\[
 \delta_0:=\frac14\min\{1,\lambda_0^2\}.
\]
Because
$C_R(1+\delta_0(1-\lambda_0)^{-2})\le A_0$ after enlarging the harmless
structural constant in the definition of $A_0$, the estimate
\eqref{eq:epsilon0-explicit-log-choice} applies to
\eqref{eq:scaled-log-propagation-ratio}.
By \eqref{eq:thin-annulus-choice-scaled-log},
$|D_{\lambda_0\rho}|\ge15|D_\rho|/16$.  Hence
\[
 \frac{|\{v(t)>(1-\epsilon_0)H\}\cap D_{\lambda_0\rho}|}
 {|D_{\lambda_0\rho}|}
 \le \frac{9}{15}=\frac35.
\]
Thus \eqref{eq:propagated-low-fraction} holds with, for example,
$\gamma_0=2/5$.  The lower-side version follows by applying the same argument
to $v=(k-u)_+$ at the corresponding root level $k\in[a_i,b_i]$ and testing
the root equation with the opposite sign.
\end{proof}

\subsection{Critical mass, shrinking, and oscillation decay}

\begin{lemma}[Critical mass from the quadratic Caccioppoli estimate]
\label{lem:root-critical-mass}
There is a constant $\nu_*>0$, depending only on $d$, $R$, and $\Omega$, such that the following holds.  Suppose
$Q_{2\rho}^\Omega(t_0,x_0)$ lies in a positive-time strip, let
$[k,k+H]\subset[a_i,b_i]$, let $0\le (u-k)_+\le H$ there, and assume
\[
 \frac{|\{u>k\}\cap Q_{2\rho}^\Omega|}
 {|Q_{2\rho}^\Omega|}\le\nu_*.
\]
Then
\begin{equation}\label{eq:critical-mass-conclusion}
 \operatorname*{ess\,sup}_{Q_\rho^\Omega}(u-k)_+\le\frac H2.
\end{equation}
The corresponding conclusion for the lower truncation $(k-u)_+$ holds as well.
\end{lemma}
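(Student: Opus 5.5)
The proof is the standard De Giorgi iteration, driven by the one-sided Caccioppoli inequality \eqref{eq:root-Caccioppoli-dossier-v11} and the uniform boundary parabolic Sobolev inequality \eqref{eq:boundary-parabolic-sobolev-DG}. I treat the upper truncation. The lower one follows by applying the same argument to the oriented variable $\widetilde u=-z_i+c$ from \eqref{eq:oriented-selected-root-equation}, or directly by using $\Psi^-_{i,k}$ in \Cref{prop:root-Caccioppoli-dossier}.

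First, set radii $\rho_n=\rho+2^{-n}\rho$ and levels $k_n=k+\tfrac H2(1-2^{-n})$. Put $Q_n:=Q_{\rho_n}^\Omega(t_0,x_0)$ and $r_n:=(u-k_n)_+$. Every level satisfies $k_n\in[k,k+H/2]\subset[a_i,b_i]$, so \Cref{prop:root-Caccioppoli-dossier} applies at each one. Its constants are uniform in the level and in the center, including boundary centers, because the Neumann condition removes the physical boundary flux. The transverse dissipation term is nonnegative and can simply be dropped.

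Next, choose an intermediate cylinder $\widetilde Q_n$ between $Q_{n+1}$ and $Q_n$, and a cutoff $\eta_n$ with $\eta_n\equiv1$ on $Q_{n+1}$. This cutoff vanishes near the lower time face and near the artificial lateral boundary, but not on $\partial\Omega$, and satisfies $|\nabla\eta_n|^2+|\partial_t\eta_n|\le C4^n\rho^{-2}$. Apply the Caccioppoli estimate to $r_{n+1}$, then \eqref{eq:boundary-parabolic-sobolev-DG} to $\phi=\eta_n r_{n+1}$, and finally Hölder's inequality. With $Y_n:=|\{u>k_n\}\cap Q_n|/|Q_n|$ this gives
\[
 \iint_{Q_{n+1}}r_{n+1}^2\le C\,\frac{4^n}{\rho^2}\,|Q_n|^{2/(d+2)}\,Y_n^{2/(d+2)}\iint_{Q_n}r_{n+1}^2 .
\]
Chebyshev's inequality, using $r_n\ge H2^{-n-2}$ on $\{u>k_{n+1}\}$, bounds $Y_{n+1}|Q_{n+1}|$ by $C4^nH^{-2}\iint_{Q_{n+1}}r_n^2$. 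In turn $\iint_{Q_n}r_n^2\le H^2|Q_n|Y_n$. The factors $|Q_n|^{2/(d+2)}\asymp\rho^{2}$ then cancel the $\rho^{-2}$; here the uniform volume density $|D_\rho|\asymp\rho^d$ from \Cref{lem:uniform-local-geometry-DG} is what gives this comparability at boundary cylinders. The result is the recursion
\[
 Y_{n+1}\le C_*\,b^{\,n}\,Y_n^{1+2/(d+2)},\qquad b=16 .
\]

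Finally, the standard fast-geometric-convergence lemma gives $Y_n\to0$ provided $Y_0\le C_*^{-(d+2)/2}b^{-(d+2)^2/4}=:\nu_*$, and $Y_0$ is exactly the quotient in the hypothesis. Since $Y_n\to 0$ while $Q_n\downarrow Q_\rho^\Omega$ and $k_n\uparrow k+H/2$, we obtain $u\le k+H/2$ almost everywhere on $Q_\rho^\Omega$, which is \eqref{eq:critical-mass-conclusion}.

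The only delicate point, and the main obstacle, is keeping $C_*$ independent of $n$, $k$, $H$ and the center. Independence of $H$ holds by homogeneity of the estimates. Independence of $k$ and $n$ comes from the uniformity of $c_R,C_R$ over $k\in[a_i,b_i]$ in \Cref{prop:stabilized-truncation-coercivity}. Independence of the center holds because the Sobolev constant in \Cref{lem:uniform-local-geometry-DG} is uniform for $2\rho\le\rho_\Omega$.
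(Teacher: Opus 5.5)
Your proposal is correct and is essentially the paper's De Giorgi iteration: the same radii $\rho_n$ and levels $k_n$, the cutoff Caccioppoli inequality, the uniform boundary parabolic Sobolev inequality, and the fast-geometric-convergence lemma. The one methodological difference is that you pass through Hölder and an $L^2$ Chebyshev bound, giving gain exponent $2/(d+2)$, whereas the paper applies Chebyshev directly to $\iint|r_n\eta_n|^{2(d+2)/d}$, giving gain $2/d$.

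There is one index slip to fix. As written, your displayed inequality bounds $\iint_{Q_{n+1}}r_{n+1}^2$, but your Chebyshev step needs a bound on the larger quantity $\iint_{Q_{n+1}}r_n^2$. Run the Caccioppoli--Sobolev--Hölder step with $r_n$ instead of $r_{n+1}$, with Hölder taken over $\{u>k_n\}\cap Q_n$. With that relabeling, your recursion $Y_{n+1}\le C_*16^nY_n^{1+2/(d+2)}$ follows exactly as you state.
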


\begin{proof}
We write the De Giorgi iteration with all scales explicit.  Put
\[
 \rho_n:=\rho(1+2^{-n}),\qquad
 k_n:=k+\frac H2(1-2^{-n}),\qquad n=0,1,2,\ldots .
\]
Thus $\rho_0=2\rho$, $\rho_n\downarrow\rho$, $k_0=k$, and
$k_n\uparrow k+H/2$.  Choose parabolic cutoffs $\eta_n$ which are one on
$Q_{\rho_{n+1}}^\Omega$, vanish near the lower and artificial lateral
boundary of $Q_{\rho_n}^\Omega$, and satisfy
\[
 |\nabla\eta_n|\le C2^n/\rho,\qquad
 |\partial_t\eta_n|\le C4^n/\rho^2.
\]
Apply \eqref{eq:root-Caccioppoli-cutoff-dossier-v11} to
{$r_n:=(u-k_n)_+$}.  The uniform parabolic Sobolev inequality
\eqref{eq:boundary-parabolic-sobolev-DG}, valid for every $d\ge2$ with exponent
{$q_*=2(d+2)/d$}, gives
\[
 \iint |{r_n}\eta_n|^{{q_*}}
 \le C
 \left(\operatorname*{ess\,sup}_t\int {r_n}^2\eta_n^2\right)^{2/d}
 \iint |\nabla({r_n}\eta_n)|^2.
\]
For completeness, the resulting recursion can be read off without any
hidden interpolation exponent.  The cutoff estimate gives
\[
 E_n:=\operatorname*{ess\,sup}_t\int ({r_n}\eta_n)^2
      +\iint |\nabla({r_n}\eta_n)|^2
 \le C_R\frac{4^n}{\rho^2}H^2|A_n|,
 \qquad
 A_n:=\{u>k_n\}\cap Q_{\rho_n}^\Omega.
\]
Hence the parabolic Sobolev inequality
\eqref{eq:boundary-parabolic-sobolev-DG} implies
\[
 \iint |{r_n}\eta_n|^{{q_*}}\le C_R E_n^{1+2/d}.
\]
On $A_{n+1}$ one has $\eta_n=1$ and
${r_n}\ge k_{n+1}-k_n=H2^{-n-2}$.  Therefore
\[
 |A_{n+1}|
 \le C_R b^n\rho^{-2(d+2)/d}|A_n|^{1+2/d}
\]
for one $b>1$.  Since $|Q_{2\rho}^\Omega|\asymp_\Omega
\rho^{d+2}$, the normalized quantities
\[
 Y_n:=\frac{|A_n|}{|Q_{2\rho}^\Omega|}
\]
satisfy
\[
 Y_{n+1}\le C_R b^nY_n^{1+2/d}.
\]
If
$Y_{n+1}\le A B^nY_n^{1+\delta_*}$ with $A>0$, $B>1$, $\delta_*>0$, then the
induction
\[
 Y_n\le A^{-1/\delta_*}
 B^{-n/\delta_*-1/\delta_*^2}
\]
holds provided
$Y_0\le A^{-1/\delta_*}B^{-1/\delta_*^2}$.  Apply this with
$\delta_*=2/d$ and with $A,B$ equal to the structural constants in the
preceding recursion.  Hence one may choose a structural
$\nu_*>0$ so that $Y_0\le\nu_*$ implies $Y_n\to0$.  The limiting
level is $k+H/2$, which proves
\eqref{eq:critical-mass-conclusion}.  The boundary constants are uniform by
\Cref{lem:uniform-local-geometry-DG}.
\end{proof}

\begin{lemma}[De Giorgi shrinking under a persistent opposite phase]
\label{lem:root-shrinking}
Fix $\gamma\in(0,1)$, $\delta\in(0,1)$, and a spatial buffer factor
$\Lambda>1$.  Let
$I=(s,s+\delta\rho^2)$ and suppose
\begin{equation}\label{eq:persistent-low-phase}
 |\{u(t)\le k_0\}\cap D_\rho|\ge\gamma|D_\rho|
 \qquad\text{for every }t\in I.
\end{equation}
Assume also that $D_{\Lambda\rho}$ lies in the same local chart,
\[
 [k_0,k_0+M]\subset[a_i,b_i],
\]
and
\[
 u\le k_0+M
 \qquad\text{on }I\times D_{\Lambda\rho}.
\]
Define
\[
 k_j:=k_0+M\bigl(1-2^{-j}\bigr),\qquad j=0,1,2,\ldots.
\]
Then
\begin{equation}\label{eq:shrinking-measure-estimate}
 \frac{|\{u>k_J\}\cap(I\times D_\rho)|}
 {|I\times D_\rho|}
 \le \frac{C_R(\Lambda)}{\gamma\sqrt{\delta J}},
 \qquad J\ge1.
\end{equation}
The same statement holds with every inequality reversed.
\end{lemma}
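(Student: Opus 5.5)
The plan is the classical De Giorgi shrinking argument. Apply the relative isoperimetric inequality at every fixed time, integrate over the whole interval $I$, and close with a quadratic energy bound that needs \emph{no} cutoff at the lower time face. The two-time stabilized estimate of \Cref{lem:two-time-stabilized-energy} supplies exactly such a bound. Throughout write $A_j(t):=\{u(t)>k_j\}\cap D_\rho$ and $A_j:=\{u>k_j\}\cap(I\times D_\rho)$. Write $\Delta_j:=A_{j-1}\setminus A_j$, which up to the level set $\{u=k_j\}$ is $\{k_{j-1}<u\le k_j\}\cap(I\times D_\rho)$. All levels $k_j$ lie in $[k_0,k_0+M]\subset[a_i,b_i]$, so every earlier root estimate is available at these levels.

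\emph{Step 1 (isoperimetric slice estimate).} Fix $t\in I$ and $j\ge1$. Apply \eqref{eq:relative-DeGiorgi-isoperimetric} with $\varphi=u(t)$, $h=k_{j-1}$ and $\ell=k_j$. Note that $\{u(t)\le k_{j-1}\}\supset\{u(t)\le k_0\}$, whose measure is at least $\gamma|D_\rho|$ by \eqref{eq:persistent-low-phase}. This gives
\[
 M2^{-j}\gamma|D_\rho|\,|A_j(t)|\le C_\Omega\rho^{d+1}\int_{D_\rho\cap\{k_{j-1}<u<k_j\}}|\nabla(u-k_{j-1})_+|\,dx .
\]
Next integrate over $t\in I$ and apply Cauchy--Schwarz in space--time. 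The right side is then bounded by $C\rho^{d+1}\bigl(\iint_{I\times D_\rho}|\nabla(u-k_{j-1})_+|^2\bigr)^{1/2}|\Delta_j|^{1/2}$.

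\emph{Step 2 (energy without lower time cutoff).} This is the step needing care. The cutoff Caccioppoli inequality \eqref{eq:root-Caccioppoli-cutoff-dossier-v11} cannot be used, because it would lose the initial portion of $I$ and hence part of $A_j$. Instead apply \Cref{lem:two-time-stabilized-energy} with $t_1=s$, $t_2=s+\delta\rho^2$ and $r=(u-k_{j-1})_+$. Take a time-independent spatial cutoff $\zeta\equiv1$ on $D_\rho$, supported in $D_{\Lambda\rho}$ (admissible since $D_{\Lambda\rho}$ lies in one chart), with $|\nabla\zeta|\le C/((\Lambda-1)\rho)$. Since $u\le k_0+M$ on $I\times D_{\Lambda\rho}$, we have $0\le r\le M2^{1-j}$. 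Also $\Psi\asymp_R r^2$ by \eqref{eq:Psi-size-flux-bounds-dossier}. The initial slice term is therefore at most $C_R(M2^{-j})^2|D_{\Lambda\rho}|$. The localization term is at most $C_R(M2^{-j})^2\delta\rho^2|D_{\Lambda\rho}|/((\Lambda-1)\rho)^2$. Using $\delta<1$ and $|D_{\Lambda\rho}|\le C\Lambda^d\rho^d$, we obtain
\[
 \iint_{I\times D_\rho}|\nabla(u-k_{j-1})_+|^2\le C_R(\Lambda)(M2^{-j})^2\rho^d .
\]
The point is that the initial slice is controlled by the pointwise bound $r\le M2^{1-j}$, not by smallness. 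For this reason the bound carries no factor $\delta^{-1}$.

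\emph{Step 3 (summation).} Insert Step 2 into Step 1 and cancel $M2^{-j}$. The volume density bound $|D_\rho|\asymp_\Omega\rho^d$ from \Cref{lem:uniform-local-geometry-DG} then gives
\[
 |A_j|^2\le C_R(\Lambda)\gamma^{-2}\rho^{d+2}|\Delta_j| .
\]
For $1\le j\le J$ we have $|A_J|\le|A_j|$, and the sets $\Delta_j$ are disjoint subsets of $I\times D_\rho$. Summing over $j$ yields $J|A_J|^2\le C_R(\Lambda)\gamma^{-2}\rho^{d+2}|I\times D_\rho|$. Finally, $|I\times D_\rho|=\delta\rho^2|D_\rho|\asymp\delta\rho^{d+2}$, so $\rho^{d+2}\le C|I\times D_\rho|/\delta$. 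Hence $|A_J|^2/|I\times D_\rho|^2\le C_R(\Lambda)/(\gamma^2\delta J)$, and taking square roots gives \eqref{eq:shrinking-measure-estimate} on the full interval $I$.

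\emph{Reversed case.} For the reversed inequalities, run the same argument for the oriented variable $\widetilde u=-z_i$ from \eqref{eq:oriented-selected-root-equation}. Equivalently, use the lower truncations $(k_{j-1}-u)_+$ with the lower branches $\Psi^-_{i,k}$ in \Cref{lem:two-time-stabilized-energy}. The constants are the same.
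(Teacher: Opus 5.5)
Your proposal is correct and follows the paper's proof essentially step for step: the slice-wise relative isoperimetric inequality using $\{u\le k_0\}\subset\{u\le k_{j-1}\}$, then the two-time stabilized energy of \Cref{lem:two-time-stabilized-energy} between $s$ and $s+\delta\rho^2$ with a cutoff buffered into $D_{\Lambda\rho}$ (so no lower time cutoff is needed and the initial slice is controlled by the pointwise bound $r\le M2^{1-j}$), and finally summation over the disjoint transition layers, with your index shift $(k_{j-1},k_j)$ versus the paper's $(k_j,k_{j+1})$ being immaterial. The only tacit point, which the paper shares, is that the bound $u\le k_0+M$ on the slice $t=s$ of the open interval $I$ follows from continuity of the strong solution.
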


\begin{proof}
Set $A_j(t)=\{u(t)>k_j\}\cap D_\rho$ and
$E_j(t)=\{k_j<u(t)<k_{j+1}\}\cap D_\rho$.  Because
$\{u\le k_0\}\subset\{u\le k_j\}$, the relative isoperimetric inequality
\eqref{eq:relative-DeGiorgi-isoperimetric} gives, for every $t\in I$,
\[
 (k_{j+1}-k_j)|A_{j+1}(t)|
 \le \frac{C_\Omega\rho}{\gamma}
 \int_{E_j(t)}|\nabla u|\,dx.
\]
Integrating in time and applying Cauchy--Schwarz yields
\begin{equation}\label{eq:shrinking-pre-sum}
 (k_{j+1}-k_j)|A_{j+1}|
 \le \frac{C_\Omega\rho}{\gamma}
 \left(\iint_{I\times D_\rho}|\nabla(u-k_j)_+|^2\right)^{1/2}
 |E_j|^{1/2},
\end{equation}
where $A_j,E_j$ now denote the corresponding space--time sets.

The gradient estimate uses the \emph{two-time} stabilized inequality, so no
hidden temporal buffer is needed.  Choose a spatial cutoff $\zeta$ which is
one on $D_\rho$, supported in $D_{\Lambda\rho}$ relative to the artificial
boundary, and satisfies
$|\nabla\zeta|\le C_\Lambda/\rho$.  Since
\[
 0\le(u-k_j)_+\le M2^{-j}
 \qquad\text{on }I\times D_{\Lambda\rho},
\]
\Cref{lem:two-time-stabilized-energy}, applied between $s$ and
$s+\delta\rho^2$, gives
\begin{equation}\label{eq:shrinking-gradient-bound-buffered}
 \iint_{I\times D_\rho}|\nabla(u-k_j)_+|^2
 \le C_R(\Lambda)\,M^2 4^{-j}\rho^d.
\end{equation}
Indeed, the initial stabilized energy is bounded by
$C M^2 4^{-j}\rho^d$.  The spatial cutoff error is bounded by
$C M^2 4^{-j}\delta\rho^d$, because the time length is
$\delta\rho^2$ and $|\nabla\zeta|\le C_\Lambda/\rho$; here
$\delta<1$.
Since $k_{j+1}-k_j=M2^{-j-1}$,
\eqref{eq:shrinking-pre-sum} and
\eqref{eq:shrinking-gradient-bound-buffered} imply
\[
 |A_{j+1}|^2
 \le \frac{C_R(\Lambda)}{\gamma^2}\rho^{d+2}|E_j|.
\]
For $j=0,\ldots,J-1$, the set $A_J$ is contained in $A_{j+1}$, while the
transition sets $E_j$ are pairwise disjoint.  Summing gives
\[
 J|A_J|^2
 \le \frac{C_R(\Lambda)}{\gamma^2}\rho^{d+2}|I\times D_\rho|.
\]
Since $|I\times D_\rho|\asymp_\Omega\delta\rho^{d+2}$, division by its square
proves \eqref{eq:shrinking-measure-estimate}.
\end{proof}

\begin{proposition}[One-step oscillation reduction]
\label{prop:root-oscillation-reduction}
There are structural constants $\vartheta_*,\eta_*\in(0,1)$ and
$\rho_*>0$ such that the following holds.  Let $u=z_i$ be any selected root
and let $Q_{4\rho}^\Omega(t_0,x_0)$ be any interior or boundary cylinder
contained in a positive-time strip, with $0<\rho\le\rho_*$.  The radius
$\rho_*$ is chosen so that $4\rho_*\le\rho_\Omega$, hence every spatial set
used below belongs to one of the uniform local-geometry charts.  Then
\begin{equation}\label{eq:one-step-oscillation-reduction}
 \operatorname*{ess\,osc}_{Q_{\vartheta_*\rho}^\Omega(t_0,x_0)}u
 \le(1-\eta_*)
 \operatorname*{ess\,osc}_{Q_{4\rho}^\Omega(t_0,x_0)}u.
\end{equation}
The constants are independent of the distance of $t_0$ to a finite upper
time face.
\end{proposition}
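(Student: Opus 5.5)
The plan is the standard De Giorgi alternative, assembled from \Cref{lem:root-measure-propagation,lem:root-shrinking,lem:root-critical-mass}. Set
\[
 \mu^+:=\operatorname*{ess\,sup}_{Q_{4\rho}^\Omega}u,\qquad
 \mu^-:=\operatorname*{ess\,inf}_{Q_{4\rho}^\Omega}u,\qquad
 \omega:=\mu^+-\mu^-,
\]
and note $[\mu^-,\mu^+]\subset[a_i,b_i]$ because the range lies in $R$. If $\omega=0$ there is nothing to prove. Otherwise let $\mu:=\tfrac12(\mu^++\mu^-)$ and take the top time slice $t_*:=t_0-\delta_0\rho^2$ of a short subcylinder. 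At this slice, either $|\{u(t_*)\le\mu\}\cap D_\rho|\ge\tfrac12|D_\rho|$ or the reverse half-measure holds for $\{u(t_*)\ge\mu\}$. By the orientation identity \eqref{eq:oriented-selected-root-equation}, the second case is the first case for $\widetilde u=-u+c$. It therefore suffices to treat the first case and show a drop of the supremum.

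\emph{Step 1 (propagation).} Apply \Cref{lem:root-measure-propagation} with $k=\mu$ and $H=\omega/2$; this is admissible since $u\le\mu^+=k+H$ on $Q_{4\rho}^\Omega$. For every $t\in[t_*,t_0]$ this gives
\[
 |\{u(t)\le\mu+(1-\epsilon_0)\omega/2\}\cap D_{\lambda_0\rho}|\ge\gamma_0|D_{\lambda_0\rho}|.
\]

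\emph{Step 2 (shrinking).} Apply \Cref{lem:root-shrinking} on $I=(t_*,t_0)$ with radius $\lambda_0\rho$, with $\gamma=\gamma_0$, $\delta=\delta_0/\lambda_0^2$ (which is $<1$ by the choice of $\delta_0$), buffer $\Lambda=4/\lambda_0$, base level $k_0=\mu+(1-\epsilon_0)\omega/2$ and $M=\mu^+-k_0=\epsilon_0\omega/2$. The uniform thin-annulus and volume-density bounds of \Cref{lem:uniform-local-geometry-DG} make the constants independent of whether the cylinder is interior or boundary. Choose $J$ structurally large enough that $C_R(\Lambda)/(\gamma_0\sqrt{\delta J})\le\nu_*$, with $\nu_*$ from \Cref{lem:root-critical-mass}.

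\emph{Step 3 (critical mass).} Apply \Cref{lem:root-critical-mass} at level $k_J$ with height $\mu^+-k_J=2^{-J}M$, on a cylinder of radius $\rho'$ with $Q_{2\rho'}^\Omega\subset I\times D_{\lambda_0\rho}$. Taking $\rho'=\tfrac12\sqrt{\delta_0}\,\rho$ works, since $\sqrt{\delta_0}\le\lambda_0/2$. This yields $\operatorname{ess\,sup}_{Q_{\rho'}^\Omega}u\le\mu^+-2^{-J-1}M$, hence
\[
 \operatorname*{ess\,osc}_{Q_{\rho'}^\Omega}u\le(1-\epsilon_02^{-J-2})\,\omega .
\]
So $\vartheta_*=\tfrac12\sqrt{\delta_0}$ and $\eta_*=\epsilon_02^{-J-2}$ work. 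These constants are structural and make no reference to an upper time face, because all cylinders are backward and lie below $t_0$.

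\emph{Main obstacle: the ratio mismatch.} In Step 3 the critical-mass lemma is stated for the normalized ratio on $Q_{2\rho'}^\Omega$, while the shrinking lemma bounds the ratio on the larger set $I\times D_{\lambda_0\rho}$. The two are comparable up to the factor $|I\times D_{\lambda_0\rho}|/|Q_{2\rho'}^\Omega|\asymp_\Omega\delta_0\lambda_0^d\rho^{d+2}/(\rho')^{d+2}$. This factor is structural, so I would absorb it by enlarging $J$ before fixing $\eta_*$.

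The care needed here is the order of constant choices: $\lambda_0,\epsilon_0,\delta_0,\gamma_0$ first, then $\Lambda$, then $J$, then $\vartheta_*,\eta_*$. I expect this bookkeeping to be the main obstacle. The key structural input is that each lemma is one-sided and holds uniformly at the physical boundary thanks to the Neumann data, so no case distinction at $\partial\Omega$ is needed.
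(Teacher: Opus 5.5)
Your proposal is correct and follows the paper's proof essentially step by step. The paper also uses the midpoint alternative at $t_*=t_0-\delta_0\rho^2$, propagation with $k=\mu$ and $H=\omega/2$, shrinking on the terminal slab of radius $\lambda_0\rho$, and critical mass at level $\mu^+-2^{-J}M$; there the slab-to-cylinder volume ratio (the constant $\chi_*$) is likewise absorbed into the choice of $J$, giving $\eta_*=\epsilon_0 2^{-J-2}$. The only differences are cosmetic. You take buffer $\Lambda=4/\lambda_0$ and radius $\rho'=\tfrac12\sqrt{\delta_0}\,\rho$, the latter relying on the specific choice $\delta_0=\tfrac14\min\{1,\lambda_0^2\}$ from the propagation lemma's proof; the paper takes $\Lambda=\lambda_0^{-1}$ and $\rho_c=\tfrac14\min\{\lambda_0,\sqrt{\delta_0}\}\rho$, and both choices work.
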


\begin{proof}
Let
\[
 \mu^+=\operatorname*{ess\,sup}_{Q_{4\rho}^\Omega}u,
 \qquad
 \mu^-=\operatorname*{ess\,inf}_{Q_{4\rho}^\Omega}u,
 \qquad \omega=\mu^+-\mu^-.
\]
There is nothing to prove if $\omega=0$.  On the positive-time cylinder the
strong representative is continuous, so the essential extrema agree with the
ordinary extrema on every compactly contained closed subcylinder; in
particular the slice values used below lie between $\mu^-$ and $\mu^+$.
Put {$\mu^0:=(\mu^++\mu^-)/2$} and choose
{$t_*:=t_0-\delta_0\rho^2$}, where $\delta_0$ is furnished by
\Cref{lem:root-measure-propagation}.  At the time {$t_*$}, one of the two sets
\[
 \{u({t_*})\le {\mu^0}\}\cap D_\rho,
 \qquad
 \{u({t_*})\ge {\mu^0}\}\cap D_\rho
\]
has measure at least $|D_\rho|/2$.  We treat the first alternative.  Apply
\Cref{lem:root-measure-propagation} with
$k=\mu^0$ and $H=\omega/2$.  There are fixed
$\lambda_0,\epsilon_0,\gamma_0>0$ such that for every
$t\in(s,t_0)$,
\begin{equation}\label{eq:phase-persistence-oscillation-proof}
 |\{u(t)\le \mu^+-M\}\cap D_{\lambda_0\rho}|
 \ge\gamma_0|D_{\lambda_0\rho}|,
 \qquad M:=\frac{\epsilon_0\omega}{2}.
\end{equation}
Use \Cref{lem:root-shrinking} on this terminal slab with phase fraction
$\gamma=\gamma_0$, spatial radius $\bar\rho:=\lambda_0\rho$, base level
$k_0=\mu^+-M$, aspect ratio
$\bar\delta:=\delta_0/\lambda_0^2\in(0,1)$, and spatial buffer
$D_\rho=D_{\Lambda\bar\rho}$ with $\Lambda=\lambda_0^{-1}$.  The required
upper bound holds on the buffered cylinder because $u\le\mu^+$ on
$Q_{4\rho}^\Omega$.  All constants are therefore structural.

Choose explicitly the critical-mass radius
\begin{equation}\label{eq:critical-cylinder-radius-choice}
 \rho_c:=
 \frac14\min\{\lambda_0,\sqrt{\delta_0}\}\rho .
\end{equation}
Then
\[
 2\rho_c\le \frac12\lambda_0\rho=\frac12\bar\rho,
 \qquad
 4\rho_c^2\le\frac14\delta_0\rho^2,
\]
so the backward cylinder
$Q_{2\rho_c}^\Omega(t_0,x_0)$ is contained in the terminal slab
$(s,t_0)\times D_{\bar\rho}$.  By the volume-density bounds in
\Cref{lem:uniform-local-geometry-DG}, there is a structural number
$\chi_*>0$ such that
\begin{equation}\label{eq:critical-cylinder-slab-ratio}
 \frac{|Q_{2\rho_c}^\Omega(t_0,x_0)|}
 {|(s,t_0)\times D_{\bar\rho}|}\ge\chi_* .
\end{equation}
Indeed, both numerator and denominator are fixed structural multiples of
$\rho^{d+2}$.

The shrinking lemma gives
\[
 \frac{|\{u>\mu^+-M2^{-J}\}\cap
 ((s,t_0)\times D_{\bar\rho})|}
 {|(s,t_0)\times D_{\bar\rho}|}
 \le
 \frac{C_*}{\sqrt{J}},
\]
where $C_*$ is structural because
$\gamma_0,\bar\delta,\Lambda$ are structural.  Choose once and for all
$J_*$ so large that
\begin{equation}\label{eq:Jstar-choice-critical-mass}
 \frac{C_*}{\sqrt{J_*}}\le \nu_*\chi_* .
\end{equation}
Then, by \eqref{eq:critical-cylinder-slab-ratio},
\[
 \frac{|\{u>\mu^+-M2^{-J_*}\}\cap
 Q_{2\rho_c}^\Omega|}
 {|Q_{2\rho_c}^\Omega|}
 \le \nu_* .
\]
Thus the hypothesis of \Cref{lem:root-critical-mass} is verified literally.
{Applying that lemma with
\[
 k=\mu^+-M2^{-J_*},
 \qquad H=M2^{-J_*},
\]
gives}
\[
 \operatorname*{ess\,sup}_{Q_{\rho_c}^\Omega}u
 \le \mu^+-\frac{M}{2^{J_*+1}}.
\]
The lower bound $u\ge\mu^-$ is unchanged, so the oscillation decreases by
the fixed fraction
\[
 \eta_*:=\frac{\epsilon_0}{2^{J_*+2}}.
\]
For the second half-measure alternative use the sign-reversed versions of
\Cref{lem:root-measure-propagation,lem:root-shrinking,lem:root-critical-mass}
at the corresponding levels of the original root $u$.  Equivalently one may
write $w=-u+\mu^+$ as in \eqref{eq:oriented-selected-root-equation}; every
truncation $(w-\kappa)_+$ is then exactly a lower truncation $(k-u)_+$ at the
root level $k=\mu^+-\kappa\in[a_i,b_i]$.  Hence the same stabilized
estimates apply, and the resulting upper-bound reduction for $w$ is precisely
a lower-bound increase for $u$ by the same structural fraction.
Taking $\vartheta_*=\rho_c/\rho$ proves
\eqref{eq:one-step-oscillation-reduction}.  Every cylinder used is backward
from $t_0$, so none of the constants sees the distance to a finite future
time face.
\end{proof}

{
\begin{lemma}[Oscillation decay implies terminal boundary H\"older control]
\label{lem:oscillation-to-terminal-holder}
Let $u$ be continuous on $[\tau,T)\times\overline\Omega$ and satisfy
$|u|\le M_R$ there.  Assume that there are $q,\sigma\in(0,1)$ and $r_*>0$
such that, whenever $x_0\in\overline\Omega$, $0<r\le r_*$ and
$(t_0-r^2,t_0)\subset(\tau,T)$,
\begin{equation}\label{eq:abstract-oscillation-contraction-blue}
 \operatorname*{osc}_{Q_{qr}^\Omega(t_0,x_0)}u
 \le \sigma\,
 \operatorname*{osc}_{Q_r^\Omega(t_0,x_0)}u .
\end{equation}
Assume also the uniform boundary-volume and local-geometry bounds of
\Cref{lem:uniform-local-geometry-DG}.  Then, for every $\tau_1>\tau$, there
are $\theta\in(0,1)$ and $C<\infty$, depending only on
$q,\sigma,r_*,M_R,\tau_1-\tau$ and the uniform geometry, such that
\begin{equation}\label{eq:abstract-terminal-holder-blue}
 |u(t,x)-u(s,y)|
 \le C\bigl(|x-y|+|t-s|^{1/2}\bigr)^\theta
\end{equation}
for all $(t,x),(s,y)\in[\tau_1,T)\times\overline\Omega$.  If $T<\infty$,
the constant is independent of the distances of $t$ and {$t_*$} from the upper
time face $T$.
\end{lemma}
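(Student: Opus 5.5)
The plan is the standard iteration of the contraction \eqref{eq:abstract-oscillation-contraction-blue} into a power-law decay of oscillation, followed by a two-case comparison of pairs of points according to whether they are close or far apart relative to $r_*$ and to the time buffer $\tau_1-\tau$. Throughout, put $r_0:=\min\{r_*,\sqrt{\tau_1-\tau}\}$. Then for every $t_0\in[\tau_1,T)$ and every $x_0\in\overline\Omega$ the backward cylinder $Q_{r_0}^\Omega(t_0,x_0)$ has time base $(t_0-r_0^2,t_0)\subset(\tau,T)$. All cylinders used are backward from their top time, so the distance to the upper face $T$ never enters.

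\emph{Step 1 (geometric decay).} Fix $(t_0,x_0)$ with $t_0\in[\tau_1,T)$. Applying \eqref{eq:abstract-oscillation-contraction-blue} successively with radii $r_0,qr_0,q^2r_0,\dots$ (each admissible because its time base stays inside that of $Q_{r_0}^\Omega$) gives
\[
\operatorname*{osc}_{Q^\Omega_{q^nr_0}(t_0,x_0)}u\le\sigma^n\,2M_R .
\]
For $0<r\le r_0$ choose $n$ with $q^{n+1}r_0<r\le q^nr_0$; monotonicity of the oscillation in the cylinder then yields
\[
\operatorname*{osc}_{Q_r^\Omega(t_0,x_0)}u\le 2M_R\sigma^{-1}(r/r_0)^\theta,
\qquad \theta:=\frac{\log\sigma}{\log q}\in(0,\infty).
\]
If necessary, $\theta$ is decreased to lie in $(0,1)$. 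Continuity of $u$ lets us replace the essential oscillation by the pointwise one on the closed subcylinders.

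\emph{Step 2 (pairs of points).} Take $(t,x),(s,y)\in[\tau_1,T)\times\overline\Omega$ with, say, $s\le t$, and set $\delta:=|x-y|+|t-s|^{1/2}$. If $\delta<r_0/2$, put $r:=2\delta$. Then $y\in B_r(x)$ and $s\in(t-r^2,t]$, so $(s,y)$ lies in the closure of $Q_r^\Omega(t,x)$, and $(t,x)$ is on its top face. By continuity, Step 1 bounds $|u(t,x)-u(s,y)|$ by $C\delta^\theta$. If $\delta\ge r_0/2$, the trivial bound $2M_R\le 2M_R(2\delta/r_0)^\theta$ suffices. The resulting constant $C$ depends only on $M_R,\sigma,q,r_*,\tau_1-\tau$.

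\emph{Main obstacle.} The step needing care is the admissibility bookkeeping: at every scale one must ensure that the hypothesis applies, with all time bases inside $(\tau,T)$, and that the top-face points $(t,x)$ are covered even though $Q_r^\Omega$ is open at the top. The choice of $r_0$ handles the former, and continuity on the closure handles the latter. Boundary points are covered uniformly because the hypothesis is assumed for all $x_0\in\overline\Omega$ and the sets $D_r(x_0)$ are relatively open in $\overline\Omega$, so no separate boundary argument is required.
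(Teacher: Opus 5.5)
Your proposal is correct and follows essentially the same route as the paper: you iterate the contraction along backward cylinders to get power-law oscillation decay below a radius of order $\min\{r_*,\sqrt{\tau_1-\tau}\}$, then split pairs of points according to their parabolic distance, using continuity to reach top-face and boundary points. The differences are only cosmetic: your $r_0$ omits the harmless extra factor $\tfrac14$ and the cap $\rho_\Omega$ in the paper's $r_1$, and you use the cylinder radius $2\delta$ where the paper uses $4d_p$.
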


\begin{proof}
Set
\[
 \theta_0:=\frac{\log(1/\sigma)}{\log(1/q)}>0,
 \qquad \theta:=\min\{\theta_0,1/2\}.
\]
Iteration of \eqref{eq:abstract-oscillation-contraction-blue} gives
\[
 \operatorname*{osc}_{Q_{q^nr}^\Omega(t_0,x_0)}u
 \le \sigma^n\operatorname*{osc}_{Q_r^\Omega(t_0,x_0)}u
 \le 2M_R q^{n\theta_0}
\]
for every $n\ge0$ for which the outer cylinder is admissible.  Hence, if
$0<\varrho\le r$ and $q^{n+1}r<\varrho\le q^nr$, monotonicity of oscillation
under restriction yields
\begin{equation}\label{eq:campanato-oscillation-blue}
 \operatorname*{osc}_{Q_\varrho^\Omega(t_0,x_0)}u
 \le C M_R\left(\frac{\varrho}{r}\right)^\theta .
\end{equation}

Fix $\tau_1>\tau$ and choose
\[
 0<r_1\le
 \min\left\{r_*,\frac14\sqrt{\tau_1-\tau},\rho_\Omega\right\}.
\]
Every backward cylinder of radius at most $r_1$ with upper time in
$[\tau_1,T)$ therefore remains above the lower time face $\tau$ and lies in
the uniform local-geometry regime.  Let $X=(t,x)$ and $Y=(s,y)$ belong to
$[\tau_1,T)\times\overline\Omega$, and set
$d_p(X,Y):=|x-y|+|t-s|^{1/2}$.  The assertion is trivial if $X=Y$.  If
$d_p(X,Y)\ge r_1/8$, boundedness gives
\[
 |u(X)-u(Y)|\le2M_R
 \le 2M_R(8/r_1)^\theta d_p(X,Y)^\theta.
\]
Suppose that $0<d_p(X,Y)<r_1/8$ and, after interchanging $X$ and $Y$ if
necessary, that $t\ge s$.  The closure of the backward cylinder
$Q_{4d_p(X,Y)}^\Omega(t,x)$ contains both points, and its radius is at most
$r_1/2$.  Applying \eqref{eq:campanato-oscillation-blue} with starting
radius $r_1$ gives
\[
 |u(X)-u(Y)|
 \le C M_R r_1^{-\theta}d_p(X,Y)^\theta.
\]
If one or both points lie on the physical boundary, or if $X$ lies on the
upper face of the open backward cylinder, the same estimate follows by
approximation from within the cylinder and continuity of $u$.  Thus no
forward cylinder and no extension beyond $T$ is introduced.  Combining the
two distance regimes proves \eqref{eq:abstract-terminal-holder-blue} and the
claimed uniformity at a finite upper time face.
\end{proof}
}

\begin{theorem}[De Giorgi regularity of the roots]
\label{thm:root-DeGiorgi-holder}
Let $\Omega\subset\mathbb R^d$ be a bounded domain of class
$C^2$, and fix $p>d+2$.  Let $z$ be a positive-time strong root solution: on
every compact strip $I\Subset(\tau,T)$ it satisfies
\eqref{eq:strong-root-chain-class}, the distributional additive root system,
and the homogeneous Neumann trace.  Assume that its range on
$[\tau,T)\times\overline\Omega$ lies in a compact root rectangle $R$.
Then there are
$\theta=\theta(d,N,R,g,\Omega)\in(0,1)$ and, for every $\tau_1>\tau$, a
constant $C<\infty$ such that
\begin{equation}\label{eq:root-holder-terminal-strip}
 \|z\|_{C_{t,x}^{\theta/2,\theta}
 ([\tau_1,T)\times\overline\Omega)}\le C.
\end{equation}
If $T<\infty$, the constant is independent of the distance to the upper time
face $T$.
\end{theorem}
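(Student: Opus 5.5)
The plan is to assemble the theorem from the one-step oscillation reduction and the abstract oscillation-to-H\"older lemma, applied separately to each root $u=z_i$, $i=1,\ldots,m$. The hypotheses of \Cref{thm:root-DeGiorgi-holder} are exactly the strong root hypotheses of \Cref{lem:root-chain-rule-dossier} on compact strips. Together with the compact root range $R$, they make every earlier ingredient available with root-box constants. These ingredients are the stabilized truncations of \Cref{prop:stabilized-truncation-coercivity}, the Caccioppoli and two-time estimates, the logarithmic estimate, measure propagation, shrinking and critical mass.

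\textbf{Step 1: continuity and the admissible range of levels.} Each root is continuous on $[\tau,T)\times\overline\Omega$, because the strong class embeds into $C(\overline I;C^1(\overline\Omega))$ on every compact strip. Since $z\in R$, we also have $|z_i|\le M_R:=\max_j\max(|a_j|,|b_j|)$. Every level used in \Cref{prop:root-oscillation-reduction} lies between $\mu^-$ and $\mu^+$ on a cylinder, so it lies in $[a_i,b_i]$. The interval hypotheses $[k,k+H]\subset[a_i,b_i]$ of the auxiliary lemmas are therefore met automatically.

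\textbf{Step 2: verify the contraction hypothesis of \Cref{lem:oscillation-to-terminal-holder}.} Set $q:=\vartheta_*/4$, $\sigma:=1-\eta_*$ and $r_*:=4\rho_*$. Take $x_0\in\overline\Omega$, $0<r\le r_*$ and $(t_0-r^2,t_0)\subset(\tau,T)$, and put $\rho=r/4$. Then $Q_r^\Omega=Q_{4\rho}^\Omega$ lies in a positive-time strip and $\rho\le\rho_*$.

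One technical point needs care: \Cref{prop:root-oscillation-reduction} uses the strong estimates on neighbourhoods of cylinders, whereas the upper face $t_0$ may equal $T$. I would handle this by applying the proposition on $Q_{4\rho}^\Omega(t_0',x_0)$ with $t_0'\uparrow t_0$, where each such cylinder is compactly contained in a strip $I\Subset(\tau,T)$ up to its lower face. Continuity of $u$ then lets us pass to the limit in the oscillations. This limit costs nothing because all cylinders are backward and the constants do not see the upper face. With the limit in hand, \eqref{eq:one-step-oscillation-reduction} gives exactly \eqref{eq:abstract-oscillation-contraction-blue}, with ordinary oscillations in place of essential ones.

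\textbf{Step 3: conclude.} \Cref{lem:oscillation-to-terminal-holder} now yields, for each $\tau_1>\tau$, a H\"older exponent $\theta_i=\min\{\log(1/\sigma)/\log(1/q),1/2\}$ and a constant bounding the parabolic H\"older seminorm on $[\tau_1,T)\times\overline\Omega$. These are uniform up to the face $T$. The exponent depends only on $\vartheta_*$ and $\eta_*$, and hence only on $d,N,R,g,\Omega$. Take $\theta:=\min_i\theta_i$; a H\"older estimate with a larger exponent implies one with a smaller exponent on the bounded set, since the parabolic distance is bounded. Adding the sup bound $M_R$ gives the full $C^{\theta/2,\theta}_{t,x}$ norm bound \eqref{eq:root-holder-terminal-strip}.

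\textbf{Main obstacle.} The main point to check is the bookkeeping in Step 2. The regularity class is only available on strips compactly inside $(\tau,T)$, yet the estimate must be uniform up to $T$. The approximation $t_0'\uparrow t_0$, together with the backward nature of all cylinders and the uniform boundary geometry of \Cref{lem:uniform-local-geometry-DG}, is what I would rely on. Everything else is direct citation.
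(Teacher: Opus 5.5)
Your proposal is correct and follows the paper's proof essentially step for step. Like the paper, you take $q=\vartheta_*/4$, $\sigma=1-\eta_*$, $r_*=4\rho_*$ from \Cref{prop:root-oscillation-reduction}, replace essential by ordinary oscillations using the continuous strong representative, apply \Cref{lem:oscillation-to-terminal-holder}, and take the worst exponent and constant over the finitely many roots. Your extra limiting argument $t_0'\uparrow t_0$ is harmless, since that lemma's proof only uses cylinders with upper time strictly below $T$. One small slip: the parabolic distance on $[\tau_1,T)\times\overline\Omega$ is not bounded when $T=\infty$, a case the paper uses later. This does not matter, because all $\theta_i$ coincide (they depend only on $\vartheta_*,\eta_*$), and large distances are covered anyway by the sup bound $2M_R$.
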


{
\begin{proof}
Fix a root $u=z_i$.  By \Cref{prop:root-oscillation-reduction}, after writing
$r=4\rho$ there are structural numbers
\[
 q:=\frac{\vartheta_*}{4}\in(0,1),\qquad
 \sigma:=1-\eta_*\in(0,1),\qquad r_*:=4\rho_*>0,
\]
such that every admissible interior or boundary backward cylinder satisfies
\[
 \operatorname*{ess\,osc}_{Q_{qr}^\Omega}u
 \le \sigma\operatorname*{ess\,osc}_{Q_r^\Omega}u,
 \qquad 0<r\le r_*.
\]
By \eqref{eq:strong-root-chain-class}, $u$ has a continuous representative on
every compact positive-time strip, so essential and ordinary oscillations
agree there.  The compact root rectangle $R$ supplies a uniform bound for
$u$, while \Cref{rem:DG-constant-ledger} shows that $q,\sigma,r_*$ are
independent of the location of the backward cylinder in time.

Apply \Cref{lem:oscillation-to-terminal-holder}.  For every $\tau_1>\tau$ it
yields an exponent
\[
 0<\theta\le
 \min\left\{\frac{\log(1/(1-\eta_*))}{\log(4/\vartheta_*)},\frac12\right\}
\]
and a constant $C_i$ such that
\[
 |z_i(t,x)-z_i(s,y)|
 \le C_i\bigl(|x-y|+|t-s|^{1/2}\bigr)^\theta
\]
throughout $[\tau_1,T)\times\overline\Omega$.  Taking the minimum exponent
and maximum constant over the finitely many roots proves
\eqref{eq:root-holder-terminal-strip}.  Since the bridge lemma uses only
backward cylinders, neither the exponent nor the constant requires a positive
distance from a finite upper time face $T$.
\end{proof}
}

\begin{remark}[Dependence of the oscillation constants]
\label{rem:DG-constant-ledger}
The constants in the Caccioppoli, logarithmic, propagation, shrinking,
critical-mass, and oscillation estimates depend only on the compact root
rectangle $R$, the additive generators (equivalently the coefficient bounds
and root-gap bounds on $R$), $N$, $d$, and the uniform local geometry of
$\Omega$.  They do \emph{not} depend on $H$, on the oscillation size, on the
chosen level, on the time location of the cylinder, or on any a-priori
$C^1$ norm of the strong solution.  The latter regularity is used only to
justify the strong chain and testing identities on compact positive-time
strips.
The time-propagation input in the oscillation argument is
\Cref{lem:root-logarithmic-estimate}.  Its logarithmic temporal energy comes
directly from the scalar root equation, while the cross-root drift is
controlled by the additional stabilized dissipation
$r^2\sum_{j\ne i}|\nabla z_j|^2$.  No Morrey-to-high-$L^p$ embedding or
replacement of the intrinsic stabilized energy by a Euclidean square is used.
\end{remark}

\section{From De Giorgi continuity to terminal strong regularity}
\label{sec:DG-to-strong}

Throughout this section, $p>d+2$ and
$\Omega\subset\mathbb R^d$ is a bounded domain of class
$C^{2+\alpha}$ for a fixed $0<\alpha<1$.

The moment system is not assumed to admit one constant Euclidean symmetrizer
on the entire moment simplex.  After \Cref{thm:root-DeGiorgi-holder}, however,
only a constant symmetrizer on a small state neighborhood is needed.  The
entropy Hessian supplies one canonically at every frozen state.

\begin{lemma}[Uniform local constant symmetrizer]\label{lem:local-constant-symmetrizer}
Let {$\mathcal K\Subset\cT^\circ$}.  There are $\delta_*>0$ and $\nu_*>0$ such that for
every {$U_0\in\mathcal K$} and every {$U\in\mathcal K$} with $|U-U_0|<\delta_*$,
\begin{equation}\label{eq:local-fixed-symmetrizer}
 \frac12\left(H(U_0)A_N(U)+A_N(U)^{\sf T}H(U_0)\right)
 \ge \nu_*I.
\end{equation}
The constants are uniform in $U_0$.
\end{lemma}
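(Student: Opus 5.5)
The plan is a perturbation argument from the diagonal case $U=U_0$. There, \Cref{thm:entropy-symmetrization} states that $H(U_0)A_N(U_0)$ is symmetric positive definite. \Cref{prop:uniform-coercivity}, applied on a compact root box containing the root image of $\mathcal K$, then gives a uniform lower bound
\[
 v^{\sf T}H(U_0)A_N(U_0)v\ge c_{\mathcal K}|v|^2,\qquad U_0\in\mathcal K,
\]
and such a box exists because $\mathcal K\Subset\cT^\circ$ corresponds to a compact subset of $\cS^\circ$, hence of $\cZ$. If one prefers not to pass through root boxes, a direct route also works. The map $U_0\mapsto$ (smallest eigenvalue of $H(U_0)A_N(U_0)$) is continuous and positive on the compact set $\mathcal K$, so it attains a positive minimum $c_{\mathcal K}$.

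Next, for $U,U_0\in\mathcal K$ split the symmetric part as
\[
 \tfrac12\bigl(H(U_0)A_N(U)+A_N(U)^{\sf T}H(U_0)\bigr)
 =H(U_0)A_N(U_0)+\sym\bigl(H(U_0)(A_N(U)-A_N(U_0))\bigr),
\]
using the symmetry \eqref{eq:HA-symmetric} of the first term. The remainder has operator norm at most $\|H(U_0)\|\,\|A_N(U)-A_N(U_0)\|$. Here $\sup_{\mathcal K}\|H\|<\infty$ by continuity of $H$ on the interior, and $A_N$ is real-analytic on a neighbourhood of $\cT$ (\Cref{thm:moment-fluxes}). Hence $A_N$ is Lipschitz on $\mathcal K$, or at least uniformly continuous there, with a modulus $\omega$. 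Choose $\delta_*$ so that $\sup_{\mathcal K}\|H\|\,\omega(\delta_*)\le c_{\mathcal K}/2$, and set $\nu_*:=c_{\mathcal K}/2$. This gives \eqref{eq:local-fixed-symmetrizer} with constants independent of $U_0$.

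The only point needing care is the Lipschitz step: the segment between $U$ and $U_0$ may leave $\mathcal K$. This is harmless, because $A_N$ is smooth on the convex set $\cT$ (indeed on a neighbourhood of it), so the mean value inequality applies on the segment inside $\cT$. Alternatively, uniform continuity of $A_N$ on the compact set $\cT$ suffices directly. No other obstacle is expected.
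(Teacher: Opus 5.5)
Your proposal is correct and follows the paper's own argument. The paper first takes positive definiteness of $H(U_0)A_N(U_0)$ from \Cref{thm:entropy-symmetrization} and a positive minimum of its smallest eigenvalue over the compact set $\mathcal K$. It then uses uniform continuity of $A_N$ and boundedness of $H$ on $\mathcal K$ to obtain one common radius $\delta_*$. Your explicit splitting into $H(U_0)A_N(U_0)$ plus a symmetrized remainder, and your remark that the segment stays in the convex set $\cT$, only spell out that perturbation step quantitatively.
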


\begin{proof}
At $U=U_0$, \Cref{thm:entropy-symmetrization} makes
$H(U_0)A_N(U_0)$ symmetric positive definite.  Its smallest eigenvalue has a
positive minimum on the compact set {$\mathcal K$}.  Uniform continuity of $A_N$ and
boundedness of $H(U_0)$ on {$\mathcal K$} then give \eqref{eq:local-fixed-symmetrizer}
for one common radius $\delta_*$.
\end{proof}

\begin{lemma}[Weak-solution conormal Schauder regularity]
\label{lem:localized-conormal-schauder}
{Let $0<\beta<1$} and let
\[
 \mathcal Q_R^+:=(-R^2,0)\times(B_R'\times(0,R)).
\]
Suppose that $v:\mathcal Q_4^+\to\mathbb R^m$ is a bounded weak solution of
\begin{equation}\label{eq:abstract-conormal-system}
 \partial_tv-\partial_\gamma
 \bigl(\mathbb A^{\gamma\delta}(t,x)\partial_\delta v\bigr)=0
\end{equation}
with the homogeneous conormal condition on the flat face.  Assume that the
coefficient tensor is uniformly strongly parabolic and bounded.

\begin{enumerate}
\item If
\[
 \mathbb A\in C_{t,x}^{{\beta/2,\beta}}
 (\overline{\mathcal Q_4^+})
\]
with fixed ellipticity and H\"older bounds, then
\begin{equation}\label{eq:localized-conormal-Lipschitz}
 \|\nabla v\|_{L^\infty(\mathcal Q_1^+)}
 \le C\|v\|_{L^\infty(\mathcal Q_4^+)},
\end{equation}
and, for every {$0<\beta_1<\beta$},
\begin{equation}\label{eq:localized-conormal-schauder}
 \|v\|_{C_{t,x}^{{(1+\beta_1)/2,1+\beta_1}}
 (\overline{\mathcal Q_1^+})}
 \le C_{{\beta_1}}\|v\|_{L^\infty(\mathcal Q_4^+)}.
\end{equation}

\item If, more strongly,
\[
 \mathbb A\in C_{t,x}^{{(1+\beta)/2,1+\beta}}
 (\overline{\mathcal Q_4^+}),
\]
then, for every {$0<\beta_2<\beta$},
\begin{equation}\label{eq:localized-conormal-second-schauder}
 \|v\|_{C_{t,x}^{{1+\beta_2/2,2+\beta_2}}
 (\overline{\mathcal Q_1^+})}
 \le C_{{\beta_2}}\|v\|_{L^\infty(\mathcal Q_4^+)}.
\end{equation}
\end{enumerate}
The analogous statements hold on full cylinders.
\end{lemma}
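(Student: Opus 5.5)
This is a linear conormal regularity statement for systems with Hölder coefficients. I would prove it by Campanato-type perturbation around frozen constant-coefficient problems, followed by the usual difference-quotient bootstrap. No nonlinear structure is needed.

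\emph{Step 1: constant coefficients.} I would first treat $\mathbb A$ constant and strongly parabolic (Legendre, or Legendre--Hadamard, which is enough for constant coefficients) on $\mathcal Q_4^+$, with homogeneous conormal data on $\{x_d=0\}$. Tangential difference quotients and time derivatives commute with the equation and preserve the conormal condition. Energy (Caccioppoli) estimates therefore bound every tangential and time derivative of $v$ in $L^2$ on smaller half-cylinders by $\|v\|_{L^2}$. The equation itself then gives $\partial_d(\mathbb A^{dd}\partial_d v)$ in terms of those derivatives, and $\mathbb A^{dd}$ is invertible. This controls all normal derivatives. Sobolev embedding then yields smooth interior-to-flat-face bounds and the Campanato decay
\[
\int_{\mathcal Q_\varrho^+}|\nabla v-(\nabla v)_\varrho|^2\le C(\varrho/r)^{d+4}\int_{\mathcal Q_r^+}|\nabla v-(\nabla v)_r|^2 .
\]
On full cylinders the same argument applies without the boundary step.

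\emph{Step 2: perturbation for part 1.} For Hölder $\mathbb A$, I would freeze $\mathbb A_0=\mathbb A(X_0)$ on each small cylinder centered at $X_0$, interior or on the flat face. I would split $v=v_0+(v-v_0)$, where $v_0$ solves the frozen problem with the same lateral data. The remainder satisfies the energy bound
\[
\|\nabla(v-v_0)\|_{L^2(\mathcal Q_r^+)}\le C r^{\beta}\|\nabla v\|_{L^2(\mathcal Q_r^+)},
\]
because $(\mathbb A-\mathbb A_0)\nabla v$ enters in divergence form and the conormal condition is respected. The standard iteration lemma then gives Morrey decay of $\nabla v$ first and Campanato decay of exponent $\beta_1<\beta$ afterwards. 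By the Campanato characterization this is $\nabla v\in C^{\beta_1/2,\beta_1}$. The time half-derivative regularity of $v$ needed for $C^{(1+\beta_1)/2,1+\beta_1}$ would come from the equation written as $\partial_t v=\Div(\cdot)$ with a Hölder flux, via a parabolic Poincaré-in-time argument. The $L^\infty$ gradient bound is a consequence. Finally, Caccioppoli estimates replace $L^2$ norms of $\nabla v$ by $\|v\|_{L^\infty(\mathcal Q_4^+)}$. The loss from $\beta$ to $\beta_1$ is exactly the $\epsilon$-loss of this iteration, which is why the statement is formulated with $\beta_1<\beta$.

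\emph{Step 3: part 2.} With $\mathbb A\in C^{(1+\beta)/2,1+\beta}$, I would differentiate tangentially in $x'$ using difference quotients. The function $w=\partial_{x'}v$ solves a conormal system of the same type with a divergence-form forcing $\partial_\gamma(\partial_{x'}\mathbb A^{\gamma\delta}\partial_\delta v)$. This forcing is $C^{\beta_1}$ by part 1, and the conormal condition remains homogeneous up to that forcing. Part 1, extended to Hölder divergence forcing by the same perturbation argument, then gives $\nabla\partial_{x'}v\in C^{\beta_2}$. The normal second derivative would be recovered by solving the equation for $\partial_d(\mathbb A^{dd}\partial_d v)$. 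The time derivative $\partial_t v$ is then Hölder, which gives $C^{1+\beta_2/2,2+\beta_2}$.

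I expect the main obstacle to be the boundary Campanato step for \emph{systems} with conormal data at the flat face. One must verify that the frozen-coefficient comparison preserves the conormal structure, so that no extra boundary term appears, and that the constant-coefficient decay holds for strongly parabolic rather than diagonal tensors. I would handle this through the tangential-derivative energy argument of Step 1 rather than by reflection, since reflection fails for non-diagonal tensors.
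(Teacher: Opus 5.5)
Your route differs from the paper's. The paper does not prove this lemma from first principles. It specializes the conormal theory of Dong--Jeon (weight exponent $0$, divergence datum $\mathbf g=0$):
\begin{itemize}
\item the Lipschitz bound comes from their Theorem~3.1 and Lemma~3.5, via a Dini mean-oscillation modulus;
\item the two H\"older gains come from their Theorem~4.1 with $k=1$ and $k=2$;
\item full cylinders are reduced to half-cylinders by adjoining a dummy normal variable with normal--normal coefficient $I_m$.
\end{itemize}
A self-contained Campanato argument is a legitimate alternative, and your Steps~1--2 are the standard perturbation proof of part~1.

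One correction there. The frozen conormal problem and the boundary Caccioppoli estimates need the Legendre form of strong parabolicity, which is what the lemma assumes. Your parenthetical claim that Legendre--Hadamard suffices for constant coefficients is false for conormal data. Adding a null Lagrangian to the tensor leaves the Legendre--Hadamard condition and the interior operator unchanged. It does, however, change the conormal condition and can destroy coercivity on $H^1$ of the half-cylinder.

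The genuine gap is in Step~3. Tangential differentiation gives you $\nabla\partial_{x'}v\in C^{\beta_2/2,\beta_2}$, i.e.\ every second spatial derivative except $\partial_d^2v$. For an elliptic system the equation would then determine $\partial_d^2v$. Here it yields only the single relation
\[
 \partial_tv-\mathbb A^{dd}\partial_d^2v
 =\bigl(\text{terms containing }\nabla\partial_{x'}v\bigr)+(\nabla\mathbb A)\nabla v
\]
between the two unknowns $\partial_tv$ and $\partial_d^2v$. ``Solving the equation for $\partial_d(\mathbb A^{dd}\partial_dv)$'' presupposes that $\partial_tv$ is already controlled. ``$\partial_tv$ is then H\"older'' presupposes control of $\partial_d^2v$. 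So the step is circular.

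The device from Step~1, differentiating in time, is not available here. $\mathbb A$ is only $C^{(1+\beta)/2}$ in $t$, so $\partial_t\mathbb A$ need not exist. The time difference quotient of the system carries the forcing $\Div((\delta_h\mathbb A)\nabla v)$, whose size is $h^{(\beta-1)/2}\to\infty$.

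Part~2 therefore needs a genuinely second-order parabolic ingredient that your plan does not contain. One option:
\begin{itemize}
\item justify $v\in W^{2,1}_2$ up to the flat face (this itself requires an argument, for the same reason);
\item rewrite the system in nondivergence form $\partial_tv-\mathbb A^{\gamma\delta}\partial_{\gamma\delta}v-(\partial_\gamma\mathbb A^{\gamma\delta})\partial_\delta v=0$ with the oblique condition $\mathbb A^{d\delta}\partial_\delta v=0$;
\item run a Campanato iteration on $(D^2v,\partial_tv)$ against frozen-coefficient solutions.
\end{itemize}
Alternatively, invoke a nondivergence Schauder theorem with an oblique boundary operator. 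This second-order conormal estimate is exactly what the paper imports from the $k=2$ case of Dong--Jeon's Theorem~4.1.
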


\begin{proof}
{We first record the specialization of the Dong--Jeon notation
\cite{DongJeon2026}.  Their spatial dimension $n$ is the present $d$, their
system size is the present $m$, their weight exponent in equation~(1.1) is set
to $0$, their coefficient matrices $A^{\gamma\delta}$ are the present tensor
$\mathbb A^{\gamma\delta}$, and their divergence datum $\mathbf g$ is set
to zero.  At weight exponent $0$ the weighted measure is ordinary
space--time Lebesgue measure, and their boundedness/parabolicity condition
(1.2) is precisely the bounded strong-parabolicity hypothesis imposed here.
Their weak formulation uses test functions up to the flat face and encodes the
homogeneous conormal condition.  Translation and parabolic scaling convert
their fixed half-cylinder estimates to the $\mathcal Q_4^+$--$\mathcal Q_1^+$
formulation used below, with constants depending only on the displayed
ellipticity and coefficient-modulus bounds.}

For the spatial Lipschitz estimate, Theorem~3.1 together with the quantitative
estimate in Lemma~3.5 of \cite{DongJeon2026} applies.  A
$C_{t,x}^{{\beta/2,\beta}}$ coefficient has a Dini mean-oscillation modulus
bounded by a structural multiple of $r^{{\beta}}$, and the $L^1$ norm appearing
in their estimate is bounded by the displayed $L^\infty$ norm of $v$.
Parabolic rescaling therefore gives
\eqref{eq:localized-conormal-Lipschitz} with a constant depending only on the
stated bounds.

{For the first H\"older gain, fix
{$0<\beta_1<\beta$} and apply
\cite[Theorem~4.1 with $k=1$]{DongJeon2026} with their H\"older exponent
$\beta={\beta_1}$.  In that case the coefficient/data assumption is
$C_{t,x}^{{\beta_1/2,\beta_1}}$ and the conclusion is
$C_{t,x}^{{(1+\beta_1)/2,1+\beta_1}}$ on a smaller half-cylinder.  Our
stronger assumption $\mathbb A\in C_{t,x}^{{\beta/2,\beta}}$ gives the
required coefficient regularity and $\mathbf g=0$ belongs to every required
data class.  The quantitative modulus summarized in their formula~(1.4),
together with the fixed ellipticity, coefficient bounds and the bound for
$v$, yields \eqref{eq:localized-conormal-schauder} uniformly in the present
family.

For the second gain, fix {$0<\beta_2<\beta$} and use
\cite[Theorem~4.1 with $k=2$]{DongJeon2026}; the $k=2$ case is proved there
as Theorem~4.2.  Its coefficient/data assumption is
$C_{t,x}^{{(1+\beta_2)/2,1+\beta_2}}$ and its conclusion is
$C_{t,x}^{{1+\beta_2/2,2+\beta_2}}$.  This is exactly the regularity assumed
and asserted in part~(2), with $\mathbf g=0$.  The same quantitative
dependence gives \eqref{eq:localized-conormal-second-schauder} uniformly.}

The cited paper is formulated for at least two spatial variables.  There are
two elementary dimension/geometry reductions which also give the asserted
full-cylinder estimates without invoking a second regularity source.
For a full cylinder, introduce one new
\emph{normal} variable {$\sigma>0$}, extend $v$ and the original coefficients
constantly in {$\sigma$}, set the new normal--normal coefficient equal to $I_m$, and
set all new mixed coefficients equal to zero.  The extended function is a
weak solution on a half-cylinder, satisfies the homogeneous conormal
condition at {$\sigma=0$}, and the augmented tensor is strongly parabolic with the
same type of bounds.  {Applying the half-cylinder result and evaluating on any fixed interior slice
{$\sigma=\sigma_0\in(0,R)$} gives the full-cylinder statements.  Because the
half-cylinders above are product cylinders $B_R'\times(0,R)$, such a slice is
exactly a full cylinder in the original spatial variables; the extended
solution and all original-coordinate derivatives are independent of {$\sigma$}.}

\end{proof}

\begin{lemma}[Local first-order regularity after root H\"older continuity]
\label{lem:symmetrizable-schauder-bootstrap}
Let {$\mathcal K\Subset\mathcal T^\circ$} be compact, let $0<\beta<1$, and let
$0<\beta_1<\min\{\beta,\alpha\}$.  Suppose that $U$ is a maximal-$L^p$
strong solution of
\begin{equation}\label{eq:bootstrap-moment-system}
 \partial_tU-\Div(A_N(U)\nabla U)=0,
 \qquad \partial_\nu U=0,
\end{equation}
on $(a,b)\times\Omega$, and fix $0<\delta<(b-a)/3$.  Assume
\begin{equation}\label{eq:bootstrap-assumptions}
 U((a,b)\times\overline\Omega)\subset {\mathcal K},
 \qquad
 \|U\|_{C_{t,x}^{\beta/2,\beta}
 ([a,b)\times\overline\Omega)}\le M.
\end{equation}
Then
\begin{equation}\label{eq:bootstrap-gradient}
 \sup_{(t,x)\in[a+2\delta,b)\times\Omega}|\nabla U(t,x)|
 \le C({\mathcal K},M,\delta,\beta,\Omega,g),
\end{equation}
and, in addition,
\begin{equation}\label{eq:bootstrap-C1a}
 \|U\|_{C_{t,x}^{(1+\beta_1)/2,1+\beta_1}
 ([a+2\delta,b)\times\overline\Omega)}
 \le C({\mathcal K},M,\delta,\beta,\beta_1,\Omega,g).
\end{equation}
The gradient bound \eqref{eq:bootstrap-gradient}, rather than the stronger
H\"older estimate \eqref{eq:bootstrap-C1a}, is the only part used in the
global continuation argument.
\end{lemma}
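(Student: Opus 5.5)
The plan is to localize, freeze the entropy symmetrizer at a reference state, and then apply the weak-solution conormal estimates of \Cref{lem:localized-conormal-schauder} to the frozen, symmetrized system. First I choose the localization scale. By \Cref{lem:local-constant-symmetrizer} there are $\delta_*,\nu_*>0$ such that $H(U_0)A_N(U)$ has symmetric part at least $\nu_*I$ whenever $U,U_0\in\mathcal K$ and $|U-U_0|<\delta_*$. The H\"older bound \eqref{eq:bootstrap-assumptions} gives a radius $r_0=r_0(M,\beta,\delta_*)\le\min\{\sqrt\delta,\rho_\Omega\}/4$ such that on every parabolic cylinder of radius $4r_0$ contained in $[a+\delta,b)\times\overline\Omega$ the oscillation of $U$ is below $\delta_*$. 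Every backward cylinder with top time in $[a+2\delta,b)$ and radius $4r_0$ lies above $t=a+\delta$. This is why only backward cylinders are used, so that nothing depends on the distance to $b$.

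Next I freeze the metric. Fix such a cylinder with centre $(t_0,x_0)$ and set $U_0:=U(t_0,x_0)$, $S:=H(U_0)^{1/2}$ (constant), $v:=SU$. Multiplying the moment system by $S$ gives
\[
 \partial_tv-\Div\bigl(SA_N(U)S^{-1}\nabla v\bigr)=0 .
\]
The tensor $\mathbb A^{\gamma\delta}:=\delta_{\gamma\delta}\,SA_N(U(t,x))S^{-1}$ satisfies
$\xi^{\sf T}SA_NS^{-1}\xi=\eta^{\sf T}H(U_0)A_N\eta$ with $\eta=S^{-1}\xi$. Hence it is strongly parabolic with a constant $\nu_*c_{\mathcal K}$ uniform in $U_0$, and it is bounded. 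The coefficient $U\mapsto SA_N(U)S^{-1}$ is smooth on $\mathcal K$, so $\mathbb A\in C^{\beta/2,\beta}_{t,x}$ with bound depending only on $M,\mathcal K$. Because $S$ is constant, the weak formulation holds with the conormal condition inherited from $\partial_\nu U=0$; the maximal-$L^p$ class gives this directly.

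For interior cylinders I apply the full-cylinder version of \Cref{lem:localized-conormal-schauder}(1) after parabolic rescaling by $r_0$. This gives $|\nabla v|\le Cr_0^{-1}\|v\|_\infty$ and the $C^{(1+\beta_1)/2,1+\beta_1}$ estimate on the inner cylinder.

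The main obstacle is the boundary cylinders, where I must flatten $\partial\Omega$ using a $C^{2+\alpha}$ chart $\Phi$. The system becomes $\partial_t\tilde v-\partial_\gamma(\tilde{\mathbb A}^{\gamma\delta}\partial_\delta\tilde v)=0$ with $\tilde{\mathbb A}^{\gamma\delta}=|\det D\Phi|^{-1}(D\Phi D\Phi^{\sf T})_{\gamma\delta}SA_NS^{-1}$, which is again of the product form metric $\otimes$ matrix. Two points need checking here:
\begin{itemize}
\item Strong (Legendre) parabolicity is preserved, since the positive definite metric tensor times a matrix with positive symmetric part gives a positive form on $\R^{d}\otimes\R^m$.
\item The conormal condition is exactly the homogeneous Neumann condition on the flat face, because $\partial_\nu U=0$ transforms to $\tilde{\mathbb A}^{d\delta}\partial_\delta\tilde v=0$.
\end{itemize}
The chart derivatives are $C^{1+\alpha}$, so the coefficient is H\"older with exponent $\min\{\beta,\alpha\}$. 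This is the reason for the restriction $\beta_1<\min\{\beta,\alpha\}$.

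Finally I apply \Cref{lem:localized-conormal-schauder}(1) on half-cylinders. Undoing the chart and $S$, whose norms are uniformly bounded, yields \eqref{eq:bootstrap-gradient} and local $C^{(1+\beta_1)/2,1+\beta_1}$ bounds on cylinders of radius $r_0$. Covering $[a+2\delta,b)\times\overline\Omega$ by finitely many spatial charts and arbitrary time positions, and patching the local H\"older seminorms, gives \eqref{eq:bootstrap-C1a}. For pairs of points farther apart than $r_0$ I use the uniform sup bound instead. All constants depend only on $\mathcal K,M,\delta,\beta,\beta_1,\Omega,g$.
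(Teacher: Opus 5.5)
Your strategy is the paper's. You freeze $S=H(U_0)^{1/2}$ via \Cref{lem:local-constant-symmetrizer}, work on backward cylinders of radius $4r_0\le\sqrt\delta$, apply \Cref{lem:localized-conormal-schauder}, and finish with a covering. The interior part is fine.

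The boundary step, however, contains an error, and it breaks the application of \Cref{lem:localized-conormal-schauder}. For a general $C^{2+\alpha}$ chart $\xi=\Phi(x)$, the substitution $dx=|\det D\Phi|^{-1}\,d\xi$ acts on \emph{both} terms of the weak formulation. The flattened system is therefore
\[
 |\det D\Phi|^{-1}\,\partial_t\tilde v-\partial_\gamma\bigl(\tilde{\mathbb A}^{\gamma\delta}\partial_\delta\tilde v\bigr)=0
\]
with your $\tilde{\mathbb A}$. You kept the Jacobian factor in the flux but dropped it from the time derivative.

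This weighted equation is not of the form $\partial_tv-\partial_\gamma(\mathbb A^{\gamma\delta}\partial_\delta v)=0$ treated in \Cref{lem:localized-conormal-schauder}. Dividing by the weight does not help, because it produces
$\partial_t\tilde v-\partial_\gamma(|\det D\Phi|\,\tilde{\mathbb A}^{\gamma\delta}\partial_\delta\tilde v)+\partial_\gamma(|\det D\Phi|)\,\tilde{\mathbb A}^{\gamma\delta}\partial_\delta\tilde v=0$. That last term is a non-divergence first-order drift, which the lemma (stated without lower-order terms or a density in front of $\partial_t$) does not allow. Covering it would need an additional Schauder result that the paper does not supply.

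The remedy, which is what the paper does, is to flatten with a volume-preserving chart. Take $\Theta(\xi)=x_*+Q_*(\xi',\xi_d+\psi(\xi'))$ with $Q_*\in SO(d)$ and $\psi\in C^{2+\alpha}$ the local graph function, so that $\det D\Theta\equiv1$. The Piola identity then gives the unweighted flat divergence form with tensor $G^{\gamma\delta}SA_N(U\circ\Theta)S^{-1}$, where $G=(D\Theta)^{-1}(D\Theta)^{-{\sf T}}\in C^{1+\alpha}$. The no-flux condition becomes exactly the flat conormal condition. Your Legendre-positivity argument goes through unchanged, since the antisymmetric part of $SA_NS^{-1}$ drops out because $G^{\gamma\delta}=G^{\delta\gamma}$. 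With this change your argument coincides with the paper's proof.
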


\begin{proof}
Compactness and \Cref{thm:entropy-symmetrization} give constants
$\nu>0$, $\varepsilon>0$, and $c_H,C_H>0$ such that
\begin{equation}\label{eq:bootstrap-fixed-symmetrizer}
 \frac12\bigl(H({U_*})A_N(U)+A_N(U)^{\sf T}H({U_*})\bigr)\ge\nu I,
 \qquad c_HI\le H({U_*})\le C_HI,
\end{equation}
whenever {$U_*,U\in\mathcal K$} and $|U-{U_*}|\le\varepsilon$.
The H\"older bound in \eqref{eq:bootstrap-assumptions} supplies a radius
$r_0>0$ such that the oscillation of $U$ on every interior or boundary
cylinder of radius $4r_0$ is at most $\varepsilon$.  We also choose
\begin{equation}\label{eq:bootstrap-radius-time}
 16r_0^2<\delta .
\end{equation}
Consequently every backward cylinder used below with upper time
$t_0\ge a+2\delta$ stays a fixed positive distance from the initial face.

Fix one such cylinder, choose a state $U_*$ in the range of $U$ there, put
{$S_*:=H(U_*)^{1/2}$}, and set $v={S_*}(U-U_*)$.  In an interior chart the coefficient
tensor is
\[
 \mathbb A^{\gamma\delta}(t,x)
 =\delta_{\gamma\delta}{S_*} A_N(U(t,x)){S_*^{-1}}.
\]
The symmetric part is
\[
 \sym\!\left({S_*}A_N(U){S_*^{-1}}\right)
 ={S_*^{-1}}\sym\!\left(H(U_*)A_N(U)\right){S_*^{-1}},
\]
so \eqref{eq:bootstrap-fixed-symmetrizer} gives one Euclidean strong
parabolicity constant on the whole cylinder.

Near the boundary choose a boundary point {$x_*$}, an orientation-preserving rotation
{$Q_*\in SO(d)$}, and a local graph function $\psi$, and use the volume-preserving
flattening chart
\[
 {\Theta(\xi):=x_*+Q_*(\xi',\xi_d+\psi(\xi'))}, \qquad \det D{\Theta}=1.
\]
Set $G=(D{\Theta})^{-1}(D{\Theta})^{-T}$.  Since $\det D{\Theta}=1$, the Piola transform
puts the divergence operator directly in flat divergence form.  The transformed tensor is
\begin{equation}\label{eq:bootstrap-boundary-tensor}
 \mathbb A^{\gamma\delta}(t,\xi)
 =G^{\gamma\delta}(\xi)
  {S_*} A_N(U(t,{\Theta(\xi)})){S_*^{-1}}.
\end{equation}
The physical no-flux condition becomes exactly the homogeneous conormal
condition associated with \eqref{eq:bootstrap-boundary-tensor}, namely
\[
 \sum_{\gamma,\delta=1}^d
 (e_d)_\gamma\,\mathbb A^{\gamma\delta}\,\partial_{\xi_\delta}v=0
 \qquad\text{on }\{\xi_d=0\}.
\]
If
{$\widetilde A:=S_*A_N(U)S_*^{-1}=\widetilde A_{\rm s}+\widetilde A_{\rm a}$}, with
$\widetilde A_{\rm s}^{\sf T}=\widetilde A_{\rm s}$ and
$\widetilde A_{\rm a}^{\sf T}=-\widetilde A_{\rm a}$, then for every real array
$\zeta=(\zeta_\gamma)_{\gamma=1}^d$,
\[
 \sum_{\gamma,\delta}G^{\gamma\delta}
 \zeta_\gamma^{\sf T}{\widetilde A_{\rm a}}\zeta_\delta=0
\]
because $G^{\gamma\delta}=G^{\delta\gamma}$.  Hence only {$\widetilde A_{\rm s}$} contributes to
the quadratic form, and \eqref{eq:bootstrap-fixed-symmetrizer} together with
the uniform positive definiteness of $G$ proves the strong parabolicity
required in \Cref{lem:localized-conormal-schauder}.

For every $0<\beta_1<\min\{\beta,\alpha\}$ the tensor
\eqref{eq:bootstrap-boundary-tensor} has a uniform
$C_{t,x}^{\beta_1/2,\beta_1}$ norm: this follows from
\eqref{eq:bootstrap-assumptions}, smoothness of $A_N$ on {$\mathcal K$}, and the
$C^{1+\alpha}$ control of the boundary metric $G$.  The transformed solution is a weak conormal solution.  Indeed,
$p>d+2>2$, so on every finite cylinder the maximal-$L^p$ strong class gives
the local $L^2$ control of the solution and its spatial gradient required in
the weak formulation; the divergence identity and the smooth coordinate
change give exactly the conormal weak identity.  {At this point
every hypothesis of \Cref{lem:localized-conormal-schauder} has been accounted
for quantitatively: the compact state range bounds $v$;
\eqref{eq:bootstrap-fixed-symmetrizer} together with the chart metric gives a
uniform strong-parabolicity constant; the root/moment H\"older modulus and
the $C^{1+\alpha}$ chart metric give the coefficient H\"older norm; the
transformed divergence identity gives the weak solution; and the physical
no-flux condition is the flat conormal condition.}  After parabolic rescaling,
\Cref{lem:localized-conormal-schauder} therefore gives both a
uniform spatial gradient bound and a uniform
$C_{t,x}^{(1+\beta_1)/2,1+\beta_1}$ bound on the concentric smaller
cylinder.  The same lemma gives the corresponding interior estimates.

No terminal boundary condition is involved: all estimates use backward
cylinders and remain valid when their upper face approaches $b$.  A finite
spatial covering and the fixed radius $r_0$ therefore give
\eqref{eq:bootstrap-gradient} and \eqref{eq:bootstrap-C1a} uniformly on
$[a+2\delta,b)$.  The constants are independent of the number of time slabs
because the same local estimate and the same compact state set are used
throughout.
\end{proof}

\subsection{Terminal trace control after the spatial Lipschitz gain}

To exclude a finite maximal existence time, the continuation argument requires
a uniform bound in $X_{\gamma,p}$ up to that time.  The De Giorgi estimate
supplies a positive H\"older exponent in time.  A uniform spatial gradient
bound then controls the quadratic nondivergence forcing and places the frozen
coefficient fields in a compact family.  Full first-order Schauder regularity
is used only for subsequent smoothing.

\begin{lemma}[Terminal trace bound from time H\"older and spatial Lipschitz control]
\label{lem:terminal-trace-from-first-Schauder}
Let $b<\infty$, let $\delta_t>0$, and let $w$ be a maximal-$L^p$ strong
solution of \eqref{eq:entropy-system-nondivergence} on $[a,b)$ such that
\[
 w([a,b)\times\overline\Omega)\subset\mathcal K_w\Subset\mathcal W,
\]
\[
 [w]_{C_t^{\delta_t}([a,b);C(\overline\Omega))}
 \le M_0,
 \qquad
 \sup_{a\le t<b}\|\nabla w(t)\|_{L^\infty(\Omega)}\le M_1.
\]
Then
\begin{equation}\label{eq:terminal-Xgamma-from-lipschitz}
 \sup_{a\le t<b}\|w(t)\|_{X_{\gamma,p}}<\infty.
\end{equation}
The bound depends on the displayed data, on $b-a$, and on the structural
parameters, but not on the distance of $t$ to $b$.
\end{lemma}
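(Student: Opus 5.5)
The plan is to run a finite freezing argument on short terminal intervals, using the uniform frozen maximal regularity of \Cref{lem:uniform-linear-MR}. Let $\mathcal O_w$ be the neighbourhood of $\mathcal K_w$ from that lemma. Write the equation \eqref{eq:entropy-system-nondivergence} as
\[
 \partial_tw+\mathcal A(w(t_0))w=\bigl[\mathcal A(w(t_0))-\mathcal A(w(t))\bigr]w+F(w,\nabla w)
\]
on an interval $[t_0,t_0+\ell]\subset[a,b)$. The Lipschitz hypothesis and the compact range give $\|F(w,\nabla w)\|_{L^\infty}\le C(\mathcal K_w)M_1^2$. Hence the forcing contributes at most $C M_1^2|\Omega|^{1/p}\ell^{1/p}$ to the $L^p(t_0,t_0+\ell;X_0)$ norm.

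For the coefficient difference, use $\|\mathcal A(w(t_0))-\mathcal A(w(t))\|_{\mathcal L(X_1,X_0)}\le L\|w(t)-w(t_0)\|_{C(\overline\Omega)}\le LM_0\ell^{\delta_t}$. This holds because $C$ is smooth on a neighbourhood of $\mathcal K_w$. The resulting term is then bounded by $LM_0\ell^{\delta_t}\|w\|_{L^p(t_0,t_0+\ell;X_1)}$.

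The key point is that the frozen operators $\mathcal A(w(t_0))$ must form a family with one maximal-regularity constant. The frozen states $w(t_0)$ are not known to be bounded in $X_{\gamma,p}$ (that is what we want to prove), so \Cref{lem:uniform-linear-MR} cannot be quoted verbatim. Instead I would rerun its compactness argument with the coefficient class it actually needs. The fields $C(w(t_0,\cdot))$ are uniformly bounded in $W^{1,\infty}(\overline\Omega)$ by $M_1$, so they form a relatively compact family in $C(\overline\Omega)$ with values in the compact normally elliptic class. Every limit is Lipschitz, hence uniformly continuous, so the Denk--Hieber--Pr\"uss/Augner theorem applies to it. By \eqref{eq:operator-coefficient-continuity}, together with the Neumann-series perturbation \eqref{eq:MR-Neumann-neighbourhood} and a finite cover, one gets a single constant $C_{\rm MR}$ on $[0,b-a]$ for all frozen operators. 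I expect this step to be the main obstacle: one must check that the cited results only need uniformly continuous top-order coefficients, not $X_{\gamma,p}$-regular ones.

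Next, choose $\ell$ so that $C_{\rm MR}LM_0\ell^{\delta_t}\le 1/2$, depending only on $M_0$, the structural data and $\mathcal K_w$. Solving with initial value $w(t_0)$ through the coretraction then gives
\[
 \|w\|_{\mathbb E(t_0,t_0+\ell)}\le C\bigl(\|w(t_0)\|_{X_{\gamma,p}}+M_1^2\ell^{1/p}\bigr),
\]
and the trace estimate yields $\sup_{[t_0,t_0+\ell]}\|w\|_{X_{\gamma,p}}\le C'(\|w(t_0)\|_{X_{\gamma,p}}+1)$. The absorption is legitimate because $w$ has finite $\mathbb E$ norm on every compact subinterval. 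If $t_0+\ell\ge b$, work on $[t_0,t')$ for $t'<b$ and note that the constants are independent of $t'$; this is exactly where distance-to-$b$ independence comes from. Iterate over $n\le\lceil(b-a)/\ell\rceil$ consecutive intervals, starting from the finite value $\|w(a)\|_{X_{\gamma,p}}$, which holds since $w\in C([a,b);X_{\gamma,p})$. This gives \eqref{eq:terminal-Xgamma-from-lipschitz} with a bound of the form $(C')^n(\|w(a)\|_{X_{\gamma,p}}+n)$. The bound depends only on the stated data and $b-a$, together with the initial trace norm.
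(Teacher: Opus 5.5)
Your proposal is correct and follows essentially the same route as the paper's proof. The uniform Lipschitz bound on the frozen coefficient fields $C(w(t,\cdot))$, together with Arzel\`a--Ascoli, the fact that Augner's theorem needs only continuous top-order coefficients, and the finite Neumann-series cover, yields one maximal-regularity constant without any a priori $X_{\gamma,p}$ bound. Then you freeze at the left endpoint of intervals of length $h_0$ with $C_{\rm MR}LM_0h_0^{\delta_t}\le 1/2$, bound the quadratic forcing through $M_1$, and iterate the affine trace recursion finitely many times over $[a,b)$. Your explicit restriction to $[t_0,t')$ with $t'<b$, which keeps the absorbed term finite on the last interval, is exactly the justification the paper's absorption step implicitly relies on.
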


\begin{proof}
Write
\[
 A(t)u:=-C(w(t,\cdot))\Delta u,\qquad D(A(t))=X_1,
 \qquad F(t):=F(w(t),\nabla w(t)).
\]
The compact state range and the spatial gradient bound give one uniform
Lipschitz constant for the coefficient fields $C(w(t,\cdot))$.  Hence this
family is relatively compact in $C(\overline\Omega)$ by Arzel\`a--Ascoli,
and every member of its closure is still Lipschitz with the same constant.
Augner's bounded-domain maximal-regularity theorem requires only continuity
of the top-order coefficients; see Assumption~5.1(a) and Theorem~5.2 of
\cite{Augner2024}.  Every coefficient field in the closure takes values in
the same compact normally elliptic matrix class and satisfies the same full
Lopatinskii--Shapiro condition from
\Cref{lem:uniform-complementing}.  Therefore the proof of
\Cref{lem:uniform-linear-MR}, followed by its finite Neumann-series covering
argument in $\mathcal L(X_1,X_0)$, gives one zero-trace maximal-regularity
constant for all frozen operators $A(t)$ on intervals of length at most a
fixed $T_0>0$.  No a-priori $X_{\gamma,p}$ bound of $w(t)$ is used in this
compactness step.

The standard trace coretraction, together with the uniform zero-trace
estimate, gives the corresponding nonzero-trace estimate: there is
$C_{\rm MR}<\infty$ such that, for every interval $I=(s,s+h)$ with
$0<h\le T_0$, every frozen $A(s)$, and every
$u_t+A(s)u=f$ on $I$,
\begin{equation}\label{eq:uniform-nonzero-trace-MR-terminal}
 \|u\|_{W^{1,p}(I;X_0)\cap L^p(I;X_1)}
 \le C_{\rm MR}\left(
 \|u(s)\|_{X_{\gamma,p}}+\|f\|_{L^p(I;X_0)}\right).
\end{equation}
For completeness, take a bounded coretraction of the trace into the
maximal-regularity space on $(0,T_0)$, subtract it from $u$, and apply the
uniform zero-trace estimate.  The uniform extension to shorter intervals is
the one used in the local theory; see \cite[Lemma~7.2]{Amann2005MaxReg}.  The
same argument gives a uniform trace embedding
\begin{equation}\label{eq:uniform-trace-embedding-terminal}
 \sup_{t\in I}\|u(t)\|_{X_{\gamma,p}}
 \le C_{\rm tr}\left(
 \|u\|_{W^{1,p}(I;X_0)\cap L^p(I;X_1)}
 +\|u(s)\|_{X_{\gamma,p}}\right).
\end{equation}

Smoothness of $C$ on the compact state range and the assumed time
H\"older control imply
\begin{equation}\label{eq:terminal-operator-time-holder}
 \|A(t)-A(s)\|_{\mathcal L(X_1,X_0)}
 \le L\|w(t)-w(s)\|_{C(\overline\Omega)}
 \le L M_0|t-s|^{\delta_t}.
\end{equation}
Furthermore the spatial Lipschitz bound gives, through the quadratic formula
\eqref{eq:entropy-quadratic-F},
\begin{equation}\label{eq:terminal-F-uniform}
 \sup_{a\le t<b}\|F(t)\|_{X_0}\le C_F(M_1,\mathcal K_w)<\infty.
\end{equation}
Choose $h_0\le T_0$ so small that
$C_{\rm MR}LM_0h_0^{\delta_t}\le1/2$.
On any $I=(s,s+h)\subset(a,b)$ with $h\le h_0$, freeze at $s$:
\[
 w_t+A(s)w=F(t)+(A(s)-A(t))w.
\]
Using \eqref{eq:uniform-nonzero-trace-MR-terminal},
\eqref{eq:terminal-operator-time-holder}, and
\eqref{eq:terminal-F-uniform}, and absorbing the last term, yields
\begin{equation}\label{eq:terminal-E-iteration}
 \|w\|_{W^{1,p}(I;X_0)\cap L^p(I;X_1)}
 \le 2C_{\rm MR}\left(
 \|w(s)\|_{X_{\gamma,p}}+C_F h^{1/p}\right).
\end{equation}
Combining this with \eqref{eq:uniform-trace-embedding-terminal} gives constants
{$C_{\rm it}\ge1$ and $C_0<\infty$}, independent of $s$ and $h\le h_0$, such that
\begin{equation}\label{eq:terminal-trace-recursion}
 \sup_{t\in[s,s+h]}\|w(t)\|_{X_{\gamma,p}}
 \le {C_{\rm it}}\|w(s)\|_{X_{\gamma,p}}+{C_0}.
\end{equation}

Starting at the fixed time $a$, cover the finite interval $[a,b)$ by at most
{$n_I:=1+\lceil(b-a)/h_0\rceil$} consecutive intervals of length at most $h_0$.
Iterating \eqref{eq:terminal-trace-recursion} finitely many times gives a
finite bound depending on {$n_I$}, $\|w(a)\|_{X_{\gamma,p}}$, {$C_{\rm it}$}, and {$C_0$}.
{No contractivity condition {$C_{\rm it}<1$} is required: $[a,b)$ has finite
length, so only finitely many applications of the affine recursion occur.  The
argument is a terminal continuation estimate, not an infinite-time stability
iteration.}  Applying the same estimate on the last truncated interval up to an arbitrary
$t<b$ shows that the bound is uniform as $t\uparrow b$.  This proves
\eqref{eq:terminal-Xgamma-from-lipschitz}.
\end{proof}

\subsection{A second conormal gain for the moment system}

The first weak-solution conormal gain makes the moment variables
$C_{t,x}^{(1+\beta)/2,1+\beta}$.  The coefficient tensor in the same locally
symmetrized divergence system therefore has one full parabolic H\"older
derivative.  The $k=2$ case of the published Dong--Jeon theorem
\cite[Theorem~4.1; see also Theorem~4.2]{DongJeon2026} can now be applied
directly to the moment system; no scalar oblique-derivative Schauder theorem
is needed.

\begin{lemma}[Second-order conormal system bootstrap]
\label{lem:moment-second-Schauder-bootstrap}
Let {$\mathcal K\Subset\mathcal T^\circ$}, let $0<\beta<1$, and let
$0<\beta_2<\min\{\beta,\alpha\}$.  Suppose that $U$ is a maximal-$L^p$
strong solution of
\[
 \partial_tU-\Div(A_N(U)\nabla U)=0,
 \qquad \partial_\nu U=0,
\]
on $(a,b)\times\Omega$, and fix $0<\delta<(b-a)/4$.  Assume
\[
 U((a,b)\times\overline\Omega)\subset {\mathcal K},
 \qquad
 \|U\|_{C_{t,x}^{(1+\beta)/2,1+\beta}
 ([a,b)\times\overline\Omega)}\le M.
\]
Then
\begin{equation}\label{eq:moment-C2a-bootstrap}
 \|U\|_{C_{t,x}^{1+\beta_2/2,2+\beta_2}
 ([a+3\delta,b)\times\overline\Omega)}
 \le C({\mathcal K},M,\delta,\beta,\beta_2,\Omega,g).
\end{equation}
The same estimate holds, after smooth coordinate composition on the compact
state range, for $y,z,$ and $w$.
\end{lemma}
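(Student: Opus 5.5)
The plan is to repeat the localization of \Cref{lem:symmetrizable-schauder-bootstrap} with the stronger coefficient regularity, and then apply part~(2) of \Cref{lem:localized-conormal-schauder} (the $k=2$ Dong--Jeon gain) instead of part~(1).

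\textbf{Step 1: fix a local frozen symmetrizer.} By \Cref{lem:local-constant-symmetrizer} and the compactness of $\mathcal K$, there are $\varepsilon,\nu>0$ such that \eqref{eq:bootstrap-fixed-symmetrizer} holds whenever $U_*,U\in\mathcal K$ and $|U-U_*|\le\varepsilon$. The assumed $C^{(1+\beta)/2,1+\beta}_{t,x}$ bound controls in particular the parabolic $C^{\beta/2,\beta}$ modulus. So there is a radius $r_0>0$, depending only on $M$, $\beta$ and $\varepsilon$, such that every interior or boundary backward cylinder of radius $4r_0$ has oscillation at most $\varepsilon$. We also take $16r_0^2<\delta$, so that cylinders with upper time $\ge a+3\delta$ stay away from the lower face $t=a$. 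On each such cylinder we set $v=S_*(U-U_*)$ with $S_*=H(U_*)^{1/2}$. In boundary charts we use the volume-preserving flattening $\Theta$, whose Piola form is $\mathbb A^{\gamma\delta}=G^{\gamma\delta}S_*A_N(U\circ\Theta)S_*^{-1}$. Exactly as before, the physical no-flux condition becomes the flat homogeneous conormal condition. Strong parabolicity follows from the fact that the antisymmetric part of $S_*A_NS_*^{-1}$ drops out against the symmetric $G$.

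\textbf{Step 2: verify the coefficient regularity; this is the main obstacle.} Part~(2) of \Cref{lem:localized-conormal-schauder} needs $\mathbb A\in C^{(1+\beta')/2,1+\beta'}_{t,x}$ with some $\beta'$ satisfying $\beta_2<\beta'<\min\{\beta,\alpha\}$.

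For the solution-dependent factor, $A_N$ is smooth on a neighbourhood of $\mathcal K$, so the composition $A_N(U)$ inherits the full $C^{(1+\beta)/2,1+\beta}$ bound. Indeed, the spatial derivative $DA_N(U)\nabla U$ is a product of $C^{\beta/2,\beta}$ functions. The time $(1+\beta)/2$-H\"older seminorm of $A_N(U)$ is controlled by the Lipschitz bound of $A_N$ on $\mathcal K$.

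For the geometric factor, the metric $G=(D\Theta)^{-1}(D\Theta)^{-T}$ is only $C^{1+\alpha}$, because $\psi\in C^{2+\alpha}$. Hence $G$ lies in $C^{1+\beta'}$, and it is time-independent.

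The composition $U\circ\Theta$ remains in the same parabolic class, since $\Theta$ is $C^{2+\alpha}$. The product of the two factors therefore has a uniform $C^{(1+\beta')/2,1+\beta'}_{t,x}$ norm, with constants depending only on $\mathcal K,M,\beta,\beta',\Omega,g$.

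The remaining point is the weak-solution property. The maximal-$L^p$ class gives local $L^2$ control of $v$ and $\nabla v$. Moreover, $U$ is already known to be $C^{1+\beta}$ in space, so the conormal weak identity follows from the strong equation by integration by parts in the chart.

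\textbf{Step 3: apply the $k=2$ gain and globalize.} We apply \eqref{eq:localized-conormal-second-schauder} after parabolic rescaling by $r_0$. This gives a $C^{1+\beta_2/2,2+\beta_2}_{t,x}$ bound for $v$ on a concentric smaller cylinder, with constant controlled by $\|v\|_{L^\infty}$, which is bounded by the compact range $\mathcal K$. Undoing the constant linear map $S_*$ and the $C^{2+\alpha}$ chart costs nothing, because $\beta_2<\alpha$; the same holds for interior cylinders.

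All cylinders are backward, so the estimates are uniform up to the open upper face $b$. A finite spatial covering by charts of the fixed radius $r_0$, combined with the uniformity in the time location, gives \eqref{eq:moment-C2a-bootstrap} on $[a+3\delta,b)\times\overline\Omega$. We could also first apply \Cref{lem:symmetrizable-schauder-bootstrap} on $[a+\delta,b)$ to refresh the first-order bound; this is not needed, since that bound is already assumed.

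\textbf{Step 4: transfer to the other coordinates.} The maps $U\mapsto y,z,w$ are smooth with bounded derivatives on $\mathcal K$. Composing a $C^{1+\beta_2/2,2+\beta_2}$ function with a smooth map preserves this class, with bounds depending only on the compact range, by the chain rule and product estimates. This gives the final sentence of the statement.
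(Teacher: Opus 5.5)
Your proposal is correct and follows the paper's proof essentially step for step. It uses the same frozen entropy symmetrizer $S_*=H(U_*)^{1/2}$ on small backward cylinders, the same volume-preserving flattening with tensor $G^{\gamma\delta}S_*A_N(U\circ\Theta)S_*^{-1}$, the same coefficient check via $G\in C^{1+\alpha}$ and smooth composition, part~(2) of \Cref{lem:localized-conormal-schauder}, and the same patching by a finite cover of backward cylinders followed by transfer to $y,z,w$. Your intermediate exponent $\beta_2<\beta'<\min\{\beta,\alpha\}$ handles the strict inequality in part~(2) a bit more carefully than the paper's wording, but the argument is the same.
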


\begin{proof}
Repeat the localization in
\Cref{lem:symmetrizable-schauder-bootstrap}.  On every sufficiently small
interior or boundary backward cylinder choose a state $U_*$ from the range,
put {$S_*:=H(U_*)^{1/2}$}, and set $v={S_*}(U-U_*)$.  The local constant-symmetrizer
estimate gives one Euclidean strong-parabolicity constant for the transformed
coefficient tensor.

{Using a volume-preserving boundary-flattening chart $\Theta$ of the same type as in
the proof of \Cref{lem:symmetrizable-schauder-bootstrap},} the tensor is
\[
 \mathbb A^{\gamma\delta}(t,\xi)
 =G^{\gamma\delta}(\xi)
   {S_*} A_N(U(t,{\Theta(\xi)})){S_*^{-1}}.
\]
The full first-order parabolic H\"older bound for $U$, smoothness of $A_N$ on
{$\mathcal K$}, and $G\in C^{1+\alpha}$ imply, for every
$\beta_2<\min\{\beta,\alpha\}$,
\[
 \mathbb A\in
 C_{t,x}^{(1+\beta_2)/2,1+\beta_2}
\]
with a uniform norm.  The physical no-flux condition is exactly the
homogeneous conormal condition for this tensor.  Part~(2) of
\Cref{lem:localized-conormal-schauder}, i.e. the $k=2$ H\"older-coefficient
case of \cite[Theorem~4.1]{DongJeon2026}, therefore yields a uniform
$C_{t,x}^{1+\beta_2/2,2+\beta_2}$ estimate on the concentric smaller
cylinder.  The full-cylinder version of the same lemma gives the interior
estimate.

Choose the localization radius also so that the backward cylinders with
upper time at least $a+3\delta$ remain a fixed positive distance from the
initial face.  A finite spatial cover and the usual overlapping backward
cylinders patch the local estimates to
\eqref{eq:moment-C2a-bootstrap}, with constants independent of the distance
to the upper face $b$.  Finally, all coordinate maps are smooth with
uniformly bounded derivatives on the compact state range, so the same
parabolic H\"older estimate transfers to $y,z,$ and $w$.
\end{proof}

\begin{corollary}[Positive-time classicality of maximal-$L^p$ strong solutions]
\label{cor:positive-time-classicality-system}
Assume $p>d+2$ and let $\Omega$ be bounded of class
$C^{2+\alpha}$ for some $0<\alpha<1$.  Let $w$ be a maximal-$L^p$
strong solution on $[0,T_{\max})$.  Then for every
$0<\tau<T<T_{\max}$ there is $\sigma\in(0,\alpha)$ such that
\[
 w,U,y,z\in
 C_{t,x}^{1+\sigma/2,2+\sigma}
 ([\tau,T]\times\overline\Omega).
\]
In particular the solution is classical on every positive-time strip.
\end{corollary}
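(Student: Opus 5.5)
The proof composes the previously established regularity steps on a fixed compact positive-time strip $[\tau,T]$, with $0<\tau<T<T_{\max}$.

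\emph{Compact range and H\"older control.} By \Cref{rem:entropy-data-compact-range}, the initial root range lies in a compact rectangle $\mathcal R_0$, and this rectangle is forward invariant. Hence $z$ takes values in $R:=\mathcal R_0$ on $[0,T_{\max})$. Correspondingly, $U$ and $w$ take values in the compact sets $\mathcal K_U\Subset\cT^\circ$ and $\mathcal K_w\Subset\mathcal W$. Fix $\tau_0:=\tau/4$. On every compact strip $I\Subset(\tau_0,T_{\max})$ the solution lies in the strong class \eqref{eq:strong-root-chain-class}, by \Cref{cor:formulation-equivalence,cor:strong-sobolev-root-system} and the embedding $X_{\gamma,p}\hookrightarrow C^1$. \Cref{thm:root-DeGiorgi-holder} therefore gives $z\in C^{\theta/2,\theta}_{t,x}([\tau_0+\epsilon,T']\times\overline\Omega)$ for some $\epsilon$ and some $T'>T$ with $T'<T_{\max}$. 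Since $y(z)$ and $U(y)$ are smooth with bounded derivatives on $R$, we also get $U\in C^{\theta/2,\theta}_{t,x}$ with $U$ in $\mathcal K_U$. The choice $T'>T$ is only a convenience; the estimates do not depend on the distance to an upper face.

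\emph{First conormal gain.} Apply \Cref{lem:symmetrizable-schauder-bootstrap} on $(a,b)=(\tau/2,T')$ with $\beta=\theta$ and a fixed $\beta_1<\min\{\theta,\alpha\}$. Choosing $\delta$ small enough that $a+2\delta\le 3\tau/4$, this yields
\[
U\in C^{(1+\beta_1)/2,1+\beta_1}_{t,x}([3\tau/4,T')\times\overline\Omega).
\]

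\emph{Second conormal gain.} Apply \Cref{lem:moment-second-Schauder-bootstrap} on $(3\tau/4,T')$, taking $\beta$ there equal to $\beta_1$, a fixed $\sigma<\min\{\beta_1,\alpha\}<\alpha$, and $\delta$ chosen so that $3\tau/4+3\delta\le\tau$. This gives
\[
U\in C^{1+\sigma/2,2+\sigma}_{t,x}([\tau,T]\times\overline\Omega),
\]
and the last sentence of that lemma transfers the estimate to $y,z,w$. This step is legitimate because the coordinate maps are real-analytic with bounded derivatives on the compact ranges. Classicality in the sense of \Cref{def:solution-classes} follows at once.

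\emph{Where the care is needed.} No new estimate is required; the only delicate point is checking the hypotheses of the cited lemmas. First, $U$ must be a maximal-$L^p$ strong solution of the moment system on the relevant intervals; this follows from \Cref{cor:formulation-equivalence} and \Cref{lem:entropy-div-nondiv-equivalence}. Second, the H\"older norm $M$ required in \eqref{eq:bootstrap-assumptions} must be finite on a half-open interval $[a,b)$, which is exactly what the terminal-uniform form of \Cref{thm:root-DeGiorgi-holder} provides. Third, the successive time shifts $\tau_0<\tau/2<3\tau/4<\tau$ must each stay inside the region already controlled, and the $\delta$ choices above arrange this.
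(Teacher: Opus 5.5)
Your argument is correct, but it reaches the first-order H\"older input by a much heavier route than the paper.

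\emph{The paper's route.} The paper does not use the invariant rectangle, \Cref{thm:root-DeGiorgi-holder}, or \Cref{lem:symmetrizable-schauder-bootstrap} for this corollary. On a strip $[\tau/2,T']$ with $T<T'<T_{\max}$, the solution already lies in $W^{1,p}(\tau/2,T';X_0)\cap L^p(\tau/2,T';X_1)$. Since $p>d+2$, the parabolic Sobolev embedding then gives directly
\[
 w\in C^{(1+\eta)/2,1+\eta}_{t,x}([\tau/2,T']\times\overline\Omega)
 \qquad\text{for small }\eta<\min\{1-(d+2)/p,\alpha\}.
\]
The compact state range also comes for free, from $w\in C([\tau/2,T'];X_{\gamma,p})\hookrightarrow C^1(\overline\Omega)$ and the softmax map. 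So only the second conormal gain of \Cref{lem:moment-second-Schauder-bootstrap} is needed. This shows that positive-time classicality on compact strips is a purely local consequence of maximal regularity plus the entropy-symmetrized conormal Schauder theory. It does not depend on the root-coordinate De Giorgi analysis at all.

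\emph{What your route buys.} Your chain (invariant rectangle, De Giorgi, first and second conormal gains) gives more than the corollary asks. The constants do not degenerate as $T'\uparrow T_{\max}$. The exponent $\sigma$ depends only on $\mathcal R_0$, the structural data and $\alpha$, not on $p$ or on the strip. This is exactly the argument the paper uses later in \Cref{prop:uniform-positive-time-smoothing} for the uniform tail bound. For the qualitative statement here, that uniformity is unnecessary.

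One bookkeeping point: take $\epsilon\le\tau/4$, so that the De Giorgi bound covers $[\tau/2,T')$ before you apply the first conormal lemma there.
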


\begin{proof}
Work on a slightly larger strip $[\tau/2,T']$ with $T<T'<T_{\max}$.
Since $w\in C([\tau/2,T'];X_{\gamma,p})$ and
$X_{\gamma,p}\hookrightarrow C^1(\overline\Omega)$, its pointwise range on
this compact strip is bounded in $\mathcal W=\mathbb R^m$.  The softmax and
moment maps therefore place the associated $y$ and $U$ ranges in compact
subsets of $\mathcal S^\circ$ and $\mathcal T^\circ$, respectively; no
separate compact-state hypothesis is required.  The maximal-regularity class
and $p>d+2$, together with the parabolic Sobolev embedding
\cite[Chapter~II, Lemma~3.3]{LSU1968}, give, for every sufficiently small
\[
 0<\eta<\min\left\{1-\frac{d+2}{p},\alpha\right\},
 \qquad
 w\in C_{t,x}^{(1+\eta)/2,1+\eta}([\tau/2,T']\times\overline\Omega).
\]
Smooth coordinate composition on these compact ranges gives the same estimate
for $U$.  Apply \Cref{lem:moment-second-Schauder-bootstrap} on a finite family of
slightly overlapping sub-strips and choose
$0<\sigma<\min\{\eta,\alpha\}$.  This yields the displayed second-order
estimate on $[\tau,T]$.  Smooth coordinate composition then gives the same
regularity for $w,y,$ and $z$.
\end{proof}

\section{Global strong existence for arbitrary positive data}
\label{sec:global-strong-existence}

The preceding estimates yield the maximal-$L^p$ continuation argument without
any smallness assumption on the initial root range.  {Throughout this section,
``arbitrary positive data'' means {uniformly positive concentration data in the
natural trace class, with no smallness condition on their oscillation or distance from equilibrium}; by
\Cref{rem:initial-trace-equivalence}, this class is exactly equivalent to
$w_0\in X_{\gamma,p}$.}

\begin{remark}[Dependencies of the global continuation argument]
\label{rem:global-proof-dependency-chain}
The continuation argument uses the local solution and family-uniform restart
from \Cref{thm:classical-local-theory}, the invariant root rectangle from
\Cref{cor:strong-sobolev-root-system,thm:root-rectangle}, the root H\"older
estimate from \Cref{sec:multi-EPD-dossier,sec:root-DeGiorgi}, the spatial
Lipschitz estimate of \Cref{lem:symmetrizable-schauder-bootstrap}, and the
terminal trace bound of \Cref{lem:terminal-trace-from-first-Schauder}.  The
second-order conormal $C^{2+\beta}$ bootstrap and the long-time entropy
estimates are not used to exclude a finite maximal existence time.
\end{remark}

\begin{theorem}[Global strong solvability in the additive class]
\label{thm:global-strong-additive}
Let $N\ge2$, $d\ge2$, and
\[
 0<g_1<\cdots<g_N,
 \qquad f_{ij}=g_i+g_j\quad(i\ne j).
\]
Let $\Omega\subset\mathbb R^d$ be bounded and of class $C^{2+\alpha}$ for
some $0<\alpha<1$, and let $p>d+2$.  For every
$w_0\in X_{\gamma,p}$, the entropy-variable problem
\eqref{eq:entropy-system-recalled} has a unique global maximal-$L^p$ strong
solution
\begin{equation}\label{eq:global-strong-class-dossier}
 w\in W^{1,p}_{\rm loc}([0,\infty);X_0)
 \cap L^p_{\rm loc}([0,\infty);X_1)
 \cap C([0,\infty);X_{\gamma,p}).
\end{equation}
The corresponding concentration, root, and moment variables remain in compact
subsets of their open state spaces determined by the initial root rectangle,
and the solution is classical on every positive-time strip.
{Equivalently, the concentration initial data are precisely
$y^0\in B^{2-2/p}_{p,p}(\Omega;\mathbb R^N)$ with
$\sum_i y_i^0=1$, $\inf_{\overline\Omega}y_i^0>0$ for each species, and
$\partial_\nu y_i^0=0$; see
\Cref{cor:global-positive-y-data,rem:initial-trace-equivalence}.}
\end{theorem}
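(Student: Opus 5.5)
The proof will be a continuation argument by contradiction, following the dependency chain recorded in \Cref{rem:global-proof-dependency-chain}. First, \Cref{thm:classical-local-theory} gives a unique maximal-$L^p$ strong solution on $[0,T_{\max})$. By \Cref{rem:entropy-data-compact-range}, the initial root range lies in a compact rectangle $\mathcal R_0\Subset\cZ$, and \Cref{thm:root-rectangle} (via \Cref{cor:strong-sobolev-root-system}) makes $\mathcal R_0$ forward invariant. Consequently $z$, $y$, $U$ and $w$ stay in fixed compact sets $R=\mathcal R_0$, $\cK_y$, $\cK_U$, $\cK_w$ on the whole interval $[0,T_{\max})$. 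This already gives the compact-range assertion. Assume, for contradiction, that $T_{\max}<\infty$. By \eqref{eq:norm-blow-up} it then suffices to produce a bound $\sup_{t<T_{\max}}\|w(t)\|_{X_{\gamma,p}}<\infty$.

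To obtain that bound, fix $\tau\in(0,T_{\max})$. On every compact strip inside $(\tau/2,T_{\max})$ the root field lies in the strong class \eqref{eq:strong-root-chain-class}, so \Cref{thm:root-DeGiorgi-holder} applies with the rectangle $R$. It yields $z\in C^{\theta/2,\theta}_{t,x}([\tau,T_{\max})\times\overline\Omega)$, with a constant uniform up to the upper face. Smooth coordinate composition transfers this bound to $U$ and $w$. Next apply \Cref{lem:symmetrizable-schauder-bootstrap} on $(a,b)=(\tau,T_{\max})$ with $\mathcal K=\cK_U$ and a small $\delta$; the constant-symmetrizer and conormal estimates give $\sup|\nabla U|\le C$ on $[\tau+2\delta,T_{\max})$, and hence a spatial Lipschitz bound $M_1$ for $w$. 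The time Hölder seminorm $[w]_{C^{\theta/2}_t(C(\overline\Omega))}\le M_0$ comes from the same De Giorgi estimate. Then \Cref{lem:terminal-trace-from-first-Schauder} with $a=\tau+2\delta$, $b=T_{\max}$, $\delta_t=\theta/2$ gives the uniform trace bound. On $[0,a]$ the trace norm is bounded by continuity of $w$ in $X_{\gamma,p}$. This contradicts \eqref{eq:norm-blow-up}, so $T_{\max}=\infty$. Uniqueness comes from \Cref{thm:classical-local-theory}, and classicality on positive-time strips follows from \Cref{cor:positive-time-classicality-system}.

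The case $N=2$ needs separate attention: the root system is then a single scalar equation. I would either check that all cited lemmas hold verbatim with $m=1$ (the stabilization is vacuous there), or use the linear heat equation of the binary remark directly. The equivalence with concentration data is handled by \Cref{cor:global-positive-y-data,rem:initial-trace-equivalence}. The idea there is that the softmax map and its inverse $w=Dh(U(y))$ are smooth on compact subsets of the open simplex, so they map uniformly positive $B^{2-2/p}_{p,p}$ data with Neumann compatibility bijectively onto $X_{\gamma,p}$. Since $2-2/p>1+1/p$, Nemytskii operators act on this space and preserve the Neumann trace by the chain rule.

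The main obstacle is making sure the constants are genuinely uniform as $t\uparrow T_{\max}$. The Hölder constant, the Lipschitz bound and the maximal-regularity constants must not depend on the distance to the upper face. This is exactly why the argument uses only backward cylinders in \Cref{lem:oscillation-to-terminal-holder} and \Cref{lem:symmetrizable-schauder-bootstrap}, and a finite covering by short frozen intervals in \Cref{lem:terminal-trace-from-first-Schauder}. I would check this point carefully, together with the fact that the strong-class hypotheses of the De Giorgi theorem hold on each compact sub-strip without any a priori uniform $C^1$ bound.
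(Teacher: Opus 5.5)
Your proposal is correct and follows the paper's proof essentially step by step: the maximal solution from the local theory, forward invariance of $\mathcal R_0$, the De Giorgi H\"older bound up to $T_{\max}$, the spatial Lipschitz bound from the frozen entropy symmetrizer and conormal estimates, and the terminal trace bound from \Cref{lem:terminal-trace-from-first-Schauder}. The only cosmetic difference is that you contradict the blow-up alternative \eqref{eq:norm-blow-up}, whereas the paper repeats the family-uniform restart directly; this is equivalent, since that alternative is itself proved from the restart, and the $N=2$ case needs no separate treatment because all lemmas hold verbatim with $m=1$.
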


\begin{proof}
Let $[0,T_{\max})$ be the maximal interval supplied by
\Cref{thm:classical-local-theory}.  By
\Cref{rem:entropy-data-compact-range}, the initial roots lie in a compact
rectangle $\mathcal R_0$, the strong root formulation is valid on every positive-time
compact strip, and the invariant-rectangle hypotheses are satisfied.  Hence
\begin{equation}\label{eq:global-proof-root-box}
 z([0,T_{\max})\times\overline\Omega)\subset \mathcal R_0.
\end{equation}
Suppose for contradiction that $T_{\max}<\infty$.  Since the smooth
coordinate map $z\mapsto w$ is bounded on the compact rectangle $\mathcal R_0$,
\eqref{eq:global-proof-root-box} also gives a compact set
$\mathcal K_w\Subset\mathcal W$ such that
\[
 w([0,T_{\max})\times\overline\Omega)\subset\mathcal K_w.
\]
We use the family-uniform restart statement in
\Cref{thm:classical-local-theory}.

Choose $0<\tau_0<T_{\max}/4$.  The strong root formulation verified in
\Cref{rem:entropy-data-compact-range}, together with the compact range
\eqref{eq:global-proof-root-box}, gives the hypotheses of
\Cref{thm:root-DeGiorgi-holder}.  Hence, for some $\theta>0$ and every $\tau_1>\tau_0$,
\begin{equation}\label{eq:global-proof-DG-holder}
 \|z\|_{C_{t,x}^{\theta/2,\theta}
 ([\tau_1,T_{\max})\times\overline\Omega)}\le C.
\end{equation}
Since the map $z\mapsto U$ is smooth on $\mathcal R_0$, the same bound holds for $U$.

Apply the gradient part of
\Cref{lem:symmetrizable-schauder-bootstrap} on a terminal strip with input
H\"older exponent $\theta$ from \eqref{eq:global-proof-DG-holder}.  After
shortening the strip once more,
\begin{equation}\label{eq:global-proof-gradient-bound}
 \sup_{t<T_{\max}}\bigl(
 \|\nabla U(t)\|_{L^\infty}
 +\|\nabla z(t)\|_{L^\infty}
 +\|\nabla w(t)\|_{L^\infty}\bigr)\le C
\end{equation}
on that terminal strip.  The bounds for $z$ and $w$ follow by smooth
coordinate composition on the compact range.  At the same time,
\eqref{eq:global-proof-DG-holder} and the same coordinate changes give
\begin{equation}\label{eq:global-proof-time-holder}
 [w]_{C_t^{\theta/2}(C(\overline\Omega))}\le C
\end{equation}
uniformly up to $T_{\max}$.

Applying \Cref{lem:terminal-trace-from-first-Schauder} with
$\delta_t=\theta/2$, using
\eqref{eq:global-proof-gradient-bound}--\eqref{eq:global-proof-time-holder},
gives a number $M<\infty$ and a late time $t_*$
such that
\[
 \|w(t)\|_{X_{\gamma,p}}\le M,
 \qquad
 w(t,\overline\Omega)\subset\mathcal K_w,
 \qquad t_*\le t<T_{\max}.
\]
Let $\tau_*=\tau_*(\mathcal K_w,M)>0$ be the common lifespan from
\Cref{thm:classical-local-theory}.  Choose
$t_1\in(t_*,T_{\max})$ so close to $T_{\max}$ that
$T_{\max}-t_1<\tau_*/2$.  Restarting the equation from the datum $w(t_1)$
produces a strong solution on $[t_1,t_1+\tau_*]$.  Uniqueness in the local
theory identifies it with the original maximal solution on
$[t_1,T_{\max})$, and hence extends that solution beyond $T_{\max}$, a
contradiction.  Therefore $T_{\max}=\infty$.
Finally, \Cref{cor:positive-time-classicality-system} applied on each finite
positive-time strip gives the asserted classical regularity of the global
solution.
\end{proof}

\begin{corollary}[Concentration-variable formulation]
\label{cor:global-positive-y-data}
Under the assumptions of \Cref{thm:global-strong-additive}, let
$y^0\in B^{2-2/p}_{p,p}(\Omega;\mathbb R^N)$ satisfy
\[
 \sum_{i=1}^Ny_i^0=1,
 \qquad \inf_{x\in\overline\Omega}y_i^0(x)>0,
 \qquad \partial_\nu y_i^0=0\quad(i=1,\ldots,N).
\]
Then the normalized additive Maxwell--Stefan system has a unique global strong
solution with initial value $y^0$, obtained from
\Cref{thm:global-strong-additive} by the entropy coordinate map.  {Uniqueness is understood in the corresponding positive concentration strong
class, equivalently among positive concentration solutions whose entropy transform is a
maximal-$L^p$ strong solution in the sense of \Cref{def:solution-classes}.}
\end{corollary}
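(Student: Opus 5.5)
The plan is to reduce the corollary to \Cref{thm:global-strong-additive} through the affine moment map followed by the entropy coordinate map. Everything then rests on one claim: $y^0$ has the stated properties exactly when $w_0:=Dh(U(y^0))$ lies in $X_{\gamma,p}$.

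\emph{Forward direction.} Set $U^0_\ell=\sum_i g_i^\ell y_i^0$. This map is affine with constant coefficients, so $U^0\in B^{2-2/p}_{p,p}(\Omega;\mathbb R^{N-1})$ and $\partial_\nu U^0=\mathsf L\,\partial_\nu y^0=0$. Since $p>d+2$, \eqref{eq:trace-holder-embedding} gives $B^{2-2/p}_{p,p}\hookrightarrow C^{1+\eta}(\overline\Omega)$. So $y^0$ is continuous, and by the uniform positivity assumption its range $y^0(\overline\Omega)$ lies in a compact subset of $\cS^\circ$. By \Cref{prop:moment-bijection}, $U^0(\overline\Omega)$ lies in a compact set $\mathcal K_U\Subset\cT^\circ$.

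\emph{Mapping into the trace space.} The map $Dh$ is real-analytic on a neighbourhood of $\mathcal K_U$. By the standard superposition (Nemytskii) property of $B^{s}_{p,p}$ with $1<s<2$ for smooth maps, applied to functions bounded in $C^1$, we get $w_0=Dh(U^0)\in B^{2-2/p}_{p,p}$. I expect this to be the only point needing care. To justify it concretely, use the real-interpolation characterization of $B^{2-2/p}_{p,p}$ as $(L^p,W^{2,p})_{1-1/p,p}$ and combine it with the $C^1$ bound. Alternatively, extend $U^0$ to a maximal-regularity function by the trace coretraction, compose it with a smooth map, and take the trace again. The normal derivative is handled by the chain rule, $\partial_\nu w_0=H(U^0)\partial_\nu U^0=0$. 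Hence \eqref{eq:trace-space-identification} gives $w_0\in X_{\gamma,p}$.

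\emph{Applying the theorem and transforming back.} \Cref{thm:global-strong-additive} now yields the unique global maximal-$L^p$ strong solution $w$. Put $U=Dh^*(w)$ and $y=y(U)$, using the softmax formula \eqref{eq:y-softmax}. By \Cref{cor:formulation-equivalence} and \Cref{prop:constrained-moment-equivalence}, on every compact strip this is a strong solution of the constrained Maxwell--Stefan system with specieswise no-flux condition, with fluxes reconstructed through \Cref{prop:flux-inverse}. Its initial value is $y(Dh^*(w_0))=y^0$ because the coordinate maps are mutually inverse.

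\emph{Uniqueness.} Let $\tilde y$ be another positive concentration solution whose entropy transform $\tilde w$ is a maximal-$L^p$ strong solution with trace $w_0$. The uniqueness part of \Cref{thm:classical-local-theory}, iterated over common existence intervals, gives $\tilde w=w$. Applying the diffeomorphism back gives $\tilde y=y$. Apart from the composition step above, every step is direct bookkeeping.
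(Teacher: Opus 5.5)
Your proposal is correct and follows the paper's proof step by step: the affine moment map, the $C^{1+\eta}$ embedding giving compact range, a superposition argument for $w_0=Dh(U(y^0))\in B^{2-2/p}_{p,p}$, and the chain rule for $\partial_\nu w_0=H\,\partial_\nu U^0=0$. The global theorem is then applied and uniqueness of the entropy-variable solution is transported back through the coordinate diffeomorphisms. The one difference is the composition step: the paper cites the Besov superposition theorem of Runst--Sickel, while your trace-coretraction argument (extend $U^0$, compose with a smooth modification of $Dh$, take the trace again) is a valid self-contained substitute; the bare real-interpolation remark would not suffice on its own, because interpolation applies to linear operators.
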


\begin{proof}
Put {$\sigma_p:=2-2/p$}. Since $p>d+2$, one has
$B^{{\sigma_p}}_{p,p}(\Omega)\hookrightarrow C^{1+{\eta}}(\overline\Omega)$ for some
{$\eta>0$}.  Uniform positivity therefore places the range of $y^0$ in a
compact subset of $\mathcal S^\circ$.  The moment map $y\mapsto U(y)$ is
affine, and $U\mapsto Dh(U)$ is smooth on the corresponding compact subset of
$\mathcal T^\circ$.  Since $U\mapsto Dh(U)$ is smooth on a neighborhood of
this compact range, the Besov superposition theorem
\cite[Section~5.3]{RunstSickel1996} gives
\[
 w_0:=Dh(U(y^0))\in B^{{\sigma_p}}_{p,p}(\Omega;\mathbb R^{N-1}).
\]
Moreover the componentwise Neumann condition implies
$\partial_\nu U(y^0)=0$, and the classical chain rule, justified by the above
$C^{1+{\eta}}$ embedding, gives
\[
 \partial_\nu w_0
 =H(U(y^0))\,\partial_\nu U(y^0)=0.
\]
Thus $w_0\in X_{\gamma,p}$.  Apply
\Cref{thm:global-strong-additive} and use
\Cref{cor:formulation-equivalence}.  On every finite time interval the
trajectory has compact range in the open state spaces, so smooth composition
preserves the maximal-$L^p$ strong class in both directions; uniqueness in the
stated concentration class follows from uniqueness of the entropy-variable
solution.
\end{proof}

\begin{remark}[Exact equivalence of the initial trace classes]
\label{rem:initial-trace-equivalence}
The preceding implication is reversible.  If $w_0\in X_{\gamma,p}$, then
$w_0\in C^{1+{\eta}}(\overline\Omega)$ and hence has bounded range.  The
softmax map \eqref{eq:y-softmax} sends that range into a compact subset of
$\mathcal S^\circ$, so
\[
 \min_{x\in\overline\Omega} y_i(w_0(x))>0\qquad(i=1,\ldots,N).
\]
Since the softmax map $w\mapsto y(w)$ is smooth on a neighborhood of the
bounded range of $w_0$, the same Besov superposition theorem
\cite[Section~5.3]{RunstSickel1996} gives
$y(w_0)\in B^{2-2/p}_{p,p}(\Omega;\mathbb R^N)$, and
\[
 \partial_\nu y(w_0)=Dy(w_0)\,\partial_\nu w_0=0.
\]
Consequently the positive concentration data in the corollary and the entropy
data $w_0\in X_{\gamma,p}$ are exactly equivalent under the coordinate maps.
\end{remark}

\begin{remark}[Species number and space dimension]
The multi-EPD cancellation and the entropy stabilization are algebraic in the
root variables and do not use the physical space dimension.  Dimension enters
only through the local De Giorgi and weak-conormal estimates and through the
local-theory condition $p>d+2$.  For $N=2$ the multi-EPD potential reduces to the usual
quadratic scalar truncation and the argument collapses to scalar diffusion.
\end{remark}

\begin{remark}[Dependence on the additive structure]
The argument uses three linked consequences of the additive compression
structure: the global interlacing root coordinates, the explicit root system
\eqref{eq:root-system-dossier} with
$C_{jk}=({\mathfrak a_j}+{\mathfrak a_k})/(z_j-z_k)$, and the entropy symmetrizer.  The multi-EPD
identity cancels the mixed terms generated by the root system, while the
mixing entropy controls the higher-species flux terms that remain after
localization.  No analogous global root system is asserted here for general
non-additive $N\ge4$ Maxwell--Stefan coefficients.
\end{remark}

\section{Exponential convergence of global positive solutions}
\label{sec:exponential-relaxation}

Throughout this section, $\Omega\subset\mathbb R^d$ is a bounded domain of class
$C^2$, and $y$ denotes a global solution furnished by
\Cref{thm:global-strong-additive}, continuous in $C^1(\overline\Omega)$ up to its
initial time and classical on every positive-time strip.  For every
$t>0$, the species balances and no-flux boundary condition give
\[
 \frac{d}{dt}\int_\Omega y_i(t,x)\,dx
 =-\int_{\partial\Omega}J_i(t,x)\cdot\nu\,dS=0,
 \qquad i=1,\ldots,N.
\]
Continuity to $t=0$ therefore shows that these masses are conserved for all
$t\ge0$.  Write the conserved species mean as
\[
        \overline y_i=\frac1{|\Omega|}\int_\Omega y_i^0(x)\,dx,
        \qquad i=1,\ldots,N.
\]
Define the relative mixing entropy
\begin{equation}\label{eq:relative-entropy-definition}
 \mathcal H(y\mid\overline y)
 =\int_\Omega\sum_{i=1}^N
 y_i\log\frac{y_i}{\overline y_i}\,dx.
\end{equation}

\begin{lemma}[Uniform entropy equivalence on a compact subset]
\label{lem:relative-entropy-equivalence}
Let $\cK_y\Subset\cS^\circ$ be compact and convex.  There are
$0<c_\cK\le C_\cK<\infty$ such that, for $y,\bar y\in\cK_y$,
\begin{equation}\label{eq:relative-entropy-equivalence}
 c_\cK|y-\bar y|^2
 \le \sum_i y_i\log\frac{y_i}{\bar y_i}
 \le C_\cK|y-\bar y|^2.
\end{equation}
{Equivalently, on the corresponding compact coordinate images the relative
entropy density is comparable to $|z(y)-z(\bar y)|^2$, to $|U(y)-U(\bar y)|^2$, and to
$|w(y)-w(\bar y)|^2$, with constants depending only on the compact set.}
\end{lemma}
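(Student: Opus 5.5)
The proof is a Taylor expansion of the relative entropy density with integral remainder along the segment from $\bar y$ to $y$. Convexity of $\cK_y$ keeps that segment inside $\cK_y$, and on $\cK_y$ all components are bounded below. Put
\[
 \phi(y):=\sum_i y_i\log\frac{y_i}{\bar y_i}.
\]
One has $\phi(\bar y)=0$. The gradient $\partial_{y_i}\phi=\log(y_i/\bar y_i)+1$ equals $1$ at $y=\bar y$. The difference $v:=y-\bar y$ lies in $E$, because both points lie on the simplex, so $\one\cdot v=0$ and the first-order term $D\phi(\bar y)[v]=\one\cdot v$ vanishes. This gives
\[
 \phi(y)=\int_0^1(1-s)\,v^{\sf T}Y_s^{-1}v\,\dd s,
 \qquad Y_s:=\diag(\bar y+s v),
\]
since $D^2\phi=Y^{-1}$ and $\bar y$ only enters the linear part.

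Next, $c_{\min}:=\min_i\min_{\cK_y}y_i>0$ by compactness in $\cS^\circ$, and all $y_i\le1$. Hence along the segment
\[
 |v|^2\le v^{\sf T}Y_s^{-1}v\le c_{\min}^{-1}|v|^2 ,
\]
and integrating against $(1-s)$ yields \eqref{eq:relative-entropy-equivalence} with $c_\cK=\tfrac12$ and $C_\cK=\tfrac1{2c_{\min}}$. The lower bound actually holds on all of $\cS^\circ$, so convexity is needed only for the upper bound.

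For the equivalent statements, the maps $y\mapsto z$, $y\mapsto U$ and $y\mapsto w=Dh(U(y))$ are diffeomorphisms, by \Cref{thm:root-diffeo}, \Cref{prop:moment-bijection} and the Legendre duality of Section~7. On a neighbourhood of the compact set $\cK_y$ each is bi-Lipschitz onto its image. For $U$ this is immediate because the map is affine and injective. For $z$ and $w$, the Lipschitz bound $|F(y)-F(\bar y)|\le C|y-\bar y|$ follows from the mean value theorem along the segment in $\cK_y$. The reverse bound needs an argument, because the image of $\cK_y$ need not be convex. I would handle it by compactness: the quotient $|F(y)-F(\bar y)|/|y-\bar y|$ is continuous and positive off the diagonal, by injectivity. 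Near the diagonal it tends to $|DF(\bar y)\,\cdot\,|$ evaluated on the unit direction of $v\in E$, which is positive because $DF$ is invertible. Hence the quotient has a positive minimum on $\cK_y\times\cK_y$. Combining this with the first part gives comparability of the density with $|z(y)-z(\bar y)|^2$, $|U(y)-U(\bar y)|^2$ and $|w(y)-w(\bar y)|^2$.

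The only mildly delicate step is this inverse-Lipschitz bound for the nonlinear maps $z$ and $w$; the compactness argument settles it. Everything else is routine.
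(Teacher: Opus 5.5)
Your proof is correct and follows the paper's argument. The paper also Taylor-expands $\phi_{\bar y}$ along the segment from $\bar y$ to $y$: the linear term $\one\cdot(y-\bar y)$ vanishes, the Hessian $\diag(1/y_i)$ is bounded above and below on $\cK_y$, and the coordinate versions then follow from bi-Lipschitz bounds. Your explicit constants $c_\cK=\tfrac12$, $C_\cK=1/(2c_{\min})$ and your compactness argument for the inverse-Lipschitz bound of $z$ and $w$ fill in details that the paper only asserts.
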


\begin{proof}
For fixed $\bar y\in\cK_y$, set
$\phi_{\bar y}(y):=\sum_i y_i\log(y_i/\bar y_i)$.  Its Hessian, restricted to
$E=\{\xi:\sum_i\xi_i=0\}$, is the restriction of
$\operatorname{diag}(1/y_i)$, whose eigenvalues are bounded above and below
on $\cK_y$.  Moreover,
$D\phi_{\bar y}(\bar y)=\one$, and therefore the linear Taylor term vanishes
along the simplex because $\one\cdot(y-\bar y)=0$.  Taylor's formula along the
segment from $\bar y$ to $y$ proves the first statement.  The coordinate
versions follow from uniform bi-Lipschitz bounds on compact sets.
\end{proof}

\begin{lemma}[Logarithmic Sobolev control of the relative mixing entropy]
\label{lem:relative-entropy-log-Sobolev}
Assume that $\Omega$ is bounded and of class $C^2$.  There exists a constant
$C_{\rm LS}=C_{\rm LS}(\Omega)<\infty$ such that for
every nonnegative $\rho$ with $\sqrt\rho\in H^1(\Omega)$ and positive mean
$\bar\rho=|\Omega|^{-1}\int_\Omega\rho\,dx$,
\begin{equation}\label{eq:bounded-domain-log-Sobolev}
 \int_\Omega \rho\log\frac{\rho}{\bar\rho}\,dx
 \le C_{\rm LS}(\Omega)\int_\Omega|\nabla\sqrt\rho|^2\,dx.
\end{equation}
Consequently, for every simplex-valued $y$ whose species means $\bar y_i$ are
positive,
\begin{equation}\label{eq:vector-relative-entropy-LSI}
 \mathcal H(y\mid\bar y)
 \le C_{\rm LS}(\Omega)
 \sum_{i=1}^N\|\nabla\sqrt{y_i}\|_2^2.
\end{equation}
\end{lemma}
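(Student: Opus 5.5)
The plan has two stages: first prove the scalar inequality \eqref{eq:bounded-domain-log-Sobolev} from a Sobolev--Poincar\'e inequality on the bounded $C^2$ (hence Lipschitz, extension) domain, then sum it over species to get \eqref{eq:vector-relative-entropy-LSI}.

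\emph{Reduction.} Both sides of \eqref{eq:bounded-domain-log-Sobolev} are $1$-homogeneous in $\rho$, so we may assume $\bar\rho=1$. Write $f:=\sqrt\rho\in H^1(\Omega)$, so that $\|f\|_2^2=|\Omega|$.

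\emph{Step 1 (Sobolev inequality for the entropy).} Take the Sobolev exponent $2^*=2d/(d-2)$ if $d\ge3$, or any $q>2$ if $d=2$. By Jensen's inequality for the concave logarithm with respect to the probability measure $f^2\,dx/|\Omega|$,
\[
 \int_\Omega f^2\log\frac{f^2}{\|f\|_2^2/|\Omega|}\,dx
 \le \frac{q}{q-2}\,\|f\|_2^2\,
 \log\frac{|\Omega|^{1-2/q}\|f\|_q^2}{\|f\|_2^2}.
\]
Combining this with the Sobolev inequality $\|f\|_q^2\le C(\|\nabla f\|_2^2+\|f\|_2^2)$ and $\log x\le x-1$ gives a defective logarithmic Sobolev inequality
\[
 \int_\Omega f^2\log f^2\,dx\le A\|\nabla f\|_2^2+B,
 \qquad \|f\|_2^2=|\Omega|.
\]

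\emph{Step 2 (removing the defect).} We tighten this via the Rothaus lemma. Set $m:=|\Omega|^{-1}\int_\Omega f\,dx$ and $g:=f-m$. Rothaus' inequality states that, for $\mathrm{Ent}(F)=\int F\log\bigl(F/(|\Omega|^{-1}\int F)\bigr)$,
\[
 \mathrm{Ent}\bigl((m+g)^2\bigr)\le \mathrm{Ent}(g^2)+2\|g\|_2^2 .
\]
Apply the defective inequality to $g^2$, suitably normalized. This bounds $\mathrm{Ent}(g^2)$ by $A\|\nabla g\|_2^2+B'\|g\|_2^2$. By the Poincar\'e--Wirtinger inequality on the connected Lipschitz domain, $\|g\|_2^2\le C_P\|\nabla f\|_2^2$. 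Hence
\[
 \mathrm{Ent}(f^2)\le C_{\rm LS}\|\nabla f\|_2^2 .
\]
The case $\rho\equiv0$ is excluded by the positive mean, and the general case $\sqrt\rho\in H^1$ follows directly because all the cited inequalities hold in $H^1$.

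\emph{Step 3 (summing over species).} For simplex-valued $y$ with $\bar y_i>0$, each $\rho=y_i$ is admissible whenever $\sqrt{y_i}\in H^1$; if some $\sqrt{y_i}\notin H^1$, the right side of \eqref{eq:vector-relative-entropy-LSI} is infinite and there is nothing to prove. Summing \eqref{eq:bounded-domain-log-Sobolev} over $i$ gives exactly $\mathcal H(y\mid\bar y)$ on the left, by its definition \eqref{eq:relative-entropy-definition}.

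\emph{Main obstacle.} The delicate point is Step 2: obtaining a tight (non-defective) inequality in which no $\|f\|_2^2$ term remains. The route chosen is Rothaus' lemma combined with Poincar\'e--Wirtinger. As a fallback, one could quote the standard bounded-domain logarithmic Sobolev inequality, which follows for instance from a Bakry--\'Emery or Sobolev--Poincar\'e argument.
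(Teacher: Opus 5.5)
Your proposal is correct and follows essentially the same route as the paper: a Jensen-based defective logarithmic Sobolev inequality derived from the Sobolev embedding, tightened by Rothaus' centering lemma and the Poincar\'e--Wirtinger inequality, and then summed over species. The differences are cosmetic: you normalize $\bar\rho=1$ in Lebesgue measure, whereas the paper works with the probability measure $\mu=|\Omega|^{-1}\,dx$ and $\|\varphi\|_{L^2(\mu)}=1$; and you argue directly in $H^1$, whereas the paper passes through smooth functions and a density argument.
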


\begin{proof}
We include the short tightening argument because it makes the precise domain
hypotheses and the low-dimensional cases explicit.  It suffices first to
argue for smooth $\varphi$; the general $H^1$ case follows by Sobolev density,
truncation, and lower semicontinuity of entropy.  Put
$d\mu=|\Omega|^{-1}\,dx$, write {$\langle\varphi\rangle_\mu:=\int_\Omega\varphi\,d\mu$}, and, for $h\ge0$,
denote by $\operatorname{Ent}_\mu$ the entropy functional with respect to $\mu$:
\[
 \operatorname{Ent}_\mu(h)
 :=\int_\Omega h\log\frac{h}{\int h\,d\mu}\,d\mu.
\]
Connectedness and bounded $C^2$ regularity give the Poincar\'e inequality
\begin{equation}\label{eq:LSI-Poincare-normalized}
 \|\varphi-{\langle\varphi\rangle_\mu}\|_{L^2(\mu)}^2
 \le P_\Omega\|\nabla\varphi\|_{L^2(\mu)}^2
\end{equation}
and a Sobolev inequality, for some finite {$q_{\rm S}>2$},
\begin{equation}\label{eq:LSI-Sobolev-normalized}
 \|\varphi\|_{L^{{q_{\rm S}}}(\mu)}^2
 \le S_1\|\nabla\varphi\|_{L^2(\mu)}^2
      +S_2\|\varphi\|_{L^2(\mu)}^2.
\end{equation}
For $d\ge3$ one may take {$q_{\rm S}=2d/(d-2)$}, while for $d=2$ any fixed finite
{$q_{\rm S}>2$} is admissible.  Enlarge $S_2$ if necessary so that $S_2\ge1$.

First assume $\|\varphi\|_{L^2(\mu)}=1$.  On the set where
$\varphi\ne0$ write
\[
 \operatorname{Ent}_\mu(\varphi^2)
 =\frac{2}{{q_{\rm S}}-2}
   \int_\Omega \varphi^2\log |\varphi|^{{q_{\rm S}}-2}\,d\mu .
\]
Jensen's inequality for the probability measure $\varphi^2\,d\mu$ therefore gives
\[
 \operatorname{Ent}_\mu(\varphi^2)
 \le \frac{2}{{q_{\rm S}}-2}\log\int_\Omega |\varphi|^{{q_{\rm S}}}\,d\mu
 = \frac{{q_{\rm S}}}{{q_{\rm S}}-2}\log\|\varphi\|_{L^{{q_{\rm S}}}(\mu)}^2
 \le \frac{{q_{\rm S}}}{{q_{\rm S}}-2}
       \bigl(\|\varphi\|_{L^{{q_{\rm S}}}(\mu)}^2-1\bigr).
\]
The formula follows for functions with zeros by the usual $\varepsilon$-regularization.
By homogeneity and \eqref{eq:LSI-Sobolev-normalized}, this yields the defective
logarithmic Sobolev estimate
\begin{equation}\label{eq:defective-LSI-bounded-domain}
 \operatorname{Ent}_\mu(\varphi^2)
 \le A_\Omega\|\nabla\varphi\|_{L^2(\mu)}^2
     +B_\Omega\|\varphi\|_{L^2(\mu)}^2.
\end{equation}
Now set {$\psi:=\varphi-\langle\varphi\rangle_\mu$}, where
$\langle\varphi\rangle_\mu:=\int_\Omega\varphi\,d\mu$.  The Rothaus centering inequality
\begin{equation}\label{eq:Rothaus-centering-bounded-domain}
 \operatorname{Ent}_\mu(\varphi^2)
 \le \operatorname{Ent}_\mu({\psi^2})+2\|{\psi}\|_{L^2(\mu)}^2
\end{equation}
{is implied by Rothaus's centering lemma; see \cite[Lemma~9]{Rothaus1985}.
The same centering step is also used in \cite[Lemma~1]{DesvillettesFellner2014}.}  Applying
\eqref{eq:defective-LSI-bounded-domain} to {$\psi$} and then
\eqref{eq:LSI-Poincare-normalized} gives
\[
 \operatorname{Ent}_\mu(\varphi^2)
 \le \bigl(A_\Omega+(B_\Omega+2)P_\Omega\bigr)
       \|\nabla\varphi\|_{L^2(\mu)}^2.
\]
Thus the bounded-domain logarithmic Sobolev constant is finite and depends
only on the domain (and the fixed Sobolev exponent chosen above).

Take $\varphi=\sqrt\rho$.  Since
$\int\varphi^2\,d\mu=\bar\rho$,
\[
 \operatorname{Ent}_\mu(\varphi^2)
 =\frac1{|\Omega|}\int_\Omega
   \rho\log\frac{\rho}{\bar\rho}\,dx.
\]
Multiplying the preceding inequality by $|\Omega|$ proves
\eqref{eq:bounded-domain-log-Sobolev}.  Applying it species by species and
summing proves \eqref{eq:vector-relative-entropy-LSI}.
\end{proof}

\begin{theorem}[Exponential convergence of global positive solutions]
\label{thm:exponential-relaxation}
Under the standing assumptions of this section, set
\[
 \mathcal R_0:=\prod_{j=1}^{N-1}
 \left[\min_{\overline\Omega}z_j(0,\cdot),
       \max_{\overline\Omega}z_j(0,\cdot)\right].
\]
Then
\[
 \mathcal R_0\Subset\prod_{j=1}^{N-1}(g_j,g_{j+1}),
 \qquad
 z([0,\infty)\times\overline\Omega)\subset \mathcal R_0.
\]
Let
\begin{equation}\label{eq:kappa-LS-structural}
 \kappa_{\rm LS}:=
 \frac{4}{f_{\max}C_{\rm LS}(\Omega)}>0.
\end{equation}
Then
\begin{equation}\label{eq:relative-entropy-exponential}
        \mathcal H(y(t)\mid\overline y)
        \le e^{-\kappa_{\rm LS}t}\mathcal H(y^0\mid\overline y),
        \qquad t\ge0.
\end{equation}
Moreover, there is $C_{\mathcal R_0}<\infty$ such that
\begin{equation}\label{eq:L2-exponential-general}
\begin{aligned}
 &\|y(t)-\overline y\|_{L^2}
 +\|z(t)-z(\overline y)\|_{L^2}
 +\|U(t)-U(\overline y)\|_{L^2}
 +\|w(t)-w(\overline y)\|_{L^2}\\
 &\hspace{20mm}\le
 C_{\mathcal R_0}e^{-\kappa_{\rm LS}t/2}\mathcal H(y^0\mid\overline y)^{1/2}.
\end{aligned}
\end{equation}
The entropy decay rate $\kappa_{\rm LS}$ depends only on $f_{\max}$ and the
domain logarithmic-Sobolev constant, not on the distance of the initial state
from the concentration boundary.
\end{theorem}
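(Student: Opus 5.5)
The proof has three parts: root-box invariance, entropy decay, and the $L^2$ bounds. The first assertion is read off from earlier results. Since $y^0$ is continuous with compact range in $\cS^\circ$ (\Cref{cor:global-positive-y-data} and \Cref{rem:initial-trace-equivalence}), the continuous root map gives extrema of each $z_j(0,\cdot)$ lying strictly inside $(g_j,g_{j+1})$. Hence $\mathcal R_0\Subset\prod_j(g_j,g_{j+1})$. Forward invariance on $[0,\infty)$ is \Cref{thm:root-rectangle} combined with \Cref{rem:entropy-data-compact-range} and the global existence from \Cref{thm:global-strong-additive}.

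For the entropy decay I would work on positive-time strips, where the solution is classical by \Cref{cor:positive-time-classicality-system}. The first step is to rewrite $\mathcal H(y\mid\overline y)=\int_\Omega h(U)\,dx-\sum_i\log\overline y_i\int_\Omega y_i\,dx$. The second term is constant by mass conservation, so \Cref{cor:entropy-dissipation} gives
\[
 \frac{d}{dt}\mathcal H(y(t)\mid\overline y)=-\int_\Omega\nabla U:H(U)A_N(U)\nabla U\,dx .
\]
Next, \Cref{cor:boundary-uniform-Fisher} bounds the right-hand side above by $-\frac4{f_{\max}}\sum_i\|\nabla\sqrt{y_i}\|_2^2$. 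Then \Cref{lem:relative-entropy-log-Sobolev} applies, with positive means $\overline y_i$ guaranteed by uniform positivity, and yields
\[
 \frac{d}{dt}\mathcal H\le-\kappa_{\rm LS}\mathcal H \quad\text{for } t>0.
\]
Gronwall on $[\tau,t]$ gives the exponential bound from time $\tau$. Letting $\tau\downarrow0$ recovers \eqref{eq:relative-entropy-exponential}, because $y\in C([0,\infty);C^1(\overline\Omega))$ with range in a fixed compact set, so $\mathcal H(y(\tau)\mid\overline y)\to\mathcal H(y^0\mid\overline y)$.

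The step I expect to need the most care is this justification near $t=0$, together with the integrability of $t\mapsto\int h(U)$ on each strip. Neither should be a real obstruction: the entropy density is continuous in $y$ on the compact range, and the identity can be integrated in time on strips $[\tau,T]$ before passing to the limit.

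The $L^2$ bounds come from \Cref{lem:relative-entropy-equivalence}, applied on the closed convex hull of the image of $\mathcal R_0$ in $\cS^\circ$, which is still compact. The point $\overline y$ lies in this hull, since it is an average of values in the image. The lemma gives $|y-\overline y|^2\le c_\cK^{-1}\sum_iy_i\log(y_i/\overline y_i)$ pointwise. Integrating over $\Omega$ and using \eqref{eq:relative-entropy-exponential} bounds $\|y(t)-\overline y\|_2$ by $Ce^{-\kappa_{\rm LS}t/2}\mathcal H(y^0\mid\overline y)^{1/2}$. The same estimate holds for $z$, $U$ and $w$ through the bi-Lipschitz coordinate maps on that compact set, using the coordinate version of the lemma. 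Only if the convex hull leaves the region where the root or entropy maps are uniformly bi-Lipschitz would one need to pass to a slightly larger compact set, and all such constants depend only on $\mathcal R_0$.
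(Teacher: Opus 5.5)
Your proposal is correct and follows essentially the same route as the paper: root-box invariance from \Cref{thm:root-rectangle}, then the entropy identity on positive-time strips combined with \Cref{cor:boundary-uniform-Fisher}, the logarithmic Sobolev inequality and Gronwall, then the limit $\tau\downarrow0$, and finally Lipschitz bounds for $z,U,w$ on the convex hull of the image of $\mathcal R_0$. The one difference is the $y$-part of the $L^2$ bound, which is a minor variation. There the paper does not use \Cref{lem:relative-entropy-equivalence}; it uses the Bregman inequality $a\log(a/b)-a+b\ge\frac12(a-b)^2$ for $0<a,b\le1$ together with $\sum_i(y_i-\overline y_i)=0$, which gives $\|y-\overline y\|_2^2\le2\mathcal H(y\mid\overline y)$ with a constant independent of the distance to the simplex boundary, whereas your compact-set version is equally valid for the stated $C_{\mathcal R_0}$-dependent claim.
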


\begin{proof}
Since $y(0,\cdot)$ is continuous on the compact set $\overline\Omega$ and takes
values in $\mathcal S^\circ$, its root range is compactly contained in the
interlacing box.  Hence $\mathcal R_0$ is a compact root rectangle, and
\Cref{thm:root-rectangle} gives
$z([0,\infty)\times\overline\Omega)\subset \mathcal R_0$.  The rectangle generates a
compact concentration set; its convex hull remains compactly contained in the
simplex and contains $\overline y$.  Conservation of the species means implies
that the relative entropy has the same derivative as the mixing entropy.  For
the positive-time classical solution, apply the entropy identity on
$[\varepsilon,t]$ and let $\varepsilon\downarrow0$ using the continuity of $y$
and of the relative entropy at $t=0$.  The entropy identity and the boundary-uniform Fisher-information estimate
\Cref{cor:boundary-uniform-Fisher} give
\[
 \frac{d}{dt}\mathcal H(y(t)\mid\overline y)+\mathcal D(t)=0,
 \qquad
 \mathcal D(t)\ge
 \frac{4}{f_{\max}}\sum_{i=1}^N\|\nabla\sqrt{y_i(t)}\|_2^2.
\]
By \Cref{lem:relative-entropy-log-Sobolev},
\[
 \mathcal D(t)\ge
 \frac{4}{f_{\max}C_{\rm LS}(\Omega)}
 \mathcal H(y(t)\mid\overline y)
 =\kappa_{\rm LS}\mathcal H(y(t)\mid\overline y).
\]
Gronwall's lemma proves \eqref{eq:relative-entropy-exponential}.  The $L^2$
control of the concentrations does not require compact separation from the
simplex boundary either.  Indeed, for $0<a,b\le1$ the scalar Bregman density
$\psi(a\mid b)=a\log(a/b)-a+b$ satisfies
$\psi''(a)=1/a\ge1$ and therefore
$\psi(a\mid b)\ge\frac12(a-b)^2$.  Since
$\sum_i(-y_i+\bar y_i)=0$ pointwise,
\[
 \mathcal H(y\mid\bar y)
 =\int_\Omega\sum_i\psi(y_i\mid\bar y_i)\,dx
 \ge\frac12\|y-\bar y\|_2^2.
\]
This gives the asserted $L^2$ decay for $y$ with the rate
$\kappa_{\rm LS}/2$.  For the remaining coordinates, let
\[
 K_y:=\operatorname{co}(y(\mathcal R_0)).
\]
Because $y(\mathcal R_0)\Subset\mathcal S^\circ$ and the simplex is convex,
$K_y\Subset\mathcal S^\circ$.  It contains every value $y(t,x)$ and also the
spatial mean $\overline y$.  The maps $y\mapsto z(y),U(y),w(y)$ therefore
have uniformly bounded derivatives on $K_y$.  Their Lipschitz bounds give the
remaining $L^2$ estimates without any unstated assumption that
$z(\overline y)$ itself belongs to the rectangular set $\mathcal R_0$.
\end{proof}

\begin{proposition}[Uniform positive-time smoothing of global solutions]
\label{prop:uniform-positive-time-smoothing}
Let $w$ be a global strong solution supplied by
\Cref{thm:global-strong-additive}.  {For every $\tau>0$ there exist
$\beta\in(0,\alpha)$ and $C_\tau<\infty$, depending only on
$\tau$, $\mathcal R_0$, $d$, $N$, $g$, $\Omega$, and $\alpha$, such that}
\begin{equation}\label{eq:uniform-global-C2beta-tail}
\begin{aligned}
 &\|z\|_{C_{t,x}^{1+\beta/2,2+\beta}
 ([\tau,\infty)\times\overline\Omega)}
 +\|y\|_{C_{t,x}^{1+\beta/2,2+\beta}
 ([\tau,\infty)\times\overline\Omega)}\\
 &\qquad
 +\|U\|_{C_{t,x}^{1+\beta/2,2+\beta}
 ([\tau,\infty)\times\overline\Omega)}
 +\|w\|_{C_{t,x}^{1+\beta/2,2+\beta}
 ([\tau,\infty)\times\overline\Omega)}
 \le C_\tau.
\end{aligned}
\end{equation}
Here $\mathcal R_0$ is the invariant initial root rectangle.
\end{proposition}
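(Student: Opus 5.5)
The plan is to make each step of the positive-time bootstrap chain uniform in time on $[\tau,\infty)$, using only backward cylinders of a fixed radius and the time-independent invariant rectangle $\mathcal R_0$.

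First, by \Cref{thm:exponential-relaxation} (equivalently \Cref{thm:root-rectangle}), $z([0,\infty)\times\overline\Omega)\subset\mathcal R_0$. The compact sets $\cK_y,\cK_U,\cK_w$ are therefore fixed for all time. I would apply \Cref{thm:root-DeGiorgi-holder} on $[0,\infty)$ with $\tau$ replaced by $\tau/4$ and $T=\infty$. Its hypotheses hold on every compact positive-time strip by \Cref{rem:entropy-data-compact-range}. Its proof, through \Cref{prop:root-oscillation-reduction} and \Cref{lem:oscillation-to-terminal-holder}, uses only backward cylinders of radius at most $r_*$, with constants depending on $R=\mathcal R_0$, $g$, $N$, $d$, $\Omega$ (\Cref{rem:DG-constant-ledger}). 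Hence it gives
\[
 \|z\|_{C^{\theta/2,\theta}_{t,x}([\tau/2,\infty)\times\overline\Omega)}\le C,
\]
with $\theta$ and $C$ independent of the time location. Composition with the smooth maps on $\mathcal R_0$ transfers this bound to $U$.

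Second, I would apply \Cref{lem:symmetrizable-schauder-bootstrap} with $\mathcal K=\cK_U$, $\beta=\theta$ and $\beta_1<\min\{\theta,\alpha\}$. That lemma is stated on a finite interval $(a,b)$, but its constant depends only on $\mathcal K$, $M$, $\delta$, $\beta$, $\Omega$, $g$, and not on $b-a$. Alternatively, I would apply it on each slab $(n\tau/4,\,n\tau/4+\tau)$ with $\delta=\tau/12$, so that every slab carries the same constants. Either way this yields a uniform $C^{(1+\beta_1)/2,1+\beta_1}_{t,x}$ bound on $[3\tau/4,\infty)$. Feeding this into \Cref{lem:moment-second-Schauder-bootstrap}, again slab by slab with identical constants, gives a uniform $C^{1+\beta/2,2+\beta}_{t,x}$ bound for $U$ on $[\tau,\infty)$ for any $\beta<\min\{\beta_1,\alpha\}$. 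The statement's $\beta\in(0,\alpha)$ is of this form. The bounds for $y,z,w$ then follow by smooth composition on the fixed compact ranges, since second-order parabolic Hölder norms are stable under composition with smooth maps having bounded derivatives.

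The main obstacle is the patching: a Hölder seminorm on $[\tau,\infty)$ must be controlled by slab-wise bounds. Pairs of points in one slab or in overlapping slabs are handled because the slabs have overlap of length at least $\tau/4$. Pairs with $|t-s|$ larger than the overlap are handled by the sup bounds on the derivatives, which are uniform in time, at the cost of a constant depending on $\tau$. I would also check that every localization step depends only on the compact state set and the Hölder bound of the preceding step, and never on the slab index or on $\|w(0)\|_{X_{\gamma,p}}$. The rectangle $\mathcal R_0$ replaces any initial-norm dependence once the first slab starts at $\tau/4>0$.
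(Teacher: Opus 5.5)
Your proposal is correct and follows essentially the same route as the paper. The chain is the same: the invariant rectangle $\mathcal R_0$, the De Giorgi bound on $[\tau/2,\infty)$ from backward cylinders, the first- and second-order conormal bootstraps on overlapping time windows with identical constants, composition to $y,z,w$, and patching via uniform sup bounds for widely separated times. The only slip is cosmetic: your slabs $(n\tau/4,n\tau/4+\tau)$ must be restricted to $n\ge2$ so that they lie in the strip where the De Giorgi H\"older bound is available, and these slabs already cover $[3\tau/4,\infty)$ after removing the $2\delta=\tau/6$ buffer (the paper instead uses windows of length $12\delta_\tau$ with step $6\delta_\tau$ starting above $\tau/2$).
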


\begin{proof}
By \Cref{thm:global-strong-additive}, the whole trajectory stays in the fixed
compact root rectangle $\mathcal R_0$.  Apply \Cref{thm:root-DeGiorgi-holder} with
$T=\infty$ on $[\tau/4,\infty)$; this gives one exponent $\theta>0$ and a
uniform $C_{t,x}^{\theta/2,\theta}$ bound on $[\tau/2,\infty)$.  The constants
depend only on the fixed root box, the structural coefficients, the domain,
and the positive distance from the initial time.

Choose a fixed number
$\delta_\tau>0$ with $16\delta_\tau<\min\{\tau,1\}$ and define, for
$n=0,1,2,\ldots$,
\[
 {t_n^-}:=\tau-5\delta_\tau+6n\delta_\tau,
 \qquad {t_n^+}:={t_n^-}+12\delta_\tau,
 \qquad I_n:=({t_n^-},{t_n^+}).
\]
Then ${t_0^-}>\tau/2$, consecutive outer intervals overlap by
$6\delta_\tau$, and all $I_n$ lie in the strip on which the uniform De Giorgi
bound is available.  On every $I_n$ the hypotheses of
\Cref{lem:symmetrizable-schauder-bootstrap} hold with the same compact state
set, the same H\"older bound, and the same choice $\delta=\delta_\tau$.
Hence that lemma gives one uniform
$C_{t,x}^{(1+\beta_0)/2,1+\beta_0}$ bound on
$[{t_n^-}+2\delta_\tau,{t_n^+})$, where
$0<\beta_0<\min\{\theta,\alpha\}$.  This inner interval has length
$10\delta_\tau>4\delta_\tau$, so
\Cref{lem:moment-second-Schauder-bootstrap}, applied with the same
$\delta_\tau$, gives a uniform
$C_{t,x}^{1+\beta/2,2+\beta}$ estimate on
\[
 [{t_n^-}+5\delta_\tau,{t_n^+})
 =[\tau+6n\delta_\tau,\,\tau+(6n+7)\delta_\tau)
\]
for every $0<\beta<\min\{\beta_0,\alpha\}$.  These final intervals have
length $7\delta_\tau$ and their starting points are separated by only
$6\delta_\tau$; hence they overlap and cover $[\tau,\infty)$ exactly.  The
constants are independent of $n$ because all coefficient, root-gap, and
H\"older bounds come from the same compact root rectangle.  Finally, all coordinate maps are smooth with
uniformly bounded derivatives of every required order on that compact set,
so composition transfers the same estimate to $y,U,w$.  Patching neighboring
windows gives the global tail norm in
\eqref{eq:uniform-global-C2beta-tail}; space--time pairs separated by more than
one window are controlled by the uniform supremum bounds.
\end{proof}

\begin{lemma}[$L^2$--$C^1$ interpolation on a bounded domain]
\label{lem:L2-C1-GN}
Let $\Omega\subset\mathbb R^d$ be bounded and of class $C^2$.  There is
$C_\Omega<\infty$ such that every
{$v\in L^2(\Omega)\cap W^{2,\infty}(\Omega)$} satisfies
\begin{equation}\label{eq:GN-C1-from-L2-C2}
 \|{v}\|_{W^{1,\infty}(\Omega)}
 \le C_\Omega
 \|{v}\|_{L^2(\Omega)}^{\frac{2}{d+4}}
 \|{v}\|_{W^{2,\infty}(\Omega)}^{\frac{d+2}{d+4}}
 +C_\Omega\|{v}\|_{L^2(\Omega)}.
\end{equation}
\end{lemma}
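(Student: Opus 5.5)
The plan is a local two-scale argument: a Taylor expansion at a good point, plus a choice of the length scale that balances the $L^2$ norm against the second-derivative bound. Write $M:=\|v\|_{W^{2,\infty}(\Omega)}$ and $\varepsilon:=\|v\|_{L^2(\Omega)}$. The $C^2$ regularity of $\Omega$, or even just Lipschitz regularity, gives an interior cone condition with uniform aperture and height $h_\Omega>0$. By Rademacher, $v\in W^{2,\infty}$ means $\nabla v$ is Lipschitz with constant $M$. Hence $v\in C^{1,1}(\overline\Omega)$ after choosing a representative, using quasiconvexity of the bounded Lipschitz domain. This may cost a geometric constant in the Lipschitz bound of $\nabla v$ along paths.

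\textbf{Step 1 (bound on $v$ and $\nabla v$ at a point).} Fix $x\in\overline\Omega$ and $0<r\le h_\Omega$, and let $\mathcal C_r(x)\subset\Omega$ be the cone of height $r$, with $|\mathcal C_r(x)|\ge c\,r^d$. Taylor's formula gives $v(y)=v(x)+\nabla v(x)\cdot(y-x)+O(Mr^2)$ on $\mathcal C_r(x)$. Test this against the $L^2$-dual basis of affine functions on the cone. By scaling, the normalized polynomial projection $\Pi_r$ onto affine functions on $\mathcal C_r(x)$ satisfies
\[
|a|+r|b|\le C r^{-d/2}\|a+b\cdot(\cdot-x)\|_{L^2(\mathcal C_r)},
\]
with $C$ uniform by the uniform cone geometry (finite-dimensional norm equivalence after rescaling). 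Applied to $a=v(x)$, $b=\nabla v(x)$, this gives
\[
|\nabla v(x)|\le C\bigl(r^{-1-d/2}\varepsilon+Mr\bigr),
\qquad
|v(x)|\le C\bigl(r^{-d/2}\varepsilon+Mr^2\bigr).
\]

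\textbf{Step 2 (choice of scale).} Balance $r^{-1-d/2}\varepsilon=Mr$, i.e. $r_*=(\varepsilon/M)^{2/(d+4)}$. Then
\[
Mr_*=\varepsilon^{2/(d+4)}M^{(d+2)/(d+4)},
\]
and likewise $r_*^{-d/2}\varepsilon+Mr_*^2\le C\varepsilon^{4/(d+4)}M^{d/(d+4)}$. If $r_*\le h_\Omega$, this yields the first term in the statement. The bound on $|v|$ is controlled by the same product, because $r_*\le h_\Omega$ gives $\varepsilon^{4/(d+4)}M^{d/(d+4)}\le C\varepsilon^{2/(d+4)}M^{(d+2)/(d+4)}$.

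If instead $r_*>h_\Omega$, then $M<h_\Omega^{-(d+4)/2}\varepsilon$. Take $r=h_\Omega$ in Step 1 to get $|v|+|\nabla v|\le C\varepsilon$, which is the additive term $C_\Omega\|v\|_{L^2}$.

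\textbf{Main obstacle.} The main obstacle is making the constants uniform near $\partial\Omega$, i.e. uniform cone geometry and the rescaled projection bound. This is handled exactly as in \Cref{lem:uniform-local-geometry-DG}: translation and dilation reduce every cone to finitely many model shapes up to rotation, or to one fixed cone. A secondary point is justifying the pointwise Taylor expansion for a $W^{2,\infty}$ function. This follows because the cone is convex, so $\nabla v$ is Lipschitz with constant $M$ along segments inside it.
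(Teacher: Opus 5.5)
Your argument is correct, but it takes a different route from the paper's proof. The paper does not work intrinsically on $\Omega$. It invokes Stein's universal extension operator, which is bounded simultaneously on $L^2$ and $W^{2,\infty}$, and multiplies by a cutoff to obtain a compactly supported $\widetilde v$ on $\mathbb R^d$. It then splits $\nabla\widetilde v=\nabla\eta_r*\widetilde v+(\nabla\widetilde v-\eta_r*\nabla\widetilde v)$ with a radial mollifier. This produces exactly your two-term bound $C(r^{-1-d/2}L+rM)$, and similarly $C(r^{-d/2}L+r^2M)$ for $\widetilde v$ itself. The choice $r=(L/M)^{2/(d+4)}$ and the dichotomy $L\le M$ versus $L>M$ (with $r=1$ in the latter case) mirror your split $r_*\le h_\Omega$ versus $r_*>h_\Omega$.

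Your version replaces the extension theorem by two elementary ingredients: the uniform interior cone condition, and norm equivalence on the finite-dimensional space of affine functions over one model cone. The mollifier-plus-Taylor splitting becomes a local affine approximation. This makes the proof self-contained and shows that only the uniform cone property, not $C^2$ regularity, is used. The paper's version is shorter once the extension theorem is granted, because all boundary geometry is absorbed into that single citation.

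One small correction: the fact you need is not Rademacher's theorem, which goes from Lipschitz to $W^{1,\infty}$. It is the converse on convex sets: $\nabla v\in W^{1,\infty}$ of the convex cone has a representative that is Lipschitz with constant $\|D^2v\|_{L^\infty}$. Your Taylor step uses only segments inside the convex cone, so this suffices. The quasiconvexity remark is needed only to fix a continuous representative of $\nabla v$ up to the vertex $x$.
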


\begin{proof}
{By the standard universal Sobolev extension theorem for bounded Lipschitz
(and hence $C^2$) domains, there is a single linear extension operator $E$ that is bounded on
both $L^2(\Omega)$ and $W^{2,\infty}(\Omega)$; see, for example,
\cite[Chapter~VI]{Stein1970Extension}.  Multiplying $Ev$ by a fixed smooth cutoff that equals one
near $\overline\Omega$ gives a compactly supported extension $\widetilde v$ with}
\[
 \|{\widetilde v}\|_{L^2(\mathbb R^d)}\le C_\Omega\|{v}\|_{L^2(\Omega)},
 \qquad
 \|{\widetilde v}\|_{W^{2,\infty}(\mathbb R^d)}
 \le C_\Omega\|{v}\|_{W^{2,\infty}(\Omega)}.
\]
{Thus it is enough to prove the estimate on $\mathbb R^d$ for this compactly
supported extension.}  Let $\eta$ be a smooth radial mollifier and
$\eta_r(x)=r^{-d}\eta(x/r)$.  Put
$L=\|{\widetilde v}\|_2$ and
$M=\|{\widetilde v}\|_{W^{2,\infty}}$.  For $0<r\le1$, Young's inequality and
the Lipschitz continuity of $\nabla{\widetilde v}$ give
\begin{equation}\label{eq:mollifier-gradient-interpolation}
 \|\nabla{\widetilde v}\|_\infty
 \le \|\nabla\eta_r*{\widetilde v}\|_\infty
     +\|\nabla{\widetilde v}-\eta_r*\nabla{\widetilde v}\|_\infty
 \le C\bigl(r^{-1-d/2}L+rM\bigr).
\end{equation}
If $0<L\le M$, choose
$r=(L/M)^{2/(d+4)}$.  The two terms in
\eqref{eq:mollifier-gradient-interpolation} then have the same size and give
\[
 \|\nabla{\widetilde v}\|_\infty
 \le C L^{2/(d+4)}M^{(d+2)/(d+4)}.
\]
If $L>M$, take $r=1$ and obtain instead
$\|\nabla{\widetilde v}\|_\infty\le C L$, which is the lower-order term in
\eqref{eq:GN-C1-from-L2-C2}.  The case $L=0$ is trivial.

For the zeroth-order term, radiality gives
$\int x\eta(x)\,dx=0$, and Taylor's formula yields
\[
 \|{\widetilde v}\|_\infty
 \le \|\eta_r*{\widetilde v}\|_\infty
      +\|{\widetilde v}-\eta_r*{\widetilde v}\|_\infty
 \le C\bigl(r^{-d/2}L+r^2M\bigr).
\]
With the same optimizing radius this is
$CL^{4/(d+4)}M^{d/(d+4)}$ when $L\le M$, which is no larger than the
right-hand leading term of \eqref{eq:GN-C1-from-L2-C2}; when $L>M$ it is
again bounded by $CL$.  Restriction to $\Omega$ proves the lemma.
\end{proof}

\begin{corollary}[Exponential convergence in $C^1$]
\label{cor:C1-exponential-convergence}
Under the assumptions of \Cref{thm:exponential-relaxation}, suppose in
addition that the solution is one of the global strong solutions of
\Cref{thm:global-strong-additive}.  Then there are constants
$C<\infty$ and $\kappa_1>0$ such that
\begin{equation}\label{eq:C1-exponential-general}
\begin{aligned}
 &\|y(t)-\overline y\|_{C^1(\overline\Omega)}
 +\|z(t)-z(\overline y)\|_{C^1(\overline\Omega)}\\
 &\qquad
 +\|U(t)-U(\overline y)\|_{C^1(\overline\Omega)}
 +\|w(t)-w(\overline y)\|_{C^1(\overline\Omega)}
 \le C e^{-\kappa_1 t},
 \qquad t\ge0.
\end{aligned}
\end{equation}
One may take $\kappa_1=\kappa_{\rm LS}/(d+4)$.
\end{corollary}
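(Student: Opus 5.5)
The plan is to combine the $L^2$ exponential decay of \Cref{thm:exponential-relaxation} with the uniform positive-time $C^{2+\beta}$ bounds of \Cref{prop:uniform-positive-time-smoothing}, via the interpolation inequality of \Cref{lem:L2-C1-GN}. The large-time estimate comes from the interpolation, and a bounded initial interval is absorbed into the constant.

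First fix $\tau=1$, say. By \Cref{prop:uniform-positive-time-smoothing}, for $t\ge1$ each of $y(t),z(t),U(t),w(t)$ is bounded in $C^{2+\beta}(\overline\Omega)$, hence in $W^{2,\infty}(\Omega)$, uniformly in $t$. Since $\overline y$, $z(\overline y)$, $U(\overline y)$, $w(\overline y)$ are constants, the same holds for the differences. Denote any of them by $v(t)$. Then $\|v(t)\|_{W^{2,\infty}}\le C_1$ for $t\ge1$. For $z,U,w$, the constant equilibrium values are finite because $\overline y$ lies in the compact convex hull $K_y\Subset\cS^\circ$ used in the proof of \Cref{thm:exponential-relaxation}.

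Next apply \Cref{lem:L2-C1-GN} to $v(t)$. Estimate \eqref{eq:L2-exponential-general} gives $\|v(t)\|_{L^2}\le C e^{-\kappa_{\rm LS}t/2}$, so
\[
 \|v(t)\|_{W^{1,\infty}}\le C\bigl(e^{-\kappa_{\rm LS}t/2}\bigr)^{2/(d+4)}C_1^{(d+2)/(d+4)}+Ce^{-\kappa_{\rm LS}t/2}\le Ce^{-\kappa_{\rm LS}t/(d+4)},
\]
since $\kappa_{\rm LS}/(d+4)\le\kappa_{\rm LS}/2$ when $d\ge2$. Because $v(t)$ is $C^1$ up to the boundary, the $W^{1,\infty}$ norm coincides with the $C^1(\overline\Omega)$ norm. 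This gives \eqref{eq:C1-exponential-general} for $t\ge1$ with $\kappa_1=\kappa_{\rm LS}/(d+4)$.

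For $0\le t\le1$, use $w\in C([0,\infty);X_{\gamma,p})$ together with the embedding $X_{\gamma,p}\hookrightarrow C^1(\overline\Omega)$ from \eqref{eq:trace-holder-embedding}. Thus $\sup_{[0,1]}\|w(t)\|_{C^1}<\infty$. The trajectory stays in the compact rectangle $\mathcal R_0$, where the coordinate maps are smooth, so the $C^1$ norms of $y,z,U$ are also bounded on $[0,1]$. Enlarging $C$ by the factor $e^{\kappa_1}$ covers this interval.

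The main point to check is that the smoothing constant is uniform on $[1,\infty)$. \Cref{prop:uniform-positive-time-smoothing} supplies exactly this: its bound depends only on $\tau$, $\mathcal R_0$ and structural data, and not on $t$. Everything else is routine.
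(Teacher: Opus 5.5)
Your proposal is correct and follows the paper's proof: uniform $W^{2,\infty}$ bounds from \Cref{prop:uniform-positive-time-smoothing}, the $L^2$ decay \eqref{eq:L2-exponential-general}, the interpolation \Cref{lem:L2-C1-GN} for $t\ge1$, and absorption of $[0,1]$ through continuity into $C^1(\overline\Omega)$. The only difference is minor: the paper interpolates only $y(t)-\overline y$ and transfers the $C^1$ decay to $z,U,w$ by the mean-value theorem and chain rule on the compact range, whereas you interpolate each coordinate difference directly, which is equally valid because both the smoothing bound and the $L^2$ estimate already cover all four coordinates.
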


\begin{proof}
If the initial relative entropy is zero, then $y^0\equiv\overline y$ and the
claim is immediate.  Otherwise, \Cref{prop:uniform-positive-time-smoothing},
with for instance $\tau=1/2$, gives
\[
 \sup_{t\ge1}\|y(t)-\overline y\|_{W^{2,\infty}(\Omega)}\le M_2<\infty.
\]
Apply \Cref{lem:L2-C1-GN} componentwise to
{$v_t:=y(t)-\overline y$}.  By \eqref{eq:L2-exponential-general},
\[
 \|{v_t}\|_{L^2}\le C_{\mathcal R_0}
 e^{-\kappa_{\rm LS}t/2}\mathcal H(y^0\mid\overline y)^{1/2}.
\]
Using the uniform $W^{2,\infty}$ bound in
\eqref{eq:GN-C1-from-L2-C2} therefore yields, for $t\ge1$,
\[
 \|y(t)-\overline y\|_{C^1}
 \le C e^{-\kappa_{\rm LS}t/(d+4)}.
\]
The finite interval $0\le t\le1$ is absorbed by enlarging $C$, since the
strong solution is continuous into $C^1(\overline\Omega)$.  On the compact concentration range containing both $y(t)$ and $\overline y$,
the maps $y\mapsto z,U,w$ have uniformly bounded first derivatives.  Since
$\overline y$ is spatially constant, the mean-value theorem and the spatial
chain rule give
\[
 \|z(t)-z(\overline y)\|_{C^1}
 +\|U(t)-U(\overline y)\|_{C^1}
 +\|w(t)-w(\overline y)\|_{C^1}
 \le C\|y(t)-\overline y\|_{C^1}.
\]
Hence the same exponential rate holds for the remaining coordinate systems.  This proves
\eqref{eq:C1-exponential-general}.
\end{proof}

\begin{corollary}[{Exponential convergence for global strong solutions}]
\label{cor:global-strong-exponential}
{Every global strong solution furnished either by
\Cref{thm:global-strong-additive} from entropy-variable data or by
\Cref{cor:global-positive-y-data} from positive concentration data converges exponentially to the
homogeneous equilibrium determined by its conserved species masses.  More precisely,
\eqref{eq:relative-entropy-exponential}--\eqref{eq:L2-exponential-general} hold, and the stronger
$C^1$ estimate \eqref{eq:C1-exponential-general} holds as well.}
\end{corollary}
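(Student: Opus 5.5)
The corollary collects results already proved, so the plan is to reduce both data classes to one setting and then cite the convergence statements. First I will reduce the concentration-data case to the entropy-data case. By \Cref{cor:global-positive-y-data} and \Cref{rem:initial-trace-equivalence}, uniformly positive data $y^0\in B^{2-2/p}_{p,p}$ with $\sum_iy_i^0=1$ and $\partial_\nu y_i^0=0$ correspond exactly to $w_0:=Dh(U(y^0))\in X_{\gamma,p}$. Uniqueness in the positive concentration class, which is equivalent to entropy-variable uniqueness, then shows that the solution of \Cref{cor:global-positive-y-data} is the image under the coordinate maps of the solution of \Cref{thm:global-strong-additive}. It therefore suffices to treat solutions furnished by \Cref{thm:global-strong-additive}.

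Next I will verify the standing assumptions of \Cref{sec:exponential-relaxation} for such a solution. \Cref{thm:global-strong-additive} gives global existence on $[0,\infty)$, and the class \eqref{eq:global-strong-class-dossier} together with $X_{\gamma,p}\hookrightarrow C^{1+\eta}(\overline\Omega)$ gives continuity in $C^1(\overline\Omega)$ up to $t=0$. \Cref{cor:positive-time-classicality-system} gives classicality on positive-time strips. Since $\Omega$ is of class $C^{2+\alpha}$, it is in particular $C^2$. By \Cref{rem:entropy-data-compact-range}, $y^0$ takes values in a compact subset of $\cS^\circ$, so the initial root rectangle $\mathcal R_0$ is compact and forward invariant. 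Mass conservation then identifies the equilibrium $\overline y$ with the homogeneous state determined by the conserved species masses.

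With these hypotheses in place, \Cref{thm:exponential-relaxation} gives \eqref{eq:relative-entropy-exponential} and \eqref{eq:L2-exponential-general}. \Cref{cor:C1-exponential-convergence} applies verbatim because the solution comes from \Cref{thm:global-strong-additive}, and it gives \eqref{eq:C1-exponential-general} with $\kappa_1=\kappa_{\rm LS}/(d+4)$. The only step that needs care is the identification in the first paragraph: the two solution classes must agree, so that the uniform tail bound of \Cref{prop:uniform-positive-time-smoothing} applies to the concentration-data solution. The equivalence of formulations in \Cref{cor:formulation-equivalence} supplies this identification. Beyond it, no new estimate is required.
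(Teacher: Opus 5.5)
Your proposal is correct and is essentially the paper's proof. Both verify the standing hypotheses of \Cref{sec:exponential-relaxation} for solutions of \Cref{thm:global-strong-additive}: $C^1$ continuity from the trace embedding, a compact initial range via \Cref{rem:entropy-data-compact-range}, and positive-time classicality via \Cref{cor:positive-time-classicality-system}; both then transfer the result to concentration data through \Cref{cor:global-positive-y-data,rem:initial-trace-equivalence}, and you merely do the transfer first. Your appeal to uniqueness for that identification is harmless but unnecessary, since \Cref{cor:global-positive-y-data} constructs its solution directly from \Cref{thm:global-strong-additive} by the entropy coordinate map.
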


\begin{proof}
{For the entropy-variable formulation, the trace embedding
\eqref{eq:trace-holder-embedding} and continuity in the global strong class give
$w\in C([0,\infty);C^1(\overline\Omega))$.  The softmax and root maps then give
$y,z\in C([0,\infty);C^1(\overline\Omega))$, and
\Cref{rem:entropy-data-compact-range} places the initial concentration range compactly inside
$\mathcal S^\circ$.  Positive-time classical regularity follows from
\Cref{cor:positive-time-classicality-system}, so the hypotheses of
\Cref{thm:exponential-relaxation,cor:C1-exponential-convergence} are satisfied.  The concentration-data
formulation is equivalent by \Cref{cor:global-positive-y-data,rem:initial-trace-equivalence}; hence it
has the same conclusion.}
\end{proof}

\section{Conclusion}

{For the normalized Maxwell--Stefan system with
$f_{ij}=g_i+g_j$, $0<g_1<\cdots<g_N$, $d\ge2$, and $p>d+2$, we proved global
strong solvability on bounded $C^{2+\alpha}$ domains for arbitrary entropy
data in the natural trace space, equivalently for arbitrary uniformly positive
concentration data in the corresponding trace class with homogeneous Neumann
compatibility.  No smallness assumption is imposed on the initial oscillation.
The additive structure turns the positive constrained relaxation operator
into a shifted compression of the diagonal species operator; its scalar
resolvent yields the explicit constrained inverse and the global interlacing
root coordinates, while its Laurent data lead to the closed affine moment
system symmetrized by the Hessian of the mixing entropy.  The root equations admit forward-invariant
rectangles, which keep the concentration range in a compact subset of the open
simplex.}

Entropy-stabilized multi-EPD truncations yield De Giorgi H\"older
continuity of the roots.  Combined with entropy symmetrization, conormal
regularity, and short-interval maximal regularity, this gives the continuation
estimates required for global strong solvability.  The global solution has
positive-time $C^{2+\beta}$ regularity for some $0<\beta<\alpha$.  The
Fisher-information estimate and logarithmic Sobolev inequality yield
exponential convergence to the homogeneous equilibrium determined by the
conserved species masses in relative entropy, $L^2$, and
$C^1(\overline\Omega)$.

Extensions to repeated values among the $g_i$, to general non-additive coefficients for $N\ge4$, or to
strong solutions whose initial state lies on the boundary of the concentration
simplex require separate analysis and are not claimed here.

\section*{Acknowledgments}
It is with deep gratitude and fondness that I remember the many fruitful
discussions I had with the late Jan Pr\"uss on Maxwell--Stefan diffusion in
reactive and nonreactive multicomponent fluid systems.  The idea of additive
friction coefficients emerged from these discussions.\vskip3mm

\noindent
{ChatGPT (OpenAI) was used during manuscript preparation for editorial
assistance, consistency checks, and alternative formulations.  The author is solely responsible for
the mathematical content and final text.}

\section*{Statements and Declarations}
\noindent\textbf{Funding.}
The author declares that no funds, grants, or other support were received during the preparation of this manuscript.

\smallskip
\noindent\textbf{Competing interests.}
The author declares no competing interests.

\smallskip
\noindent\textbf{Data availability.}
Data sharing is not applicable to this article, as no datasets were generated
or analyzed during the study.

\end{document}